  \def\gn#1#2{{$\href{http://groupnames.org/\#?#1}{#2}$}}
\def\gn#1#2{$#2$}  
\tikzset{sgplattice/.style={inner sep=1pt,norm/.style={black!50!black},char/.style={black!50!black},
  lin/.style={black!50}},cnj/.style={black!50,yshift=-2.5pt,left=-1pt of #1,scale=0.5,fill=white}}
\theoremstyle{plain}
\newtheorem{theorem}{Theorem}[section]
\newtheorem*{theorem*}{Theorem}
\newtheorem*{prop*}{Proposition}
\newtheorem{lemma}[theorem]{Lemma}
\newtheorem{corollary}[theorem]{Corollary}
\newtheorem{prop}[theorem]{Proposition}
\theoremstyle{remark}
\newtheorem{remark}[theorem]{Remark}
\newtheorem{example}[theorem]{Example}
\newtheorem*{note*}{Note}
\newtheorem*{remark*}{Remark}
\newtheorem*{example*}{Example}
\theoremstyle{definition}
\newtheorem*{definition*}{Definition}
\newtheorem*{hypothesis*}{Hypothesis}
\newtheorem*{assumptions*}{Assumptions}
\newtheorem{definition}[theorem]{Definition}
\newcommand{\Z}{\mathbb{Z}}
\newcommand{\Q}{\mathbb{Q}}
\newcommand{\Gal}{\mathrm{Gal}}
\newcommand{\Tr}{\mathrm{Tr}}
\newcommand{\ncl}{\mathrm{ncl}}
\numberwithin{equation}{section}
\newcommand{\Ind}{\mathrm{Ind}}
\title[Leopoldt-type theorems for non-abelian extensions of $\Q$]{Leopoldt-type theorems for \\ non-abelian extensions of $\Q$}
\author{Fabio Ferri}
\address[]{\parbox{\linewidth}{Department of Mathematics, University of Exeter, Rennes Dr, Exeter EX4 4RN, United Kingdom\vspace{0.15cm}}}
\email{ff263@exeter.ac.uk}
\urladdr{http://emps.exeter.ac.uk/mathematics/staff/ff263}
\date{version of 8th April 2022}
\begin{document}

\maketitle

\begin{abstract}
We prove new results concerning the additive Galois module structure of certain wildly ramified finite non-abelian extensions of $\Q$. In particular, when $K/\Q$ is a Galois extension with Galois group $G$ isomorphic to $A_4$, $S_4$ or $A_5$, we give necessary and sufficient conditions for the ring of integers $\mathcal{O}_{K}$ to be free over its associated order 
in the rational group algebra $\Q[G]$.
\end{abstract}

\section{Introduction}

Let $K/F$ be a finite Galois extension of number fields or $p$-adic fields and let $G=\Gal(K/F)$.
The classical normal basis theorem says that $K$ is free of rank $1$ as a module over the group algebra $F[G]$. 
A much more difficult problem is that of determining whether the ring of integers $\mathcal{O}_{K}$ is free of rank $1$ over an appropriate $\mathcal{O}_{F}$-order in $F[G]$. 
The natural choice of such an order is the so-called associated order
\[
\mathfrak{A}_{K/F}:=\{ \lambda \in F[G]:  \lambda \mathcal{O}_K \subseteq \mathcal{O}_K \},
\]
since this is the only $\mathcal{O}_{F}$-order in $F[G]$ over which $\mathcal{O}_{K}$ can possibly be free. 

It is clear that the group ring $\mathcal{O}_{F}[G]$ is contained in $\mathfrak{A}_{K/F}$.
In fact, $\mathfrak{A}_{K/F}=\mathcal{O}_{F}[G]$ if and only if $K/F$ is at most tamely ramified.
It is in this setting that by far the most progress has been made and we say that 
$K/F$ has a normal integral basis if  $\mathcal{O}_{K}$ is free over $\mathcal{O}_{F}[G]$.
The celebrated Hilbert-Speiser theorem says that if $K/\Q$ is a tamely ramified finite abelian extension,
then it has a normal integral basis. 
Leopoldt removed the assumption on ramification to obtain the following generalisation of this result.

\begin{theorem}\cite{MR0108479}\label{thm:leopoldt}
Let $K/\Q$ be a finite abelian extension. Then $\mathcal{O}_{K}$ is free over $\mathfrak{A}_{K/\Q}$.
\end{theorem}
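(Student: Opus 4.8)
The plan is to combine the Kronecker--Weber theorem with an explicit construction of a free generator.

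\emph{Reduction to prime-power conductor.} Let $f$ be the conductor of $K$; by Kronecker--Weber $K \subseteq \Q(\zeta_f)$, and write $f = \prod_{i=1}^{r} p_i^{e_i}$. Setting $K_i := K \cap \Q(\zeta_{p_i^{e_i}})$ one has $K = K_1 \cdots K_r$ with $\Gal(K/\Q) = \prod_i \Gal(K_i/\Q)$, hence $\Q[\Gal(K/\Q)] = \bigotimes_i \Q[\Gal(K_i/\Q)]$ over $\Q$. Because the conductors $p_i^{e_i}$, and so the discriminants of the $K_i$, are pairwise coprime, $\mathcal{O}_K = \bigotimes_i \mathcal{O}_{K_i}$ over $\Z$; and a routine local check --- at a prime dividing some $p_i^{e_i}$ the remaining factors are unramified and so locally contribute their group rings --- gives $\mathfrak{A}_{K/\Q} = \bigotimes_i \mathfrak{A}_{K_i/\Q}$. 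A tensor product over $\Z$ of free rank-one modules being free of rank one, it suffices to treat $K \subseteq \Q(\zeta_{p^e})$ of conductor exactly $p^e$. Put $\Delta := \Gal(\Q(\zeta_{p^e})/\Q) \cong (\Z/p^e\Z)^{\times}$, $H := \Gal(\Q(\zeta_{p^e})/K)$ and $G := \Delta/H = \Gal(K/\Q)$.

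\emph{The prime-power case: construction.} Here the aim is to exhibit a single $\eta \in \mathcal{O}_K$ such that $\lambda \mapsto \lambda\eta$ carries $\mathfrak{A}_{K/\Q}$ isomorphically onto $\mathcal{O}_K$. A suitable choice, following Leopoldt and Lettl, is (for instance) $\eta := \Tr_{\Q(\zeta_{p^e})/K}\bigl(\zeta_p + \zeta_{p^2} + \cdots + \zeta_{p^e}\bigr)$; note that in the wild case one cannot simply use $\zeta_{p^e}$, whose component at the trivial character already vanishes since $\Tr_{\Q(\zeta_{p^e})/\Q}(\zeta_{p^e}) = 0$. Decompose $\Q[G] = \prod_{\chi} \Q(\chi)$ over the $\Gal(\overline{\Q}/\Q)$-orbits of characters $\chi$ of $G$ --- equivalently, the Dirichlet characters modulo $p^e$ trivial on $H$ --- and let $f_\chi = p^{m(\chi)}$ be the conductor of $\chi$. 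One first computes the $\chi$-component $e_\chi\eta$ and identifies it, up to a unit of the ring of integers of $\Q(\chi)$, with the primitive Gauss sum $\tau(\bar\chi) = \sum_{a \in (\Z/f_\chi\Z)^{\times}} \bar\chi(a)\zeta_{f_\chi}^{a}$ of conductor $f_\chi$. As $|\tau(\bar\chi)|^2 = f_\chi \ne 0$, every component is nonzero, so $\eta$ is a free $\Q[G]$-generator of $K$; thus $\lambda \mapsto \lambda\eta$ is a $\Q$-isomorphism $\Q[G] \xrightarrow{\sim} K$ carrying the full $\Z$-lattice $\mathfrak{B} := \{\lambda \in \Q[G] : \lambda\eta \in \mathcal{O}_K\}$ onto $\mathcal{O}_K$, and one checks directly that $\mathfrak{A}_{K/\Q} \subseteq \mathfrak{B}$. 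It therefore remains to prove the reverse inclusion $\mathfrak{B} \subseteq \mathfrak{A}_{K/\Q}$, equivalently $\mathcal{O}_K \subseteq \mathfrak{A}_{K/\Q}\eta$.

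\emph{The main difficulty.} The crux is this last inclusion, which forces one to describe $\mathfrak{A}_{K/\Q}$ explicitly --- in contrast to the tame situation ($\mathfrak{A}_{K/\Q} = \Z[G]$, freeness classical), here $\mathfrak{A}_{K/\Q}$ sits strictly between $\Z[G]$ and the maximal order of $\Q[G]$. Away from $p$ nothing happens ($K/\Q$ is unramified, the Gauss sums are units), so the entire problem is $p$-adic: working componentwise, one must match the local lattice $\mathfrak{B} \otimes \Z_p$ with $\mathfrak{A}_{K/\Q} \otimes \Z_p$, for which the inputs are the filtration of $\Z_p[\zeta_{p^e}]$ by powers of its maximal ideal $(1-\zeta_{p^e})$ and the precise $(1-\zeta)$-adic valuations of the resolvents $\tau(\bar\chi)$ of each conductor $p^{m(\chi)}$, supplied by Stickelberger's theorem. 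I would expect essentially all of the work --- and all of the novelty beyond the tame case --- to lie in this explicit determination of $\mathfrak{A}_{K/\Q}$ and the ensuing lattice-index bookkeeping showing $[\mathcal{O}_K : \mathfrak{A}_{K/\Q}\eta] = 1$.
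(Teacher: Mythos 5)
This statement is not proved in the paper: it is Leopoldt's theorem, quoted from \cite{MR0108479} (with Lettl's simplified proof \cite{MR1037435} also cited), so your attempt has to stand on its own, and as written it has two genuine problems.

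First, the reduction to prime-power conductor is false as stated. For an abelian $K$ of conductor $f=\prod p_i^{e_i}$ it is \emph{not} true in general that $K=K_1\cdots K_r$ with $K_i=K\cap\Q(\zeta_{p_i^{e_i}})$: the subgroup $H=\Gal(\Q(\zeta_f)/K)$ of $\prod_i(\Z/p_i^{e_i}\Z)^{\times}$ need not be a product of subgroups of the factors. A concrete counterexample is $K=\Q(\sqrt{-5})$, of conductor $20$, where $K\cap\Q(\zeta_4)=K\cap\Q(\zeta_5)=\Q$, so the compositum of your $K_i$ is $\Q\neq K$. Consequently the identities $\mathcal{O}_K=\bigotimes_i\mathcal{O}_{K_i}$ and $\mathfrak{A}_{K/\Q}=\bigotimes_i\mathfrak{A}_{K_i/\Q}$ have nothing to apply to, and the whole first step collapses. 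Leopoldt's and Lettl's arguments do not reduce to the prime-power case in this way; they work with arbitrary conductor, character by character, the cross-conductor interaction being absorbed into the factorisation of Gauss sums and into the explicit description of $\mathfrak{A}_{K/\Q}$.

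Second, even in the prime-power case you stop exactly where the theorem begins. You construct a candidate generator $\eta$, observe that its character components are nonzero (so $\eta$ generates $K$ over $\Q[G]$, which only reproves the normal basis theorem for this element), note $\mathfrak{A}_{K/\Q}\subseteq\mathfrak{B}$, and then state that the reverse inclusion $\mathcal{O}_K\subseteq\mathfrak{A}_{K/\Q}\eta$ is ``the main difficulty'', to be handled by an explicit determination of $\mathfrak{A}_{K/\Q}$ and Stickelberger-type valuation bookkeeping that you do not carry out. That determination of the associated order, and the verification that the specific $\eta$ generates $\mathcal{O}_K$ over it, is the entire content of Leopoldt's theorem (it occupies essentially all of Leopoldt's paper and of Lettl's simplification); identifying it as the hard part, however accurately, is a plan rather than a proof. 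As it stands the argument establishes nothing beyond what follows formally from the definitions, so the proposal must be regarded as incomplete, independently of the faulty reduction.
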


Leopoldt also specified a generator and the associated order; Lettl \cite{MR1037435} gave a simplified and more explicit proof of the same result. We also have the following result of Berg\'e.

\begin{theorem}\cite{MR0371857}\label{thm:berge}
Let $p$ be a prime and let $K/\Q$ be a dihedral extension of degree $2p$. 
Then $\mathcal{O}_{K}$ is free over $\mathfrak{A}_{K/\Q}$.
\end{theorem}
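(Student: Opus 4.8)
The plan is to reduce the global freeness statement to a family of local problems, solve those, and glue. Since the dihedral group of order $4$ is $C_{2}\times C_{2}$, the case $p=2$ is abelian and already covered by Theorem~\ref{thm:leopoldt}; so assume $p$ odd, and write $G=\langle\sigma,\tau\mid\sigma^{p}=\tau^{2}=1,\ \tau\sigma\tau^{-1}=\sigma^{-1}\rangle$, with quadratic subfield $k=K^{\langle\sigma\rangle}$ and a degree-$p$ subfield $F=K^{\langle\tau\rangle}$ (so $K/k$ is cyclic of degree $p$). The Wedderburn decomposition is
\[
\Q[G]\;\cong\;\Q\times\Q\times M_{2}(E),\qquad E=\Q(\zeta_{p}+\zeta_{p}^{-1}),
\]
so no simple factor is a totally definite quaternion algebra and $\Q[G]$ satisfies the Eichler condition; by Jacobinski's cancellation theorem it therefore suffices to show that $\mathcal{O}_{K}$ is locally free of rank one over $\mathfrak{A}:=\mathfrak{A}_{K/\Q}$ at every prime and that its class in the locally free class group $\mathrm{Cl}(\mathfrak{A})$ vanishes.

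For the local statements, one first notes that only $2$ and $p$ can be wildly ramified: wild ramification at $\ell$ forces the inertia group to contain a nontrivial normal $\ell$-subgroup (the wild inertia), and $D_{p}$ has such a subgroup only for $\ell=p$ (inertia $C_{p}$ or $D_{p}$, wild part $C_{p}$) and for $\ell=2$ (wild part a $C_{2}$; as a $C_{2}$ is self-normalizing in $D_{p}$ and contains the wild inertia, which is normal in the decomposition group, the inertia and decomposition groups at $2$ are then both $C_{2}$). At a tamely ramified prime $\ell$ we have $\mathfrak{A}\otimes\Z_{\ell}=\Z_{\ell}[G]$, and $\mathcal{O}_{K}\otimes\Z_{\ell}$ is projective over it by E.~Noether's theorem, hence free of rank one because $\Z_{\ell}[G]$ is semilocal and the rank — computed via the normal basis theorem — is one. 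At $\ell=2$, by the standard reduction of such freeness questions to the decomposition group, the local freeness of $\mathcal{O}_{K}\otimes\Z_{2}$ over $\mathfrak{A}\otimes\Z_{2}$ is equivalent to that of $\mathcal{O}_{L}$ over $\mathfrak{A}_{L/\Q_{2}}$ for a ramified quadratic extension $L/\Q_{2}$; and for such an $L$ a short trace computation shows $\mathfrak{A}_{L/\Q_{2}}$ is the maximal order $\Z_{2}\times\Z_{2}$, over which $\mathcal{O}_{L}$ is free of rank one with generator $1+\pi$ for any uniformizer $\pi$.

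The substantive case is $\ell=p$. According to the inertia and decomposition groups, $\mathcal{O}_{K}\otimes\Z_{p}$ is a product of rings of integers $\mathcal{O}_{K_{\mathfrak{L}}}$ with $K_{\mathfrak{L}}/\Q_{p}$ either (i) cyclic of degree $p$, or (ii) of degree $2p$ with Galois group $D_{p}$ (inertia $C_{p}$ or $D_{p}$). In case (i) the associated order and a free generator are classical for wildly ramified cyclic degree-$p$ extensions of $p$-adic fields. Case (ii) is the genuinely new one: from the ramification filtration — $G_{0}$ being $C_{p}$ or $D_{p}$, $G_{i}=C_{p}$ for $1\le i\le s$ and $G_{i}=1$ for $i>s$, where $s$ is the single break of the wild part — one computes the valuations $v_{\mathfrak{L}}((g-1)\pi)$ for a uniformizer $\pi$ and all $g\in G$, uses these to pin down $\mathfrak{A}\otimes\Z_{p}$ explicitly inside $\Q_{p}[G]$, and exhibits an element $\xi\in\mathcal{O}_{K_{\mathfrak{L}}}$ with $(\mathfrak{A}\otimes\Z_{p})\,\xi=\mathcal{O}_{K_{\mathfrak{L}}}$. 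Carrying out this bookkeeping, uniformly across the (finitely many) admissible filtrations, is where the dihedral rather than merely abelian structure has to be used, and I expect it to be the main obstacle.

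Finally, the passage from local to global freeness goes as in Leopoldt's theorem: since $\Q$ has class number one, and using the Eichler condition above, one checks that the local generators constructed at the ramified primes can be normalized so as to patch into a global $\xi\in\mathcal{O}_{K}$ with $\mathfrak{A}\xi=\mathcal{O}_{K}$ — equivalently, that the class $[\mathcal{O}_{K}]-[\mathfrak{A}]\in\mathrm{Cl}(\mathfrak{A})$ vanishes, which promotes the stable freeness just obtained to an isomorphism $\mathcal{O}_{K}\cong\mathfrak{A}$. As in Lettl's treatment of the abelian case, one may instead bypass the class-group language and simply write down a global generator assembled from the local ones.
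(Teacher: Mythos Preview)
The paper does not give its own proof of this statement; it is cited as Berg\'e's theorem \cite{MR0371857}, recalled again at the start of \S\ref{sec:dihedral}, and used as background. So there is no proof in the paper to compare against, and your proposal must be judged on its own merits.

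Your outline has one genuine misconception and two substantial incompletions.

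\textbf{The misconception (at $\ell=2$).} You write that ``by the standard reduction of such freeness questions to the decomposition group'' the freeness of $\mathcal{O}_K\otimes\Z_2$ over $\mathfrak{A}\otimes\Z_2$ is \emph{equivalent} to that of $\mathcal{O}_L$ over $\mathfrak{A}_{L/\Q_2}$. There is no such standard reduction: this is exactly the obstacle the introduction of the paper highlights, and \S\ref{sec:induction}--\S\ref{inductionstructure} are devoted to analysing when it can be bridged. Here $D\cong C_2$ is a reflection, which is \emph{not} normal in $D_{2p}$; its normal closure is all of $G$. By Theorem~\ref{thm:inductiononegenerator} one finds $\Ind_{C_2}^G\mathfrak{A}_{L/\Q_2}=\Z_2[G]+\tfrac{1}{2}\Z_2[G]\Tr_{C_2}$ strictly contains $\mathfrak{A}_{K/\Q,2}=\Z_2[G]+\tfrac{1}{2}\Z_2[G]\Tr_G$, and the induced lattice is not a ring (Theorem~\ref{thm:inductiononegenerator}(iii)). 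So freeness of $\mathcal{O}_L$ over $\mathfrak{A}_{L/\Q_2}$ does not \emph{a priori} give freeness of $\mathcal{O}_{K,2}$ over $\mathfrak{A}_{K/\Q,2}$; one needs a separate argument. (A clean one, in the spirit of the paper, is that $\Z_2[D_{2p}]$ is $C_p$-hybrid by Example~\ref{ex:dihedral-example}, so Corollary~\ref{cor:hybrid} reduces the question at $2$ to the quadratic subfield $M=K^{C_p}$, where Leopoldt applies.)

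\textbf{The incompletions.} First, the heart of the matter --- $\ell=p$ with decomposition group $D_{2p}$ --- is only sketched: you describe the shape of the ramification filtration and say one ``exhibits an element $\xi$'' without doing so. This is the entire content of Berg\'e's paper in that case, and it is not routine bookkeeping. Second, your passage from local to global is hand-waved: Jacobinski cancellation only upgrades stable freeness to freeness, but you still need $[\mathcal{O}_K]=0$ in $\mathrm{Cl}(\mathfrak{A})$, and ``the local generators can be normalized so as to patch'' is not an argument. The class-group route (Proposition~\ref{prop:trivialclassgroup}) would require $\mathrm{Cl}(\Z[D_{2p}])=0$, which you do not establish and which is not known for arbitrary $p$; Berg\'e's original proof instead constructs a global generator explicitly.
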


Now let $K/\Q$ be a Galois extension with $\Gal(K/\Q)\cong Q_{8}$, the quaternion group of order $8$.
Suppose that $K/\Q$ is tamely ramified.
Martinet \cite{MR0291208} gave two examples of such extensions without and one with a normal integral basis. Moreover, Fr\"ohlich \cite{MR323759} showed that both possibilities occur infinitely often.
By contrast, in the case that $K/\Q$ is wildly ramified, we have the following result of Martinet.

\begin{theorem}\cite{MR299593}\label{thm:martinet}
Let $K/\Q$ be a wildly ramified Galois extension with $\Gal(K/\Q)\cong Q_{8}$. 
Then $\mathcal{O}_{K}$ is free over $\mathfrak{A}_{K/\Q}$.
\end{theorem}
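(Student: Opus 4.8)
The plan is to run the standard two-step strategy for freeness over an associated order. Write $\mathfrak{A}=\mathfrak{A}_{K/\Q}$ and, for a rational prime $p$, write $\mathfrak{A}_p=\mathfrak{A}\otimes_{\Z}\Z_p$, and similarly for other $\Z$-orders and modules. First I would prove that $\mathcal{O}_{K}\otimes_{\Z}\Z_p$ is free over $\mathfrak{A}_p$ for every $p$; then, given this, $\mathcal{O}_K$ defines a class in the locally free class group $\mathrm{Cl}(\mathfrak{A})$, and I would show this class is trivial, so that $\mathcal{O}_K$ is stably free over $\mathfrak{A}$; it then remains to upgrade ``stably free'' to ``free''. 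Since $Q_8$ is a $2$-group, the only prime that can ramify wildly in $K/\Q$ is $2$, so at every odd $p$ the extension is tame, whence $\mathfrak{A}_p=\Z_p[Q_8]$ and $\mathcal{O}_K\otimes_{\Z}\Z_p$ is free over it by a theorem of Noether. The first step thus reduces to a local computation at $2$.

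For that computation, let $\mathfrak{p}\mid 2$ in $K$ and consider the decomposition group $D\le Q_8$ together with its inertia subgroup $I$. Because $Q_8$ has a unique subgroup of order $2$, the pair $(I,D)$ lies on a short explicit list, and in each case $K_{\mathfrak{p}}/\Q_2$ is either an abelian extension of $\Q_2$ or the full $Q_8$-extension. In the abelian cases one writes down $\mathfrak{A}_{K_{\mathfrak{p}}/\Q_2}$ and a free generator using the local analogue of Leopoldt's theorem. In the $Q_8$-case the decisive point is that wild ramification forces $\mathfrak{A}_2$ to contain, on the quaternion Wedderburn component of $\Q_2[Q_8]$ --- which is the quaternion division algebra over $\Q_2$, with ring of integers a noncommutative discrete valuation ring --- the full maximal order, over which every lattice is free; gluing this with the split behaviour on the four linear characters produces an explicit generator of $\mathcal{O}_{K_{\mathfrak{p}}}$ over $\mathfrak{A}_2$. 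This establishes local freeness everywhere and hence a well-defined class $[\mathcal{O}_K]\in\mathrm{Cl}(\mathfrak{A})$.

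To handle that class I would pass to a maximal order $\mathcal{M}\supseteq\mathfrak{A}$ in $\Q[Q_8]$ and use the Wedderburn decomposition $\Q[Q_8]\cong\Q^{4}\times H$, where $H$ is the rational Hamilton quaternion algebra, i.e.\ the $4$-dimensional division algebra over $\Q$ ramified exactly at $2$ and $\infty$. The four rational factors contribute nothing to the class group, and a maximal order in $H$ --- the Hurwitz quaternions --- has class number one, so that $\mathrm{Cl}(\mathcal{M})$ is trivial. Hence $[\mathcal{O}_K]$ lies in the kernel of the natural surjection $\mathrm{Cl}(\mathfrak{A})\to\mathrm{Cl}(\mathcal{M})$, which, after applying reduced norms, is governed by the discrepancy between the local unit groups of $\mathfrak{A}_p$ and $\mathcal{M}_p$: for odd $p$ the quaternion component of $\Q_p[Q_8]$ is split, so Eichler's theorem disposes of the comparison, and at $p=2$ the order $\mathfrak{A}_2$ is already maximal on the quaternion component, so there is nothing to correct. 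Computing the idelic norm-resolvent of $\mathcal{O}_K$ against this identification then shows $[\mathcal{O}_K]=0$.

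I expect the real obstacle to be the final passage, from stably free to free. Because $H$ is ramified at the archimedean place it fails the Eichler condition, so strong approximation in $H^{\times}$ is unavailable, $\mathfrak{A}$ need not enjoy the cancellation property, and the finer invariant that actually controls freeness --- a Swan-type class, detected by a symplectic Artin root number --- is not automatically annihilated by the vanishing of $[\mathcal{O}_K]$ in $\mathrm{Cl}(\mathfrak{A})$. This is precisely the mechanism that allows a tamely ramified $Q_8$-extension of $\Q$ to fail to have a normal integral basis, as in the examples of Martinet and Fr\"ohlich quoted above. The content of this theorem is that \emph{wild} ramification at $2$ enlarges $\mathfrak{A}$ enough --- to an order that is maximal, hence locally a discrete valuation ring, on the quaternion component --- to force this finer obstruction to vanish; establishing that, whether by exhibiting a generator directly from the local data or by showing that the relevant root number equals $+1$, is the technical heart of the argument.
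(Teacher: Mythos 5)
This statement is one the paper does not prove itself: it is quoted from Martinet \cite{MR299593}, so the only fair comparison is with the architecture of Martinet's argument (local freeness over $\mathfrak{A}_{K/\Q}$ plus a classification of locally free rank-one lattices over the specific orders that can occur as $\mathfrak{A}_{K/\Q}$ when $2$ is wild). Measured against that, your proposal is a reasonable sketch of the right shape, but it is not a proof, for three concrete reasons. First, the claim doing all the work at $p=2$ --- that wild ramification forces $\mathfrak{A}_{K/\Q,2}$ to be maximal on the quaternion Wedderburn component of $\Q_2[Q_8]$ --- is asserted, never established, and you then use it twice: to produce a local generator in the full-decomposition-group case, and to argue that the kernel of $\mathrm{Cl}(\mathfrak{A})\to\mathrm{Cl}(\mathcal{M})$ receives no contribution at $2$. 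Without a computation of the associated orders of the wildly ramified $Q_8$- (and sub-)extensions of $\Q_2$, the argument has no foundation. Second, in the cases where the decomposition group $D$ at $2$ is a proper subgroup, freeness of $\mathcal{O}_{K_\mathfrak{P}}$ over $\mathfrak{A}_{K_\mathfrak{P}/\Q_2}$ does \emph{not} by itself give local freeness of $\mathcal{O}_K$ over $\mathfrak{A}_{K/\Q}$ at $2$: one must compare $\mathrm{Ind}_D^G\mathfrak{A}_{K_\mathfrak{P}/\Q_2}$ with $\mathfrak{A}(\Q_2[G],\mathrm{Ind}_D^G\mathcal{O}_{K_\mathfrak{P}})$, and this paper's introduction and \S\S 6--7 exist precisely because that implication fails in general for wild non-abelian extensions (it is how Theorem \ref{thm:a4} produces counterexamples). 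In the $Q_8$ case the gap is repairable, since every subgroup of $Q_8$ is normal and the local associated orders have the trace form needed for the induction results (Proposition \ref{prop:Berge-prop-3}, Theorem \ref{thm:indringstar}), but your write-up simply conflates the two notions of freeness.

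Third, and most seriously, you yourself defer the decisive step: showing that the class of $\mathcal{O}_K$ vanishes and that stably free implies free over $\mathfrak{A}_{K/\Q}$, in a situation where $\Q[Q_8]$ fails the Eichler condition so that neither Jacobinski cancellation nor the paper's Proposition \ref{prop:trivialclassgroup}-style reduction is available for $\Z[Q_8]$ itself. Your closing paragraph correctly identifies this as ``the technical heart'' and then does not carry it out; ``compute the idelic norm-resolvent'' and ``show the relevant root number equals $+1$'' are names for the missing work, not arguments. Martinet's proof lives exactly there: one shows that wild ramification enlarges the associated order to one of an explicit short list of orders, and then classifies locally free rank-one modules over those orders (where cancellation can be established by hand, e.g.\ via the Euclidean/principal-ideal nature of the relevant quaternionic order), rather than invoking a general cancellation theorem that does not apply. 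As it stands, the proposal is a plan with its central lemmas unproved.
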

 
In the present article, we prove other Leopoldt-type theorems for certain non-abelian extensions of $\Q$. 
An important notion is that of local freeness, which we now review.

For the rest of the introduction, let $K/\Q$ be a finite Galois extension and let $G=\Gal(K/\Q)$.
We recall that $\mathcal{O}_{K}$ is said to be locally free over $\mathfrak{A}_{K/\Q}$ at a rational prime $p$ 
if $\mathcal{O}_{K,p}:=\Z_p\otimes_\Z\mathcal{O}_K$ is free as an
$\mathfrak{A}_{K/\Q,p}:=\Z_p\otimes_\Z\mathfrak{A}_{K/\Q}$-module. 
We say that $\mathcal{O}_{K}$ is locally free over $\mathfrak{A}_{K/\Q}$
if this holds for all rational primes $p$.

Suppose that $K/\Q$ is tamely ramified. 
Then $\mathcal{O}_{K}$ is locally free over $\Z[G]$.
Moreover, the problem of determining whether $K/\Q$ has a normal integral basis is well understood thanks to Taylor's proof of Fr\"ohlich's conjecture \cite{MR608528}: he determined the class of $\mathcal{O}_{K}$ in the locally free class group $\text{Cl}(\Z[G])$ in terms of Artin root numbers of the irreducible symplectic characters of $G$ (see \cite[I]{MR717033} for an overview).
In particular, if $G$ has no irreducible symplectic characters
(this is the case, for instance, if $G$ is abelian, dihedral or of odd order), 
then $K/\Q$ has a normal integral basis. 
 
If $K/\Q$ is wildly ramified, then the situation becomes much more difficult, not least because 
$\mathcal{O}_{K}$ need not even be locally free over $\mathfrak{A}_{K/\Q}$. 
For instance, Berg\'e \cite{zbMATH03623711} gave examples of wildly ramified dihedral extensions of $\Q$
without the local freeness property. 

Let $N/M$ be a finite Galois extension of $p$-adic fields.
One can consider the analogous problem of whether $\mathcal{O}_{N}$ is free over 
$\mathfrak{A}_{N/M}$. Indeed, this is the case when $N/M$ is unramified, tamely ramified or weakly ramified,
or $M=\Q_{p}$ and $N/\Q_{p}$ is abelian or dihedral of order $2\ell$ for some prime $\ell$
(see \S \ref{padicresults} for a detailed overview of such results).
However, freeness in this situation does not relate to the aforementioned notion of local freeness in the way one might expect.
More precisely, if $K/\Q$ is wildly ramified and non-abelian,
$p$ is a rational prime, $\mathfrak{P}$ a prime of $K$ above $p$, 
and $\mathcal{O}_{K_\mathfrak{P}}$ is free over $\mathfrak{A}_{K_\mathfrak{P}/\Q_{p}}$, 
then it is not necessarily the case that $\mathcal{O}_{K}$ is locally free over $\mathfrak{A}_{K/\Q}$ at $p$
(here $K_{\mathfrak{P}}$ denotes the completion of $K$ at $\mathfrak{P}$).
This is an important obstacle that needs to be overcome in the proofs of the main results of the present article.

We consider the question of whether $\mathcal{O}_{K}$ is free over $\mathfrak{A}_{K/\Q}$
in the case that the locally free class group $\text{Cl}(\Z[G])$ is trivial and $\Z[G]$ has the 
so-called locally free cancellation property (in fact, the latter condition holds whenever the former holds).
In this situation, it is also the case that $\text{Cl}(\mathfrak{A}_{K/\Q})$ is trivial and
$\mathfrak{A}_{K/\Q}$ has the locally free cancellation property. 
Hence it is straightforward to show that the question reduces to whether 
$\mathcal{O}_{K}$ is locally free over  $\mathfrak{A}_{K/\Q}$.

We now briefly describe a straightforward application of this strategy. 
Suppose that $G$ is dihedral of order $2p^{n}$ for some prime $p$ and some positive integer $n$.
In this situation, Keating \cite{MR0357591} gave sufficient conditions for $\text{Cl}(\Z[G])$ to be trivial
and Berg\'e \cite{zbMATH03623711} gave necessary and sufficient conditions
for $\mathcal{O}_{K}$ to be locally free over $\mathfrak{A}_{K/\Q}$.
As a consequence we obtain the following result.

\begin{theorem}\label{thm:dihedralintro}
Let $n$ be a positive integer and let $p\geq 5$ be a regular prime number such that the class number of 
$\Q(\zeta_{p^{n}})^+$ is $1$. 
Let $K/\Q$ be a dihedral extension of degree $2p^{n}$. 
Then $\mathcal{O}_{K}$ is free over $\mathfrak{A}_{K/\Q}$ if and only if the ramification index 
of $p$ in $K/\Q$ is a power of $p$.
\end{theorem}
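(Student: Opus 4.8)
The strategy is exactly the one sketched in the introduction: reduce the global freeness question to a purely local one, then quote the local classification of Berg\'e. First I would verify that under the hypotheses on $p$ and $n$, the locally free class group $\mathrm{Cl}(\Z[G])$ is trivial. For $G$ dihedral of order $2p^n$ this is Keating's result \cite{MR0357591}: the relevant Iwasawa-theoretic obstruction vanishes when $p$ is regular and $\Q(\zeta_{p^n})^+$ has class number $1$ (and $p\geq 5$ ensures we are outside the small exceptional cases). Granting this, $\Z[G]$ also has locally free cancellation, and as explained in the excerpt the associated order $\mathfrak{A}=\mathfrak{A}_{K/\Q}$ then satisfies $\mathrm{Cl}(\mathfrak{A})$ trivial with locally free cancellation as well. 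Consequently $\mathcal{O}_K$ is free over $\mathfrak{A}$ if and only if it is \emph{locally} free over $\mathfrak{A}$, i.e.\ if and only if $\mathcal{O}_{K,\ell}$ is free over $\mathfrak{A}_\ell$ for every rational prime $\ell$.

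Next I would dispose of all primes $\ell\neq p$: such primes are at most tamely ramified in $K/\Q$ (the only prime that can ramify wildly in a degree $2p^n$ dihedral extension is $p$ itself, since the inertia groups are subgroups of $G$ and wild ramification forces the residue characteristic to divide the order of inertia), so $\mathfrak{A}_\ell=\Z_\ell[G]$ and $\mathcal{O}_{K,\ell}$ is automatically free over it by the tame case. Thus the whole question collapses to local freeness at $p$. Here I invoke Berg\'e's theorem \cite{zbMATH03623711}, which for dihedral extensions of $\Q$ of degree $2p^n$ gives necessary and sufficient conditions for $\mathcal{O}_{K,p}$ to be free over $\mathfrak{A}_{K/\Q,p}$; the condition is precisely that the ramification index of $p$ in $K/\Q$ be a power of $p$. (Geometrically: the inertia group at $p$ must be a $p$-group, equivalently it avoids the reflections, equivalently it lies in the cyclic part of the dihedral group.) Combining the reduction with Berg\'e's criterion yields the stated equivalence.

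The main obstacle — and the only genuine content beyond citing known results — is the bookkeeping in the reduction step: one must check carefully that triviality of $\mathrm{Cl}(\Z[G])$ really does transfer to $\mathrm{Cl}(\mathfrak{A})$ together with the cancellation property, and that this forces freeness to be detected locally. This requires knowing that $\mathfrak{A}$ is a maximal-order-like object at all primes where it differs from $\Z[G]$ in a sufficiently controlled way, or more precisely using a Mayer–Vietoris / conductor-square argument relating $\mathrm{Cl}(\mathfrak{A})$, $\mathrm{Cl}(\Z[G])$ and local unit groups. In the present dihedral setting this is comparatively benign because only one prime $p$ is wildly ramified and the local structure at $p$ is explicitly understood, so the argument is essentially formal; the substantive hypotheses are entirely about making $\mathrm{Cl}(\Z[G])$ vanish (regularity of $p$, class number $1$ of $\Q(\zeta_{p^n})^+$) and about Berg\'e's local computation. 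I would present the reduction as a general lemma (valid whenever $\mathrm{Cl}(\Z[G])$ is trivial) and then simply feed in Keating's and Berg\'e's results.
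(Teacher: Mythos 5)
Your overall reduction (triviality of $\mathrm{Cl}(\Z[G])$ gives cancellation, hence freeness is equivalent to local freeness, then analyse each prime) is exactly the paper's strategy, but there is a genuine gap in the step where you dispose of the primes $\ell\neq p$. Your claim that ``the only prime that can ramify wildly in a degree $2p^{n}$ dihedral extension is $p$ itself'' is false: since $|G|=2p^{n}$ is even, the prime $2$ can be wildly ramified in $K/\Q$. This happens precisely when the quadratic subfield of $K$ is ramified at $2$; the wild inertia group at $2$ is then the order-$2$ subgroup generated by a reflection. In that case $\mathfrak{A}_{K/\Q,2}$ strictly contains $\Z_{2}[G]$ and the tame argument does not apply, so local freeness at $2$ requires a separate proof. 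The paper supplies this by observing that $\Z_{2}[D_{2p^{n}}]$ is $N$-hybrid for $N$ the cyclic subgroup of index $2$ (Example \ref{ex:dihedral-example}), so the component away from $e_{N}$ is a maximal order and the question reduces to the quadratic subfield, where Leopoldt's theorem applies (Corollary \ref{cor:hybrid}); alternatively one can invoke Berg\'e's local results at $2$. Without some such argument your reduction to the single prime $p$ is incomplete.

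A secondary inaccuracy: Berg\'e's local criterion at $p$ is not literally ``the ramification index of $p$ is a power of $p$''. Her necessary and sufficient condition for (projectivity, equivalently) freeness of $\mathcal{O}_{K,p}$ over $\mathfrak{A}_{K/\Q,p}$ is phrased in terms of almost-maximal ramification, together with an exceptional case where the inertia group is dihedral of order $2p$ (which cannot occur for $n\geq 2$). The equivalence between almost-maximal ramification and the power-of-$p$ condition is a further fact (Berg\'e's Corollaire to Proposition 6), and it is exactly here that the hypothesis $p\geq 5$ is used — not, as you suggest, in Keating's class-group result, which needs $p$ odd and regular together with the real cyclotomic class number condition; for $p=3$ one only obtains one implication. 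Finally, the transfer of triviality and cancellation from $\Z[G]$ to $\mathfrak{A}_{K/\Q}$ does not need any conductor-square or Mayer--Vietoris bookkeeping: the surjection $\mathrm{Cl}(\Z[G])\twoheadrightarrow \mathrm{Cl}(\mathfrak{A}_{K/\Q})$ is standard, and cancellation follows from the Eichler condition and Jacobinski's theorem, as in Proposition \ref{prop:trivialclassgroup} of the paper.
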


Here $\Q(\zeta_{p^{n}})^+$ denotes the maximal totally real subfield $\Q(\zeta_{p^{n}})$.
Using the class number computations of Miller \cite{MR3240817} we obtain the following corollary.

\begin{corollary}
Let $K/\Q$ be a dihedral extension of degree $2p^{n}$ where
$(p,n)$ is $(5,2)$, $(5,3)$, $(7,2)$ or $(11,2)$.
Then $\mathcal{O}_{K}$ is free over $\mathfrak{A}_{K/\Q}$ if and only if 
the ramification index of $p$ in $K/\Q$ is a power of $p$.
\end{corollary}

Similar but more complicated results hold when $p=2$ or $3$ (see \S \ref{sec:dihedral}).

If $G$ is non-abelian and non-dihedral such that $\text{Cl}(\Z[G])$ is trivial, then a result of End\^{o} and Hironaka \cite{MR519042} shows that $G$ is isomorphic to $A_4$, $S_4$ or $A_5$; the converse was already shown by Reiner and Ullom \cite{MR0367043}. 
The main results of the present article will be necessary and sufficient conditions for $\mathcal{O}_{K}$
to be free over $\mathfrak{A}_{K/\Q}$ when $G$ is isomorphic to $A_{4}$, $S_{4}$ or $A_{5}$.
The discussion above shows that the main work is in determining when $\mathcal{O}_{K}$ is locally free over
$\mathfrak{A}_{K/\Q}$.
A key ingredient is the notion of hybrid $p$-adic group rings, introduced by Johnston and Nickel \cite{MR3461042}; using this tool it is straightforward to show that $\mathcal{O}_{K}$ is locally free over
$\mathfrak{A}_{K/\Q}$ at $p=3$ when $G$ is isomorphic to $A_{4}$ or $S_{4}$. 

The statements of the following theorems will depend on certain primes of $K$ having given
decomposition or inertia subgroups up to conjugation. We remark that such properties will not depend on which prime of $K$ we choose above a given rational prime. For example, saying that a rational prime is tamely ramified will mean that some, and hence every, prime of $K$ above $p$ is (at most) tamely ramified in $K/\Q$. We shall henceforth abbreviate `at most tamely ramified' to `tamely ramified'.

\begin{theorem}\label{thm:a4intro}
Let $K/\Q$ be a Galois extension with $\Gal(K/\Q) \cong A_{4}$.
Then $\mathcal{O}_{K}$ is free over $\mathfrak{A}_{K/\Q}$ if and only if 
$2$ is tamely ramified or has full decomposition group.
\end{theorem}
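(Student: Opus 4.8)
The plan is to reduce the global freeness question to a local freeness question, and then to verify the local condition place by place. By the result of Reiner--Ullom \cite{MR0367043}, $\mathrm{Cl}(\Z[A_4])$ is trivial, and hence $\Z[A_4]$ has the locally free cancellation property; as noted in the introduction this forces $\mathrm{Cl}(\mathfrak{A}_{K/\Q})$ to be trivial and $\mathfrak{A}_{K/\Q}$ to have locally free cancellation as well. Therefore $\mathcal{O}_K$ is free over $\mathfrak{A}_{K/\Q}$ if and only if $\mathcal{O}_K$ is locally free over $\mathfrak{A}_{K/\Q}$, i.e.\ if and only if $\mathcal{O}_{K,p}$ is free over $\mathfrak{A}_{K/\Q,p}$ for every rational prime $p$. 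So from the outset the problem becomes a purely local one.

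The next step is to dispose of the primes other than $2$. If $p$ is unramified or tamely ramified in $K/\Q$, then $\mathfrak{A}_{K/\Q,p} = \Z_p[A_4]$ and $\mathcal{O}_{K,p}$ is locally free over it (this is the classical tame case), so such primes impose no obstruction. For wildly ramified $p \neq 2$, the only possibility (since $|A_4| = 12 = 2^2 \cdot 3$) is $p = 3$, and $3$ can only be tamely or wildly ramified with inertia a $3$-group — here I would invoke the hybrid $p$-adic group ring machinery of Johnston--Nickel \cite{MR3461042} as flagged in the introduction: the order $\Z_3[A_4]$ is hybrid at $3$ (it decomposes, after a suitable idempotent splitting, into a maximal-order part coming from the abelianization/normal $3$-complement structure and a part on which wild ramification is "mild"), and this is precisely the setup in which one concludes directly that $\mathcal{O}_{K,3}$ is locally free over $\mathfrak{A}_{K/\Q,3}$. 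Thus only $p = 2$ can obstruct, and the theorem is reduced to: $\mathcal{O}_{K,2}$ is free over $\mathfrak{A}_{K/\Q,2}$ if and only if $2$ is tamely ramified or has full decomposition group $A_4$.

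The heart of the proof is therefore the analysis at $p = 2$. Here the decomposition group $D \leq A_4$ of a prime $\mathfrak{P}$ above $2$ is, up to conjugacy, one of $1$, $C_2$, $C_3$, $V_4$ (the Klein four-group, which is normal in $A_4$), or $A_4$ itself. If $D$ has order prime to $2$ (so $D = 1$ or $D = C_3$) then $2$ is unramified, hence tame, and we are done. The remaining cases are $D \in \{C_2, V_4, A_4\}$. The "if" direction for $D = A_4$: when $2$ has full decomposition group, $K_{\mathfrak{P}}/\Q_2$ is a Galois extension with group $A_4$, and one needs to produce a normal integral basis generator at $2$. The crucial subtlety, emphasized in the introduction, is that local freeness of $\mathcal{O}_{K_{\mathfrak{P}}}$ over $\mathfrak{A}_{K_{\mathfrak{P}}/\Q_2}$ does \emph{not} automatically give local freeness of $\mathcal{O}_{K,2}$ over $\mathfrak{A}_{K/\Q,2}$ when $D \neq A_4$, because $\mathcal{O}_{K,2} \cong \Z_2[A_4] \otimes_{\Z_2[D]} \mathcal{O}_{K_{\mathfrak{P}}}$ and the induced module can fail to be free even when $\mathcal{O}_{K_{\mathfrak{P}}}$ is free over $\mathfrak{A}_{K_{\mathfrak{P}}/\Q_2}$; but when $D = A_4$ there is no induction gap, so it suffices to invoke (or prove) freeness in the $p$-adic $A_4$-case, after checking $\mathfrak{A}_{K/\Q,2} = \mathfrak{A}_{K_{\mathfrak{P}}/\Q_2}$. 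For the "only if" direction one must show that if $D = C_2$ or $D = V_4$ (and $2$ is wildly ramified, i.e.\ the inertia is $C_2$ or $V_4$ respectively), then $\mathcal{O}_{K,2}$ is \emph{not} free over $\mathfrak{A}_{K/\Q,2}$ — this is exactly the induced-module obstruction made concrete, and is where the real work lies: one computes $\mathfrak{A}_{K/\Q,2}$ and $\mathcal{O}_{K,2}$ explicitly (using that $\mathcal{O}_{K,2}$ is an induced module from the decomposition group), identifies the relevant local associated order and its module, and exhibits a failure of freeness, e.g.\ by a rank/idempotent count on the Wedderburn components or by showing the module is not even locally free at some irreducible constituent.

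The main obstacle I expect is precisely this last point — the explicit analysis at $p=2$ with partial decomposition group $C_2$ or $V_4$. One must pin down the structure of the $\Z_2[A_4]$-order $\mathfrak{A}_{K/\Q,2}$, understand the completed ring of integers as a module induced from $\mathcal{O}_{K_{\mathfrak{P}}}$ over $\Z_2[D]$, and prove non-freeness; the rest (the reduction via triviality of $\mathrm{Cl}(\Z[A_4])$, the tame primes, and the hybrid argument at $3$) is comparatively formal. A secondary technical point is carefully verifying that the global associated order localizes as the product of local associated orders and that, at $p=2$ with $D=A_4$, global freeness genuinely follows from the known $p$-adic $A_4$ result — including confirming such a $p$-adic result is available or proving it directly.
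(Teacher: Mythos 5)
Your overall framework matches the paper's: reduce to local freeness via triviality of $\mathrm{Cl}(\Z[A_4])$ and locally free cancellation, dispose of tame primes classically, and handle $p=3$ with the hybrid group ring machinery. However, the two substantive halves of the theorem are left as declarations of intent rather than arguments. For the ``if'' direction with full decomposition group, you say one should ``invoke (or prove) freeness in the $p$-adic $A_4$-case''; the missing input is the concrete fact that there is a \emph{unique} $A_4$-extension of $\Q_2$ and that it is wildly and \emph{weakly} ramified (verified in the paper via the database of $2$-adic fields), after which freeness of $\mathcal{O}_{K_\mathfrak{P}}$ over $\mathfrak{A}_{K_\mathfrak{P}/\Q_2}$ follows from the known theorem on weakly ramified extensions. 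Without identifying this, the case $D=A_4$ is not actually proved.

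More seriously, the ``only if'' direction --- non-freeness when $2$ is wildly ramified with decomposition group $C_2$ or $V_4$ --- is exactly where you stop. The methods you suggest (``a rank/idempotent count on the Wedderburn components'' or showing failure of local freeness at some irreducible constituent) cannot work as stated: $\mathcal{O}_{K,2}$ always has the correct $\Q_2$-rank and its components over the maximal-order part of $\Q_2[A_4]$ are free, so the obstruction is invisible at that level. The paper's argument needs genuinely different ingredients: (a) Lettl's theorem, giving freeness of $\mathcal{O}_{K_\mathfrak{P}}$ over $\mathfrak{A}_{K_\mathfrak{P}/\Q_2}$ for abelian $D$, whence $\mathcal{O}_{K,2}\cong\Ind_D^G\mathfrak{A}_{K_\mathfrak{P}/\Q_2}$ as $\mathfrak{A}_{K/\Q,2}$-lattices; (b) an explicit determination of $\mathfrak{A}_{K_\mathfrak{P}/\Q_2}$ via Berg\'e's almost-maximal ramification criteria, exhibiting it in the form $\Z_2[D]+\sum_i 2^{-n_i}\Z_2[D]\Tr_{P_i}$ with the $P_i$ ramification subgroups; and (c) the induction theorems for orders of this shape, whose key part rests on Hattori's theorem that commutative orders are clean: projectivity of the induced lattice over its associated order forces every $P_i$ to be normal in $A_4$, which fails because the relevant ramification subgroup is a non-normal $C_2$. (The case $D=G_0\cong C_2$ needs an additional step, since $D$ is not normal in $A_4$; the paper handles it by observing that the induced lattice and associated order coincide with those of the $(V_4,C_2)$ case.) None of this machinery, nor a workable substitute, appears in your proposal, so the core of the theorem remains unproved.
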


The proof of the `if' direction of this result involves the aforementioned tools. 
To prove the converse, we show that if $2$ is wildly ramified and has decomposition group of order $2$ or $4$ then $\mathcal{O}_{K}$ is not locally free over $\mathfrak{A}_{K/\Q}$ at $p=2$. 
This reduces to showing that the lattice $\Ind_{D}^{G}\mathfrak{A}_{K_\mathfrak{P}/\Q_{2}}:=\Z_{2}[G]\otimes_{\Z_{2}[D]}\mathfrak{A}_{K_\mathfrak{P}/\Q_{2}}$ is not free over $\mathfrak{A}_{K/\Q,2}$,
where $\mathfrak{P}$ is a fixed prime above $2$ and $D$ is its decomposition group.
The main theorem used here is Hattori's result \cite{MR0175950} that commutative orders are `clean' 
(see \S \ref{subsec:induction-and-clean}).

\begin{theorem}\label{thm:s4intro}
Let $K/\Q$ be a Galois extension with $G:=\Gal(K/\Q)\cong S_4$. Then $\mathcal{O}_K$ is free over 
$\mathfrak{A}_{K/\Q}$ if and only if one of the following conditions on $K/\Q$ holds:
\begin{enumerate}
\item $2$ is tamely ramified;
\item $2$ has decomposition group equal to the unique subgroup of $G$ of order $12$; 
\item $2$ is wildly and weakly ramified and has full decomposition group; or
\item $2$ is  wildly and weakly ramified, has decomposition group of order $8$ in $G$, and has inertia subgroup equal to the unique normal subgroup of order $4$ in $G$.
\end{enumerate}
\end{theorem}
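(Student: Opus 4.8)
The plan is to follow the same strategy as for Theorem~\ref{thm:a4intro}: since $\text{Cl}(\Z[S_4])$ is trivial and $\Z[S_4]$ has locally free cancellation, so do $\text{Cl}(\mathfrak{A}_{K/\Q})$ and $\mathfrak{A}_{K/\Q}$, so $\mathcal{O}_K$ is free over $\mathfrak{A}_{K/\Q}$ if and only if it is locally free. Thus I must determine precisely at which rational primes $p$ local freeness at $p$ can fail, and this can only happen at wildly ramified primes. The only prime that can ramify wildly in an $S_4$-extension with nontrivial wild inertia of a form that matters is $p=2$ (wild ramification forces $p \mid |I_\mathfrak{P}|$; for $p=3$ one uses that the relevant $3$-adic group rings of the $3$-groups involved are maximal/hybrid, so a hybrid-group-ring argument as in the $A_4$ case shows local freeness at $3$ always holds; for $p\geq 5$ ramification is tame). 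So the whole content is a completely local analysis at $p=2$: fix a prime $\mathfrak{P}\mid 2$ with decomposition group $D$ and inertia $I\trianglelefteq D$, and decide for which pairs $(D,I)$ the lattice $\Ind_D^G \mathfrak{A}_{K_\mathfrak{P}/\Q_2} = \Z_2[G]\otimes_{\Z_2[D]}\mathfrak{A}_{K_\mathfrak{P}/\Q_2}$ is free over $\mathfrak{A}_{K/\Q,2}$, noting (as stressed in the introduction) that $\mathcal{O}_{K_\mathfrak{P}}$ being free over $\mathfrak{A}_{K_\mathfrak{P}/\Q_2}$ is necessary but not sufficient for this.

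Next I would enumerate the possibilities for $(D,I)$. In $S_4$ the $2$-subgroups up to conjugacy are: trivial, $C_2$ (two classes: transpositions and double transpositions), $C_3$-free 2-groups of order $4$ ($C_2\times C_2$ in two flavours, $C_4$), $D_4$ (the Sylow, order $8$), and then $D$ could have order not a $2$-power, namely $6$ ($S_3$), $12$ ($A_4$), or $24$ ($S_4$); in each case $I$ is a normal subgroup of $D$ whose order is divisible by $2$ (else $p=2$ is tame). For each such pair I need two inputs: (a) the structure of the local associated order $\mathfrak{A}_{K_\mathfrak{P}/\Q_2}$ and whether $\mathcal{O}_{K_\mathfrak{P}}$ is free over it — here I invoke the $p$-adic results catalogued in \S\ref{padicresults}, in particular weak ramification (Vinatier/Ullom-type results giving freeness in the weakly ramified case) and the fact that weakly ramified means the second ramification group in lower numbering is trivial, i.e.\ $I$ has order $2$ or, for the ramification-break reasons, a very restricted shape; and (b) whether induction up to $G$ preserves freeness over $\mathfrak{A}_{K/\Q,2}$. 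For the positive cases (i)--(iv), in case (i) $p=2$ is tame so $\mathfrak{A}_{K/\Q,2}=\Z_2[G]$ and $\mathcal{O}_{K,2}$ is free by tameness; in case (ii) $D$ has index $2$ and order $12$ (so $D\cong A_4$) and one exploits that $\mathfrak{A}_{K/\Q,2}$ is, via the hybrid structure at the prime $2$ of $A_4$ (where $2$-adically the group ring of $A_4$ is a product of a matrix ring over $\Z_2$ and the group ring $\Z_2[C_2\times C_2]$, actually $\Z_2[A_4]$ is hybrid), close enough to a maximal order that freeness of $\mathcal{O}_{K_\mathfrak{P}}$ over the local associated order pulls back; in cases (iii) and (iv) the extension is wildly but weakly ramified, so one uses the known freeness results for weakly ramified extensions together with an induction argument that works because the inertia subgroup is normal in $G$ (the normal subgroup of order $4$ in case (iv), and in case (iii) $D=G$ so there is nothing to induce).

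For the converse — that no other $(D,I)$ gives local freeness — I would argue as in Theorem~\ref{thm:a4intro}: it suffices to show $\Ind_D^G\mathfrak{A}_{K_\mathfrak{P}/\Q_2}$ is \emph{not} free over $\mathfrak{A}_{K/\Q,2}$ whenever $2$ is wildly ramified and $(D,I)$ is not on the list. The tool is Hattori's theorem (\S\ref{subsec:induction-and-clean}) that commutative $\Z_2$-orders are clean, which lets one compute in $K_0$ / rank functions of the commutative pieces of the (Wedderburn decomposition of the) semisimple algebra and detect a non-free induced module by a stable-rank / Steinitz-class obstruction: one shows the class of the induced lattice in $\text{Cl}(\mathfrak{A}_{K/\Q,2})$-type invariants, read off componentwise via the decomposition of $\Q[S_4]$ into its simple factors (two one-dimensional, one two-dimensional, two three-dimensional), is nonzero — concretely that the ranks of the induced module at the components do not come from a free module. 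Here the subtle point flagged in the introduction bites: even when $\mathcal{O}_{K_\mathfrak{P}}$ is free locally over $\mathfrak{A}_{K_\mathfrak{P}/\Q_2}$ (e.g.\ $D=C_2$ tame-looking locally, or $D$ of order $4$), inducing up to $S_4$ can fail to be free, so the analysis of "bad" pairs is not vacuous and each must be ruled out by an explicit rank computation.

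The main obstacle I expect is step (b) in the "bad" cases: carrying out the Hattori-clean-order rank computation for induction from the various $2$-subgroups $D$ of $S_4$ up to $S_4$, keeping careful track of which Wedderburn component of $\Q_2[S_4]$ each lattice summand lands in and how the associated-order correction (the fact that $\mathfrak{A}_{K/\Q,2}\neq\Z_2[S_4]$ at wildly ramified $2$) shifts these ranks — in particular separating genuinely non-free inductions from the four freeness cases requires precisely identifying, for each $(D,I)$, the break-number / weak-ramification data that determines the local associated order, and I expect the most delicate case to be $D=D_4$ (Sylow) with $I$ of order $4$ but \emph{not} weakly ramified, or with $I$ the non-normal order-$4$ subgroup, where one must show failure of freeness after induction. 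A secondary, more bookkeeping-heavy obstacle is assembling the needed $p$-adic freeness inputs at $p=2$ for the order-$6$, $-8$, $-12$, $-24$ decomposition groups from \S\ref{padicresults} and verifying the weak-ramification hypotheses translate exactly into conditions (iii) and (iv).
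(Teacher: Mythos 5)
Your global skeleton (reduce to local freeness at $2$ via triviality of $\mathrm{Cl}(\Z[S_4])$ and cancellation, handle $p=3$ by hybridity and $p\geq 5$ by tameness, then analyse pairs $(D,I)$ at $2$ by inducing local associated orders) matches the paper, but two of your key steps do not work as stated. First, your justification of case (ii) is wrong: $\Z_2[A_4]$ is \emph{not} hybrid with respect to $V_4$, since $|V_4|=4$ is not a unit in $\Z_2$ (hybridity is only available at $p=3$ in this paper). The correct argument for case (ii) is that there is a \emph{unique} $A_4$-extension of $\Q_2$ (Lemma \ref{lem:unique-A4-ext-of-Q2}), and it is wildly and weakly ramified with inertia equal to $V_4$, which is normal in $S_4$; then Theorem \ref{thm:weak} gives $\mathfrak{A}_{K_\mathfrak{P}/\Q_2}=\Z_2[D]+\pi^{-1}\Z_2[D]\Tr_{G_0}$ and Corollary \ref{cor:inertianormal}(i) (i.e.\ Theorem \ref{thm:inductiononegenerator}(iv)) shows the induced module is free --- exactly the same mechanism as your cases (iii) and (iv), not a hybridity or ``pull back'' argument.

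Second, your converse strategy has a genuine gap. The case $D=S_4$ with $2$ wildly but not weakly ramified is not an induction problem at all: there one must show that $\mathcal{O}_{K_\mathfrak{P}}$ itself fails to be free over $\mathfrak{A}_{K_\mathfrak{P}/\Q_2}$ for the four relevant $S_4$-extensions of $\Q_2$, and no general theorem quoted in the paper covers this; the paper settles it with the \textsc{Magma} implementation of the Bley--Johnston algorithm. For the remaining non-free pairs $(D,I)$ your proposed ``Hattori cleanness plus rank/Steinitz computation on Wedderburn components'' is not adequate: Hattori's theorem enters only through Proposition \ref{prop:strongequivalence} and Theorem \ref{thm:indringstar}(v), which require the inducing subgroup to be \emph{abelian and normal}, so they dispose of only a few of the ten cases in Table \ref{tables4} (those handled in the remark following the theorem, partly composed with Proposition \ref{prop:projproj} through $D_8$). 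All lattices in question are full $\Z_2[G]$-lattices in $\Q_2[G]$, so componentwise rank invariants cannot distinguish them, and the obstruction is not formal: Remark \ref{rmk:generalproblemlattices} exhibits a strict containment of exactly this trace-lattice shape, $\langle 1,\tfrac14\Tr_{V_4},\tfrac18\Tr_{D_8}\rangle\subsetneq$ its associated order $\langle 1,\tfrac14\Tr_{V_4},\tfrac18\Tr_{G}\rangle$, which \emph{is} free. The paper therefore first computes $\Ind_D^G\mathfrak{A}_{K_\mathfrak{P}/\Q_2}$ and $\mathfrak{A}_{K/\Q,2}$ explicitly as trace lattices via Theorem \ref{thm:indringstar}(i)\&(ii) (this requires pinning down the local associated orders through Theorem \ref{thm:locallyfreedihedral}, Remark \ref{rmk:idempotent} and the database of $2$-adic fields), and then verifies non-isomorphism as $\Z_2[G]$-lattices with the Hofmann--Johnston algorithm; without these computational inputs, or a replacement for them, your argument does not close.
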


\begin{theorem}\label{thm:a5intro}
Let $K/\Q$ be a Galois extension with $\Gal(K/\Q) \cong A_{5}$. 
Then $\mathcal{O}_K$ is free over $\mathfrak{A}_{K/\Q}$ 
if and only if all three of the following conditions on $K/\Q$ hold:
\begin{enumerate}
\item $2$ is tamely ramified;
\item $3$ is tamely ramified or is weakly ramified with ramification index $6$; and
\item $5$ is tamely ramified or is weakly ramified with ramification index $10$.
\end{enumerate}
\end{theorem}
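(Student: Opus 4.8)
The plan is to follow the general strategy outlined in the introduction: since $A_5$ is one of the groups for which $\text{Cl}(\Z[A_5])$ is trivial and $\Z[A_5]$ has locally free cancellation, the same holds for $\mathfrak{A}_{K/\Q}$, so $\mathcal{O}_K$ is free over $\mathfrak{A}_{K/\Q}$ if and only if it is locally free, i.e.\ if and only if $\mathcal{O}_{K,p}$ is free over $\mathfrak{A}_{K/\Q,p}$ for every rational prime $p$. At every tamely ramified prime local freeness is automatic, so only the wildly ramified primes matter; since $|A_5| = 2^2 \cdot 3 \cdot 5$, the only primes that can be wildly ramified are $p = 2, 3, 5$. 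Thus the theorem reduces to a prime-by-prime analysis, and the three listed conditions should correspond exactly to the three bad primes.

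First I would treat $p = 3$ and $p = 5$. For these primes the Sylow subgroup of $A_5$ is cyclic of prime order, so the wild part of the ramification is mild; the relevant decomposition groups $D$ are among the cyclic and dihedral subgroups of $A_5$ (of orders dividing $6$ or $10$ respectively). For each possible $D$ one must compute the local associated order $\mathfrak{A}_{K_\mathfrak{P}/\Q_p}$ and decide whether $\Ind_D^G \mathcal{O}_{K_\mathfrak{P}}$ is free over $\mathfrak{A}_{K/\Q,p}$; here I expect to invoke the known results for dihedral extensions $N/\Q_p$ of order $2\ell$ referenced in \S\ref{padicresults}, together with Bergé-type local freeness criteria and the clean-order / induction machinery of \S\ref{subsec:induction-and-clean}. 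The condition ``$3$ tame or weakly ramified with ramification index $6$'' (resp.\ ``$5$ tame or weakly ramified with ramification index $10$'') should drop out as precisely the decomposition/inertia configurations for which this induced lattice is free, while weakly ramified cases with smaller ramification index, or wildly and non-weakly ramified cases, must be shown to fail.

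The prime $p = 2$ is where I expect the main obstacle. The Sylow $2$-subgroup of $A_5$ is the Klein four-group $V_4$, which is non-cyclic, so the local representation theory of $\Z_2[A_5]$ and of $\mathfrak{A}_{K/\Q,2}$ is genuinely more delicate, and the hybrid $p$-adic group ring trick used for $A_4$ and $S_4$ at $p = 3$ does not obviously apply at $p = 2$ for $A_5$. One must show that $2$ being wildly ramified always obstructs local freeness: the claim is that condition (i) (``$2$ tamely ramified'') is not merely sufficient but necessary. The argument should proceed as in the $A_4$ case — reduce to showing that the induced lattice $\Ind_D^G \mathfrak{A}_{K_\mathfrak{P}/\Q_2}$ is not free over $\mathfrak{A}_{K/\Q,2}$ for every decomposition group $D$ arising from a wildly ramified prime $2$, and detect the failure using Hattori's theorem that commutative orders are clean together with a rank/idempotent computation on the Wedderburn components of $\Q_2[A_5]$. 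The enumeration of possible $(D, \text{inertia})$ pairs for a wildly ramified $2$, and checking that none of them yields freeness, is the technical heart; I would expect this to require the detailed local computations of the associated orders that occupy the body of the paper.
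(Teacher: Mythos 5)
Your overall architecture is the same as the paper's: reduce to local freeness via the triviality of $\mathrm{Cl}(\Z[A_5])$ and locally free cancellation, dispose of tame primes, and analyse $p=2,3,5$ case by case according to the decomposition group $D$, using $\Ind_D^G\mathfrak{A}_{K_\mathfrak{P}/\Q_p}\cong\mathcal{O}_{K,p}$ as $\mathfrak{A}_{K/\Q,p}$-lattices. The gap is at the decisive step: deciding whether these induced lattices are free over $\mathfrak{A}_{K/\Q,p}$. You propose to do this with Hattori's cleanness theorem, Berg\'e-type induction criteria and idempotent computations, but these tools require normality hypotheses that are never satisfied here, because $A_5$ is simple: Proposition \ref{prop:strongequivalence} needs $H$ abelian \emph{and normal} in $G$, and Theorem \ref{thm:indringstar}(iv)\&(v) needs the trace subgroups $P_i$ (or $H$) normal in $G$. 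This is exactly what made the $A_4$ argument work ($V_4\trianglelefteq A_4$) and exactly what fails for $A_5$. Concretely, at $p=3,5$ with weakly and totally ramified inertia $D_{2p}$ you must prove \emph{freeness} of $\langle 1,\tfrac1p\Tr_{D_{2p}}\rangle$ over $\langle 1,\tfrac1p\Tr_G\rangle$; this induced lattice is not a ring, properly contains the associated order (Remark \ref{rmk:generalproblemlatticesa5}), and so neither Corollary \ref{cor:induction-in-free-rank-1-case} nor cleanness (which only upgrades projectivity to freeness, and only for commutative orders) can yield it -- your sketch simply asserts that the stated conditions ``should drop out''. Dually, the non-freeness of $\langle 1,\tfrac1p\Tr_{C_p}\rangle$ over $\langle 1,\tfrac1p\Tr_G\rangle$, and the hardest $2$-adic case $D\cong A_4$ (where the comparison is $\langle 1,\tfrac12\Tr_{C_2^2}\rangle$ versus $\langle 1,\tfrac12\Tr_G\rangle$, with $D$ neither abelian nor normal), are out of reach of the Hattori/idempotent route; the transfer trick via Proposition \ref{prop:projproj} also gives nothing there.

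The paper resolves precisely these cases computationally: it tabulates the possible pairs $\bigl(\Ind_D^G\mathfrak{A}_{K_\mathfrak{P}/\Q_p},\,\mathfrak{A}_{K/\Q,p}\bigr)$ in Tables \ref{tablea5prime2} and \ref{tablea5primep} (using Theorem \ref{thm:indringstar}(i)\&(ii) for their explicit descriptions, which your outline does anticipate) and then runs the Hofmann--Johnston \textsc{Magma} algorithm of \cite{MR4136552} to test local isomorphism of lattices, finding non-freeness in every wild case at $2$ and freeness at $p=3,5$ exactly in the weakly ramified case with inertia $D_{2p}$. Only the cases with decomposition group $C_2$ or $C_2^2$ at $2$ admit an algorithm-free treatment (via non-projectivity over $A_4$-orders and Proposition \ref{prop:projproj}). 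So your proposal reproduces the reduction and the case breakdown, but the heart of the proof -- the freeness decisions for the listed lattices -- is missing and cannot be supplied by the machinery you name; some replacement for the computer verification (or the computation itself) is required.
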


In contrast to the proof of Theorem \ref{thm:a4intro}, the proofs of Theorems \ref{thm:s4intro} and \ref{thm:a5intro} use the (updated) implementation in \textsc{Magma} \cite{MR1484478} of the algorithms developed by Bley and Johnston \cite{MR2422318} and by Hofmann and Johnston \cite{MR4136552}.

\subsection*{Acknowledgments} 
This work is part of the author's PhD thesis at the University of Exeter. 
The author wishes to thank his PhD supervisor, Henri Johnston, for his encouragement and valuable comments, Nigel Byott for helpful discussions, and Werner Bley and Tommy Hofmann for their help with the computer calculations in Appendix \S \ref{appendix}.
The author is grateful to the University of Exeter for funding his PhD scholarship.

\subsection*{Notation and conventions}
All rings are assumed to have an identity element and all modules are assumed
to be left modules unless otherwise stated. We denote certain finite groups as follows:
\begin{itemize}
\item $D_{2n}$ is the dihedral group of order $2n$;
\item $Q_{8}$ is the quaternion group of order $8$;
\item $A_{n}$ is the alternating group on $n$ letters;
\item $S_{n}$ is the symmetric group on $n$ letters.
\end{itemize}

Let $K$ be a number field. By a prime of $K$, we mean a non-zero prime ideal of $\mathcal{O}_{K}$.
If $\mathfrak{P}$ be a prime of $K$, we let $K_{\mathfrak{P}}$ denote the completion of $K$ at $\mathfrak{P}$. We say that a prime is tamely ramified if it is at most tamely ramified. 

Let $H$ be a subgroup of a finite group $G$. We denote by $\ncl_G(H)$ the normal closure of $H$ in $G$, defined as the smallest normal subgroup of $G$ containing $H$ or, equivalently, the subgroup generated by all the conjugates of $H$ in $G$.

\section{Associated orders and reduction to the study of local freeness}

\subsection{Lattices and orders}
For further background, we refer the reader to \cite{MR1972204} or \cite{MR632548}. 
Let $R$ be a Dedekind domain with field of fractions $F$.
An $R$-lattice $M$ is a finitely generated torsion-free $R$-module, or equivalently,
a finitely generated projective $R$-module.
Note that any $R$-submodule of an $R$-lattice is again an $R$-lattice.
For any finite-dimensional $F$-vector space $V$, an $R$-lattice in $V$ is a finitely generated $R$-submodule $M$ in $V$.
We define a $F$-vector subspace of $V$ by 
\[
FM := \{ \alpha_{1} m_{1} + \alpha_{2} m_{2} + \cdots + \alpha_{r} m_{r} \mid r \in \Z_{\geq 0}, \alpha_{i} \in F, m_{i} \in M \}
\]
and say that $M$ is a full $R$-lattice in $V$ if $FM=V$. 
We may identify $FM$ with $F \otimes_{R} M$.

Let $A$ be a finite-dimensional $F$-algebra.
An $R$-order in $A$ is a subring $\Lambda$ of $A$ (so in particular has the same unity element as $A$) such that $\Lambda$ is a full $R$-lattice in $A$. 
A $\Lambda$-lattice is a $\Lambda$-module which is also an $R$-lattice.
For $\Lambda$-lattices $M$ and $N$, a homomorphism of $\Lambda$-modules $f: M \rightarrow N$
is called a homomorphism of $\Lambda$-lattices.
 
The following well-known lemma follows from \cite[Exercise 23.2]{MR632548}.

\begin{lemma}\label{lemma:biggerorder}
Let $\Lambda\subseteq\Gamma$ be two $R$-orders in $A$. 
Let $M$ and $N$ be $\Gamma$-lattices and let $f:M\rightarrow N$
be a homomorphism of $\Lambda$-lattices. 
Then $f$ is a homomorphism of $\Gamma$-lattices.
\end{lemma}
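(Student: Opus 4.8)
The plan is to pass to the ambient $F$-algebra $A$ and exploit the hypothesis that $\Lambda$ is a \emph{full} $R$-lattice in $A$, so that rationalising turns $\Lambda$-linearity into $A$-linearity, which in particular includes $\Gamma$-linearity.

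First I would record that, since $M$ and $N$ are $\Lambda$-lattices, they are in particular $R$-lattices and hence torsion-free $R$-modules; therefore the canonical maps $M\to F\otimes_R M$ and $N\to F\otimes_R N$ are injective, and I identify $M$ and $N$ with full $R$-lattices in the $F$-vector spaces $FM:=F\otimes_R M$ and $FN:=F\otimes_R N$. Next I would extend scalars along $R\to F$: applying $F\otimes_R-$ to $f$ produces an $F$-linear map $f_F:=\mathrm{id}_F\otimes f:FM\to FN$ whose restriction to $M$ is $f$. A short check on elementary tensors shows that $f_F$ is a homomorphism of $F\otimes_R\Lambda$-modules. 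Because $\Lambda$ is a full $R$-lattice in $A$, the natural map $F\otimes_R\Lambda\to A$, $a\otimes\lambda\mapsto a\lambda$, is an isomorphism (surjectivity is the definition of full lattice; injectivity follows by clearing denominators and using that $A$ is torsion-free over $R$). Hence $f_F$ is a homomorphism of $A$-modules, and in particular of $\Gamma$-modules since $\Gamma\subseteq A$.

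Finally, for $\gamma\in\Gamma$ and $m\in M$ I would use that $M$ is a $\Gamma$-submodule of $FM$, so $\gamma m\in M$, and that $N$ is a $\Gamma$-submodule of $FN$, so $\gamma f(m)\in N$; then $f(\gamma m)=f_F(\gamma m)=\gamma f_F(m)=\gamma f(m)$, which is exactly $\Gamma$-linearity of $f$. There is no serious obstacle here: the only points needing care are the verification that $f_F$ is genuinely $(F\otimes_R\Lambda)$-linear and the identification $F\otimes_R\Lambda\cong A$, both of which are routine; the statement is essentially a reformulation of \cite[Exercise 23.2]{MR632548}, which could alternatively be cited directly.
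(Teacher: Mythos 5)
Your argument is correct, and it is worth noting that the paper itself gives no proof of this lemma: it simply records that the statement follows from \cite[Exercise 23.2]{MR632548}, whose standard solution is a one-line denominator-clearing argument. Your route via extension of scalars is a valid repackaging of the same idea, but the work is hidden in one step that you pass over quickly: when you say ``$M$ is a $\Gamma$-submodule of $FM$'' and then compute $f(\gamma m)=f_F(\gamma m)=\gamma f_F(m)$, you are implicitly asserting that the $A$-module structure you put on $FM=F\otimes_R M$ via the isomorphism $F\otimes_R\Lambda\cong A$ restricts, on elements $\gamma\in\Gamma$ and $1\otimes m$ with $m\in M$, to the \emph{given} $\Gamma$-action on $M$. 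This does hold, but checking it amounts to writing $\gamma=\sum_i a_i\lambda_i$ with $a_i\in F$, $\lambda_i\in\Lambda$, clearing denominators by some $0\neq r\in R$, and using torsion-freeness of $F\otimes_R M$ to conclude $\sum_i a_i\otimes\lambda_i m=1\otimes\gamma m$ — i.e.\ exactly the computation in the direct proof. That direct proof is shorter and avoids the scalar extension altogether: since $\Lambda$ and $\Gamma$ are both full $R$-lattices in $A$, choose $0\neq r\in R$ with $r\Gamma\subseteq\Lambda$; then for $\gamma\in\Gamma$ and $m\in M$ one has $r f(\gamma m)=f\bigl((r\gamma)m\bigr)=(r\gamma)f(m)=r\,\gamma f(m)$, and since $N$ is $R$-torsion-free this gives $f(\gamma m)=\gamma f(m)$. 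So your proof is sound provided you make the compatibility of the two module structures on $F\otimes_R M$ explicit; alternatively, the elementary argument above (or the citation the paper uses) gets there more quickly.
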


\subsection{Associated orders}
Let $\Lambda$ be an $R$-order in a finite-dimensional $F$-algebra $A$.
Let $M$ be a full $R$-lattice in a free $A$-module of rank $1$ (thus $FM \cong A$ as $A$-modules).
The \emph{associated order} of $M$ is defined to be
\[
\mathfrak{A}(A, M) = \{ \lambda \in  A : \lambda M \subseteq M \}.
\]
Note that $\mathfrak{A}(A, M)$ is an $R$-order (see \cite[\S 8]{MR1972204}).
In particular, it is the largest order $\Lambda$ over which $M$ has a structure of $\Lambda$-module. 
The following well-known result says that $\mathfrak{A}(A, M)$ is the only $R$-order in $A$
over which $M$ can possibly be free.

\begin{prop}\label{prop:associatedfree} 
Let $\Lambda$ be an $R$-order in $A$ and let $M$ be a free $\Lambda$-lattice of rank $1$.
Then $FM$ is a free $A$-module of rank $1$ and $\Lambda=\mathfrak{A}(A, M)$.
\end{prop}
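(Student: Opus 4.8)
The plan is to use the defining property of the associated order together with the freeness hypothesis. Suppose $M$ is free of rank $1$ over $\Lambda$, so $M=\Lambda x$ for some $x\in M$. First I would show that $x$ is a free generator of $FM$ over $A$: since $M$ is an $R$-lattice and $FM=F\otimes_R M$, the element $x$ generates $FM$ as an $F$-vector space together with $\Lambda$, and because $\Lambda$ is a full $R$-lattice in $A$ we get $FM=F\Lambda x=Ax$. To see that the natural surjection $A\to FM$, $a\mapsto ax$, is injective, note that $FM\cong A$ as $A$-modules by hypothesis (or simply compare $F$-dimensions: $\dim_F FM=\dim_F F\Lambda=\dim_F A$ since $\Lambda$ is full), so a surjective $A$-linear map between $F$-vector spaces of equal finite dimension is an isomorphism. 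Hence $FM$ is free of rank $1$ over $A$ with generator $x$.

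Next I would identify $\Lambda$ with $\mathfrak{A}(A,M)$. By construction $M=\Lambda x$ is stable under $\Lambda$, so $\Lambda\subseteq\mathfrak{A}(A,M)$; this inclusion holds for any $\Lambda$ over which $M$ is a module and uses no freeness. For the reverse inclusion, take $\lambda\in\mathfrak{A}(A,M)$, so $\lambda M\subseteq M$. Then $\lambda x\in M=\Lambda x$, so $\lambda x=\mu x$ for some $\mu\in\Lambda$. Since $x$ is a free generator of $FM$ over $A$, the map $a\mapsto ax$ is injective, so $\lambda=\mu\in\Lambda$. This proves $\mathfrak{A}(A,M)\subseteq\Lambda$, hence equality.

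The only mild subtlety — and the step I would treat most carefully — is the passage from "$M=\Lambda x$ generates $FM$" to "$x$ freely generates $FM$ over $A$", i.e. the injectivity of $a\mapsto ax$. This rests entirely on $\Lambda$ being a \emph{full} $R$-lattice in $A$, which forces $\dim_F A=\dim_F FM$; without fullness one would only get that $FM$ is a cyclic $A$-module, not a free one. Everything else is a direct unwinding of the definitions of associated order and of $R$-lattice, and requires no computation. (One could alternatively phrase the argument as: the annihilator of $x$ in $A$ is an ideal that meets $\Lambda$ trivially hence is zero, using again that $\Lambda$ is full, but the dimension count is the cleanest route.)
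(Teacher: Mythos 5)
Your proof is correct and follows essentially the same route as the paper: choose a free generator $x$ of $M$ over $\Lambda$, deduce that $FM=Ax$ is free of rank $1$ over $A$, and then compare $\lambda x=\mu x$ against the free generator to get $\mathfrak{A}(A,M)\subseteq\Lambda$, the other inclusion being trivial. The only slip is the aside that ``$FM\cong A$ as $A$-modules by hypothesis'' — this is part of the conclusion, not an assumption — but your dimension count (equivalently, tensoring the isomorphism $\Lambda\cong M$ with $F$ over $R$) correctly supplies the injectivity of $a\mapsto ax$, which the paper simply asserts.
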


\begin{proof}
By hypothesis there exists $\alpha\in M$ such that $M=\Lambda\alpha$ is a free $\Lambda$-module.
Thus $FM=A\alpha$ is free over $A$.
Let $x \in \mathfrak{A}(A, M)$. 
Then $x \alpha \in M = \Lambda \alpha$, so $x\alpha = y\alpha$ for some $y \in \Lambda$.
Since $FM$ is freely generated by $\alpha$, we must have $x=y$.
Hence $\mathfrak{A}(A, M) \subseteq \Lambda$.
The reverse inclusion is trivial and therefore $\Lambda = \mathfrak{A}(A, M)$.
\end{proof}

\begin{remark}\label{rmk:associatedorderring}
Suppose $\Lambda$ is an $R$-order in $A$.
Then clearly $\Lambda \subseteq \mathfrak{A}(A, \Lambda)$.
Moreover, $\mathfrak{A}(A, \Lambda) 1_{A} \subseteq \Lambda$ and so 
$\mathfrak{A}(A, \Lambda) \subseteq \Lambda$.
Therefore $\mathfrak{A}(A, \Lambda) = \Lambda$.
\end{remark}

\subsection{Completion and local freeness}
Let $\mathfrak{p}$ be any maximal ideal of $R$. 
Let $F_{\mathfrak{p}}$ denote the completion 
of $F$ with respect to a valuation defined by $\mathfrak{p}$ and let $R_{\mathfrak{p}}$ be 
the corresponding valuation ring. 
For any $R$-module $M$ we write $M_{\mathfrak{p}}$ for $R_{\mathfrak{p}} \otimes_{R} M$ 
and $V_{\mathfrak{p}}= F_{\mathfrak{p}} \otimes_{F} V$ for any $F$-vector space $V$.
These two notations are consistent as the map
$\lambda \otimes_{\mathcal{O}_{F}} v \mapsto \lambda \otimes_{F} v$
($v \in V$, $\lambda \in \mathcal{O}_{F_{\mathfrak{p}}}$) is an isomorphism
(see \cite[p.\ 93]{MR1215934}).

Let $\Lambda$ be an $R$-order and let $M$ be a $\Lambda$-lattice in some $A$-module $V$.
Then $\Lambda_{\mathfrak{p}}$ is an $R_{\mathfrak{p}}$-order in $A_{\mathfrak{p}}$
and $M_{\mathfrak{p}}$ is a $\Lambda_{\mathfrak{p}}$-lattice  in $V_{\mathfrak{p}}$.
We say that $M$ is locally free over $\Lambda$ if $M_{\mathfrak{p}}$ is free over $\Lambda_{\mathfrak{p}}$
for every $\mathfrak{p}$. 

Let $G$ be a finite group and let $M$ be a full $R[G]$-lattice in a free $A$-module of rank $1$.
Then $R[G] \subseteq \mathfrak{A}(F[G], M)$ and 
$R_{\mathfrak{p}}[G] \subseteq \mathfrak{A}(F_{\mathfrak{p}}[G], M_{\mathfrak{p}}) \cong \mathfrak{A}(F[G], M)_{\mathfrak{p}}$.
Moreover, $M$ is locally free over $\mathfrak{A}(F[G], M)$
if $M_{\mathfrak{p}}$ is free over $\mathfrak{A}(F_{\mathfrak{p}}[G], M_{\mathfrak{p}})$
for every $\mathfrak{p}$. 

\subsection{Associated orders of rings of integers}\label{subsec:ass-orders-rings-of-ints}
Let $K/F$ be a finite Galois extension of number fields and let $G=\Gal(K/F)$.
We consider the behaviour of the associated order 
$\mathfrak{A}_{K/F}:=\mathfrak{A}(F[G],\mathcal{O}_{K})$ with respect to localisation and induction.

Let $\mathfrak{p}$ be a maximal ideal of $\mathcal{O}_{F}$.
Then we have decompositions
\[
K_{\mathfrak{p}} := F_{\mathfrak{p}} \otimes_{F} K
\cong \prod_{\mathfrak{P}' \mid \mathfrak{p}} K_{\mathfrak{P}'}
\quad \textrm{ and } \quad
\mathcal{O}_{K, \mathfrak{p}} := \mathcal{O}_{F_{\mathfrak{p}}} \otimes_{\mathcal{O}_{F}} \mathcal{O}_{K}
\cong \prod_{\mathfrak{P}' \mid \mathfrak{p}} \mathcal{O}_{K_{\mathfrak{P}'}},
\]
where $\{ \mathfrak{P}' \mid \mathfrak{p} \}$ consists of the primes of $\mathcal{O}_{K}$ above $\mathfrak{p}$ (see  \cite[p.\ 109]{MR1215934}).
Fix a prime $\mathfrak{P}$ above $\mathfrak{p}$ and let $D$ be its decomposition group in $G$.
Then as $G$ acts transitively on $\{ \mathfrak{P}' \mid \mathfrak{p} \}$ we have
\[
K_{\mathfrak{p}} \cong \prod_{s \in G/D} sK_{\mathfrak{P}}
\quad \textrm{ and } \quad
\mathcal{O}_{K, \mathfrak{p}} \cong \prod_{s \in G/D} s \mathcal{O}_{K_{\mathfrak{P}}},
\]
where the products run over a system of representatives of the left cosets $G/D$.  
Hence 
\[
\mathcal{O}_{K, \mathfrak{p}}
\cong \mathrm{Ind}_{D}^{G}\mathcal{O}_{K_{\mathfrak{P}}} := 
\mathcal{O}_{F_{\mathfrak{p}}}[G] \otimes_{\mathcal{O}_{F_{\mathfrak{p}}}[D]} 
\mathcal{O}_{K_{\mathfrak{P}}},
\]
and 
\[
\mathfrak{A}_{K/F,\mathfrak{p}}=\mathfrak{A}(F[G],\mathcal{O}_{K})_{\mathfrak{p}} \cong \mathfrak{A}(F_\mathfrak{p}[G],\mathrm{Ind}_{D}^{G}\mathcal{O}_{K_{\mathfrak{P}}}),
\]
where the last isomorphism follows from \cite[Exercise 24.2]{MR632548}, for instance. 
Thus $\mathcal{O}_{K}$ is locally free over $\mathfrak{A}_{K/F}$ at $\mathfrak{p}$ 
if and only if $\mathrm{Ind}_{D}^{G}\mathcal{O}_{K_{\mathfrak{P}}}$ is free over 
$\mathfrak{A}(F_\mathfrak{p}[G],\mathrm{Ind}_{D}^{G}\mathcal{O}_{K_{\mathfrak{P}}})$.

In \S \ref{sec:induction} we will consider the relationship between 
$\mathfrak{A}(F_{\mathfrak{p}}[G],\mathrm{Ind}_{D}^{G}\mathcal{O}_{K_{\mathfrak{P}}})$ and
$\Ind_{D}^{G}\mathfrak{A}_{K_{\mathfrak{P}}/F_{\mathfrak{p}}}$, 
as well as conditions under which the implication
`if $\mathcal{O}_{K_\mathfrak{P}}$ is free over $\mathfrak{A}_{K_\mathfrak{P}/F_\mathfrak{p}}$
then $\mathcal{O}_{K}$ is locally free over $\mathfrak{A}_{K/F}$ at $\mathfrak{p}$' holds.

\subsection*{Notation} 
We henceforth consider the isomorphism $\mathfrak{A}_{K/F,\mathfrak{p}}\cong \mathfrak{A}(F_\mathfrak{p}[G],\mathcal{O}_{K,\mathfrak{p}})$ as an identification. 
In particular, we consider $\mathfrak{A}_{K/F,\mathfrak{p}}$ as an 
$\mathcal{O}_{F,\mathfrak{p}}$-order in $F_{\mathfrak{p}}[G]$.

\subsection{Reduction to the study of local freeness}

For a $\Z$-order $\Lambda$ let $\text{Cl}(\Lambda)$ denote the locally free class group of $\Lambda$
as defined in \cite[(39.12)]{MR892316}; further background material can be found in \cite[\S 49]{MR892316}.
The following proposition underpins the proofs of all the new theorems stated in the introduction.

\begin{prop}\label{prop:trivialclassgroup}
Let $G$ be a finite group such that $\textup{Cl}(\Z[G])$ is trivial
and let $K/\Q$ be a Galois extension with $\Gal(K/\Q) \cong G$. 
Then $\mathcal{O}_{K}$ is free over $\mathfrak{A}_{K/\Q}$ 
if and only if $\mathcal{O}_{K}$ is locally free over $\mathfrak{A}_{K/\Q}$.
\end{prop}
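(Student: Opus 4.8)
The plan is to show that when $\mathrm{Cl}(\Z[G])$ is trivial, the associated order $\mathfrak{A}_{K/\Q}$ inherits both trivial locally free class group and the locally free cancellation property, and that these two facts together force local freeness and freeness to coincide. The forward implication is immediate: if $\mathcal{O}_K$ is free over $\mathfrak{A}_{K/\Q}$ then it is in particular locally free. So the content is in the reverse direction, and the strategy is to reduce freeness to a class-group statement.

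First I would record the standard fact that $\mathfrak{A}_{K/\Q}$ is a $\Z$-order in the semisimple $\Q$-algebra $\Q[G]$ containing $\Z[G]$. Since $\mathrm{Cl}(\Z[G])$ is trivial, a result of Jacobinski (or the Jordan–Zassenhaus/Fröhlich machinery; see \cite[\S 49]{MR892316}) shows that every order $\Gamma$ with $\Z[G]\subseteq\Gamma\subseteq\mathcal{M}$ for a maximal order $\mathcal{M}$ also has trivial locally free class group — more precisely, there is a surjection $\mathrm{Cl}(\Z[G])\onto\mathrm{Cl}(\Gamma)$, so $\mathrm{Cl}(\mathfrak{A}_{K/\Q})$ is trivial. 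In particular this applies to $\Gamma=\mathfrak{A}_{K/\Q}$. Next I would invoke the fact that triviality of the locally free class group of a $\Z$-order implies the locally free cancellation property for that order: if $P\oplus\Gamma^{(n)}\cong Q\oplus\Gamma^{(n)}$ with $P,Q$ locally free, then $[P]=[Q]$ in $\mathrm{Cl}(\Gamma)=0$ forces $P\cong Q$. (This is exactly the parenthetical remark in the introduction that the cancellation property holds whenever the class group is trivial.) Hence $\mathfrak{A}_{K/\Q}$ has locally free cancellation.

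Now suppose $\mathcal{O}_K$ is locally free over $\mathfrak{A}_{K/\Q}$, i.e.\ $\mathcal{O}_{K,p}$ is free of rank $1$ over $\mathfrak{A}_{K/\Q,p}$ for every rational prime $p$. Since $\mathcal{O}_K$ is a full $\Z[G]$-lattice in $\Q[G]$, which is a free $\Q[G]$-module of rank $1$, $\mathcal{O}_K$ is a locally free $\mathfrak{A}_{K/\Q}$-lattice of rank $1$, so it determines a class $[\mathcal{O}_K]\in\mathrm{Cl}(\mathfrak{A}_{K/\Q})$. As that group is trivial, $\mathcal{O}_K$ is stably free: $\mathcal{O}_K\oplus\mathfrak{A}_{K/\Q}^{(n)}\cong\mathfrak{A}_{K/\Q}^{(n+1)}$ for some $n$. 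By the cancellation property just established, we may cancel the free summand to get $\mathcal{O}_K\cong\mathfrak{A}_{K/\Q}$ as $\mathfrak{A}_{K/\Q}$-modules, i.e.\ $\mathcal{O}_K$ is free of rank $1$. By Proposition \ref{prop:associatedfree} this is the only order over which it could be free, so the statement is proved.

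The main obstacle, and the place where I would be most careful, is the passage from $\mathrm{Cl}(\Z[G])=0$ to $\mathrm{Cl}(\mathfrak{A}_{K/\Q})=0$ together with cancellation: one must know that $\mathfrak{A}_{K/\Q}$ is sandwiched between $\Z[G]$ and a maximal order (clear, since any order is) and that the relevant functoriality of locally free class groups and the Jacobinski cancellation theorem apply to $\mathfrak{A}_{K/\Q}$ and not merely to group rings. This is a known but slightly technical point from the theory of orders; everything else is formal. It would also be worth noting explicitly that the localisation identifications of \S\ref{subsec:ass-orders-rings-of-ints} make ``$\mathcal{O}_K$ locally free over $\mathfrak{A}_{K/\Q}$'' in the sense of the proposition agree with the class-group-theoretic notion of a locally free lattice used above.
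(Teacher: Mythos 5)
Your forward implication, the surjection $\textup{Cl}(\Z[G])\twoheadrightarrow\textup{Cl}(\mathfrak{A}_{K/\Q})$ coming from $\Z[G]\subseteq\mathfrak{A}_{K/\Q}$ (this is \cite[(49.25)]{MR892316}), and the final reduction (locally free of rank $1$ $+$ trivial class group $+$ cancellation $\Rightarrow$ free) all match the paper. The genuine gap is in your justification of the locally free cancellation property. You argue: if $P\oplus\Gamma^{(n)}\cong Q\oplus\Gamma^{(n)}$ then $[P]=[Q]$ in $\textup{Cl}(\Gamma)=0$, which ``forces $P\cong Q$''. But equality of classes in the locally free class group only says that $P$ and $Q$ are \emph{stably} isomorphic; concluding $P\cong Q$ from $[P]=[Q]$ is precisely the cancellation property you are trying to establish, so the argument is circular. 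Triviality of $\textup{Cl}(\Gamma)$ for a $\Z$-order $\Gamma$ does not formally imply cancellation: it only says every locally free module is stably free, and stably free non-free modules are exactly what cancellation rules out. The parenthetical remark in the introduction (``the latter condition holds whenever the former holds'') is a theorem whose proof is the content of this proposition, not a formal fact you can quote.

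What is missing is the verification of the Eichler condition, which is where the hypothesis $\textup{Cl}(\Z[G])=1$ is really used a second time. The paper invokes the End\^{o}--Hironaka classification \cite{MR519042}: triviality of $\textup{Cl}(\Z[G])$ forces $G$ to be abelian, dihedral, or isomorphic to $A_4$, $S_4$ or $A_5$; for each such $G$ the algebra $\Q[G]$ is a product of matrix rings over number fields, hence has no totally definite quaternion Wedderburn components and satisfies the Eichler condition \cite[(51.2)]{MR892316}. Only then does the Jacobinski cancellation theorem \cite{MR251063} apply, giving locally free cancellation for \emph{every} order in $\Q[G]$, in particular for $\mathfrak{A}_{K/\Q}$. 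You do mention Jacobinski as a point needing care in your final paragraph, but you never check the Eichler condition, and your actual written argument for cancellation is the circular one above; without the classification-plus-Eichler step (or some substitute for it) the proof does not go through.
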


\begin{proof}
One implication is trivial.
By \cite[(49.25)]{MR892316} the inclusion $\Z[G]\subseteq \mathfrak{A}_{K/\Q}$ induces a surjection $\text{Cl}(\Z[G]) \twoheadrightarrow \text{Cl}(\mathfrak{A}_{K/\Q})$, 
and so $\text{Cl}(\mathfrak{A}_{K/\Q})$ is also trivial.
Moreover, by \cite{MR519042} the triviality of $\textup{Cl}(\Z[G])$ 
implies that $G$ must be abelian, dihedral, or isomorphic to $A_{4}$, $S_{4}$ or $A_{5}$
(also see \cite[(50.29)]{MR892316}).
In each of these cases, $\Q[G]$ is isomorphic to a finite direct product of matrix rings over number fields.
Hence by \cite[(51.2)]{MR892316} $\Q[G]$ satisfies the Eichler condition 
(see \cite[(45.4) or \S 51A]{MR892316}). 
Thus the Jacobinski cancellation theorem \cite{MR251063} (also see \cite[(51.24)]{MR892316})
implies that $\mathfrak{A}_{K/\Q}$ has the locally free cancellation property.
The non-trivial implication now follows easily.
\end{proof}

\begin{corollary}\label{cor:A4S5A5-locally-free-implies-free}
Let $K/\Q$ be a Galois extension with $\Gal(K/\Q) \cong A_{4},S_{4}$ or $A_{5}$.
Then $\mathcal{O}_{K}$ is free over $\mathfrak{A}_{K/\Q}$ 
if and only if $\mathcal{O}_{K}$ is locally free over $\mathfrak{A}_{K/\Q}$.
\end{corollary}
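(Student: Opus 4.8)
The plan is to deduce Corollary~\ref{cor:A4S5A5-locally-free-implies-free} directly from Proposition~\ref{prop:trivialclassgroup}. The only thing that needs to be checked is that the hypothesis of the proposition is satisfied for each of the three groups in question, namely that $\mathrm{Cl}(\Z[G])$ is trivial when $G$ is isomorphic to $A_4$, $S_4$ or $A_5$.

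First I would recall that the triviality of the locally free class group in these three cases is a classical result of Reiner and Ullom \cite{MR0367043} (and is also recorded in \cite[(50.29)]{MR892316}); indeed, it is precisely the converse direction of the End\^o--Hironaka classification \cite{MR519042} already cited in the introduction, which states that among the non-abelian non-dihedral finite groups $G$ the ring $\Z[G]$ has trivial locally free class group if and only if $G \cong A_4$, $S_4$ or $A_5$. So for each such $G$ we have $\mathrm{Cl}(\Z[G]) = 0$.

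Next, for any Galois extension $K/\Q$ with $\Gal(K/\Q) \cong G$ where $G$ is one of $A_4$, $S_4$, $A_5$, Proposition~\ref{prop:trivialclassgroup} applies verbatim and yields that $\mathcal{O}_K$ is free over $\mathfrak{A}_{K/\Q}$ if and only if $\mathcal{O}_K$ is locally free over $\mathfrak{A}_{K/\Q}$. This is exactly the assertion of the corollary, so the proof is complete.

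There is essentially no obstacle here: the corollary is a cosmetic specialisation of the preceding proposition, included for ease of later reference since $A_4$, $S_4$ and $A_5$ are the groups treated in Theorems~\ref{thm:a4intro}, \ref{thm:s4intro} and \ref{thm:a5intro}. The only mild point worth making explicit is that the triviality of $\mathrm{Cl}(\Z[G])$ for these three groups is invoked as an input; once that is granted, the argument is immediate. One could alternatively simply remark that the proof of Proposition~\ref{prop:trivialclassgroup} already shows, en route, that these are among the groups to which it applies, so the corollary requires no new work.
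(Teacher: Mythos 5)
Your proposal is correct and matches the paper's own argument: the paper likewise cites Reiner--Ullom \cite{MR0367043} for the triviality of $\mathrm{Cl}(\Z[G])$ when $G \cong A_4$, $S_4$ or $A_5$ and then applies Proposition~\ref{prop:trivialclassgroup} verbatim. Nothing further is needed.
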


\begin{proof}
Let $G=\Gal(K/\Q)$. In each case $\text{Cl}(\Z[G])$ is trivial, as shown in \cite{MR0367043}.
\end{proof}

\section{Review of results relating to local freeness}

\subsection{Freeness results for Galois extensions of $p$-adic fields}\label{padicresults}
Many of the results and definitions of this subsection also hold for local fields of positive characteristic, 
but for simplicity we restrict to the case of $p$-adic fields. We fix a prime number $p$.

\begin{theorem}\label{thm:tamepadic}
Let $K/F$ be a tamely ramified finite Galois extension of $p$-adic fields and let $G=\Gal(K/F)$.
Then $\mathcal{O}_{K}$ is free over $\mathfrak{A}_{K/F}=\mathcal{O}_{F}[G]$.
\end{theorem}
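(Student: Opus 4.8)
The plan is to reduce the statement to the assertion that $\mathcal{O}_K$ is free of rank $1$ over the group ring $\mathcal{O}_F[G]$, and then to establish this using Noether's tameness criterion together with the fact that, over a complete local base, a projective lattice of the correct rank over a group ring is automatically free.

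First I would invoke the classical normal basis theorem: $K$ is free of rank $1$ over $F[G]$, so $\mathcal{O}_K$ is a full $\mathcal{O}_F[G]$-lattice in a free $F[G]$-module of rank $1$, and $F\otimes_{\mathcal{O}_F}\mathcal{O}_K\cong F[G]$. By Proposition~\ref{prop:associatedfree} it is therefore enough to exhibit an $\mathcal{O}_F[G]$-module isomorphism $\mathcal{O}_K\cong\mathcal{O}_F[G]$: this yields at once that $\mathfrak{A}_{K/F}=\mathfrak{A}(F[G],\mathcal{O}_K)=\mathcal{O}_F[G]$ and that $\mathcal{O}_K$ is free over $\mathfrak{A}_{K/F}$.

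Next I would show that $\mathcal{O}_K$ is a projective $\mathcal{O}_F[G]$-module (this is E.~Noether's theorem). Since $\mathcal{O}_F$ is a complete discrete valuation ring, $\mathcal{O}_K$ is free as an $\mathcal{O}_F$-module, so by Higman's relative-projectivity criterion it suffices to find $\sigma\in\End_{\mathcal{O}_F}(\mathcal{O}_K)$ with $\sum_{g\in G}g\sigma g^{-1}=\mathrm{id}_{\mathcal{O}_K}$. Taking $\sigma$ to be multiplication by an element $a\in\mathcal{O}_K$, one computes directly that $\sum_{g\in G}g\sigma g^{-1}$ is multiplication by $\Tr_{K/F}(a)$, so it is enough to find $a\in\mathcal{O}_K$ with $\Tr_{K/F}(a)=1$, i.e.\ to prove $\Tr_{K/F}(\mathcal{O}_K)=\mathcal{O}_F$. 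Here I would factor $K/F$ through its maximal unramified subextension $K_{0}$. The extension $K/K_{0}$ is totally and tamely ramified, so $\Tr_{K/K_{0}}(1)=[K:K_{0}]$ is a unit of $\mathcal{O}_{K_{0}}$ (as $[K:K_{0}]$ is prime to the residue characteristic), whence the $\mathcal{O}_{K_{0}}$-ideal $\Tr_{K/K_{0}}(\mathcal{O}_K)$ equals $\mathcal{O}_{K_{0}}$; and for the unramified extension $K_{0}/F$ the trace is compatible with reduction modulo the maximal ideal, where it becomes the trace of the separable residue-field extension and so is surjective, giving $\Tr_{K_{0}/F}(\mathcal{O}_{K_{0}})=\mathcal{O}_F$ by Nakayama's lemma. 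Composing the two yields $\Tr_{K/F}(\mathcal{O}_K)=\mathcal{O}_F$.

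Finally I would upgrade projectivity to freeness of rank $1$. Because $\mathcal{O}_F$ is a complete discrete valuation ring, $\mathcal{O}_F[G]$ is semiperfect, and a finitely generated projective $\mathcal{O}_F[G]$-module $M$ with $F\otimes_{\mathcal{O}_F}M\cong F[G]$ must be free of rank $1$: over such a base the isomorphism class of a projective lattice is detected by its image in $K_{0}(F[G])$ (equivalently, the pertinent local class group is trivial), a standard fact for which I would cite \cite{MR892316}. Applying this with $M=\mathcal{O}_K$ completes the proof. The trace computation and the verification of Higman's criterion are routine once recalled; the substantive inputs are the normal basis theorem and this last upgrade, and it is the latter that I would flag as the one point genuinely requiring care, since a naively chosen element of trace $1$ need not generate a normal integral basis.
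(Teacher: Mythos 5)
Your proof is correct, but note that the paper itself offers no proof of Theorem \ref{thm:tamepadic}: this is the classical theorem of Noether, and the remark immediately following it only points to complete proofs in the literature (Fr\"ohlich's book, Chapman's article, etc.). Your argument is essentially the standard one found in those sources, so there is nothing to compare against inside the paper itself. The individual steps all check out: the normal basis theorem makes $\mathcal{O}_K$ a full $\mathcal{O}_F[G]$-lattice with $F\otimes_{\mathcal{O}_F}\mathcal{O}_K\cong F[G]$; tameness gives $\Tr_{K/F}(\mathcal{O}_K)=\mathcal{O}_F$ via the inertia field (the unramified part by surjectivity of the residue trace, the totally ramified part because $\Tr_{K/K_0}(1)=[K:K_0]$ is a unit), and Higman's criterion with $\sigma=$ multiplication by an element of trace $1$ then yields projectivity of $\mathcal{O}_K$ over $\mathcal{O}_F[G]$; finally, since $\mathcal{O}_F$ is a complete discrete valuation ring, a projective $\mathcal{O}_F[G]$-lattice whose rationalization is free of rank $1$ is itself free of rank $1$, and deducing $\mathfrak{A}_{K/F}=\mathcal{O}_F[G]$ from this via Proposition \ref{prop:associatedfree} is a clean way to get both assertions of the theorem at once. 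The only point to tighten is the citation for the last step: the precise statement you need (two finitely generated projective lattices over an order over a complete discrete valuation ring are isomorphic as soon as their rationalizations are isomorphic, which follows from Krull--Schmidt together with the nonsingularity of the Cartan matrix) is most readily found in Curtis--Reiner, \emph{Methods of Representation Theory} I, or in Swan's work on induced representations and projective modules, rather than the generic reference you give; with that reference pinned down the write-up is complete.
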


\begin{remark}
Theorem \ref{thm:tamepadic} is usually attributed to Emmy Noether \cite{MR1581331}. 
In fact, as noted in \cite{MR1373957}, she only stated and proved 
the result in the case that $p \nmid |G|$. 
Complete proofs can be found in \cite{MR717033}, \cite{MR825142} and \cite{MR1373957}.
\end{remark}

\begin{theorem}\cite{MR1627831}\label{thm:lettl-abs-abelian-p-adic}
Let $K/F$ be an extension of $p$-adic fields such that $K/\Q_{p}$ is a finite abelian extension. 
Then $\mathcal{O}_{K}$ is free over $\mathfrak{A}_{K/F}$.
\end{theorem}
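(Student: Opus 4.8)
The statement is a local, relative analogue of Leopoldt's theorem (Theorem~\ref{thm:leopoldt}), and the plan is to adapt the explicit approach of Leopoldt \cite{MR0108479} and of Lettl \cite{MR1037435} from the absolute global setting to the $p$-adic relative one. Since $K/\Q_p$ is finite abelian, the local Kronecker--Weber theorem gives $K \subseteq \Q_p(\zeta_N)$ for some $N = p^a m$ with $p \nmid m$, and $\Q_p(\zeta_N) = \Q_p(\zeta_{p^a}) \cdot \Q_p(\zeta_m)$ is the product of its totally ramified $p$-part and its unramified prime-to-$p$ part. Writing $\Gamma = \Gal(K/\Q_p)$ and $G = \Gal(K/F) \leq \Gamma$ (both abelian), I would use this decomposition to organise the problem according to the wild, tamely ramified and unramified directions in $K/\Q_p$. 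Since the normal basis theorem already gives $K \cong F[G]$ as $F[G]$-modules, the entire content is integral: to exhibit an explicit $\theta \in \mathcal{O}_K$ that is a normal basis generator and to prove that $\mathfrak{A}_{K/F}\,\theta = \mathcal{O}_K$.

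For the building blocks I would invoke: for the unramified and tamely ramified directions, Noether's theorem (Theorem~\ref{thm:tamepadic}), where the group ring already equals the associated order; and for the single wild ingredient, the classical explicit description of $\Z_p[\zeta_{p^a}]$ as a free module over its associated order $\mathfrak{A}$ in $\Q_p[(\Z/p^a)^\times]$, namely $\Z_p[\zeta_{p^a}] = \mathfrak{A}\,\zeta_{p^a}$, with $\mathfrak{A}$ generated over $\Z_p[(\Z/p^a)^\times]$ by trace-type elements attached to the ramification filtration. For the full cyclotomic field $\Q_p(\zeta_N)$ these combine, via a tensor and Lagrange-resolvent construction, to produce both an explicit generator and an explicit associated order.

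The hard part is the descent from $\Q_p(\zeta_N)$ to the subfield $K$ and, especially, to the relative base $F$: this cannot be reduced to the absolute case $F = \Q_p$, since $\mathfrak{A}_{K/F}$ lives inside $F[G]$, which is not comparable to the algebra $\Q_p[\Gamma] \supseteq \mathfrak{A}_{K/\Q_p}$, so freeness over $\mathfrak{A}_{K/\Q_p}$ has no formal bearing on freeness over $\mathfrak{A}_{K/F}$. My plan here is: (a) take as candidate generator a suitable partial-trace / Gauss-sum resolvent $\theta \in \mathcal{O}_K$ built from the cyclotomic generator of the previous step, and check that its $F[G]$-translates span $K$; (b) determine $\mathfrak{A}_{K/F}$ explicitly in terms of the conductor of $K/F$ and the higher ramification groups; and (c) conclude $\mathfrak{A}_{K/F}\,\theta = \mathcal{O}_K$. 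Since $\mathcal{O}_F$ is a complete discrete valuation ring, step (c) reduces, once (a) holds, to the single comparison $\mathrm{disc}_{\mathcal{O}_F}(\mathfrak{A}_{K/F}\,\theta) = \mathrm{disc}_{\mathcal{O}_F}(\mathcal{O}_K)$, i.e.\ to matching the explicit form of $\mathfrak{A}_{K/F}$ from (b) against $\Nrd(\theta)$ and the relative discriminant $\mathfrak{d}_{K/F}$. I expect step (b) --- the delicate ramification-theoretic computation with Gauss sums / Lagrange resolvents that pins down the associated order --- to be the main obstacle; everything else is bookkeeping around it.
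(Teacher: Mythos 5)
The paper does not prove this statement at all: it is quoted verbatim from Lettl \cite{MR1627831}, so there is no internal argument to compare yours against; the only fair comparison is with Lettl's published proof, whose general spirit (explicit cyclotomic generators and associated orders in the style of Leopoldt \cite{MR0108479} and \cite{MR1037435}) your plan does follow.

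However, what you have written is a programme, not a proof, and the gap sits exactly where you place it. Your step (b) --- an explicit description of $\mathfrak{A}_{K/F}$ for an \emph{arbitrary} intermediate field $F$ of an arbitrary absolutely abelian $K/\Q_{p}$, ``in terms of the conductor and the higher ramification groups'' --- is not a bookkeeping step that can be asserted and then fed into a discriminant comparison; it is essentially the entire content of Lettl's paper, and no closed-form trace-type description of $\mathfrak{A}_{K/F}$ of the kind you postulate is known to drop out of the absolute cyclotomic case. (Lettl in fact circumvents a fully explicit determination of $\mathfrak{A}_{K/F}$: he decomposes $F[G]$ along $\Q_p$-conjugacy classes of characters, reduces to cyclotomic building blocks $F(\zeta_{p^a})$, and controls the behaviour under unramified base change, proving freeness component by component via resolvent computations rather than by one global discriminant identity.) Your step (c) is a valid criterion once (a) and (b) are in hand --- for full $\mathcal{O}_F$-lattices $\mathfrak{A}_{K/F}\theta\subseteq\mathcal{O}_K$ equality of discriminants does force equality, though the relevant quantity is a product of resolvents $\langle\theta,\chi\rangle$ over characters of $G$ rather than a reduced norm --- but neither (a) (that your proposed resolvent $\theta$ is even an $F[G]$-generator of $K$ in all wild cases) nor (b) is carried out or reduced to a citable result. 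As it stands the proposal correctly identifies the correct strategy and the correct obstacle, but it does not overcome it, so it cannot be accepted as a proof; for the purposes of this paper the honest course is what the author does, namely cite \cite{MR1627831}.
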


\begin{theorem}\cite{MR0371857}\label{thm:padicberge}
 Let $K/\Q_p$ be a Galois extension with $\Gal(K/\Q_p)\cong D_{2\ell}$, where $\ell$ is a prime number. Then $\mathcal{O}_K$ is free over $\mathfrak{A}_{K/\Q_p}$.
\end{theorem}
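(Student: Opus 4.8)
The plan is to reduce the problem to the case of cyclic $p$-groups and then invoke the main theorem of Leopoldt (Theorem~\ref{thm:leopoldt}), or rather its $p$-adic analogue, via the local-global machinery set up in \cite{MR0371857}. First I would recall that a $p$-adic dihedral extension $K/\Q_p$ with $\Gal(K/\Q_p)\cong D_{2\ell}$ contains a unique cyclic subextension $K^{+}/\Q_p$ of degree $\ell$ fixed by the reflections, and a quadratic subextension $E/\Q_p$ fixed by the rotation subgroup $C_\ell$, so that $K=EK^{+}$ and $K/E$ is cyclic of degree $\ell$. The key structural input is that $\Q_p[D_{2\ell}]$ decomposes, after adjoining $\zeta_\ell$ or working with the rational Wedderburn components, into a product of the two one-dimensional characters (trivial and sign) and a $2$-dimensional irreducible piece, which is a matrix algebra $M_2$ over a field. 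Correspondingly the associated order and the module $\mathcal{O}_K$ split up, and on the $2$-dimensional component the problem becomes equivalent, by Morita theory, to a statement about the cyclic extension $K/E$ or $K^{+}/\Q_p$.

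The main steps, in order, would be: (i) decompose $\Q_p[D_{2\ell}]$ into Wedderburn components and identify the image of $\mathcal{O}_K$ in each; (ii) on the two linear characters, note that the corresponding pieces of $\mathcal{O}_K$ are $\mathcal{O}_{\Q_p}$ and $\mathcal{O}_E$ (or the analogous rank-one lattices) and handle them directly --- here freeness over the associated order reduces to the abelian (indeed cyclotomic) case and follows from Theorem~\ref{thm:lettl-abs-abelian-p-adic}; (iii) on the $2$-dimensional component, use the Morita equivalence between the matrix algebra $M_2(D)$ and $D$ to translate the associated order $\mathfrak{A}_{K/\Q_p}$ and the lattice $\mathcal{O}_K$ into data over the smaller algebra, reducing to freeness of a rank-one lattice over the associated order in a cyclic degree-$\ell$ extension; (iv) invoke the known freeness result for cyclic (in fact abelian) extensions of $\Q_p$, again Theorem~\ref{thm:lettl-abs-abelian-p-adic}, or Leopoldt's theorem in its local form, to conclude freeness on that component; (v) reassemble, using that freeness over $\mathfrak{A}_{K/\Q_p}$ is equivalent to freeness over each of the (finitely many, maximal-order-like) components together with a patching/class-group argument --- over a $p$-adic base the relevant local class groups vanish, so there is no obstruction to gluing local generators into a global one.

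The hard part will be step (iii): making the Morita reduction compatible with the \emph{integral} structures, i.e.\ showing that under the Morita equivalence the associated order $\mathfrak{A}_{K/\Q_p}$ on the $M_2$-component really does correspond to the associated order of the cyclic subextension, and that $\mathcal{O}_K$ corresponds to a free rank-one lattice over it rather than merely a projective or locally free one. This requires an explicit analysis of how the ramification in $K/\Q_p$ distributes among the subextensions $E/\Q_p$ and $K^{+}/\Q_p$ and of the precise shape of $\mathfrak{A}_{K/\Q_p}$ as an $\mathcal{O}_{\Q_p}$-order; when $\ell=p$ (the wildly ramified case) this is genuinely delicate, whereas for $\ell\neq p$ the extension is tame and Theorem~\ref{thm:tamepadic} applies immediately. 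A secondary subtlety is step (v): one must know that $\Q_p[D_{2\ell}]$ and hence $\mathfrak{A}_{K/\Q_p}$ satisfies a suitable cancellation property and has trivial locally free class group over $\Z_p$, which holds because over a complete local base every projective module over an order is free on each component --- this is standard but should be cited rather than reproved.

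Given the generality, I expect the cleanest route is simply to appeal to Berg\'e's original argument \cite{MR0371857}, which carries out exactly this decomposition-and-reduction strategy; the statement here is the $p$-adic specialisation and the proof is contained in (or immediately extracted from) that work. Thus in practice the proof would be a short paragraph: decompose the group algebra, dispatch the linear characters via Theorem~\ref{thm:lettl-abs-abelian-p-adic}, reduce the $2$-dimensional component to a cyclic subextension by Morita theory, apply Theorem~\ref{thm:lettl-abs-abelian-p-adic} (or tameness via Theorem~\ref{thm:tamepadic}) once more, and glue using the vanishing of the relevant local class group.
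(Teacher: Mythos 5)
This statement is imported by the paper from Berg\'e \cite{MR0371857} without proof, so the only fair comparison is with Berg\'e's actual argument, and your sketch would not reproduce it: there are genuine gaps. First, a structural error: for $\ell$ odd there is \emph{no} cyclic degree-$\ell$ subextension $K^{+}/\Q_p$ ``fixed by the reflections'' --- the order-$2$ subgroups of $D_{2\ell}$ are not normal, so the fixed field of a reflection is a non-Galois degree-$\ell$ subfield, and the only proper Galois subextension is the quadratic field $E$. Your reduction target is therefore forced to be the relative cyclic extension $K/E$, but neither Leopoldt's theorem nor Theorem~\ref{thm:lettl-abs-abelian-p-adic} applies to it, since $K/\Q_p$ is not abelian; so step (iv) has no valid input. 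Second, the Morita step (iii) is not available integrally in the only hard case $\ell=p$: the image of $\mathfrak{A}_{K/\Q_p}$ in the two-dimensional Wedderburn component is in general not of the form $M_2(\Lambda)$ (nor maximal, nor hereditary), and Morita equivalence does not apply to an arbitrary order in a matrix algebra; even granting it, the object obtained is not the ring of integers of an abelian extension of $\Q_p$, so there is nothing for Leopoldt/Lettl to say about it.

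Third, the ``reassembly'' step (v) is precisely where the difficulty of the theorem lives, not a formality. Splitting $\mathcal{O}_K$ along the central idempotents of $\Q_p[D_{2\ell}]$ presupposes that these idempotents lie in $\mathfrak{A}_{K/\Q_p}$, which in the wild case holds exactly when $K/\Q_p$ is almost-maximally ramified and fails in the weakly and totally ramified case (compare Theorem~\ref{thm:locallyfreedihedral} and Remark~\ref{rmk:weakalmostmaximal}, both taken from \cite{zbMATH03623711}). Moreover ``projective $\Rightarrow$ free'' over a $p$-adic order is the cleanness property, which is not automatic for noncommutative orders --- this is why the paper invokes Hattori's theorem only for \emph{commutative} orders --- so an appeal to vanishing class groups or cancellation over $\Z_p$ does not close the argument. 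Berg\'e's proof instead determines $\mathfrak{A}_{K/\Q_p}$ explicitly in terms of the ramification filtration (the dichotomy almost-maximally ramified versus weakly and totally ramified) and exhibits a generator in each case; if you want a proof rather than the citation, that explicit computation, and not a Morita reduction to the abelian case, is what has to be carried out. (For $\ell\neq p$ the extension is tame and Theorem~\ref{thm:tamepadic} suffices, as you note; for $\ell=2$ the group is abelian and Theorem~\ref{thm:lettl-abs-abelian-p-adic} applies.)
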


\begin{theorem}\cite{MR299593}\label{thm:padicquaternion}
Let $K/\Q_{p}$ be a Galois extension with $\Gal(K/\Q_p)\cong Q_{8}$.
Then $\mathcal{O}_K$ is free over $\mathfrak{A}_{K/\Q_p}$.
\end{theorem}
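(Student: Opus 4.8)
Proof proposal for Theorem \ref{thm:padicquaternion} (Martinet's theorem: $\mathcal{O}_K$ is free over $\mathfrak{A}_{K/\Q_p}$ when $\Gal(K/\Q_p)\cong Q_8$).

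The plan is to split into the tame and wild cases. If $p$ is odd, or $p=2$ and $K/\Q_2$ is tamely ramified, then by Theorem \ref{thm:tamepadic} we have $\mathfrak{A}_{K/\Q_p}=\Z_p[G]$ and $\mathcal{O}_K$ is free over it, so nothing further is needed. Hence the real content is the case $p=2$ with $K/\Q_2$ wildly ramified, and here I would first narrow down the possible extensions. Since $|Q_8|=8$, the ramification is necessarily wild at $2$; the inertia subgroup $I\trianglelefteq G$ and the decomposition group equal $G$ (as $\Q_2$ has no unramified extensions realising $Q_8$ with smaller decomposition group inside a degree-$8$ extension — more precisely, $G/I$ is cyclic and $I$ must be large). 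One should enumerate the finitely many wildly ramified $Q_8$-extensions of $\Q_2$ (there are only a handful, and they can be listed, e.g. via their conductor-discriminant data or via the classification of octic $2$-adic fields), or argue uniformly. The cleanest uniform route is to exhibit a generator explicitly: find $\alpha\in\mathcal{O}_K$ whose image generates $\mathcal{O}_K$ as an $\mathfrak{A}_{K/\Q_2}$-module, equivalently such that $\mathfrak{A}_{K/\Q_2}\cdot\alpha=\mathcal{O}_K$. By Proposition \ref{prop:associatedfree}, producing such an $\alpha$ simultaneously identifies $\mathfrak{A}_{K/\Q_2}$ as the associated order and proves freeness.

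Concretely, I would proceed as follows. First, compute $\mathfrak{A}_{K/\Q_2}$: writing $\Q_2[Q_8]\cong \Q_2\times\Q_2\times\Q_2\times\Q_2\times \mathrm{Mat}_2(\Q_2)$ or the appropriate Wedderburn decomposition (noting that over $\Q_2$ the quaternion algebra $\left(\tfrac{-1,-1}{\Q_2}\right)$ is a division algebra, so in fact $\Q_2[Q_8]\cong\Q_2^4\times\mathbb{H}_{\Q_2}$), decompose $\mathcal{O}_K\otimes\Q_2 = \Q_2[G]$ accordingly and determine the order $\mathfrak{A}_{K/\Q_2}$ componentwise. Second, pick a uniformiser or a suitable element $\alpha$ (for instance a root of the minimal polynomial defining $K/\Q_2$, suitably normalised) and check that $\mathcal{O}_K=\mathfrak{A}_{K/\Q_2}\alpha$; this is a finite index computation, reducible to showing the two $\Z_2$-lattices have the same image, which one verifies by comparing discriminants/indices and a containment. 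Third, conclude by Proposition \ref{prop:associatedfree}. An alternative to the explicit-generator approach, which may be more conceptual, is to use the local analogue of the reduction machinery: since $\Z_2[Q_8]$ and hence $\mathfrak{A}_{K/\Q_2}$ have trivial locally free class group over the complete local ring $\Z_2$ (every finitely generated projective module over a $\Z_2$-order in a semisimple $\Q_2$-algebra is determined by its rank, as localisation over a complete DVR kills the class group), it suffices to show $\mathcal{O}_K$ is \emph{locally free} over $\mathfrak{A}_{K/\Q_2}$ at the unique prime — but over a complete local base this is automatic once $\mathcal{O}_K\otimes\Q_2\cong\Q_2[G]$ as $\Q_2[G]$-modules, which is the normal basis theorem. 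This second route essentially reduces everything to the semisimple situation plus the observation that $\mathfrak{A}_{K/\Q_2}$-lattices locally free in the generic fibre are free.

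The main obstacle is the computation of the associated order $\mathfrak{A}_{K/\Q_2}$ and the verification that it acts with a single generator on $\mathcal{O}_K$: because the ramification is wild and $2\mid|G|$, $\mathfrak{A}_{K/\Q_2}$ is strictly larger than $\Z_2[G]$ and is not a maximal order, so its structure must be pinned down by hand for each ramification type. I expect that the quaternion-algebra factor $\mathbb{H}_{\Q_2}$ is where the subtlety concentrates — there $\mathfrak{A}_{K/\Q_2}$ is an order in a division algebra over $\Q_2$, and one must check it is (up to conjugacy) the unique maximal order, over which every lattice of rank $1$ is free. Once that component is handled, the commutative components $\Q_2^4$ are straightforward (the relevant orders there are $\Z_2$ itself or at worst orders in ramified extensions of $\Q_2$, which are clean in the sense of Hattori, so freeness of rank-$1$ lattices again holds). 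Assembling the componentwise generators into a single $\alpha\in\mathcal{O}_K$ via the Chinese Remainder Theorem / weak approximation then finishes the argument.
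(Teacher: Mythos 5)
There is a genuine gap here: neither of your two routes actually constitutes a proof. The ``more conceptual'' alternative rests on a false principle. You claim that over the complete local base $\Z_2$ it is ``automatic'' that $\mathcal{O}_K$ is free over $\mathfrak{A}_{K/\Q_2}$ once $K\cong \Q_2[G]$ as $\Q_2[G]$-modules (the normal basis theorem), because triviality of class groups over a complete DVR reduces everything to the generic fibre. This is exactly backwards: the normal basis theorem only controls $\Q_2\otimes_{\Z_2}\mathcal{O}_K$, and the whole difficulty in the wildly ramified case is whether the full lattice $\mathcal{O}_K$ is projective (equivalently, in this local setting, free) over the non-maximal order $\mathfrak{A}_{K/\Q_2}$. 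Class-group/cancellation arguments only apply \emph{after} projectivity is established, and projectivity can genuinely fail for wild extensions of $p$-adic fields --- see Theorem \ref{thm:locallyfreedihedral}, where $\mathcal{O}_K$ fails to be projective over $\mathfrak{A}_{K/F}$ for suitable dihedral extensions. So this route proves nothing.

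Your first route (exhibit an explicit generator) is the right kind of strategy but is only a plan: you do not enumerate the wildly ramified $Q_8$-extensions of $\Q_2$, do not compute $\mathfrak{A}_{K/\Q_2}$ in any case, and do not produce or verify a generator. Moreover, the proposed componentwise analysis along the Wedderburn decomposition $\Q_2[Q_8]\cong\Q_2^4\times\mathbb{H}_{\Q_2}$ tacitly assumes that $\mathfrak{A}_{K/\Q_2}$ contains the central idempotents, i.e.\ decomposes as a product of its images in the components; for wildly ramified extensions this is precisely the delicate point (it is an almost-maximal-ramification type condition) and cannot be assumed, so the ``assemble generators by CRT'' step does not make sense without it; the assertion that the component in $\mathbb{H}_{\Q_2}$ is the maximal order is likewise only an expectation. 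For comparison, the paper disposes of the statement (Remark \ref{rmk:padicmartinet}) either by observing that Martinet's proof of Theorem \ref{thm:martinet} goes through unchanged in the local setting, or by globalising: every $Q_8$-extension of $\Q_2$ is the completion at the unique prime above $2$ of some $Q_8$-extension of $\Q$ (checked against databases of $2$-adic and number fields), so the local freeness follows from the global Theorem \ref{thm:martinet}, with Theorem \ref{thm:tamepadic} covering the tame case. Your proposal would need either to carry out the explicit local computations in full (including the idempotent issue) or to supply such a globalisation/citation argument.
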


\begin{remark}\label{rmk:padicmartinet}
Theorem \ref{thm:padicquaternion} is not explicitly stated in \cite{MR299593}.
However, the proof of Theorem \ref{thm:martinet} given in loc.\ cit.\ also works essentially unchanged
in the setting of Theorem \ref{thm:padicquaternion}.
Alternatively, note that Theorem \ref{thm:padicquaternion} is implied by Theorem \ref{thm:martinet} because
for every $Q_{8}$-extension $L/\Q_{2}$ there exists a $Q_{8}$-extension $K/\Q$ such that
$K_{\mathfrak{P}}=L$, where $\mathfrak{P}$ is the unique prime of $K$ above $2$;
this can be checked using databases of $p$-adic and number fields such as \cite{MR2194887} and \cite{lmfdb}. 
Note that unlike Theorem \ref{thm:martinet}, it is not necessary to assume that the extension 
in question is wildly ramified, thanks to  Theorem \ref{thm:tamepadic}.
\end{remark}

\begin{theorem}\cite{MR748000}\label{thm:padicjaulent}
Let $p,n$ and $r$ be positive integers such that $p$ is an odd prime, $n$ divides $p-1$
and $r$ is a primitive $n$th root modulo $p$.
Let $G$ be the metacyclic group with the following structure:
\begin{equation}\label{eqmetacyclic}
G= \langle x,y:x^{p}=1,y^{n}=1,yxy^{-1}=x^{r} \rangle \cong C_{p} \rtimes C_{n}.
\end{equation}
Let $K/\Q_{p}$ be a Galois extension with $\Gal(K/\Q_p) \cong G$. 
Then $\mathcal{O}_{K}$ is free over $\mathfrak{A}_{K/\Q_p}$.
\end{theorem}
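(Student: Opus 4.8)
The plan is to build a normal integral basis generator layer by layer along the tower $\Q_p \subseteq F \subseteq K$, where $F := K^{C_p}$. First I would record that $n$ divides $p-1$, hence is prime to $p$, so the wild inertia subgroup of $G$ lies in the unique Sylow $p$-subgroup $C_p = \langle x \rangle$ and is therefore trivial or all of $C_p$. If it is trivial then $K/\Q_p$ is tamely ramified, $\mathfrak{A}_{K/\Q_p} = \Z_p[G]$, and $\mathcal{O}_K$ is $\Z_p[G]$-free by Theorem \ref{thm:tamepadic}, so assume the wild inertia equals $C_p$. Then $F/\Q_p$ is abelian with group $G/C_p \cong C_n$ and, being of degree prime to $p$, is tamely ramified; and $K/F$ is a $C_p$-extension that cannot be unramified (else $K/\Q_p$ would be tame), hence is totally wildly ramified of degree $p$.

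Next I would produce the two ingredients of the generator. Applying Theorem \ref{thm:tamepadic} to $F/\Q_p$ gives $\theta \in \mathcal{O}_F$ with $\mathcal{O}_F = \Z_p[C_n]\theta$. For the wild layer $K/F$ one carries out the classical Leopoldt/Berg\'e-type local computation for a cyclic degree-$p$ wildly ramified extension: reading off the ramification filtration of $K/F$, one determines the relative associated order $\mathfrak{A}_{K/F} \subseteq F[C_p]$ explicitly and exhibits $\eta \in \mathcal{O}_K$ with $\mathcal{O}_K = \mathfrak{A}_{K/F}\,\eta$. To assemble these into a generator over $\mathfrak{A}_{K/\Q_p} \subseteq \Q_p[G]$ I would use the central idempotent $e_0 := \tfrac1p\sum_{g \in C_p} g$, which splits $\Q_p[G] \cong \Q_p[G]e_0 \times \Q_p[G](1-e_0)$ with $\Q_p[G]e_0 \cong \Q_p[C_n]$ and $\Q_p[G](1-e_0)$ the crossed product of $\Q_p(\mu_p)$ with the image of $C_n$ in $\Gal(\Q_p(\mu_p)/\Q_p)$, hence a matrix algebra over a field. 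One then shows that $\mathfrak{A}_{K/\Q_p}$ is pinned down component by component — essentially by $\Z_p[C_n]$ on the $e_0$-part, where the module $e_0\mathcal{O}_K$ is identified with (an ideal of) $\mathcal{O}_F$ carrying the tame $C_n$-action, and by a suitably twisted version of $\mathfrak{A}_{K/F}$ interlaced with that $C_n$-action on the other part — and that an element of $\mathcal{O}_K$ congruent to $\eta$ on the wild part and to $\theta$ on the $e_0$-part is a free generator.

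The hard part is precisely the interaction of the two layers: the $\Z_p$-order $\mathfrak{A}_{K/\Q_p}$ in $\Q_p[G]$ is strictly finer than anything obtained by naively combining $\Z_p[C_n]$ with $\mathfrak{A}_{K/F}$, because the tame automorphisms in $C_n$ do not act $\mathcal{O}_F$-linearly on $\mathcal{O}_K$ and the wild ramification of $K/F$ couples to the congruences cutting out $\mathcal{O}_F$ inside $F[C_n]$. Making the valuation bookkeeping in the full ramification filtration of $K/\Q_p$ precise — so as to identify $\mathfrak{A}_{K/\Q_p}$ on the non-trivial isotypic component and to verify that the candidate generator has the correct valuation profile on every simple component simultaneously — is the technical heart of the argument. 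An alternative, more structural route would exploit that $C_p$ is a cyclic Sylow $p$-subgroup of $G$, so that $\Z_p[G]$ has finite lattice type and $\mathfrak{A}_{K/\Q_p}$ occupies a well-understood finite poset of orders, reducing freeness to a finite check; but this still requires identifying $\mathfrak{A}_{K/\Q_p}$ and the isomorphism class of $\mathcal{O}_K$ explicitly.
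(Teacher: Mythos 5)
This statement is not proved in the paper at all: it is quoted as a theorem of Jaulent \cite{MR748000} (see also Remark \ref{rmk:padicmartinet} and the remark after Corollary \ref{cor:hybrid}), so there is no in-paper argument to compare with, and your sketch has to stand on its own as a reconstruction of Jaulent's explicit Berg\'e-style computation. Its overall shape (tame quotient layer $F=K^{C_p}$, wild cyclic layer $K/F$, splitting $\Q_p[G]$ by the central idempotent $e_0=\frac1p\Tr_{C_p}$ into $\Q_p[C_n]$ and a matrix algebra over a subfield of $\Q_p(\zeta_p)$) is the right one, but as written it has two genuine gaps.

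First, the step ``one carries out the classical Leopoldt/Berg\'e-type computation and exhibits $\eta$ with $\mathcal{O}_K=\mathfrak{A}_{K/F}\,\eta$'' is not a routine citation: for a totally and wildly ramified cyclic degree-$p$ extension of a $p$-adic field $F\neq\Q_p$, freeness of $\mathcal{O}_K$ over $\mathfrak{A}_{K/F}$ can fail, and by the work of Bertrandias and Ferton it depends delicately on the ramification break $t$ of $K/F$. To make your step work you must use the metacyclic structure: since the tame quotient acts faithfully on $C_p$, the action of $G_0/G_1$ on $G_t/G_{t+1}$ (Serre, Local Fields IV \S 2) forces congruence conditions on $t$ (e.g.\ $\gcd(t,e(F/\Q_p))=1$, and in the dihedral case the dichotomy ``almost-maximally ramified or weakly ramified'' of Remark \ref{rmk:weakalmostmaximal}), and only with such constraints can one either invoke the known freeness criteria or compute $\mathfrak{A}_{K/F}$ and a generator directly. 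No such argument appears in your proposal. Second, the assembly step is exactly the content of the theorem and you explicitly defer it: $\mathfrak{A}_{K/\Q_p}$ is in general \emph{not} ``pinned down component by component'', because when $e_0\notin\mathfrak{A}_{K/\Q_p}$ (the non-almost-maximally ramified case) the order is a fibre product, not a product, of its projections to $e_0\Q_p[G]$ and $(1-e_0)\Q_p[G]$, and likewise $e_0\mathcal{O}_K=\frac1p\Tr_{K/F}(\mathcal{O}_K)$ is only a fractional ideal of $\mathcal{O}_F$ (free over $\Z_p[C_n]$ by tameness, but not $\mathcal{O}_F$ itself); verifying that a single element generates $\mathcal{O}_K$ over this glued order is the valuation-theoretic heart of Jaulent's proof, which you acknowledge but do not carry out. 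As it stands the proposal is a plausible plan rather than a proof.
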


\begin{remark}
In the special case $n=2$, the group $G$ of \eqref{eqmetacyclic} is dihedral of order $2p$. 
\end{remark}

Let $K/F$ be a Galois extension of $p$-adic fields and let $G=\Gal(K/F)$.
We recall that for an integer $t \geq -1$ the $t$-th ramification group is defined to be
\[
G_{t} := \{ \sigma \in G : v_{K}(\sigma(x)-x)\geq t+1\text{ }\forall x\in \mathcal{O}_K \},
\]
where $v_{K}$ is the normalized valuation on $K$ (i.e.\ with image $\Z$). When it is not obvious which extension we are referring to we will use the notation `$G_t(K/F)$' or similar.
Thus $K/F$ is unramified if and only if $G_{0}$ is trivial and is tamely ramified if and only if $G_{1}$ is trivial. 
We say that the extension is weakly ramified if $G_{2}$ is trivial.

\begin{theorem}\cite{MR3411126}\label{thm:weak}
Let $K/F$ be a weakly ramified finite Galois extension of $p$-adic fields and let $G=\Gal(K/F)$.
Then $\mathcal{O}_{L}$ is free over $\mathfrak{A}_{K/F}$.
Moreover, if $K/F$ is both wildly and weakly ramified then
$\mathfrak{A}_{K/F}=\mathcal{O}_K[G][\pi_{K}^{-1}\textup{Tr}_{G_0}]$, 
where $\pi_{K}$ is a uniformiser of $\mathcal{O}_{K}$ and 
$\textup{Tr}_{G_0}=\sum_{\gamma\in G_{0}}\gamma$ is the sum of the elements of the inertia group $G_{0}$.
\end{theorem}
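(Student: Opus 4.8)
The statement to prove is Theorem~\ref{thm:weak}: for a weakly ramified finite Galois extension $K/F$ of $p$-adic fields with $G=\Gal(K/F)$, the ring $\mathcal{O}_K$ is free over $\mathfrak{A}_{K/F}$, and moreover in the wildly and weakly ramified case $\mathfrak{A}_{K/F}=\mathcal{O}_K[G][\pi_K^{-1}\Tr_{G_0}]$. I would reduce immediately to the case where $K/F$ is \emph{wildly} ramified, since the tamely ramified case is exactly Theorem~\ref{thm:tamepadic}. So assume $G_1\neq 1$ but $G_2=1$. The natural first step is to handle the totally ramified case $G=G_0$ and then bootstrap to the general case by passing through the maximal unramified subextension.

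**Key steps.** First, the totally ramified case. Here one wants an explicit description of $\mathcal{O}_K$ as a module, and the classical tool is a theorem of Ullom (building on Köck and others) computing the Galois module structure of the ring of integers, or more directly the description of $\mathfrak{A}_{K/F}$ via the different. The weak ramification hypothesis $G_2=1$ forces the second ramification break to vanish, which by the standard valuation formula $v_K(\mathfrak{d}_{K/F})=\sum_{i\geq 0}(|G_i|-1)$ pins down the valuation of the different tightly. The idea is: (i) show $\Tr_{G_0}\in\pi_K\mathfrak{A}_{K/F}$ in an appropriate sense so that $\pi_K^{-1}\Tr_{G_0}$ makes sense as an element of $F[G]$ carrying $\mathcal{O}_K$ into itself, hence lies in $\mathfrak{A}_{K/F}$; (ii) show conversely that $\mathcal{O}_K[G][\pi_K^{-1}\Tr_{G_0}]$ already equals its own associated order, i.e.\ it is as large as possible, by a local computation with the filtration $\{\pi_K^n\mathcal{O}_K\}$ and the action of $G$ on the graded pieces (each $G_i/G_{i+1}$ acts on a one-dimensional space and the weak ramification condition makes the relevant cocycle/coboundary terms collapse); (iii) exhibit an explicit free generator $\alpha$ of $\mathcal{O}_K$ over this order — typically one takes $\alpha$ with $v_K(\alpha)$ chosen so that $\alpha$ avoids the "bad" congruence classes, e.g.\ $v_K(\alpha)$ coprime to the ramification index, and checks $\mathfrak{A}_{K/F}\alpha=\mathcal{O}_K$ by a rank/index count: both sides are $\mathcal{O}_F$-lattices of the same rank $[K:F]$, and $\mathcal{O}_F\text{-index}(\mathfrak{A}_{K/F}\alpha \text{ in } \mathcal{O}_K)$ is computed to be trivial using the known index of $\mathfrak{A}_{K/F}$ in $\mathcal{O}_F[G]$ against the different.

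**The general (not totally ramified) case.** Let $K_0$ be the maximal unramified subextension of $K/F$, so $\Gal(K/K_0)=G_0$ and $K/K_0$ is totally (wildly, weakly) ramified. One knows $\mathcal{O}_{K_0}$ is free over $\mathcal{O}_F[G/G_0]$ (unramified case) and $\mathcal{O}_{K_0}$ is unramified-étale over $\mathcal{O}_F$; the plan is to show $\mathfrak{A}_{K/F}\cong\mathcal{O}_{K_0}\otimes_{\mathcal{O}_F}(\text{something})$ does not literally hold, but rather to use that $\mathcal{O}_K$ as an $\mathcal{O}_F[G]$-module is induced, $\mathcal{O}_K\cong\mathcal{O}_F[G]\otimes_{\mathcal{O}_F[G_0]}\mathcal{O}_K$ where on the right $\mathcal{O}_K$ is viewed over $\mathcal{O}_{K_0}[G_0]=\mathcal{O}_{K_0}\otimes_{\mathcal{O}_F}\mathcal{O}_F[G_0]$ after a base change, reducing via a transitivity/descent argument (using that $\mathcal{O}_{K_0}/\mathcal{O}_F$ is tame hence $\mathcal{O}_{K_0}$ is $\mathcal{O}_F[G/G_0]$-free) to the totally ramified computation over $K_0$. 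This is exactly the kind of "induction from the inertia subgroup" reduction that \S\ref{subsec:ass-orders-rings-of-ints} sets up. The formula $\mathfrak{A}_{K/F}=\mathcal{O}_K[G][\pi_K^{-1}\Tr_{G_0}]$ then propagates from $K/K_0$ to $K/F$ since $\pi_K$ is a uniformiser of $K$ over $K_0$ too and $\Tr_{G_0}$ is the same element.

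**Main obstacle.** The delicate point is step (ii) of the totally ramified case: proving that $\mathcal{O}_K[G][\pi_K^{-1}\Tr_{G_0}]$ is \emph{exactly} the associated order and not strictly smaller, and simultaneously that $\mathcal{O}_K$ is free of rank $1$ over it. Both come down to a careful analysis of the $G$-action on the one-step graded quotients $\pi_K^n\mathcal{O}_K/\pi_K^{n+1}\mathcal{O}_K$ and of how $\Tr_{G_0}$ and the augmentation-type elements of $\mathcal{O}_F[G]$ act on $\mathcal{O}_K$; the weak ramification hypothesis $G_2=1$ is what makes the error terms (which for general wild ramification obstruct freeness and complicate $\mathfrak{A}_{K/F}$) vanish, so the proof must isolate precisely where $G_2=1$ is used — essentially in showing a single trace-type congruence $v_K(\Tr_{G_0}(\mathcal{O}_K))=1+v_K(\mathfrak{d}_{K/K_0})-(e-1)$ or similar, valid only when the higher ramification breaks are absent. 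I would expect to lean on the structure theory of modules over the (local, typically non-maximal) order $\mathcal{O}_F[G]$ inside $\mathfrak{A}_{K/F}$, comparing indices via the conductor-discriminant-type formula, rather than attempting a direct generator-by-generator verification.
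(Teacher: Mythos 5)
There is no proof of Theorem \ref{thm:weak} in this paper to compare against: the result is quoted from Johnston \cite{MR3411126}. Judged on its own terms, your outline has a genuine gap, because the two assertions that constitute the theorem --- that in the wildly and weakly ramified case the associated order is exactly $\mathcal{O}_F[G][\pi_F^{-1}\Tr_{G_0}]$, and that $\mathcal{O}_K$ is free of rank one over it --- are precisely your steps (ii) and (iii), which you leave as ``a careful analysis'' of graded quotients plus an index count. Your step (i) is indeed routine (a different/trace valuation computation as in Remark \ref{rmk:idempotent}), but the upper bound on $\mathfrak{A}_{K/F}$ and the construction of a free generator are the entire content, and the index-count strategy presupposes knowing the $\mathcal{O}_F$-index of $\mathfrak{A}_{K/F}$ in a maximal order, which is part of what must be proved. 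The dévissage through the maximal unramified subextension is likewise asserted rather than argued: $\mathfrak{A}_{K/K_0}$ is an order in $K_0[G_0]$ while $\mathfrak{A}_{K/F}$ is an order in $F[G]$, and neither associated orders nor freeness over them ``propagate'' formally across this step --- the delicacy of exactly such transfers is the reason \S\ref{sec:induction} and \S\ref{inductionstructure} exist. A smaller point: as printed, $\pi_K^{-1}\Tr_{G_0}$ with $\pi_K$ a uniformiser of $K$ does not lie in $F[G]$; the statement actually used later (Corollary \ref{cor:inertianormal}) is $\mathcal{O}_F[G][\pi_F^{-1}\Tr_{G_0}]$ with a base-field uniformiser (the printed version keeps the source's notation, where $K$ denotes the base field), and your step (i) tries to make literal sense of the printed form.

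For contrast, the proof in \cite{MR3411126} pivots on an input you mention but never use: Ullom's theorem, made explicit there, that for a weakly ramified extension the powers $\mathfrak{p}_K^n$ of the maximal ideal with $n\equiv 1 \bmod |G_1|$ --- in particular $\mathfrak{p}_K$ itself --- are free over the group ring $\mathcal{O}_F[G]$. One then shows, when the extension is in addition wildly ramified, that any free generator of $\mathfrak{p}_K$ over $\mathcal{O}_F[G]$ is also a free generator of $\mathcal{O}_K$ over its associated order, identifying $\mathfrak{A}_{K/F}$ along the way by a short trace computation that exploits $\mathcal{O}_K=\mathcal{O}_{K_0}+\mathfrak{p}_K$ and the normal integral basis for the unramified part. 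Without this input, or without actually carrying out your steps (ii)--(iii), the proposal remains a plan rather than a proof.
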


For a subgroup $H$ of $G$ define $\Tr_{H}=\sum_{h\in H}h \in F[G]$ and 
$e_{H}=\frac{1}{|H|}\Tr_{H} \in F[G]$. Note that $e_{H}$ is an idempotent.
We say that $K/F$ is almost-maximally ramified if $e_{H} \in \mathfrak{A}_{K/F}$
for every subgroup $H$ of $G$ such that $G_{t+1} \subseteq H \subseteq G_{t}$
for some $t \geq 1$.

\begin{theorem}\cite[Proposition 7]{zbMATH03623711}\label{thm:locallyfreedihedral}
Let $K/F$ be a finite dihedral extension of $p$-adic fields such that $F/\Q_{p}$ is unramified. 
Let $G=\Gal(K/F)$. 
Then $\mathcal{O}_{K}$ is projective over $\mathfrak{A}_{K/F}$ if and only if 
$\mathcal{O}_{K}$ is free over $\mathfrak{A}_{K/F}$ if and only if either
\begin{enumerate}
\item $K/F$ is almost-maximally ramified, in which case 
$\mathfrak{A}_{K/F}=\mathcal{O}_F[G][\{e_{G_t}\}_{t\geq1}]$, or
\item $K/F$ is not almost-maximally ramified, and the inertia subgroup $G_0$ is dihedral of order $2p$, in which case $\mathfrak{A}_{K/F}=\mathcal{O}_F[G][2e_{G_0}]$.
\end{enumerate}
\end{theorem}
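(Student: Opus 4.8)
The plan is to work component by component through the Wedderburn decomposition of $F[G]$, reducing the question to a list of rank-one module problems over commutative orders, with everything controlled by the ramification filtration of $K/F$. Note first that the equivalence between projectivity and freeness is comparatively soft here: the base $\mathcal{O}_{F}$ is a complete discrete valuation ring, so a projective $\mathfrak{A}_{K/F}$-lattice with the same rational span as $\mathfrak{A}_{K/F}$ is automatically free; hence the real content is a characterisation of \emph{projectivity}. I would then record the structure of a dihedral extension of $p$-adic fields. Write $G=D_{2m}=\langle\sigma,\tau\rangle$ with $\sigma^{m}=\tau^{2}=1$, $\tau\sigma\tau=\sigma^{-1}$. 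The maximal unramified subextension of $K/F$ corresponds to the quotient $G/G_{0}$, which must be cyclic; since the only cyclic quotients of $D_{2m}$ are trivial or $C_{2}$, the inertia group $G_{0}$ is either $G$ itself or the cyclic subgroup $\langle\sigma\rangle\cong C_{m}$. In the latter case $K^{G_{0}}/F$ is unramified and $K/K^{G_{0}}$ is totally ramified cyclic, and a descent through the unramified (hence tame) layer reduces the statement to the corresponding assertion for the abelian extension $K/K^{G_{0}}$. So I would concentrate on the totally ramified case $G_{0}=G$, where $G_{1}$ is the normal Sylow $p$-subgroup of $G$, $G_{0}/G_{1}$ is cyclic of order prime to $p$, and (for $p$ odd) the $G_{t}$ with $t\ge1$ form a chain of cyclic normal subgroups of $\langle\sigma\rangle$; in parallel I would fix the Wedderburn decomposition of $F[G]$ — two one-dimensional components attached to the characters $\sigma\mapsto1$, $\tau\mapsto\pm1$, together with $2\times2$ matrix components over subfields of the maximal real subfield of $F(\zeta_{m})$ attached to the characters $\Ind_{\langle\sigma\rangle}^{G}\chi$ — and the induced decompositions of $K$ and of $\mathcal{O}_{K}$.

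For the constructive direction I would first verify the stated shape of $\mathfrak{A}_{K/F}$. By definition of the associated order, $e_{G_{t}}\in\mathfrak{A}_{K/F}$ is equivalent to $v_{K}(\Tr_{G_{t}}(x))\ge v_{K}(|G_{t}|)$ for all $x\in\mathcal{O}_{K}$, which is a computation with the different of the totally wildly ramified cyclic extension $K/K^{G_{t}}$; it is precisely the almost-maximal ramification hypothesis that makes these inequalities hold, and this pins down $\mathfrak{A}_{K/F}=\mathcal{O}_{F}[G][\{e_{G_{t}}\}_{t\ge1}]$, while the non-almost-maximally ramified case with $G_{0}\cong D_{2p}$ is treated the same way and gives $\mathfrak{A}_{K/F}=\mathcal{O}_{F}[G][2e_{G_{0}}]$. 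Having identified $\mathfrak{A}_{K/F}$ explicitly, I would exhibit a free generator $\alpha\in\mathcal{O}_{K}$: choose $\alpha$ so that its $\mathfrak{P}$-adic valuation makes each idempotent $e_{G_{t}}$ (resp.\ the element $2e_{G_{0}}$) carry $\alpha$ back into $\mathcal{O}_{K}$, so that the image of $\alpha$ generates each one-dimensional component over the relevant valuation ring (or product of two such after adjoining an idempotent), and so that $\mathcal{O}_{K}$ becomes free over the now explicitly known order in each $2\times2$ component. Via Morita equivalence the matrix-component question becomes a rank-one question for a lattice over an order in a subfield of $F(\zeta_{m})$, which I would settle using the corresponding analysis of the cyclic extension $K/\langle\sigma\rangle$.

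For the converse I would argue contrapositively: if $K/F$ is neither almost-maximally ramified nor has $G_{0}\cong D_{2p}$, then $\mathcal{O}_{K}$ is not projective over $\mathfrak{A}_{K/F}$. In this range the associated order fails to contain enough of the idempotents $e_{G_{t}}$ for $\mathcal{O}_{K}$ to split off the obstructing matrix component as a projective; concretely I would compute, in that component, the reduction of $\mathcal{O}_{K}$ modulo the radical of $\mathfrak{A}_{K/F}$ (or a suitable length/Fitting-ideal invariant) and show it does not have the form forced on a projective $\mathfrak{A}_{K/F}$-module — equivalently, one can pass to a cyclic subextension of degree $p^{2}$ and invoke the well-known failure of the ring of integers to be projective over its associated order there.

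The main obstacle I anticipate is precisely the interaction of the two-dimensional Wedderburn components with the valuation bookkeeping: determining the valuations of the trace maps $\Tr_{G_{t}}$ on $\mathcal{O}_{K}$ in terms of the ramification breaks, carrying the matrix component through Morita to a commutative rank-one problem in the positive direction, and producing a sufficiently sharp invariant to obstruct projectivity in the negative direction. The one-dimensional components, the passage from projective to free, and the descent through the unramified layer are, by contrast, routine.
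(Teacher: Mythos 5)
The first thing to say is that the paper does not prove this statement at all: it is quoted verbatim from Berg\'e \cite[Proposition 7]{zbMATH03623711}, so your attempt has to be judged on its own terms, and as it stands it has genuine gaps. The most serious one is structural. You assert that, since $G/G_{0}$ is cyclic, the inertia group $G_{0}$ is either $G$ or the cyclic subgroup $\langle\sigma\rangle$. This is only correct when $m$ is odd: for $m$ even, $D_{2m}$ has two further index-two subgroups, both dihedral, and these do occur as inertia groups. In particular the configuration $G\cong D_{8}$, $G_{0}\cong C_{2}\times C_{2}$ (``dihedral of order $2p$'' with $p=2$) is precisely case (ii) of the theorem, and it is exactly the case this paper later exploits in the proof of Theorem \ref{thm:s4classification}; your case analysis never reaches it, and several of your structural claims (e.g.\ that the $G_{t}$, $t\geq 1$, form a chain of cyclic subgroups of $\langle\sigma\rangle$) are explicitly restricted to odd $p$, so the $p=2$ half of the theorem is simply not addressed. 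A second substantive problem is the claim that the non-totally-ramified case ``reduces by descent through the unramified layer to the corresponding assertion for the abelian extension $K/K^{G_{0}}$''. This is not a formality, and taken literally it yields the wrong criterion: freeness of $\mathcal{O}_{K}$ over $\mathfrak{A}_{K/K^{G_{0}}}$ is governed by Ferton-type congruence conditions on the ramification jumps, not by almost-maximal ramification, and the entire difficulty of the theorem is the passage from the subgroup level to an order in $F[G]$ --- this is what Berg\'e's induction machinery ($\mathfrak{A}^{*}$, Propositions \ref{prop:projproj}, \ref{prop:Berge-prop-3} and \ref{prop:strongequivalence}, reproduced in \S\ref{sec:induction}) exists to handle, and nothing in your sketch substitutes for it.

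Two further points. Your opening reduction is false as stated: over a complete discrete valuation ring, a projective lattice over a noncommutative order whose rational span is free of rank one need not be free. For a hereditary order $\Lambda$ in $M_{2\times 2}(F)$ with non-isomorphic indecomposable projectives $P_{1},P_{2}$ one has $\Lambda\cong P_{1}\oplus P_{2}$, while $P_{1}\oplus P_{1}$ is projective, spans $M_{2\times 2}(F)$, and is not free; this is exactly why ``cleanness'' (Theorem \ref{thm:Hattori}) is only available for commutative orders in this paper. Your overall logic could survive this error, since proving ``(i) or (ii) $\Rightarrow$ free'' and ``otherwise $\Rightarrow$ not projective'' closes the equivalence chain, but both of those steps are left as placeholders: the exact determination of $\mathfrak{A}_{K/F}$ (the reverse inclusion, which is the laborious part) is asserted rather than argued, the free generator is ``chosen so that everything works'' with no existence argument, and the negative direction rests either on an unspecified radical/Fitting invariant or on passing to a cyclic subextension of degree $p^{2}$, which need not exist (the wild inertia may have order $p$, or be non-cyclic when $p=2$) and whose non-projectivity over the associated order of the subextension would not transfer to non-projectivity over $\mathfrak{A}_{K/F}$ by any mechanism you cite. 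So while the toolkit you name (Wedderburn components, Morita reduction, different computations, almost-maximal ramification as the pivot) is reasonable, the proposal does not yet constitute a proof of Berg\'e's result.
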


\begin{remark}\label{rmk:idempotent}
Let $H$ be a subgroup of $G$ and let $r$ be a positive integer. 
We now show how to determine whether $\frac{1}{r}\Tr_{H}\in \mathfrak{A}_{K/F}$. 
For example, when $r=|H|$ this can be used to check for almost-maximal ramification.
Let $M$ be the subfield of $K$ fixed by $H$. 
We denote by $\mathfrak{D}_{K/M}$ the different of the extension $K/M$ (see \cite[III\S3]{MR554237}) and by $v_{p}(x)$ the $p$-adic valuation of an integer $x$
(thus $v_{p}$ is the restriction of $v_{\Q_p}$ to $\Z$).
\[
\begin{split}
\frac{1}{r}\text{Tr}_H\in \mathfrak{A}_{K/F} & \Longleftrightarrow \frac{1}{r}\text{Tr}_{K/M}(\mathcal{O}_K)\subseteq \mathcal{O}_M\\
  &\Longleftrightarrow \text{Tr}_{K/M}(\mathcal{O}_{K})\subseteq r\mathcal{O}_{M} \\
  &\Longleftrightarrow \mathcal{O}_K \subseteq r\mathcal{O}_M \mathfrak{D}_{K/M}^{-1}\text{ by \cite[III Proposition 7]{MR554237}}\\
  &\Longleftrightarrow \mathfrak{D}_{K/M} \subseteq r\mathcal{O}_K \\
  &\Longleftrightarrow v_K(\mathfrak{D}_{K/M})\geq e(K/\Q_p)v_p(r)\\
  &\Longleftrightarrow \sum_{i=0}^\infty (|G_i(K/M)|-1)\geq e(K/\Q_p) v_p(r)\text{ by \cite[IV Proposition 4]{MR554237}}.\\
\end{split}
\]
\end{remark}

\begin{remark}\label{rmk:weakalmostmaximal}
 From Theorem \ref{thm:padicberge}, Theorem \ref{thm:locallyfreedihedral} and Remark \ref{rmk:idempotent} (also see \cite[Corollaire to Proposition 3]{MR513880}) we deduce that a dihedral extension $K/\Q_p$ of degree $2p$ is either almost-maximally ramified or is weakly and totally ramified.
\end{remark}

\subsection{Local freeness results for Galois extensions of number fields}

\begin{remark}\label{rmk:free-implies-locally-free}
Let $K/F$ be a finite Galois extension of number fields. 
If $\mathcal{O}_{K}$ is free over $\mathfrak{A}_{F/K}$ then it is clear that 
$\mathcal{O}_{K}$ is locally free over $\mathfrak{A}_{F/K}$.
In particular, the analogues of Theorems \ref{thm:leopoldt}, \ref{thm:berge} and \ref{thm:martinet} all hold,
with `locally free' in place of `free'. 
Theorems \ref{thm:abs-abelian-locally-free} and \ref{thm:jaulent} below are generalisations of the first
two of these analogues.
\end{remark}

\begin{theorem}\label{thm:noetherlocallyfree}
Let $K/F$ be a finite Galois extension of number fields.
Let $G=\Gal(K/F)$ and let $\mathfrak{p}$ be a prime of $F$ that is tamely ramified in $K/F$. 
Then $\mathcal{O}_{K, \mathfrak{p}}$ is free over
$\mathfrak{A}_{K/F,\mathfrak{p}} = \mathcal{O}_{F_{\mathfrak{p}}}[G]$.
\end{theorem}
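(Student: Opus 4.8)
The statement is the local version at a tamely ramified prime, so the plan is to reduce to Theorem \ref{thm:tamepadic} via the decomposition and induction machinery set up in \S\ref{subsec:ass-orders-rings-of-ints}. First I would fix a prime $\mathfrak{P}$ of $K$ above $\mathfrak{p}$ with decomposition group $D$ and inertia group $D_0 = G_0(K_\mathfrak{P}/F_\mathfrak{p})$. Since $\mathfrak{p}$ is tamely ramified in $K/F$, the extension $K_\mathfrak{P}/F_\mathfrak{p}$ is a tamely ramified finite Galois extension of $p$-adic fields (where $p$ is the residue characteristic of $\mathfrak{p}$), so Theorem \ref{thm:tamepadic} applies to give $\mathfrak{A}_{K_\mathfrak{P}/F_\mathfrak{p}} = \mathcal{O}_{F_\mathfrak{p}}[D]$ and that $\mathcal{O}_{K_\mathfrak{P}}$ is free of rank $1$ over $\mathcal{O}_{F_\mathfrak{p}}[D]$.

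Next I would invoke the identifications from \S\ref{subsec:ass-orders-rings-of-ints}: $\mathcal{O}_{K,\mathfrak{p}} \cong \mathrm{Ind}_D^G \mathcal{O}_{K_\mathfrak{P}} = \mathcal{O}_{F_\mathfrak{p}}[G] \otimes_{\mathcal{O}_{F_\mathfrak{p}}[D]} \mathcal{O}_{K_\mathfrak{P}}$ and $\mathfrak{A}_{K/F,\mathfrak{p}} \cong \mathfrak{A}(F_\mathfrak{p}[G], \mathrm{Ind}_D^G \mathcal{O}_{K_\mathfrak{P}})$. Because $\mathcal{O}_{K_\mathfrak{P}} \cong \mathcal{O}_{F_\mathfrak{p}}[D]$ as an $\mathcal{O}_{F_\mathfrak{p}}[D]$-module, induction gives
\[
\mathcal{O}_{K,\mathfrak{p}} \cong \mathcal{O}_{F_\mathfrak{p}}[G] \otimes_{\mathcal{O}_{F_\mathfrak{p}}[D]} \mathcal{O}_{F_\mathfrak{p}}[D] \cong \mathcal{O}_{F_\mathfrak{p}}[G]
\]
as $\mathcal{O}_{F_\mathfrak{p}}[G]$-modules, so $\mathcal{O}_{K,\mathfrak{p}}$ is free of rank $1$ over $\mathcal{O}_{F_\mathfrak{p}}[G]$. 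By Proposition \ref{prop:associatedfree} (applied with $R = \mathcal{O}_{F_\mathfrak{p}}$, $\Lambda = \mathcal{O}_{F_\mathfrak{p}}[G]$, $A = F_\mathfrak{p}[G]$, $M = \mathcal{O}_{K,\mathfrak{p}}$), it follows that $\mathfrak{A}(F_\mathfrak{p}[G], \mathcal{O}_{K,\mathfrak{p}}) = \mathcal{O}_{F_\mathfrak{p}}[G]$, which under the identification is precisely $\mathfrak{A}_{K/F,\mathfrak{p}}$. Combining, $\mathcal{O}_{K,\mathfrak{p}}$ is free over $\mathfrak{A}_{K/F,\mathfrak{p}} = \mathcal{O}_{F_\mathfrak{p}}[G]$, as claimed.

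The only real content is the input from Theorem \ref{thm:tamepadic}; everything else is bookkeeping with the scalar-extension isomorphisms already recorded in the excerpt. The one point requiring a little care is that freeness of $\mathcal{O}_{K_\mathfrak{P}}$ over $\mathcal{O}_{F_\mathfrak{p}}[D]$ is genuinely needed (not merely the equality $\mathfrak{A}_{K_\mathfrak{P}/F_\mathfrak{p}} = \mathcal{O}_{F_\mathfrak{p}}[D]$), since it is freeness that is preserved by the induction functor $\mathcal{O}_{F_\mathfrak{p}}[G] \otimes_{\mathcal{O}_{F_\mathfrak{p}}[D]} -$; but Theorem \ref{thm:tamepadic} does supply exactly this. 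I expect no serious obstacle; alternatively, one could bypass induction and argue directly that $\mathfrak{p}$ being tame in $K/F$ forces each completion to be tame and then assemble the product decomposition, but the induction route is cleaner given what has been set up.
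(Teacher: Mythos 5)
Your proof is correct and follows essentially the same route as the paper, which deduces the theorem from Theorem \ref{thm:tamepadic} via the decomposition/induction identifications of \S\ref{subsec:ass-orders-rings-of-ints} (the paper points to Remark \ref{rmk:conditions-to-be-a-ring} and Corollary \ref{cor:induction-in-free-rank-1-case}). Your only deviation is cosmetic: instead of citing the \S\ref{sec:induction} machinery, you observe directly that $\mathrm{Ind}_D^G\mathcal{O}_{K_\mathfrak{P}}\cong\mathcal{O}_{F_\mathfrak{p}}[G]$ is free of rank $1$ over the group ring and then apply Proposition \ref{prop:associatedfree} to identify the associated order, which is a perfectly valid shortcut.
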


\begin{theorem}\label{thm:abs-abelian-locally-free}\cite{MR1627831}
Let $K/F$ be an extension of number fields such that $K/\Q$ is a finite abelian extension. 
Then $\mathcal{O}_{K}$ is locally free over $\mathfrak{A}_{K/F}$.
\end{theorem}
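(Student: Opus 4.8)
The plan is to reduce the statement to local freeness at each prime of $F$ and then apply the $p$-adic result Theorem \ref{thm:lettl-abs-abelian-p-adic} prime by prime, using the localisation/induction formalism of \S\ref{subsec:ass-orders-rings-of-ints}.

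First I would fix a maximal ideal $\mathfrak{p}$ of $\mathcal{O}_F$ lying over the rational prime $p$, put $G=\Gal(K/F)$, fix a prime $\mathfrak{P}$ of $K$ above $\mathfrak{p}$, and let $D\subseteq G$ be its decomposition group, so that $D=\Gal(K_\mathfrak{P}/F_\mathfrak{p})$. By \S\ref{subsec:ass-orders-rings-of-ints} we have $\mathcal{O}_{K,\mathfrak{p}}\cong \Ind_D^G\mathcal{O}_{K_\mathfrak{P}}$, and $\mathcal{O}_K$ is locally free over $\mathfrak{A}_{K/F}$ at $\mathfrak{p}$ if and only if $\Ind_D^G\mathcal{O}_{K_\mathfrak{P}}$ is free over $\mathfrak{A}(F_\mathfrak{p}[G],\Ind_D^G\mathcal{O}_{K_\mathfrak{P}})$. (For $\mathfrak{p}$ tamely ramified this is immediate from Theorem \ref{thm:noetherlocallyfree}, so in effect only wildly ramified $\mathfrak{p}$ need work.) Since $K/\Q$ is abelian, $\widetilde{G}:=\Gal(K/\Q)$ is abelian, hence so are its subgroups $D\subseteq G\subseteq\widetilde{G}$, and crucially $D$ is normal in $G$ (indeed in $\widetilde{G}$). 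In particular $K_\mathfrak{P}/\Q_p$ is a finite abelian extension of $p$-adic fields, so Theorem \ref{thm:lettl-abs-abelian-p-adic} applies and $\mathcal{O}_{K_\mathfrak{P}}$ is free of rank $1$ over $\mathfrak{A}_{K_\mathfrak{P}/F_\mathfrak{p}}$; fix a generator $\alpha$, so $\mathcal{O}_{K_\mathfrak{P}}=\mathfrak{A}_{K_\mathfrak{P}/F_\mathfrak{p}}\,\alpha$.

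It then remains to pass from freeness of $\mathcal{O}_{K_\mathfrak{P}}$ over $\mathfrak{A}_{K_\mathfrak{P}/F_\mathfrak{p}}$ to freeness of $\Ind_D^G\mathcal{O}_{K_\mathfrak{P}}$ over its associated order. Here normality of $D$ is decisive: because conjugation inside the abelian group $\widetilde{G}$ is trivial, every prime of $K$ above $\mathfrak{p}$ has decomposition group exactly $D$ and all the completions $K_{\mathfrak{P}'}$ carry literally the same $D$-action, so choosing a transversal $\{t_s\}_{s\in G/D}$ of $D$ in $G$ gives a decomposition $F_\mathfrak{p}[G]=\bigoplus_{s}t_sF_\mathfrak{p}[D]$ compatible with the product decomposition of $\mathcal{O}_{K,\mathfrak{p}}$. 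A direct check — essentially the $D\triangleleft G$ case of the induction analysis carried out in \S\ref{sec:induction} — then yields
\[
\mathfrak{A}(F_\mathfrak{p}[G],\Ind_D^G\mathcal{O}_{K_\mathfrak{P}})=\Ind_D^G\mathfrak{A}_{K_\mathfrak{P}/F_\mathfrak{p}}=\bigoplus_{s\in G/D}t_s\,\mathfrak{A}_{K_\mathfrak{P}/F_\mathfrak{p}},
\]
and that $1\otimes\alpha$ generates $\Ind_D^G\mathcal{O}_{K_\mathfrak{P}}$ freely over this order. Since $\mathfrak{p}$ was arbitrary, $\mathcal{O}_K$ is locally free over $\mathfrak{A}_{K/F}$.

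The hard part is this last step. In general, freeness of $\mathcal{O}_{K_\mathfrak{P}}$ over $\mathfrak{A}_{K_\mathfrak{P}/F_\mathfrak{p}}$ does \emph{not} force $\Ind_D^G\mathcal{O}_{K_\mathfrak{P}}$ to be free over its associated order — this is precisely the obstruction flagged in the introduction for wildly ramified non-abelian extensions. What rescues the abelian case is that normality of $D$ (automatic since $\Gal(K/\Q)$ is abelian) forces the associated order of the induced lattice to be the induced order, so the generator $\alpha$ survives induction; establishing that identification and verifying that $1\otimes\alpha$ works is the crux of the argument.
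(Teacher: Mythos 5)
Your argument is correct and is essentially the paper's own route: localise, identify $\mathcal{O}_{K,\mathfrak{p}}$ with $\Ind_D^G\mathcal{O}_{K_\mathfrak{P}}$, invoke Theorem \ref{thm:lettl-abs-abelian-p-adic} for freeness of $\mathcal{O}_{K_\mathfrak{P}}$ over $\mathfrak{A}_{K_\mathfrak{P}/F_\mathfrak{p}}$, and then use the induction formalism (the paper cites Remark \ref{rmk:conditions-to-be-a-ring} and Corollary \ref{cor:induction-in-free-rank-1-case}) to conclude that $\Ind_D^G\mathfrak{A}_{K_\mathfrak{P}/F_\mathfrak{p}}$ is a ring, equals $\mathfrak{A}(F_\mathfrak{p}[G],\Ind_D^G\mathcal{O}_{K_\mathfrak{P}})$, and that the induced module is free over it. One correction to your closing summary: what makes this work is not normality of $D$ but centrality of $D$ in $G$ (automatic here since $G\leq\Gal(K/\Q)$ is abelian), i.e.\ triviality of conjugation, which is what your argument actually uses; normality alone does \emph{not} force the induced order to be the associated order, as the paper's own $A_4$ analysis shows (in the proof of Theorem \ref{thm:a4}, $D=V_4$ is normal in $A_4$ yet $\Ind_D^G\mathfrak{A}_{K_\mathfrak{P}/\Q_2}\supsetneq\mathfrak{A}_{K/\Q,2}$).
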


\begin{remark}
Theorems \ref{thm:noetherlocallyfree} and \ref{thm:abs-abelian-locally-free} are well-known 
consequences of Theorems \ref{thm:tamepadic} and \ref{thm:lettl-abs-abelian-p-adic}, respectively.
See Remark \ref{rmk:conditions-to-be-a-ring} and Corollary \ref{cor:induction-in-free-rank-1-case}, for instance.
\end{remark}

\begin{theorem}\cite{MR748000}\label{thm:jaulent}
Let $K/\Q$ be a Galois extension such that $\Gal(K/\Q)$ is metacyclic of type (\ref{eqmetacyclic}). 
Then $\mathcal{O}_{K}$ is locally free over $\mathfrak{A}_{K/\Q}$.
\end{theorem}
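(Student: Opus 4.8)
The plan is to deduce this statement about $\mathcal{O}_K$ being locally free over $\mathfrak{A}_{K/\Q}$ from the $p$-adic freeness result of Theorem \ref{thm:padicjaulent}, using the reduction to local freeness developed in \S\ref{subsec:ass-orders-rings-of-ints}. Recall that $\mathcal{O}_K$ is locally free over $\mathfrak{A}_{K/\Q}$ if and only if, for every rational prime $\ell$, the induced lattice $\mathrm{Ind}_{D}^{G}\mathcal{O}_{K_{\mathfrak{P}}}$ is free over $\mathfrak{A}(\Q_\ell[G],\mathrm{Ind}_{D}^{G}\mathcal{O}_{K_{\mathfrak{P}}})$, where $\mathfrak{P}$ is a fixed prime of $K$ above $\ell$ and $D=D(\mathfrak{P}/\ell)\leq G$ is its decomposition group. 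So the task is to verify this condition at every $\ell$.

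First I would dispose of the primes $\ell\neq p$. For such $\ell$, the extension $K/\Q$ is tamely ramified at $\ell$ — indeed the wild inertia at any prime is a $p$-group, and a $p$-subgroup of $G\cong C_p\rtimes C_n$ that is normalised appropriately is forced to lie in $C_p$, but more to the point the only possible wild ramification in a group of order $pn$ with $n\mid p-1$ occurs at $p$. Hence Theorem \ref{thm:noetherlocallyfree} (the local form of Noether's theorem) applies and gives $\mathcal{O}_{K,\ell}$ free over $\mathfrak{A}_{K/\Q,\ell}=\Z_\ell[G]$. This settles all $\ell\neq p$ immediately.

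It remains to treat $\ell=p$. Fix a prime $\mathfrak{P}$ of $K$ above $p$ with decomposition group $D\leq G$; then $K_{\mathfrak{P}}/\Q_p$ is a Galois extension with $\Gal(K_{\mathfrak{P}}/\Q_p)\cong D$. I would first argue that $D$ itself is again a group of the metacyclic type \eqref{eqmetacyclic}: every subgroup of $C_p\rtimes C_n$ is either cyclic of order dividing $n$, or contains $C_p$ and has the form $C_p\rtimes C_m$ with $m\mid n$, and in either case it is (possibly trivially) of type \eqref{eqmetacyclic} with parameters $(p,m,r^{n/m})$, or is tame so that Theorem \ref{thm:tamepadic} applies. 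Either way, Theorem \ref{thm:padicjaulent} (or Theorem \ref{thm:tamepadic}) gives that $\mathcal{O}_{K_{\mathfrak{P}}}$ is free over $\mathfrak{A}_{K_{\mathfrak{P}}/\Q_p}=\mathfrak{A}(\Q_p[D],\mathcal{O}_{K_{\mathfrak{P}}})$.

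The main obstacle is the final step: passing from the $D$-local freeness of $\mathcal{O}_{K_{\mathfrak{P}}}$ to the $G$-local freeness of $\mathrm{Ind}_{D}^{G}\mathcal{O}_{K_{\mathfrak{P}}}$. As the introduction emphasises, this implication is \emph{not} automatic for wildly ramified non-abelian extensions, and bridging it is precisely the analysis promised for \S\ref{sec:induction}. Here the point is that for groups of type \eqref{eqmetacyclic} the relevant condition does hold; I expect the argument to show that $\mathfrak{A}(\Q_p[G],\mathrm{Ind}_{D}^{G}\mathcal{O}_{K_{\mathfrak{P}}})$ coincides with $\Ind_{D}^{G}\mathfrak{A}_{K_{\mathfrak{P}}/\Q_p}$ (or is otherwise controlled) and that induction of a free rank-one $\mathfrak{A}_{K_{\mathfrak{P}}/\Q_p}$-lattice yields a free rank-one $\mathfrak{A}_{K/\Q,p}$-lattice — appealing to Corollary \ref{cor:induction-in-free-rank-1-case} and the results of \S\ref{sec:induction}. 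Combining the cases $\ell\neq p$ and $\ell=p$ then gives local freeness at every prime, which is the assertion of the theorem.
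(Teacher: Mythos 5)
Your argument has two genuine gaps. The first is the dismissal of the primes $\ell\neq p$: the claim that wild ramification can only occur at $p$ is false, and the reason you offer rests on a slip — the wild inertia subgroup at a rational prime $\ell$ is an $\ell$-group, not a $p$-group. Since $n$ may be divisible by primes $\ell\neq p$, such an $\ell$ can be wildly ramified in $K/\Q$; this happens precisely when the decomposition group $D$ at $\ell$ meets $C_p$ trivially (any subgroup of $G$ containing $C_p$ has no nontrivial normal $\ell$-subgroup, because the action of $C_n$ on $C_p$ is faithful), so that $D$ is a cyclic $\ell$-group. A concrete example already occurs for $n=2$: the splitting field of $x^3-x-2$ is an $S_3$-extension of $\Q$ (a group of type \eqref{eqmetacyclic}) whose quadratic subfield is $\Q(\sqrt{-26})$, so $2$ is wildly ramified. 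At such a prime Theorem \ref{thm:noetherlocallyfree} does not apply, and you are thrown back on exactly the induction problem you face at $p$: Lettl's theorem gives freeness of $\mathcal{O}_{K_\mathfrak{P}}$ over $\mathfrak{A}_{K_\mathfrak{P}/\Q_\ell}$, but the proof of Theorem \ref{thm:a4} shows that with $D$ abelian and the local ring of integers free, $\mathcal{O}_K$ can nonetheless fail to be locally free. The way to handle all $\ell\neq p$ uniformly in the spirit of this paper is the hybrid argument: $\Z_\ell[G]$ is $C_p$-hybrid for every $\ell\neq p$ and $G/C_p\cong C_n$ is abelian, so Corollary \ref{cor:hybrid} together with Leopoldt's theorem gives local freeness at every $\ell\neq p$, wild or tame; this is essentially the route the paper attributes to Jaulent in the remark following Corollary \ref{cor:hybrid}.

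The second gap is that at $\ell=p$ you explicitly leave the decisive step — passing from freeness of $\mathcal{O}_{K_\mathfrak{P}}$ over $\mathfrak{A}_{K_\mathfrak{P}/\Q_p}$ to freeness of $\mathcal{O}_{K,p}\cong\Ind_D^G\mathcal{O}_{K_\mathfrak{P}}$ over $\mathfrak{A}_{K/\Q,p}$ — as an expectation. That step is the entire difficulty of the theorem (the introduction stresses that this implication fails in general, and Theorem \ref{thm:a4} is a counterexample), so it cannot be waved through by appeal to \S\ref{sec:induction}. To carry it out along the lines of the paper one needs an explicit trace-type description of $\mathfrak{A}_{K_\mathfrak{P}/\Q_p}$ as in \eqref{eq:assoc-order-with-traces}, after which Theorem \ref{thm:indringstar}(iii)\&(iv) reduces the question to normality in $G$ of the subgroups appearing there (which at $p$ would be groups containing $C_p$, or contained in $C_p$, hence indeed normal since $G/C_p$ is abelian); producing that description of the associated order is the substance of Jaulent's local analysis and is nowhere supplied in your proposal. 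Note finally that the paper itself does not prove Theorem \ref{thm:jaulent}: it is quoted from \cite{MR748000}, with the surrounding material (Corollary \ref{cor:hybrid}, \S\ref{sec:induction}--\S\ref{inductionstructure}) indicating how such a proof goes — and it is exactly those ingredients that your write-up postpones or omits.
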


\begin{theorem}\cite[Théorème]{zbMATH03623711}\label{thm:nopadicdihedral}
Let $K/\Q$ be a finite dihedral extension and let $G=\Gal(K/\Q)$.
Let $p$ be an odd rational prime that is wildly ramified in $K/\Q$
and let $N$ be the unique cyclic subgroup of $G$ of index $2$.
Then $\mathcal{O}_{K,p}$ is projective over $\mathfrak{A}_{K/\Q,p}$ if and only if 
$\mathcal{O}_{K,p}$ is free over $\mathfrak{A}_{K/\Q,p}$ if and only if 
one of the following conditions holds:
\begin{enumerate}
\item $p$ is almost-maximally ramified in $K/\Q$ and $G_{1} \subseteq N$, in which case
\[
\mathfrak{A}_{K/\Q,p}=\Z_p[G][\{e_{G_t}\}_{t\geq1}],\text{ }or
\]
\item $p$ is not almost-maximally ramified, $|G_{0}|=2p$ and $[G:G_0] \mid 2$, in which case
\[
\mathfrak{A}_{K/\Q,p}=\Z_p[G][e_{G_0}].
\]
\end{enumerate}

\end{theorem}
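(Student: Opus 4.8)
The plan is to localise at a prime $\mathfrak{P}$ of $K$ above $p$ and reduce to the classification of dihedral extensions of $p$-adic fields recorded above. Write $D$ for the decomposition group of $\mathfrak{P}$ in $G$ and $G_0\subseteq D$ for its inertia group. By \S\ref{subsec:ass-orders-rings-of-ints} we have $\mathcal{O}_{K,p}\cong \Ind_D^G\mathcal{O}_{K_\mathfrak{P}}$ and $\mathfrak{A}_{K/\Q,p}\cong \mathfrak{A}(\Q_p[G],\Ind_D^G\mathcal{O}_{K_\mathfrak{P}})$, so the claim is equivalent to a statement about when the induced lattice $\Ind_D^G\mathcal{O}_{K_\mathfrak{P}}$ is free, resp.\ projective, over its associated order in $\Q_p[G]$. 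First I would dispose of the group theory: since $p$ is odd and wildly ramified, $G_1$ is a non-trivial cyclic $p$-group, and as $p$ is odd every $p$-subgroup of the dihedral group $G$ lies in the rotation subgroup $N$, while every subgroup of $N$ is normal in $G$; hence $G_1\subseteq N$ and $G_1\trianglelefteq G$ hold automatically (which is why that clause of (i) is merely a normalisation). Consequently the inertia group $G_0\supseteq G_1$, and hence $D\supseteq G_0$, is either cyclic (and contained in $N$) or dihedral, so $K_\mathfrak{P}/\Q_p$ is an abelian or dihedral extension of $p$-adic fields with $\Q_p$ itself as (trivially unramified) base. Moreover, when $G_0$ is dihedral of order $2p$ the condition $[G:G_0]\mid 2$ is equivalent to $G_0$ being normal in $G$, and in any case it forces $D$ to be either $G_0$ or $G$.

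Next I would record the behaviour at $\mathfrak{P}$ itself. If $D$ is cyclic then $K_\mathfrak{P}/\Q_p$ is abelian and Theorem \ref{thm:lettl-abs-abelian-p-adic} gives that $\mathcal{O}_{K_\mathfrak{P}}$ is free over $\mathfrak{A}_{K_\mathfrak{P}/\Q_p}$. If $D$ is dihedral then Theorem \ref{thm:locallyfreedihedral}, applied with base field $\Q_p$, says that $\mathcal{O}_{K_\mathfrak{P}}$ is projective over $\mathfrak{A}_{K_\mathfrak{P}/\Q_p}$ if and only if it is free over it, if and only if either $K_\mathfrak{P}/\Q_p$ is almost-maximally ramified or its inertia group is dihedral of order $2p$, together with the corresponding descriptions of $\mathfrak{A}_{K_\mathfrak{P}/\Q_p}$ via the idempotents $e_{G_t}$. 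Here Remark \ref{rmk:idempotent} translates almost-maximal ramification into an explicit inequality on the ramification filtration, and the analogue of Remark \ref{rmk:weakalmostmaximal} over an unramified base shows that in the order-$2p$ case the two alternatives ``almost-maximally ramified'' and ``weakly and totally ramified'' are exhaustive and mutually exclusive. When $D=G$ this already settles the theorem, since then $\Ind_D^G\mathcal{O}_{K_\mathfrak{P}}=\mathcal{O}_{K_\mathfrak{P}}$ and $\mathfrak{A}(\Q_p[G],\mathcal{O}_{K_\mathfrak{P}})=\mathfrak{A}_{K_\mathfrak{P}/\Q_p}$, and normality of $G_0$ in $G$ makes $[G:G_0]\mid 2$ automatic.

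The remaining, and genuinely hard, step is the case $D\subsetneq G$, where one must pass from the behaviour of $\mathcal{O}_{K_\mathfrak{P}}$ over $\mathfrak{A}_{K_\mathfrak{P}/\Q_p}$ to that of $\Ind_D^G\mathcal{O}_{K_\mathfrak{P}}$ over $\mathfrak{A}(\Q_p[G],\Ind_D^G\mathcal{O}_{K_\mathfrak{P}})$ — and, as stressed in the introduction, these do not automatically agree. Here I would compute the associated order of the induced lattice block by block, using the Wedderburn decomposition of $\Q_p[G]$ (coming from the one- and two-dimensional characters of the dihedral group) together with the explicit elements $e_{G_t}$ and the criterion of Remark \ref{rmk:idempotent}, compare it with $\Ind_D^G\mathfrak{A}_{K_\mathfrak{P}/\Q_p}$, and then decide freeness of the induced lattice. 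This is exactly where the hypotheses enter: in the almost-maximally ramified case one checks that $\Z_p[G][\{e_{G_t}\}_{t\geq1}]$ is already the full associated order of $\Ind_D^G\mathcal{O}_{K_\mathfrak{P}}$ and that this lattice is free over it; in the remaining case the conditions $|G_0|=2p$ and $[G:G_0]\mid 2$ (which in particular force $D=G_0$, so $p$ is totally ramified at $\mathfrak{P}$, and $G_0\trianglelefteq G$) are precisely what prevents the associated order from growing too large for $\Ind_D^G\mathcal{O}_{K_\mathfrak{P}}$ to be free, whereas if $|G_0|\neq 2p$ or $[G:G_0]\nmid 2$ one exhibits a Wedderburn block over which the induced lattice fails even to be projective. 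Finally, the equivalence of projectivity and freeness drops out of these explicit descriptions, since over each block of the resulting order a projective lattice of the relevant rank is free. I expect this block-by-block comparison of the two orders — and the identification of the obstruction in the non-almost-maximally ramified case — to be the main difficulty; it is, of course, the content of Berg\'e's theorem, which we may equally well simply quote.
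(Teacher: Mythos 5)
The paper does not actually prove this statement: as the citation and the remark immediately following it make explicit, Theorem \ref{thm:nopadicdihedral} is Berg\'e's Th\'eor\`eme from \cite{zbMATH03623711} specialised to odd $p$ and base field $\Q$, and is simply quoted, so your concluding fallback of citing Berg\'e coincides exactly with the paper's treatment. Your preliminary reductions are sound (localisation via \S\ref{subsec:ass-orders-rings-of-ints}, the observation that $G_1\subseteq N$ is automatic for odd $p$, the case $D=G$ via Theorems \ref{thm:lettl-abs-abelian-p-adic} and \ref{thm:locallyfreedihedral}), but note that your outline for $D\subsetneq G$ is not an independent proof: the determination of $\mathfrak{A}(\Q_p[G],\Ind_D^G\mathcal{O}_{K_\mathfrak{P}})$ and the projective-implies-free claim are precisely the substance of Berg\'e's theorem, and carrying them out would require machinery of the kind the paper only develops later (Theorem \ref{thm:indringstar}, Proposition \ref{prop:strongequivalence}).
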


\begin{remark}
In fact, Theorem \ref{thm:nopadicdihedral} is \cite[Théorème]{zbMATH03623711}
specialised to the case that $p$ is odd and the base field is $\Q$; the more
general statement is somewhat more complicated.
\end{remark}

\section{Hybrid group rings and applications to local freeness}

\subsection{Hybrid group rings}\label{hybridsection}

Let $R$ be a discrete valuation ring with fraction field $F$ and let $G$ be a finite group. 
Let $M$ be a full $R[G]$-lattice in $F[G]$. 
Note that $R[G]\subseteq \mathfrak{A}(F[G], M)$. 

For a normal subgroup $N$ of $G$ define $e_{N}=\frac{1}{|N|}\sum_{n\in N}n \in F[G]$
to be the central idempotent associated to $N$. 

\begin{prop}\label{prop:smallassociated}
If $N$ is a normal subgroup of $G$ such that $|N| \in R^{\times}$ then 
\begin{enumerate}
\item $R[G] = e_{N} R[G] \times (1-e_{N})R[G]\cong R[G/N] \times (1-e_N)R[G]$,
\item $e_{N}M$ has the structure of a $e_{N}R[G]\cong R[G/N]$-lattice, and 
\item $e_{N}\mathfrak{A}(F[G],M)=\mathfrak{A}(F[G],M)\cap e_{N}F[G]=\mathfrak{A}(e_NF[G],e_{N}M)\cong\mathfrak{A}(F[G/N],e_{N}M)$.
\end{enumerate}
\end{prop}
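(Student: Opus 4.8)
The plan is to exploit the fact that $e_N$ is a \emph{central} idempotent of $F[G]$ lying in $R[G]$ whenever $|N| \in R^\times$, so that multiplication by $e_N$ and by $1-e_N$ gives a ring-theoretic direct product decomposition that is compatible with all the lattices and orders in sight. First I would record that $e_N \in R[G]$: this is immediate since $e_N = \frac{1}{|N|}\Tr_N$ and $\frac{1}{|N|} \in R$. Being an idempotent that is central in $F[G]$ and an element of $R[G]$, it follows that $1-e_N \in R[G]$ as well, and the standard Peirce decomposition yields $R[G] = e_N R[G] \oplus (1-e_N)R[G]$ as a direct product of rings (each factor is a two-sided ideal with its own identity, $e_N$ and $1-e_N$ respectively). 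The isomorphism $e_N R[G] \cong R[G/N]$ is the familiar one: the surjection $R[G] \onto R[G/N]$ has kernel $(1-e_N)R[G] = R[G]\cdot\Tr_N$ — here one checks $\ker$ contains $n - 1$ for all $n \in N$, hence the augmentation ideal of $N$ over $R[G]$, and a rank count (or the explicit splitting via $e_N$) shows equality — and restriction to the complementary summand $e_N R[G]$ is then an isomorphism onto $R[G/N]$. This proves (i).

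For (ii): since $M$ is an $R[G]$-module and $e_N \in R[G]$ is central idempotent, $e_N M$ is a direct summand of $M$ as an $R[G]$-module, annihilated by $1-e_N$, hence naturally a module over $e_N R[G] \cong R[G/N]$. It is an $R$-lattice because it is an $R$-submodule of the $R$-lattice $M$ (any $R$-submodule of an $R$-lattice is an $R$-lattice, as noted in \S\ref{subsec:ass-orders-rings-of-ints}), so it is an $e_N R[G]$-lattice, and transporting along the isomorphism an $R[G/N]$-lattice. One should also note $F\cdot e_N M = e_N F[G]$: indeed $FM = F[G]$ by hypothesis (as $M$ is full), and applying the central idempotent $e_N$ gives $e_N FM = e_N F[G]$; this guarantees $e_N M$ is a \emph{full} $e_N R[G]$-lattice in a rank-one free $e_N F[G]$-module, so the associated order $\mathfrak{A}(e_N F[G], e_N M)$ is defined.

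For (iii), I would argue the three equalities left to right. For $e_N \mathfrak{A}(F[G],M) = \mathfrak{A}(F[G],M) \cap e_N F[G]$: the inclusion $\subseteq$ holds because $e_N$ is central and lies in $R[G] \subseteq \mathfrak{A}(F[G],M)$, so $e_N \mathfrak{A}(F[G],M)$ is an ideal of $\mathfrak{A}(F[G],M)$ contained in $e_N F[G]$; conversely if $\lambda \in \mathfrak{A}(F[G],M)$ with $\lambda \in e_N F[G]$ then $\lambda = e_N \lambda \in e_N\mathfrak{A}(F[G],M)$. For the middle equality $\mathfrak{A}(F[G],M) \cap e_N F[G] = \mathfrak{A}(e_N F[G], e_N M)$: if $\lambda \in \mathfrak{A}(F[G],M)$ lies in $e_N F[G]$, then $\lambda e_N M \subseteq \lambda M \cap e_N F[G] \subseteq e_N M$ (using $e_N M = M \cap e_N F[G]$, which follows by applying the central idempotents $e_N$, $1-e_N$ to any element of $M$), so $\lambda \in \mathfrak{A}(e_N F[G], e_N M)$; conversely, given $\lambda \in \mathfrak{A}(e_N F[G], e_N M) \subseteq e_N F[G]$, write any $m \in M$ as $m = e_N m + (1-e_N)m$ and compute $\lambda m = \lambda e_N m \in e_N M \subseteq M$ (since $\lambda(1-e_N)m = 0$), so $\lambda \in \mathfrak{A}(F[G],M)$. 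The final isomorphism $\mathfrak{A}(e_N F[G], e_N M) \cong \mathfrak{A}(F[G/N], e_N M)$ is simply transport of structure along the ring isomorphism $e_N F[G] \cong F[G/N]$ of (i) (tensored with $F$), under which $e_N M$ corresponds to itself as a module. I expect the only point requiring a little care — the main ``obstacle,'' such as it is — to be pinning down $e_N M = M \cap e_N F[G]$ and the kernel computation $\ker(R[G]\onto R[G/N]) = (1-e_N)R[G]$; both are routine once one observes that the hypothesis $|N|\in R^\times$ makes $e_N$ an honest element of $R[G]$, which is what makes the whole decomposition work integrally rather than just rationally.
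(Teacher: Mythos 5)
Your proof is correct and takes essentially the same route as the paper's: the whole argument rests on $e_N$ being a central idempotent that lies in $R[G]$ because $|N|\in R^{\times}$, with (i) and (ii) following from the resulting Peirce decomposition and (iii) verified by direct manipulation with $e_N$ (your identity $e_NM=M\cap e_NF[G]$ is just a repackaging of the paper's computations $e_Nxe_NM=e_N(e_Nxe_NM)\subseteq e_NM$ and $e_NxM=e_Nxe_NM$). One slip to fix in your treatment of (i): the kernel of $R[G]\onto R[G/N]$ is the relative augmentation ideal generated by $\{\,n-1 : n\in N\,\}$, not $R[G]\Tr_N$ — indeed, since $|N|\in R^{\times}$ one has $R[G]\Tr_N=e_NR[G]$, the \emph{complementary} summand — but your parenthetical justification (the kernel contains every $n-1$, and the explicit splitting via $e_N$ gives equality) is what actually carries the argument, so nothing downstream is affected.
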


\begin{proof}
Since $|N| \in R^{\times}$ we have $e_{N} \in R[G]$. 
Moreover, it is straightforward to show that $e_{N}R[G] \cong R[G/N]$.
Thus we have established (i) and (ii), and it remains to prove (iii).

The last isomorphism of (iii) is immediate from (ii). We now prove the first equality, that is, $e_{N}\mathfrak{A}(F[G],M)=\mathfrak{A}(F[G],M)\cap e_{N}F[G]$. Since $e_{N}\in R[G]\subseteq\mathfrak{A}(F[G],M)$, we easily have that
$e_{N}\mathfrak{A}(F[G],M)\subseteq\mathfrak{A}(F[G],M)\cap e_{N}F[G]$.
The other containment follows from the fact that $e_{N}^{2}=e_{N}$, 
hence any element in $\mathfrak{A}(F[G],M)\cap e_{N}F[G]$, with the harmless multiplication
by $e_{N}$, can be written as an element in $e_{N}\mathfrak{A}(F[G],M)$.

We now prove that $\mathfrak{A}(F[G],M)\cap e_{N}F[G]=\mathfrak{A}(e_NF[G],e_{N}M)$. Consider $e_Nx\in \mathfrak{A}(F[G],M)\cap e_NF[G]$ for a certain $x\in F[G]$;
we have to prove that $e_{N}x$ preserves $e_{N}M$. 
Since $e_{N}M\subseteq M$ and $e_{N}x\in \mathfrak{A}(F[G],M)$, 
we have that $e_{N}xe_{N}M\subseteq M$.
Hence $e_{N}xe_{N}M=e_{N}e_{N}xe_{N}M\subseteq e_{N}M$. Conversely, let us consider an element
$e_{N}x\in \mathfrak{A}(e_NF[G],e_NM)$, thus such that $e_{N}xe_{N}M\subseteq e_{N}M$.
We must prove that $e_{N}x\in \mathfrak{A}(F[G],M)$, which is automatic since 
$e_{N}xM=e_{N}e_{N}xM=e_{N}xe_{N}M\subseteq e_{N}M\subseteq M$.
 \end{proof}

We now recall the notion of hybrid group ring introduced in
\cite[\S 2]{MR3461042} and further developed in \cite[\S 2]{MR3749195}.

\begin{definition}\label{def:N-hybrid}
Let $N$ be a normal subgroup of $G$.
We say that $R[G]$ is $N$-hybrid if $|N| \in R^{\times}$ and
$(1-e_N)R[G]$ is a maximal $R$-order in $(1-e_{N})F[G]$. 
\end{definition}

\begin{remark}
The group ring $R[G]$ is a maximal $R$-order if and only if $|G| \in R^{\times}$ if and only if
$R[G]$ is $G$-hybrid, where the first equivalence is given by \cite[(27.1)]{MR632548}.
In this situation, $\mathfrak{A}(F[G], M)=R[G]$ and thus $M$ is free over $\mathfrak{A}(F[G], M)$
by \cite[(18.10)]{MR1972204}.
\end{remark}

\begin{example}\label{ex:A4S4-hybrid-examples}
Let $G=A_{4}$ or $S_{4}$ and let $N$ be its unique normal subgroup of order $4$. 
Then $\Z_{3}[G]$ is $N$-hybrid as shown in \cite[Examples 2.16 and 2.18]{MR3461042}.
Indeed, we have
\[
\Z_{3}[A_{4}] \cong \Z_{3}[C_{3}] \times M_{3 \times 3}(\Z_{3})
\quad \text{and} \quad
\Z_{3}[S_{4}] \cong \Z_{3}[S_{3}] \times M_{3 \times 3}(\Z_{3}) \times M_{3 \times 3}(\Z_{3}),
\]
where $M_{3 \times 3}(\Z_{p})$ is a maximal $\Z_{p}$-order by \cite[(8.7)]{MR1972204}.
\end{example}

\begin{example}\label{ex:dihedral-example}
Let $n$ be an odd positive integer and let $N_{n}$ be the unique subgroup of index $2$ in $D_{2n}$.
Then $\Z_{2}[D_{2n}]$ is $N_{n}$-hybrid as shown in \cite[Example 2.14]{MR3461042}.
\end{example}

\begin{prop}\label{prop:hybridfree}
Suppose $R[G]$ is $N$-hybrid. Then 
\[
\mathfrak{A}(F[G],M) 
= e_{N}\mathfrak{A}(F[G],M) \times (1-e_N)R[G]
\cong\mathfrak{A}(F[G/N],e_NM) \times (1-e_N)R[G].
\]
Moreover, $e_{N}M$ is free over $\mathfrak{A}(F[G/N],e_{N}M)$
if and only if $M$ is free over $\mathfrak{A}(F[G],M)$.
\end{prop}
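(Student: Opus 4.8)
The plan is to deduce everything from Proposition~\ref{prop:smallassociated}, which already does the heavy lifting, together with the defining property of an $N$-hybrid group ring. First I would apply Proposition~\ref{prop:smallassociated} with $\Lambda = \mathfrak{A}(F[G],M)$: since $N$ is normal and $|N|\in R^\times$, we have the ring decomposition $R[G] = e_N R[G] \times (1-e_N)R[G]$, and the associated order, being an $R$-order in $F[G] = e_N F[G] \times (1-e_N)F[G]$ that contains $R[G]$, decomposes compatibly as $\mathfrak{A}(F[G],M) = e_N\mathfrak{A}(F[G],M) \times (1-e_N)\mathfrak{A}(F[G],M)$. Here one uses that $e_N \in R[G] \subseteq \mathfrak{A}(F[G],M)$ is a central idempotent, so the order splits along it; the $e_N$-component is $e_N\mathfrak{A}(F[G],M)$ and the $(1-e_N)$-component is $(1-e_N)\mathfrak{A}(F[G],M)$.

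Next I would identify the two components. By Proposition~\ref{prop:smallassociated}(iii), $e_N\mathfrak{A}(F[G],M) = \mathfrak{A}(e_N F[G], e_N M) \cong \mathfrak{A}(F[G/N], e_N M)$, which handles the first factor. For the second factor, the $N$-hybrid hypothesis says precisely that $(1-e_N)R[G]$ is a \emph{maximal} $R$-order in $(1-e_N)F[G]$. Since $(1-e_N)R[G] \subseteq (1-e_N)\mathfrak{A}(F[G],M) \subseteq (1-e_N)F[G]$ and the leftmost is already maximal, we get $(1-e_N)\mathfrak{A}(F[G],M) = (1-e_N)R[G]$. Combining, $\mathfrak{A}(F[G],M) \cong \mathfrak{A}(F[G/N], e_N M) \times (1-e_N)R[G]$, which is the first assertion.

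For the second assertion, I would pass to module decompositions under the same idempotent splitting: $M = e_N M \oplus (1-e_N)M$ as a module over $\mathfrak{A}(F[G],M) = e_N\mathfrak{A}(F[G],M) \times (1-e_N)\mathfrak{A}(F[G],M)$, so $M$ is free of rank $1$ over $\mathfrak{A}(F[G],M)$ if and only if $e_N M$ is free of rank $1$ over $e_N\mathfrak{A}(F[G],M) \cong \mathfrak{A}(F[G/N], e_N M)$ \emph{and} $(1-e_N)M$ is free of rank $1$ over $(1-e_N)\mathfrak{A}(F[G],M) = (1-e_N)R[G]$. But the latter is automatic: $(1-e_N)R[G]$ is a maximal order, and $(1-e_N)M$ is a full lattice in the free rank-$1$ module $(1-e_N)F[G]$, so it is free of rank $1$ by \cite[(18.10)]{MR1972204} (as already noted in the remark following Definition~\ref{def:N-hybrid}). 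Hence the freeness of $M$ over its associated order is equivalent to the freeness of $e_N M$ over $\mathfrak{A}(F[G/N], e_N M)$, as claimed.

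I do not anticipate a serious obstacle here; the only point requiring a little care is justifying that an $R$-order sandwiched between a product order and a product algebra splits as the corresponding product of its components — but this is immediate once one observes that $e_N$ lies in the smaller order, hence acts as a central idempotent on everything in sight, so that each of the three objects $R[G] \subseteq \mathfrak{A}(F[G],M) \subseteq F[G]$ is the product of its $e_N$- and $(1-e_N)$-parts. The rest is bookkeeping with Proposition~\ref{prop:smallassociated} and the maximal-order freeness fact.
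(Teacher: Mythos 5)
Your argument is correct and follows essentially the same route as the paper: decompose along the central idempotent $e_{N}$, identify the $e_{N}$-component via Proposition \ref{prop:smallassociated}(iii), use maximality of $(1-e_N)R[G]$ (the $N$-hybrid hypothesis) to pin down the $(1-e_N)$-component and to get freeness of $(1-e_N)M$ from \cite[(18.10)]{MR1972204}, and conclude from $M \cong e_N M \oplus (1-e_N)M$. The extra detail you supply on why the sandwiched order splits along $e_N$ is a correct elaboration of what the paper leaves implicit.
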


\begin{proof}
The first claim follows from Proposition \ref{prop:smallassociated}, Definition \ref{def:N-hybrid},
and the fact that $R[G] \subseteq \mathfrak{A}(F[G],M)$.
Since $(1-e_{N})R[G]$ is a maximal $R$-order,
$(1-e_{N})M$ is free over $(1-e_{N})R[G]$ by \cite[(18.10)]{MR1972204}. 
The second claim now follows from the decomposition $M \cong e_{N} M \oplus (1-e_{N})M$.
\end{proof}

\subsection{Applications to local freeness for extensions of number fields}

\begin{prop}\label{prop:hybridnumberfields}
Let $K/F$ be a finite Galois extension of number fields and let $G=\Gal(K/F)$.
Let $p$ be a rational prime and let $\mathfrak{p}$ be a prime of $F$ above $p$.
Let $N$ be a normal subgroup of $G$ such that $p \nmid |N|$ and let
$M$ be the subfield of $K$ fixed by $N$.
Then we have an identification
$e_{N}\mathcal{O}_{K,\mathfrak{p}}=\mathcal{O}_{M,\mathfrak{p}}$.
Moreover, via this identification, the structure of $e_{N}\mathcal{O}_{K,\mathfrak{p}}$
as an $e_{N}\mathcal{O}_{F_\mathfrak{p}}[G]$-module coincides with the structure
of $\mathcal{O}_{M,\mathfrak{p}}$ as an $\mathcal{O}_{F_{\mathfrak{p}}}[G/N]$-module 
under the canonical identification $G/N \cong \Gal(M/F)$.
In particular,
\[
e_{N}\mathfrak{A}(F_{\mathfrak{p}}[G],\mathcal{O}_{K,\mathfrak{p}})
=\mathfrak{A}(F_{\mathfrak{p}}[G],\mathcal{O}_{K,\mathfrak{p}})\cap e_{N}F[G]
\cong \mathfrak{A}(F_{\mathfrak{p}}[G/N],\mathcal{O}_{M,\mathfrak{p}}).
\]
Now further suppose that $\mathcal{O}_{F_\mathfrak{p}}[G]$ is $N$-hybrid. 
Then
\[
\mathfrak{A}_{K/F,\mathfrak{p}}
\cong
\mathfrak{A}_{M/F,\mathfrak{p}} \times (1-e_{N})\mathcal{O}_{F_\mathfrak{p}}[G],
\]
and $\mathcal{O}_{K,\mathfrak{p}}$ is free over $\mathfrak{A}_{K/F,\mathfrak{p}}$
if and only if $\mathcal{O}_{M,\mathfrak{p}}$ is free over $\mathfrak{A}_{M/F,\mathfrak{p}}$.
\end{prop}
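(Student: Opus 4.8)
The plan is to deduce Proposition~\ref{prop:hybridnumberfields} from the local statements already established, namely Proposition~\ref{prop:smallassociated}, Proposition~\ref{prop:hybridfree}, and the description of $\mathcal{O}_{K,\mathfrak{p}}$ and $\mathfrak{A}_{K/F,\mathfrak{p}}$ given in \S\ref{subsec:ass-orders-rings-of-ints}. First I would work over $R=\mathcal{O}_{F_{\mathfrak{p}}}$, which is a discrete valuation ring with fraction field $F_{\mathfrak{p}}$, so that the abstract machinery of \S\ref{hybridsection} applies with $M$ replaced by the $R[G]$-lattice $\mathcal{O}_{K,\mathfrak{p}}$ sitting inside $F_{\mathfrak{p}}[G]$ (this identification being legitimate since $\mathcal{O}_{K,\mathfrak{p}}$ is a full $R[G]$-lattice in $K_{\mathfrak{p}}\cong F_{\mathfrak{p}}[G]\otim_{?}$—more precisely $K_{\mathfrak{p}}$ is free of rank $1$ over $F_{\mathfrak{p}}[G]$ by the normal basis theorem). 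The hypothesis $p\nmid|N|$ gives $|N|\in R^{\times}$, so $e_{N}\in R[G]$ and Proposition~\ref{prop:smallassociated} is available.

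The first substantive step is the identification $e_{N}\mathcal{O}_{K,\mathfrak{p}}=\mathcal{O}_{M,\mathfrak{p}}$ as $R$-modules, together with the claim that the $e_{N}R[G]\cong R[G/N]$-module structure matches the natural $\mathcal{O}_{F_{\mathfrak{p}}}[G/N]$-structure on $\mathcal{O}_{M,\mathfrak{p}}$ under $G/N\cong\Gal(M/F)$. Rationally this is clear: $e_{N}K_{\mathfrak{p}}=K_{\mathfrak{p}}^{N}=M_{\mathfrak{p}}$, and $e_{N}$ acts as the averaging (norm-type) projector $\frac{1}{|N|}\sum_{n\in N}n$, which on $\mathcal{O}_{K,\mathfrak{p}}$ is exactly the trace map $\frac{1}{|N|}\mathrm{Tr}_{K/M}$ base-changed to $\mathcal{O}_{F_{\mathfrak{p}}}$. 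So I would argue $e_{N}\mathcal{O}_{K,\mathfrak{p}}=\frac{1}{|N|}\mathrm{Tr}_{K/M}(\mathcal{O}_{K,\mathfrak{p}})$, and the content is that this equals $\mathcal{O}_{M,\mathfrak{p}}$. One inclusion, $e_{N}\mathcal{O}_{K,\mathfrak{p}}\subseteq e_{N}K_{\mathfrak{p}}\cap\mathcal{O}_{K,\mathfrak{p}}$: any element fixed by $N$ and integral lies in $\mathcal{O}_{M,\mathfrak{p}}$. For the reverse inclusion, $\mathcal{O}_{M,\mathfrak{p}}\subseteq\mathcal{O}_{K,\mathfrak{p}}$ and is fixed by $e_{N}$, hence $\mathcal{O}_{M,\mathfrak{p}}=e_{N}\mathcal{O}_{M,\mathfrak{p}}\subseteq e_{N}\mathcal{O}_{K,\mathfrak{p}}$. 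The compatibility of module structures then follows because the $G$-action on $e_{N}\mathcal{O}_{K,\mathfrak{p}}$ factors through $G/N$ and restricts on $\mathcal{O}_{M,\mathfrak{p}}$ to the Galois action of $\Gal(M/F)$, by the canonical identification; and because localisation commutes with taking $N$-invariants and with forming $\mathcal{O}_{M}$ out of $\mathcal{O}_{K}$ (using $e_{N}\mathcal{O}_{K,\mathfrak{p}}=\mathcal{O}_{M,\mathfrak{p}}$ just established together with $\mathcal{O}_{M,\mathfrak{p}}=\mathcal{O}_{F_{\mathfrak{p}}}\otimes_{\mathcal{O}_{F}}\mathcal{O}_{M}$).

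With that identification in hand, the displayed chain of equalities for $e_{N}\mathfrak{A}(F_{\mathfrak{p}}[G],\mathcal{O}_{K,\mathfrak{p}})$ is immediate from Proposition~\ref{prop:smallassociated}(iii) applied to $R=\mathcal{O}_{F_{\mathfrak{p}}}$, $M=\mathcal{O}_{K,\mathfrak{p}}$, together with substituting $e_{N}\mathcal{O}_{K,\mathfrak{p}}=\mathcal{O}_{M,\mathfrak{p}}$ into the last term $\mathfrak{A}(F_{\mathfrak{p}}[G/N],e_{N}\mathcal{O}_{K,\mathfrak{p}})$ and recognising it as $\mathfrak{A}(F_{\mathfrak{p}}[G/N],\mathcal{O}_{M,\mathfrak{p}})=\mathfrak{A}_{M/F,\mathfrak{p}}$ via the Notation at the end of \S\ref{subsec:ass-orders-rings-of-ints}. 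Finally, under the extra hypothesis that $\mathcal{O}_{F_{\mathfrak{p}}}[G]$ is $N$-hybrid, I would apply Proposition~\ref{prop:hybridfree} verbatim with $R=\mathcal{O}_{F_{\mathfrak{p}}}$, $M=\mathcal{O}_{K,\mathfrak{p}}$: it yields the direct product decomposition $\mathfrak{A}_{K/F,\mathfrak{p}}\cong\mathfrak{A}(F_{\mathfrak{p}}[G/N],e_{N}\mathcal{O}_{K,\mathfrak{p}})\times(1-e_{N})\mathcal{O}_{F_{\mathfrak{p}}}[G]\cong\mathfrak{A}_{M/F,\mathfrak{p}}\times(1-e_{N})\mathcal{O}_{F_{\mathfrak{p}}}[G]$, and the equivalence `$\mathcal{O}_{K,\mathfrak{p}}$ free over $\mathfrak{A}_{K/F,\mathfrak{p}}$ $\iff$ $e_{N}\mathcal{O}_{K,\mathfrak{p}}=\mathcal{O}_{M,\mathfrak{p}}$ free over $\mathfrak{A}_{M/F,\mathfrak{p}}$'. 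I expect the main obstacle to be the careful bookkeeping in the first substantive step: checking that all the relevant localisation-at-$\mathfrak{p}$ operations (completing $K$, $M$, and $F$; forming rings of integers; taking $N$-invariants; applying $e_{N}$) genuinely commute, and in particular that $\mathcal{O}_{M,\mathfrak{p}}$ in the sense $\mathcal{O}_{F_{\mathfrak{p}}}\otimes_{\mathcal{O}_{F}}\mathcal{O}_{M}$ agrees with $e_{N}\mathcal{O}_{K,\mathfrak{p}}$ and with $\prod_{\mathfrak{Q}\mid\mathfrak{p}}\mathcal{O}_{M_{\mathfrak{Q}}}$; once that is pinned down, everything else is a direct invocation of the already-proved propositions.
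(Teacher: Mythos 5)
Your proposal is correct and takes essentially the same route as the paper: its proof simply declares the identification $e_{N}\mathcal{O}_{K,\mathfrak{p}}=\mathcal{O}_{M,\mathfrak{p}}$ (with the compatible $G/N$-module structure) to be clear and then deduces the remaining claims as specialisations of Propositions \ref{prop:smallassociated} and \ref{prop:hybridfree}, exactly as you do, with your write-up merely spelling out the idempotent/trace argument behind the ``clear'' identification. The garbled phrase about $K_{\mathfrak{p}}\cong F_{\mathfrak{p}}[G]$ is harmless, since, as you note, $K_{\mathfrak{p}}$ is free of rank one over $F_{\mathfrak{p}}[G]$, so the machinery of \S\ref{hybridsection} applies.
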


\begin{proof}
The claims regarding the identifications are clear.
The remaining claims are then specialisations of Propositions \ref{prop:smallassociated} and \ref{prop:hybridfree}.
\end{proof}

\begin{corollary}\label{cor:hybrid}
Let $K/\Q$ be a finite Galois extension and let $G=\Gal(K/\Q)$.
Let $N$ be a normal subgroup of $G$ and such that 
$G/N$ is abelian or metacyclic of type (\ref{eqmetacyclic}).
Let $p$ be a rational prime.
If $\Z_{p}[G]$ is $N$-hybrid, then $\mathcal{O}_{K,p}$ is free over $\mathfrak{A}_{K/\Q,p}$.
\end{corollary}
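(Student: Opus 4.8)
The plan is to combine Proposition \ref{prop:hybridnumberfields} (applied with $F=\Q$ and $\mathfrak{p}=p\Z_p$) with the known local freeness results for abelian and metacyclic extensions recalled in \S\ref{padicresults}. First I would invoke the hypothesis that $\Z_p[G]$ is $N$-hybrid; by Definition \ref{def:N-hybrid} this forces $p\nmid |N|$, so the hypotheses of Proposition \ref{prop:hybridnumberfields} are satisfied. Let $M$ be the subfield of $K$ fixed by $N$, so that $M/\Q$ is Galois with $\Gal(M/\Q)\cong G/N$. The last assertion of Proposition \ref{prop:hybridnumberfields} then reduces the claim to showing that $\mathcal{O}_{M,p}$ is free over $\mathfrak{A}_{M/\Q,p}$, i.e.\ that $\mathcal{O}_M$ is locally free over $\mathfrak{A}_{M/\Q}$ at $p$.

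Next I would dispose of the reduced problem by appeal to the appropriate classical theorem, according to which of the two cases $G/N$ falls into. If $G/N$ is abelian, then $M/\Q$ is a finite abelian extension, and Theorem \ref{thm:abs-abelian-locally-free} gives that $\mathcal{O}_M$ is locally free over $\mathfrak{A}_{M/\Q}$; in particular this holds at $p$. If instead $G/N$ is metacyclic of type \eqref{eqmetacyclic}, then $\Gal(M/\Q)$ is metacyclic of type \eqref{eqmetacyclic}, and Theorem \ref{thm:jaulent} gives that $\mathcal{O}_M$ is locally free over $\mathfrak{A}_{M/\Q}$, again in particular at $p$. Either way, $\mathcal{O}_{M,p}$ is free over $\mathfrak{A}_{M/\Q,p}$, which is exactly what the reduction step requires, and the corollary follows.

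There is essentially no genuine obstacle here: the work has already been done in Proposition \ref{prop:hybridnumberfields}, which packages the hybrid decomposition $\mathfrak{A}_{K/\Q,p}\cong \mathfrak{A}_{M/\Q,p}\times(1-e_N)\Z_p[G]$ together with the fact that $(1-e_N)\mathcal{O}_{K,p}$ is automatically free over the maximal order $(1-e_N)\Z_p[G]$. The only point requiring a line of care is the implicit remark that $N$-hybridness entails $p\nmid |N|$ (so that $e_N\in\Z_p[G]$ and the identification $e_N\mathcal{O}_{K,p}=\mathcal{O}_{M,p}$ is valid); this is immediate from Definition \ref{def:N-hybrid}. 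Thus the proof is a short two-step argument: apply Proposition \ref{prop:hybridnumberfields} to reduce to $M/\Q$, then apply Theorem \ref{thm:abs-abelian-locally-free} or Theorem \ref{thm:jaulent}.
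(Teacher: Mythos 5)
Your proposal is correct and follows essentially the same route as the paper: reduce via Proposition \ref{prop:hybridnumberfields} to the fixed field $M=K^{N}$ and then invoke the abelian or metacyclic local freeness results. The only cosmetic difference is that in the abelian case the paper cites Leopoldt's theorem (Theorem \ref{thm:leopoldt}) while you cite Theorem \ref{thm:abs-abelian-locally-free}; both yield local freeness of $\mathcal{O}_{M}$ over $\mathfrak{A}_{M/\Q}$ at $p$, so the argument is unaffected.
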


\begin{proof}
Let $M$ be the subfield of $K$ fixed by $N$.
Then $\mathcal{O}_{M,p}$ is free over $\mathfrak{A}_{M/\Q,p}$
by Theorem \ref{thm:leopoldt} or Theorem \ref{thm:jaulent}.
The result now follows from Proposition \ref{prop:hybridnumberfields}.
\end{proof}

\begin{remark}
Jaulent \cite{MR748000} developed similar arguments to Corollary \ref{cor:hybrid},
but restricted to the case that $G$ is metacyclic of type (\ref{eqmetacyclic}).
\end{remark}

\subsection{Preliminary results on $A_{4}$ and $S_{4}$-extensions of $\Q$}

\begin{prop}\label{prop:preliminary-A4-S4-result}
Let $K/\Q$ be a Galois extension with $\Gal(K/\Q)\cong A_{4}$ or $S_{4}$.
Then $\mathcal{O}_{K}$ is free over $\mathfrak{A}_{K/\Q}$ 
if and only if $\mathcal{O}_{K,2}$ is free over $\mathfrak{A}_{K/\Q,2}$. 
\end{prop}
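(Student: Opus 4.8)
By Corollary \ref{cor:A4S5A5-locally-free-implies-free}, it suffices to show that $\mathcal{O}_K$ is locally free over $\mathfrak{A}_{K/\Q}$ if and only if $\mathcal{O}_{K,2}$ is free over $\mathfrak{A}_{K/\Q,2}$. One implication is trivial, so the content is to show that $\mathcal{O}_{K,p}$ is automatically free over $\mathfrak{A}_{K/\Q,p}$ for every odd prime $p$. The plan is to split this into two cases according to whether $p$ is tamely or wildly ramified in $K/\Q$.

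\emph{The tame case.} If $p$ is tamely (possibly un-)ramified in $K/\Q$, then by Theorem \ref{thm:noetherlocallyfree} we have $\mathfrak{A}_{K/\Q,p}=\Z_p[G]$ and $\mathcal{O}_{K,p}$ is free over it. This disposes of all $p$ not dividing $|G|$, and also of the tamely ramified primes dividing $|G|$.

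\emph{The wild case.} Suppose $p$ is wildly ramified in $K/\Q$, so $p\mid |G|$. For $G\cong A_4$ we have $|G|=12$, so the only wildly ramifiable odd prime is $p=3$; for $G\cong S_4$, $|G|=24$, and again the only odd wildly ramifiable prime is $p=3$. So in both cases it remains to treat $p=3$. Here the plan is to invoke the hybrid machinery: by Example \ref{ex:A4S4-hybrid-examples}, $\Z_3[G]$ is $N$-hybrid where $N$ is the unique normal subgroup of $G$ of order $4$. Since $3\nmid |N|=4$, Corollary \ref{cor:hybrid} applies directly provided $G/N$ is abelian or metacyclic of type \eqref{eqmetacyclic}: for $G=A_4$ we have $G/N\cong C_3$, which is abelian, and for $G=S_4$ we have $G/N\cong S_3\cong C_3\rtimes C_2$, which is metacyclic of type \eqref{eqmetacyclic} (with $p=3$, $n=2$, $r=2$, i.e.\ dihedral of order $6$). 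Hence $\mathcal{O}_{K,3}$ is free over $\mathfrak{A}_{K/\Q,3}$ in either case.

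\emph{Assembling the argument.} Combining the two cases, $\mathcal{O}_{K,p}$ is free over $\mathfrak{A}_{K/\Q,p}$ for every odd prime $p$; together with the reduction via Corollary \ref{cor:A4S5A5-locally-free-implies-free}, this gives the claimed equivalence. The only step requiring any real input is the $p=3$ wild case, and even that is immediate once one records that $\Z_3[A_4]$ and $\Z_3[S_4]$ are $N$-hybrid and that the corresponding quotients $C_3$ and $S_3$ fall under Leopoldt's theorem (Theorem \ref{thm:leopoldt}) or Jaulent's theorem (Theorem \ref{thm:jaulent}) respectively; so I do not anticipate a genuine obstacle here, the main point being simply to assemble the previously-established propositions correctly.
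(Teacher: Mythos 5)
Your proposal is correct and follows essentially the same route as the paper: reduce via Corollary \ref{cor:A4S5A5-locally-free-implies-free} to local freeness, handle primes $p\geq 5$ (and any tame primes) by Theorem \ref{thm:noetherlocallyfree}, and handle $p=3$ using that $\Z_3[G]$ is $N$-hybrid for the normal subgroup $N$ of order $4$ with $G/N\cong C_3$ or $S_3$, via Corollary \ref{cor:hybrid}. The only cosmetic difference is that the paper invokes the hybrid argument at $p=3$ unconditionally rather than splitting into tame and wild cases, but this changes nothing of substance.
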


\begin{proof}
By Corollary \ref{cor:A4S5A5-locally-free-implies-free}, it suffices to show that 
$\mathcal{O}_{K,p}$ is free over $\mathfrak{A}_{K/\Q,p}$ for each rational prime $p \geq 3$.
For $p \geq 5$ this follows from Theorem \ref{thm:noetherlocallyfree}.
Let $G=\Gal(K/\Q)$ and let $N$ be its unique normal subgroup of order $4$.
By Example \ref{ex:A4S4-hybrid-examples} the group ring $\Z_{3}[G]$ is $N$-hybrid.
Moreover, $G/N \cong C_{3}$ or $S_{3}$ (note that $S_{3} \cong D_{6}$ is metacyclic of type \eqref{eqmetacyclic}).
Thus by Corollary \ref{cor:hybrid} we have that 
$\mathcal{O}_{K,3}$ is free over $\mathfrak{A}_{K/\Q,3}$.
\end{proof}

\begin{lemma}\label{lem:unique-A4-ext-of-Q2}
There is a unique Galois extension $L/\Q_{2}$ with $\Gal(L/\Q_{2}) \cong A_{4}$.
Moreover, this extension is wildly and weakly ramified, and the inertia subgroup is equal to the unique
(normal) subgroup of order $4$.
\end{lemma}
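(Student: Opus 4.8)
The plan is to prove Lemma~\ref{lem:unique-A4-ext-of-Q2} by a combination of group-theoretic and local class field theory arguments, breaking it into three parts: existence, uniqueness, and the ramification statement. First I would recall the standard structure theory of Galois groups of local fields: if $L/\Q_{2}$ is a totally ramified Galois extension with group $G$, then $G$ is solvable, the inertia subgroup $G_{0}$ equals $G$, the wild inertia $G_{1}$ is the (unique) Sylow $p$-subgroup of $G_{0}$ (here $p=2$), and $G_{0}/G_{1}$ is cyclic of order prime to $p$; more generally, for any Galois extension, $G/G_{0}$ is cyclic (the Frobenius), $G_{0}/G_{1}$ is cyclic of order prime to $p$, and $G_{1}$ is a $p$-group. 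I would also recall the tame quotient bound: the tamely ramified part imposes that $G_{0}/G_{1}$ embeds in $k^{\times}$ where $k$ is the residue field of the relevant subextension, which for $\Q_{2}$ is $\F_{2}$, forcing the tame quotient over $\Q_{2}$ to be trivial.

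The key structural observation is that $A_{4}$ has a unique Sylow $2$-subgroup, namely its normal Klein four subgroup $V$, and $A_{4}/V \cong C_{3}$. Now suppose $L/\Q_{2}$ is Galois with $G = \Gal(L/\Q_{2}) \cong A_{4}$. Consider the ramification filtration $G \supseteq G_{0} \supseteq G_{1} \supseteq \cdots$. Since $G/G_{0}$ is cyclic and the only cyclic quotients of $A_{4}$ are $C_{1}$ and $C_{3}$, either $G_{0} = G$ or $G_{0} = V$. If $G_{0} = V$, then $L^{V}/\Q_{2}$ would be unramified of degree $3$ (which exists and is unique) and $L/L^{V}$ totally ramified with group $V$; but $G_{0}/G_{1}$ is cyclic of order prime to $2$, and $V$ is a $2$-group, so $G_{0} = G_{1} = V$, i.e. $L/L^{V}$ is totally wildly ramified with group $V \cong C_{2} \times C_{2}$. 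If instead $G_{0} = G$, then $G_{0}/G_{1}$ is a cyclic quotient of $A_{4}$ of order prime to $2$, hence $C_{1}$ or $C_{3}$; but $G_{0}/G_{1}$ must embed into $\F_{2}^{\times} = \{1\}$ (the residue field of the maximal tamely ramified subextension, which sits over the residue field $\F_{2}$ of $\Q_{2}$), so $G_{0}/G_{1}$ is trivial, forcing $G_{1} = G_{0} = G = A_{4}$ — impossible, since $G_{1}$ is a $2$-group. Hence necessarily $G_{0} = G_{1} = V$, which is exactly the claimed inertia statement, and also shows $L/\Q_{2}$ is wildly ramified with inertia the unique normal subgroup of order $4$.

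For the weak ramification, I would argue that $G_{2}$ must be trivial. Since $G_{1} = V$ is elementary abelian of order $4$ and $G_{1}/G_{2}$ embeds into the additive group of the residue field $\F_{2}$, we get $|G_{1}/G_{2}| \le 2$; but $G_{2}$ is normal in $G_{0} = G$ (in fact in $G$, being characteristic in the filtration), and the normal subgroups of $A_{4}$ contained in $V$ are $1$ and $V$. If $G_{2} = V$ then $G_{1}/G_{2} = 1$, contradicting the fact that the first jump $G_{1} \supsetneq G_{2}$ must be strict when $G_{1} \neq 1$ (equivalently, $G_{i} = G_{i+1}$ for the smallest $i$ with $G_{i} \neq 1$ would force $G_{1} = 1$ by the standard fact that if $G_{i} = G_{i+1}$ then $G_{i} = \bigcap_{j} G_{j} = 1$). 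Hence $G_{2} = 1$ and $L/\Q_{2}$ is weakly ramified.

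For existence and uniqueness I would use local class field theory applied to the abelian extension $L/L^{V}$. A totally ramified $V$-extension of a $2$-adic field $E := L^{V}$ (which is the unramified cubic extension of $\Q_{2}$, so $E$ is fixed) corresponds, via the norm residue symbol, to an open subgroup of index $4$ of $E^{\times}$ with quotient $\cong V$; moreover the requirement that $L/\Q_{2}$ be Galois with group $A_{4}$ (not just $V \rtimes C_{3}$ with some other action, but precisely the action making the extension class of the correct type) amounts to the $\Gal(E/\Q_{2}) = C_{3}$-action on this quotient being the natural nontrivial one, and the resulting extension of $\Q_{2}$ being Galois. Concretely, $E^{\times} \cong \Z \times \mu_{\cdot} \times \Z_{2}^{3}$ (units of a degree-$3$ unramified extension), its maximal exponent-$2$ quotient on the principal units is $(\Z_{2}/2)^{3} \cong \F_{2}^{3}$ carrying a natural $\F_{2}[C_{3}]$-module structure, and one checks there is a unique $C_{3}$-stable index-$4$ subgroup with quotient $V$ on which $C_{3}$ acts without fixed points; the corresponding class field is the unique such $L$, and Galois descent shows $\Gal(L/\Q_{2}) \cong V \rtimes C_{3} = A_{4}$. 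I expect the main obstacle to be precisely this last step: carefully pinning down the $\F_{2}[C_{3}]$-module structure of the relevant unit quotient and verifying there is exactly one admissible subgroup (and that the descent gives $A_{4}$ rather than a different extension of $C_{3}$ by $V$), rather than the purely ramification-theoretic parts, which are routine. An alternative, if a cleaner reference is available, would be to cite a database of $2$-adic fields (as the paper does elsewhere, e.g. \cite{MR2194887}) to list the degree-$12$ extensions of $\Q_{2}$ with Galois group $A_{4}$ and read off that there is exactly one, with the stated ramification invariants.
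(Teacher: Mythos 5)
Your determination of the inertia subgroup is correct and is a genuinely different (theoretical) route from the paper: since $G/G_0$ is cyclic, since $G_0/G_1$ is cyclic of order prime to $2$ while $G_1$ is a $2$-group, and since $A_4$ has only $V$ and $A_4$ as candidates for $G_0$, your elimination of $G_0=G$ via the embedding of $G_0/G_1$ into the residue field (which is indeed $\F_2$ in that case) forces $G_0=G_1=V$ and wild ramification. The genuine gap is in the weak ramification step, which fails on two counts. First, for $i\geq 1$ the quotient $G_i/G_{i+1}$ embeds into the additive group of the residue field of $L$, which here is $\F_8$ (the residue degree is $3$, since $G_0=V$ has index $3$), not $\F_2$; so there is no bound $|G_1/G_2|\leq 2$ — and note that such a bound would actually rule out the case $G_2=1$ you are trying to prove, since then $|G_1/G_2|=4$. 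Second, the ``standard fact'' you invoke, that $G_i=G_{i+1}$ at the first nontrivial level forces $G_i=1$, is false: for $\Q_2(\sqrt{2})/\Q_2$ one has $G_0=G_1=G_2\neq 1$ and $G_3=1$. So nothing in your argument excludes the a priori possible filtration $G_0=G_1=G_2=V$, i.e.\ a non-weakly ramified $A_4$-extension; this case needs a real input to be ruled out.

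One way to close the gap in your own framework: with $E=L^V$ the unramified cubic extension of $\Q_2$, the group $\Gal(E/\Q_2)\cong C_3$ permutes the three nontrivial quadratic characters of $\Gal(L/E)\cong V$ transitively, so they share a common conductor exponent, which (for a $2$-adic field of absolute ramification index $1$) is $2$ or $3$; but the quadratic characters of $E^\times$ of conductor exponent $3$ form a coset of an index-$2$ subgroup of the full group of quadratic characters, so a Klein four group of characters cannot consist entirely of conductor-exponent-$3$ characters. Hence all three exponents equal $2$, the unique ramification break is at $1$, and $L/\Q_2$ is weakly ramified. Your class-field-theoretic existence/uniqueness sketch, though left unexecuted, does work: as $\F_2[C_3]$-modules one has $E^\times/(E^\times)^2\cong \F_2^3\oplus W$ with $W$ the two-dimensional irreducible, so there is exactly one stable index-$4$ subgroup with fixed-point-free action on the quotient, and it contains the second unit group, which gives weak ramification again. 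For comparison, the paper's proof is simply your fallback option: it cites the database of $2$-adic fields and exhibits $L$ as the Galois closure of the extension defined by $x^4+2x^3+2x^2+2$.
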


\begin{proof}
This can easily be checked by, for instance, using the database of $p$-adic fields \cite{MR2194887}. 
Indeed, $L$ is the Galois closure of the extension of $\Q_{2}$
generated by the polynomial $\mathtt{x^{4}+2x^{3}+2x^{2}+2}$. 
\end{proof}

\begin{prop}\label{prop:preliminary-A4-result}
Let $K/\Q$ be a Galois extension with $\Gal(K/\Q) \cong A_{4}$. 
If $2$ is either tamely ramified in $K/\Q$ or has full decomposition group in $\Gal(K/\Q)$,
then $\mathcal{O}_{K}$ is free over $\mathfrak{A}_{K/\Q}$. 
\end{prop}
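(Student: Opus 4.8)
The plan is to split into the two cases in the hypothesis and reduce each to results already available in the excerpt. By Proposition~\ref{prop:preliminary-A4-S4-result} it suffices to prove that $\mathcal{O}_{K,2}$ is free over $\mathfrak{A}_{K/\Q,2}$. Fix a prime $\mathfrak{P}$ of $K$ above $2$, let $D$ be its decomposition group in $G=\Gal(K/\Q)$, and recall from \S\ref{subsec:ass-orders-rings-of-ints} that $\mathcal{O}_{K,2}\cong\mathrm{Ind}_D^G\mathcal{O}_{K_\mathfrak{P}}$ and that local freeness at $2$ is equivalent to freeness of this induced lattice over its associated order in $\Q_2[G]$.

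\emph{Case 1: $2$ is tamely ramified.} Then $\mathcal{O}_{K,2}$ is free over $\mathfrak{A}_{K/\Q,2}=\Z_2[G]$ by Theorem~\ref{thm:noetherlocallyfree}, and we are done immediately.

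\emph{Case 2: $2$ has full decomposition group, i.e.\ $D=G$.} Then $K_\mathfrak{P}/\Q_2$ is a Galois extension with group $G\cong A_4$, so by Lemma~\ref{lem:unique-A4-ext-of-Q2} it is (the unique such extension, and in particular) wildly and weakly ramified. Since $D=G$, we have $\mathcal{O}_{K,2}\cong\mathcal{O}_{K_\mathfrak{P}}$ and $\mathfrak{A}_{K/\Q,2}\cong\mathfrak{A}_{K_\mathfrak{P}/\Q_2}$, so the question becomes whether $\mathcal{O}_{K_\mathfrak{P}}$ is free over $\mathfrak{A}_{K_\mathfrak{P}/\Q_2}$. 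This is precisely the weakly ramified case, so Theorem~\ref{thm:weak} applies and gives freeness.

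The main thing to be careful about is the passage in Case~2 from the global decomposition-group hypothesis to the statement that $K_\mathfrak{P}/\Q_2$ is Galois with group $A_4$ and hence weakly ramified: this uses that the completion at $\mathfrak{P}$ realises $D$ as $\Gal(K_\mathfrak{P}/\Q_2)$, that $D=G$ forces the induced lattice to collapse to the local lattice (the products over $G/D$ in \S\ref{subsec:ass-orders-rings-of-ints} become trivial), and then Lemma~\ref{lem:unique-A4-ext-of-Q2} to identify the ramification behaviour. Once those identifications are in place, neither case requires any computation — both reduce directly to theorems already recorded in the excerpt (Theorem~\ref{thm:noetherlocallyfree} and Theorem~\ref{thm:weak}), with Proposition~\ref{prop:preliminary-A4-S4-result} handling all primes $p\geq 3$.
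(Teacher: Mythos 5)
Your proof is correct and follows essentially the same route as the paper: reduce to local freeness at $2$ via Proposition~\ref{prop:preliminary-A4-S4-result}, handle the tame case by Theorem~\ref{thm:noetherlocallyfree}, and in the full-decomposition-group case use Lemma~\ref{lem:unique-A4-ext-of-Q2} to get weak ramification, the identifications $\mathcal{O}_{K,2}\cong\mathcal{O}_{K_\mathfrak{P}}$ and $\mathfrak{A}_{K/\Q,2}\cong\mathfrak{A}_{K_\mathfrak{P}/\Q_2}$, and then Theorem~\ref{thm:weak}.
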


\begin{proof}
By Proposition \ref{prop:preliminary-A4-S4-result}, it suffices to show that 
$\mathcal{O}_{K,2}$ is free over $\mathfrak{A}_{K/\Q,2}$.
If $2$ is tamely ramified in $K/\Q$ then this follows from Theorem \ref{thm:noetherlocallyfree}.
Now suppose that $2$ has full decomposition group in $G:=\Gal(K/\Q)$. 
Then $2$ is weakly ramified in $K/\Q$ by Lemma \ref{lem:unique-A4-ext-of-Q2}.
Let $\mathfrak{P}$ be the unique prime of $K$ above $2$.
Then 
\[
\mathcal{O}_{K,2} \cong \Ind_{G}^{G}\mathcal{O}_{K_\mathfrak{P}}=\mathcal{O}_{K_\mathfrak{P}}
\quad \text{and} \quad
\mathfrak{A}_{K/\Q,2}\cong \mathfrak{A}_{K_\mathfrak{P}/\Q_{2}},
\]
so the result now follows from Theorem \ref{thm:weak}.
\end{proof}

\section{Leopoldt-type theorems for certain dihedral extensions of $\Q$}\label{sec:dihedral}

We first recall the following theorem of Berg\'e stated the introduction.

\begin{theorem}\cite{MR0371857}
Let $p$ be a prime number and let $K/\Q$ be a dihedral extension of degree $2p$. 
Then $\mathcal{O}_{K}$ is free over $\mathfrak{A}_{K/\Q}$.
\end{theorem}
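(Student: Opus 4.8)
The plan is to reduce the global problem to a local one, prime by prime, and then to quote the $p$-adic freeness results collected in Section \ref{padicresults}. Since $G = D_{2p}$ is dihedral, it has no irreducible symplectic characters, so $\Q[G]$ is a product of matrix rings over number fields and $\mathrm{Cl}(\Z[G])$ is trivial (indeed for dihedral groups this is classical); in particular the hypotheses of Proposition \ref{prop:trivialclassgroup} are satisfied, and it suffices to show that $\mathcal{O}_K$ is \emph{locally} free over $\mathfrak{A}_{K/\Q}$, i.e.\ that $\mathcal{O}_{K,q}$ is free over $\mathfrak{A}_{K/\Q,q}$ for every rational prime $q$.

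Next I would dispose of the primes $q \neq p$: these are tamely ramified in $K/\Q$ (the only prime that can be wildly ramified in a $D_{2p}$-extension is $p$ itself, since wild ramification forces $p \mid |G_0|$ with $G_0$ a $p$-group up to the tame quotient, and the Sylow $p$-subgroup of $D_{2p}$ is $C_p$), so Theorem \ref{thm:noetherlocallyfree} gives that $\mathcal{O}_{K,q}$ is free over $\mathfrak{A}_{K/\Q,q} = \Z_q[G]$. It therefore remains to treat the single prime $p$.

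For the prime $p$, fix a prime $\mathfrak{P}$ of $K$ above $p$, let $D$ be its decomposition group and $I = G_0 \subseteq D$ its inertia group. By the discussion in \S\ref{subsec:ass-orders-rings-of-ints} it is enough to show that $\mathrm{Ind}_D^G \mathcal{O}_{K_\mathfrak{P}}$ is free over $\mathfrak{A}(\Q_p[G], \mathrm{Ind}_D^G \mathcal{O}_{K_\mathfrak{P}})$. The decomposition group $D$ of a prime of a $D_{2p}$-extension is, up to conjugacy, one of $1$, $C_2$, $C_p$, or $D_{2p}$, and the local extension $K_\mathfrak{P}/\Q_p$ is Galois with group $D$. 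When $D$ is trivial or $C_2$ the prime $p$ is at most tamely ramified and we are back in the tame case handled above; when $D = C_p$ or $D = D_{2p}$ the extension $K_\mathfrak{P}/\Q_p$ is abelian of order $p$, or dihedral of order $2p$ respectively, so Theorem \ref{thm:lettl-abs-abelian-p-adic} (equivalently Theorem \ref{thm:tamepadic} in the unramified/tame subcases) or Theorem \ref{thm:padicberge} tells us that $\mathcal{O}_{K_\mathfrak{P}}$ is free over $\mathfrak{A}_{K_\mathfrak{P}/\Q_p}$. However, as the introduction emphasises, $p$-adic freeness of $\mathcal{O}_{K_\mathfrak{P}}$ does not automatically yield local freeness of $\mathcal{O}_K$ at $p$ when $D \neq G$, so the induction step needs to be handled carefully — and this is the main obstacle. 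The cleanest route is to use Remark \ref{rmk:weakalmostmaximal}, which says a dihedral extension $K_\mathfrak{P}/\Q_p$ of degree $2p$ is either almost-maximally ramified or weakly and totally ramified, together with the description of $\mathfrak{A}_{K/\Q,p}$ coming from Theorem \ref{thm:nopadicdihedral}: in either case $\mathfrak{A}_{K/\Q,p} = \Z_p[G][e_{G_0}]$ or $\Z_p[G][\{e_{G_t}\}_{t\geq 1}]$, and since $G_0 \subseteq N$ (the cyclic subgroup of index $2$) is a $p$-group one checks directly that $\mathrm{Ind}_D^G$ of the explicit generator of $\mathcal{O}_{K_\mathfrak{P}}$ over $\mathfrak{A}_{K_\mathfrak{P}/\Q_p}$ remains a free generator after induction, because $\mathfrak{A}(\Q_p[G], \mathrm{Ind}_D^G\mathcal{O}_{K_\mathfrak{P}}) = \mathrm{Ind}_D^G \mathfrak{A}_{K_\mathfrak{P}/\Q_p}$ in this situation. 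Assembling the local statements at $p$ and at all $q \neq p$ and invoking Proposition \ref{prop:trivialclassgroup} then yields the global freeness of $\mathcal{O}_K$ over $\mathfrak{A}_{K/\Q}$.
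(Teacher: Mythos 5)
There is a genuine gap, and it is in the very first step. Your reduction to local freeness rests on the assertion that $\mathrm{Cl}(\Z[D_{2p}])$ is trivial for every prime $p$, which you justify by the absence of irreducible symplectic characters; this is a non sequitur and the assertion itself is false in general. The absence of symplectic characters gives the Eichler condition, hence locally free \emph{cancellation} (Jacobinski), but says nothing about triviality of the locally free class group: $\mathrm{Cl}(\Z[D_{2p}])$ surjects onto the class group of a maximal order in $\Q \times \Q \times M_2\bigl(\Q(\zeta_p)^+\bigr)$, so its triviality forces (in particular) the class number of $\Q(\zeta_p)^+$ to be $1$, which fails for infinitely many (or at least for known) primes such as $p=163$. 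This is exactly why the paper's own dihedral result (Theorem \ref{thm:dihedralpower}) has to \emph{assume} that $p$ is regular and that $h^+$ is $1$, quoting Keating for the triviality of $\mathrm{Cl}(\Z[G])$ under those hypotheses. Without triviality of the class group (or some other argument that the class of $\mathcal{O}_K$ in $\mathrm{Cl}(\mathfrak{A}_{K/\Q})$ vanishes), Proposition \ref{prop:trivialclassgroup} is not available, and local freeness everywhere does not yield global freeness. Note also that the paper does not prove this theorem at all: it is quoted from Berg\'e \cite{MR0371857}, whose argument is a genuinely global, explicit one (building on Leopoldt's theorem for the subfields), not a ``trivial class group plus cancellation'' argument.

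By contrast, the local half of your sketch is essentially fine, if somewhat loosely argued. For $q\neq p$ tameness plus Theorem \ref{thm:noetherlocallyfree} works; at $p$ (odd) one can avoid your ``one checks directly'' step entirely: $G_1$ is a $p$-group, hence automatically contained in $N$, and either $p$ is almost-maximally ramified (this always holds when $G_0\cong C_p$, by the computation of Remark \ref{rmk:idempotent}) or, by Remark \ref{rmk:weakalmostmaximal}, $K_\mathfrak{P}/\Q_p$ is weakly and totally ramified with $|G_0|=2p$; in either case Theorem \ref{thm:nopadicdihedral} gives local freeness at $p$ directly. Alternatively your induction route works when $D=C_p$ because the trace subgroup $P=C_p$ is normal in $G$, so Theorem \ref{thm:inductiononegenerator}(iii)\&(iv) applies — but this needs to be said, rather than appealing vaguely to $G_0\subseteq N$. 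You should also dispose of $p=2$ separately (a ``dihedral'' extension of degree $4$ is a $C_2\times C_2$-extension, covered by Leopoldt's Theorem \ref{thm:leopoldt}). None of these repairs, however, fixes the global step, which is where the proof fails.
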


In the following theorem and corollaries, we consider other dihedral extensions of $\Q$.
For a positive integer $m$,
let $\Q(\zeta_{m})^+$ denote the maximal totally real subfield of $m$th cyclotomic field $\Q(\zeta_{m})$.
If $m$ is odd then $\Q(\zeta_{2m})= \Q(\zeta_{m})$ and so
$\Q(\zeta_{2m})^{+}= \Q(\zeta_{m})^{+}$.

\begin{theorem}\label{thm:dihedralpower}
Let $p$ be a prime and let $n\geq 2$ be an integer.
Let $K/\Q$ be a dihedral extension of degree $2p^{n}$.
Suppose that $p$ is a regular prime such that the class number of $\Q(\zeta_{2p^{n}})^{+}$ is $1$. 
Consider the following assertions:
\begin{enumerate}
\item $\mathcal{O}_{K}$ is free over $\mathfrak{A}_{K/\Q}$;
\item $\mathcal{O}_{K}$ is locally free over $\mathfrak{A}_{K/\Q}$ at $p$;
\item $p$ is almost-maximally ramified in the extension $K/\Q$;
\item the ramification index of $p$ in $K/\Q$ is a power of $p$.
\end{enumerate}
Then we have the following conclusions:
\begin{enumerate}[label=\upshape{(\alph*)}]
\item \textup{(i)} and \textup{(ii)} are equivalent;
\item if $p$ is odd, then \textup{(i)}, \textup{(ii)} and \textup{(iii)} are equivalent;
\item if $p$ is odd, then \textup{(iv)} implies  \textup{(i)}, \textup{(ii)} and \textup{(iii)};
\item if $p\geq 5$, then \textup{(i)}, \textup{(ii)}, \textup{(iii)} and \textup{(iv)} are equivalent.
\end{enumerate}
\end{theorem}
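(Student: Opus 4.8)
The plan is to combine the reduction machinery of \S 2 with Berg\'e's local-freeness criterion (Theorem \ref{thm:nopadicdihedral}) and the arithmetic of cyclotomic fields. First observe that conclusion (a) is exactly Proposition \ref{prop:trivialclassgroup} applied to the dihedral group $G = D_{2p^n}$, \emph{provided} we know $\textup{Cl}(\Z[G])$ is trivial. This is where the hypotheses on the prime $p$ and on the class number of $\Q(\zeta_{2p^n})^{+}$ enter: by Keating's sufficient conditions \cite{MR0357591}, the triviality of $\textup{Cl}(\Z[D_{2p^n}])$ follows when $p$ is regular and $h(\Q(\zeta_{2p^n})^{+}) = 1$. (Strictly, what we need is that $\Q[G]$ satisfies the Eichler condition—automatic, since $\Q[D_{2p^n}]$ is a product of matrix algebras over number fields of dimension $\le 4$ each—and that $\textup{Cl}(\Z[G])$ vanishes; then Jacobinski cancellation gives that $\mathcal{O}_K$ is free over $\mathfrak{A}_{K/\Q}$ iff it is locally free, i.e. (i) $\Leftrightarrow$ (ii).)

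Next, for conclusion (b), with $p$ odd, I would show (ii) $\Leftrightarrow$ (iii) by localising at every rational prime and checking that the only prime that can obstruct local freeness is $p$ itself. For primes $q \ne p$: either $q$ is tamely ramified (and then $\mathcal{O}_{K,q}$ is free over $\mathfrak{A}_{K/\Q,q} = \Z_q[G]$ by Theorem \ref{thm:noetherlocallyfree}), or—if $q$ could be wildly ramified—one uses the hybrid-group-ring argument: since $p \nmid |N| $ is false here, instead one notes that the only wild prime in a $D_{2p^n}$-extension of $\Q$ is $p$, because the ramification index and residue degree must divide $|G| = 2p^n$ and wildness at $q$ forces $q \mid |G|$, hence $q \in \{2, p\}$; and for $q = 2$, $\Z_2[D_{2p^n}]$ is $N_{p^n}$-hybrid by Example \ref{ex:dihedral-example} with $G/N_{p^n} \cong C_2$ abelian, so Corollary \ref{cor:hybrid} gives freeness at $2$ unconditionally. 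Thus local freeness reduces to the single condition at $p$, and Theorem \ref{thm:nopadicdihedral} tells us that $\mathcal{O}_{K,p}$ is free over $\mathfrak{A}_{K/\Q,p}$ iff either (1) $p$ is almost-maximally ramified with $G_1 \subseteq N$, or (2) $p$ is not almost-maximally ramified, $|G_0| = 2p$ and $[G:G_0] \mid 2$. I would then argue that in a $D_{2p^n}$-extension the inertia group $G_0$ at $p$ is itself dihedral (or cyclic), that the wild inertia $G_1$ is the unique Sylow $p$-subgroup of $G_0$ hence contained in the cyclic index-$2$ subgroup $N$, and that case (2) can only occur when $n = 1$—excluded by hypothesis $n \ge 2$. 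This leaves precisely case (1), i.e. almost-maximal ramification, giving (ii) $\Leftrightarrow$ (iii).

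For conclusion (c): when $p$ is odd and the ramification index $e$ of $p$ is a power of $p$, the inertia group $G_0$ has order a power of $p$, so $G_0 \subseteq N$ and in particular $G_1 = G_0 \subseteq N$; one then checks via the different-and-ramification-groups computation of Remark \ref{rmk:idempotent} (comparing $v_K(\mathfrak{D}_{K/M})$ with $e(K/\Q_p)\,v_p(|G_t|)$ for the relevant subfields $M$) that all the idempotents $e_{G_t}$ lie in $\mathfrak{A}_{K/\Q,p}$, i.e. $p$ is almost-maximally ramified—so (iv) $\Rightarrow$ (iii), and by (b) also $\Rightarrow$ (i), (ii). Finally, conclusion (d): when $p \ge 5$ we must also show (iii) $\Rightarrow$ (iv), i.e. almost-maximal ramification forces $e$ to be a power of $p$. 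Suppose not; then $p \mid e$ but $e = p^j m$ with $m > 1$, $m \mid (p-1)/\gcd(\dots)$ dividing $2$ in the dihedral setting, so $m = 2$ and $G_0 = D_{2p^j}$ for some $j \ge 1$. The ramification filtration of such an extension is governed by the same analysis as in Remark \ref{rmk:weakalmostmaximal}; the point is that when $m = 2$ and $p \ge 5$, the jump contributed by the tame quotient forces $\sum_i (|G_i(K/M)| - 1)$ to fall \emph{short} of $e(K/\Q_p)\,v_p(p^{j-1})$ for at least one intermediate field $M$, so some $e_{G_t} \notin \mathfrak{A}_{K/\Q,p}$, contradicting (iii). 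I expect this last implication—pinning down the ramification filtration of a $D_{2p^j}$ inertia group precisely enough to run the Remark \ref{rmk:idempotent} criterion, and seeing exactly where $p \ge 5$ (rather than $p = 3$) is needed—to be the main obstacle; the congruence $m \mid 2$ together with $p \ge 5$ should make $p \nmid m$ and give the needed numerical slack, whereas $p = 3$ can have $m = 2 \mid p - 1$ interacting with the wild part. The cases $p = 2, 3$ are therefore deliberately left to the separate treatment promised in \S\ref{sec:dihedral}.
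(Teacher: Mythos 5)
Your parts (a) and (b) follow essentially the same route as the paper: triviality of $\textup{Cl}(\Z[G])$ plus Proposition \ref{prop:trivialclassgroup} (Jacobinski cancellation), local freeness away from $p$ via tameness and the $N$-hybrid argument at $2$ (Example \ref{ex:dihedral-example} and Corollary \ref{cor:hybrid}), and then Theorem \ref{thm:nopadicdihedral}, noting that $G_{1}\subseteq N$ is automatic for odd $p$ and that case (ii) of that theorem forces $[G:G_{0}]\mid 2$, impossible for $n\geq 2$. Two small slips there: for $p=2$ the vanishing of $\textup{Cl}(\Z[G])$ is not covered by Keating \cite{MR0357591} (the paper invokes \cite{MR360531} instead), and Keating's criterion needs the class number of $\Q(\zeta_{2p^{d}})^{+}$ to be $1$ for \emph{all} $d\leq n$, which the paper deduces from the single hypothesis at level $n$ via \cite[Theorem 10.1]{MR1421575}; you tacitly assume the top-level hypothesis suffices.

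The genuine gap is in (c) and (d). The paper disposes of both by citing Berg\'e's characterisation of almost-maximal ramification in dihedral extensions \cite[Corollaire to Proposition 6]{zbMATH03623711}; you propose instead to verify these facts directly with Remark \ref{rmk:idempotent}, but the decisive computations are only asserted. For (c) one must show that whenever $G_{0}$ is cyclic of $p$-power order, the inequality $\sum_{i}(|G_{i}(K_{\mathfrak{P}}/M)|-1)\geq e(K_{\mathfrak{P}}/\Q_p)\,v_{p}(|H|)$ holds for every relevant ramification subgroup $H$; this requires pinning down the possible lower ramification jumps of cyclic $p^{j}$-extensions over an absolutely unramified base (local class field theory together with Hasse--Arf), and ``one then checks'' is precisely the content of Berg\'e's result, not a routine step. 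For (d) your heuristic misses the actual mechanism: $p\nmid m$ holds for $p=3$ just as for $p\geq 5$, so it cannot be what separates the cases. When $G_{0}\cong D_{2p^{j}}$ one has $D=G_{0}$ (the normaliser of $D_{2p^{j}}$ in $D_{2p^{n}}$ is itself, since $n\geq 2$), and for $j=1$ the single wild jump $t$ satisfies $t\leq 2p/(p-1)$ and must be \emph{odd}, because the tame quotient acts on $G_{t}/G_{t+1}$ by inversion while that action is $\theta^{t}$ for the order-$2$ tame character $\theta$; hence $t=1$ (weak ramification, so not almost-maximal) when $p\geq 5$, whereas $t=3$ is admissible when $p=3$ -- which is exactly why $p=3$ is excluded from (d) and why Remark \ref{rmk:weakalmostmaximal} is consistent with the theorem. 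A corresponding analysis for all ramification subgroups when $j\geq 2$ would also be needed. As written, (iv) $\Rightarrow$ (iii) and, for $p\geq 5$, (iii) $\Rightarrow$ (iv) are left unproved -- as you yourself acknowledge -- so the proposal is incomplete precisely where it departs from the paper's citation of Berg\'e.
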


\begin{proof}
Let $G=\Gal (K/\Q)$. 
By \cite[Theorem 10.1]{MR1421575} the condition on the class number of $\Q(\zeta_{2p^{n}})^{+}$
implies that the class number of $\Q(\zeta_{2p^d})^+$ is $1$ for every $d\leq n$. 
This together with the regularity of $p$ implies that the locally free class group
$\text{Cl}(\Z[G])$ is trivial: if $p$ is odd this follows from a special case of the main result of 
\cite[Theorem 1]{MR0357591} (also see \cite[(50.28)]{MR892316}),
if $p=2$ this follows from the results of \cite{MR360531}
(also see \cite[(50.31)]{MR892316} and \cite[(7.39)]{MR632548}).
Therefore
$\mathcal{O}_{K}$ is free over $\mathfrak{A}_{K/\Q}$ if and only if
$\mathcal{O}_{K}$ is locally free over $\mathfrak{A}_{K/\Q}$ by Proposition \ref{prop:trivialclassgroup}.
Note that $\mathcal{O}_{K}$ is locally free over
$\mathfrak{A}_{K/\Q}$ at $\ell$ for every rational prime $\ell \neq 2,p$ by 
Theorem \ref{thm:noetherlocallyfree}.
Moreover, if $p$ is odd then 
Example \ref{ex:dihedral-example} implies that 
$\Z_{2}[G]$ is $N$-hybrid where $N$ is the unique subgroup of $G$ of index $2$,
and so $\mathcal{O}_{K}$ is locally free over
$\mathfrak{A}_{K/\Q}$ at $\ell=2$ by Corollary \ref{cor:hybrid}.
Thus we have proven claim (a).
 
Claim (b) now follows from Theorem \ref{thm:nopadicdihedral}
(note that case (ii) of Theorem \ref{thm:nopadicdihedral} cannot occur when $p$ is odd and $n \geq 2$).
Finally, claims (c) and (d) follow from the characterization of almost-maximal ramification in dihedral extensions given in \cite[Corollaire to Proposition 6]{zbMATH03623711}.
\end{proof}

\begin{remark}
Let $p$ be a prime and let $n$ be a positive integer.
It is well known that $p$ is regular if $p < 37$.
Moreover, by the results of \cite{MR3240817} the class number of $\Q(\zeta_{2p^{n}})^{+}$ is $1$ whenever 
$(p,n)$ is $(2,6)$, $(3,4)$, $(5,3)$, $(7,2)$, $(11,2)$, or the same pairs with a smaller choice of $n \geq 2$. 
Hence the hypotheses of Theorem \ref{thm:dihedralpower} hold for these values.
In particular, we obtain the following corollaries.
\end{remark}

\begin{corollary}
Let $K/\Q$ be a dihedral extension of degree $2 \cdot 3^{n}$ where $n=2,3$ or $4$.
If the ramification index of $3$ in $K/\Q$ is a power of $3$ then 
$\mathcal{O}_{K}$ is free over $\mathfrak{A}_{K/\Q}$.
\end{corollary}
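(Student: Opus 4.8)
The plan is to derive this corollary as a direct specialization of Theorem \ref{thm:dihedralpower}. First I would invoke the hypotheses: by the remark preceding the corollary, the class number of $\Q(\zeta_{2 \cdot 3^{n}})^{+}$ is $1$ for $n = 2, 3, 4$, and $3$ is a regular prime since $3 < 37$. Hence the standing assumptions of Theorem \ref{thm:dihedralpower} are satisfied with $p = 3$ and $n \in \{2,3,4\}$.

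Next I would apply conclusion (c) of Theorem \ref{thm:dihedralpower}. Since $p = 3$ is odd, conclusion (c) tells us that assertion (iv) — the ramification index of $3$ in $K/\Q$ is a power of $3$ — implies assertion (i), namely that $\mathcal{O}_{K}$ is free over $\mathfrak{A}_{K/\Q}$. This is exactly the statement of the corollary, so the proof is essentially immediate once the hypotheses are checked.

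There is really no substantive obstacle here: the corollary is a worked example illustrating Theorem \ref{thm:dihedralpower}, and the only thing to verify is that the numerical inputs (regularity of $3$ and the class number computation of Miller \cite{MR3240817} for $\Q(\zeta_{2 \cdot 3^{n}})^{+}$, $n \leq 4$) are in place. One minor point worth a sentence is that $n = 4$ is included because Miller's computations give class number $1$ for $(p,n) = (3,4)$, and the cited monotonicity result \cite[Theorem 10.1]{MR1421575} used inside the proof of Theorem \ref{thm:dihedralpower} then handles the smaller values $n = 2, 3$ automatically. So the proof is a two-line citation: verify the hypotheses of Theorem \ref{thm:dihedralpower} hold for $(p,n) = (3,n)$ with $n \in \{2,3,4\}$, then quote part (c).
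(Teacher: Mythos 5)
Your proposal is correct and matches the paper's own (implicit) argument: the remark preceding the corollary verifies exactly these hypotheses (regularity of $3$ and Miller's class number computation for $\Q(\zeta_{2\cdot 3^{n}})^{+}$, $n\leq 4$), and the corollary is then read off from Theorem \ref{thm:dihedralpower}(c) with $p=3$ odd, which gives (iv)$\Rightarrow$(i). Your aside about handling $n=2,3$ via the monotonicity result \cite[Theorem 10.1]{MR1421575} is also consistent with how the theorem's proof treats smaller values of $n$.
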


\begin{corollary}
Let $K/\Q$ be a dihedral extension of degree $2p^{n}$ where
$(p,n)$ is $(5,2)$, $(5,3)$, $(7,2)$ or $(11,2)$.
Then $\mathcal{O}_{K}$ is free over $\mathfrak{A}_{K/\Q}$ if and only if 
the ramification index of $p$ in $K/\Q$ is a power of $p$.
\end{corollary}

\begin{remark}
In the proof of Theorem \ref{thm:dihedralpower}, we could have used 
\cite[Théorème]{zbMATH03623711} to establish local freeness at $\ell=2$ 
instead of Example \ref{ex:dihedral-example} and Corollary \ref{cor:hybrid}.
\end{remark}

\section{Review of results on induction of lattices and associated orders}\label{sec:induction}

In this section, we shall give an exposition of Berg\'e's results contained in \cite[\S I]{zbMATH03623711}. 
We include some of the proofs for the convenience of the reader.
The motivation for this section comes from \S \ref{subsec:ass-orders-rings-of-ints}.

\subsection{Associated orders and induction}\label{subsec:induction}
Let $R$ be a Dedekind domain with field of fractions $F$. 
Let $H$ be a subgroup of a finite group $G$ and let $M$ be an $R[H]$-lattice such that 
$FM$ is free of rank $1$ over $F[H]$.

We recall that $\mathrm{Ind}_{H}^{G} M$ is the induced module $R[G] \otimes_{R[H]} M \cong \bigoplus _{s\in G/H} sM$, where on the right hand side we choose a system of representatives in $G$ of the left cosets $G/H$ and the left $R[G]$-module structure is given by the relation $gs=th$ for some coset representative $t$ and $h\in H$.
We wish to understand the relationship between $\mathfrak{A}(F[G], \mathrm{Ind}_{H}^{G} M)$ and
$\mathrm{Ind}_{H}^{G} \mathfrak{A}(F[H], M)$.
Note that these both contain the group ring $R[G]$.

\begin{prop}\cite[\S 1.3]{zbMATH03623711}\label{prop:intersection}
We have
\begin{equation*}
\mathfrak{A}(F[G], \mathrm{Ind}_{H}^{G} M)
= \bigcap_{g \in G} g\mathrm{Ind}_{H}^{G} \mathfrak{A}(F[H], M)g^{-1}.
\end{equation*}
\end{prop}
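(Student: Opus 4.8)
The plan is to prove the equality by double inclusion, exploiting the fact that $\mathfrak{A}(F[G], \Ind_H^G M)$ is characterised by the condition that an element $\lambda \in F[G]$ lies in it precisely when $\lambda \cdot \Ind_H^G M \subseteq \Ind_H^G M$. I will use the explicit description $\Ind_H^G M \cong \bigoplus_{s \in G/H} sM$ inside $F[G] = FM' $ where $M' = \Ind_H^G M$ spans a free rank-$1$ module over $F[G]$ (since $FM$ is free of rank $1$ over $F[H]$, we have $F \otimes_F \Ind_H^G M = \Ind_H^G F[H] = F[G]$ as $F[G]$-modules). The key observation is a conjugation-stability property: for any $g \in G$, the associated order $\mathfrak{A}(F[G], \Ind_H^G M)$ is stable under conjugation by $g$, i.e. $g\,\mathfrak{A}(F[G], \Ind_H^G M)\,g^{-1} = \mathfrak{A}(F[G], g \cdot \Ind_H^G M)$, but since $g \cdot \Ind_H^G M = \Ind_H^G M$ as a set (the induced module is a $G$-module, so left multiplication by $g$ permutes it), we actually get that $\mathfrak{A}(F[G], \Ind_H^G M)$ is conjugation-invariant. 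Wait — more carefully, the relevant point is that $\mathfrak{A}(F[G], \Ind_H^G M)$ acts on $\Ind_H^G M$ from the left, and I want to compare with $\Ind_H^G \mathfrak{A}(F[H], M) = R[G] \otimes_{R[H]} \mathfrak{A}(F[H], M)$ sitting inside $F[G]$.

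\textbf{The inclusion $\subseteq$.} Suppose $\lambda \in \mathfrak{A}(F[G], \Ind_H^G M)$, so $\lambda \cdot \Ind_H^G M \subseteq \Ind_H^G M$. I want to show $\lambda \in g\,\Ind_H^G\mathfrak{A}(F[H],M)\,g^{-1}$ for every $g \in G$; equivalently $g^{-1}\lambda g \in \Ind_H^G\mathfrak{A}(F[H],M)$. The crucial step is to identify $\Ind_H^G \mathfrak{A}(F[H],M)$ concretely: writing $F[G] = \bigoplus_{s \in G/H} s\,F[H]$, an element $\mu \in F[G]$ lies in $\Ind_H^G\mathfrak{A}(F[H],M) = \bigoplus_{s \in G/H} s\,\mathfrak{A}(F[H],M)$ if and only if, for the chosen coset decomposition, writing $\mu = \sum_s s\,\mu_s$ with $\mu_s \in F[H]$, each $\mu_s \in \mathfrak{A}(F[H], M)$, i.e. $\mu_s M \subseteq M$. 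The connection to the $G$-action is: $\mu \in \Ind_H^G\mathfrak{A}(F[H],M)$ iff $\mu$ maps the summand $M$ (the identity-coset piece) into $\Ind_H^G M$ — because $\mu \cdot M = \sum_s s\mu_s M$, and the $s\mu_s M$ land in distinct summands $sM$, so this is in $\bigoplus_s sM$ iff each $\mu_s M \subseteq M$. Then conjugating by $g$ converts "maps the identity-coset summand into $\Ind_H^G M$" into "maps the $g$-translated summand into $\Ind_H^G M$", and since $\lambda$ maps all of $\Ind_H^G M$ into itself, it in particular maps each such summand in, giving $\lambda \in g\,\Ind_H^G\mathfrak{A}(F[H],M)\,g^{-1}$ for all $g$.

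\textbf{The inclusion $\supseteq$ and the main obstacle.} Conversely, if $\lambda \in \bigcap_{g} g\,\Ind_H^G\mathfrak{A}(F[H],M)\,g^{-1}$, then for each $g$, $\lambda$ maps the summand $gM$ into $\Ind_H^G M$. Since the summands $gM$ (as $g$ ranges over $G$, or just over coset representatives) together generate $\Ind_H^G M$ as an $R$-module — indeed $\Ind_H^G M = \sum_{s \in G/H} sM$ — it follows that $\lambda \cdot \Ind_H^G M \subseteq \Ind_H^G M$, i.e. $\lambda \in \mathfrak{A}(F[G], \Ind_H^G M)$. The main subtlety I expect to need care with is bookkeeping the coset-representative conventions: the decomposition $\Ind_H^G M = \bigoplus_{s} sM$ depends on a choice of representatives, and the identification of $\Ind_H^G\mathfrak{A}(F[H],M)$ as the set of $\mu$ with $\mu M \subseteq \Ind_H^G M$ must be checked to be independent of this choice (it is, because replacing $s$ by $sh$ with $h \in H$ just reshuffles within $\mathfrak{A}(F[H],M)$ since $\mathfrak{A}(F[H],M)$ is an $R[H]$-bimodule containing $R[H]$). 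I would state the lemma "$\mu \in \Ind_H^G\mathfrak{A}(F[H],M) \iff \mu M \subseteq \Ind_H^G M$" as the technical heart, prove it via the direct-sum decomposition, and then both inclusions follow by the conjugation/generation argument above. This matches the structure one expects from Berg\'e's original treatment in \cite[\S 1.3]{zbMATH03623711}.
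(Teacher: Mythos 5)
Your proof is correct and is essentially the standard argument: the technical lemma that $\mu \in \mathrm{Ind}_{H}^{G}\mathfrak{A}(F[H],M)$ if and only if $\mu$ maps the identity-coset summand $1 \otimes M$ into $\mathrm{Ind}_{H}^{G}M$, together with the observation that conjugating by $g$ turns this into the condition on the summand $g \otimes M$ and that these summands span $\mathrm{Ind}_{H}^{G}M$, yields both inclusions; this is precisely the approach of Berg\'e's \S 1.3, which the paper cites for this proposition without reproducing a proof. The only wording to fix is that the components $s\mu_{s}M$ lie in the distinct $F$-summands $s\,FM$ (not a priori in $sM$, which is exactly what is being tested), which is what your direct-sum argument actually uses.
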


\begin{corollary}\label{cor:condition-to-be-a-ring}
$\mathrm{Ind}_{H}^{G} \mathfrak{A}(F[H], M)$ is a ring
if and only if it is equal to $\mathfrak{A}(F[G], \mathrm{Ind}_{H}^{G} M)$. 
\end{corollary}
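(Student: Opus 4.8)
The plan is to read this off directly from Proposition \ref{prop:intersection}. Write $\mathfrak{B} := \mathrm{Ind}_{H}^{G}\mathfrak{A}(F[H],M)$ and $\mathfrak{A} := \mathfrak{A}(F[G],\mathrm{Ind}_{H}^{G}M)$, both regarded as $R$-lattices inside $F[G]$, so that Proposition \ref{prop:intersection} reads
\[
\mathfrak{A} = \bigcap_{g\in G} g\,\mathfrak{B}\,g^{-1}.
\]
Taking $g=1$ on the right-hand side gives the unconditional inclusion $\mathfrak{A}\subseteq\mathfrak{B}$. Since $\mathfrak{A}=\mathfrak{A}(F[G],\mathrm{Ind}_{H}^{G}M)$ is an $R$-order, in particular a ring, the implication ``$\mathfrak{B}=\mathfrak{A}$ $\Rightarrow$ $\mathfrak{B}$ is a ring'' is immediate.

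For the converse I would argue as follows. Suppose $\mathfrak{B}$ is a ring. The key observation is that $\mathfrak{B}$ contains $R[G]$ (as noted just before Proposition \ref{prop:intersection}), hence contains every group element $g\in G$ together with $g^{-1}$; since $\mathfrak{B}$ is closed under the multiplication of $F[G]$, this forces $g\,\mathfrak{B}\,g^{-1}\subseteq\mathfrak{B}$ for every $g\in G$. Applying the same with $g^{-1}$ in place of $g$ gives the reverse inclusion, so $g\,\mathfrak{B}\,g^{-1}=\mathfrak{B}$ for all $g\in G$. Therefore $\bigcap_{g\in G} g\,\mathfrak{B}\,g^{-1}=\mathfrak{B}$, and Proposition \ref{prop:intersection} yields $\mathfrak{A}=\mathfrak{B}$, as desired.

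There is no substantive obstacle here beyond Proposition \ref{prop:intersection} itself; the only point requiring a little care is to record explicitly that $R[G]\subseteq\mathfrak{B}$, so that the ring hypothesis genuinely promotes to invariance under $G$-conjugation, and to note that both $\mathfrak{A}$ and $\mathfrak{B}$ are being compared as submodules of the common ambient algebra $F[G]$.
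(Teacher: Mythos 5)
Your proof is correct and is essentially the paper's argument: both directions go through Proposition \ref{prop:intersection}, with the reverse implication using that an associated order is a ring, and the forward implication using that a ring containing $R[G]$ is stable under conjugation by elements of $G$, so the intersection collapses. You merely spell out the conjugation-invariance step (via $R[G]\subseteq \mathrm{Ind}_{H}^{G}\mathfrak{A}(F[H],M)$) which the paper leaves implicit.
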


\begin{proof}
If $\mathrm{Ind}_{H}^{G} \mathfrak{A}(F[H], M)$ is a ring then $g\mathrm{Ind}_{H}^{G} \mathfrak{A}(F[H], M)g^{-1} = \mathrm{Ind}_{H}^{G} \mathfrak{A}(F[H], M)$ for all $g \in G$ and 
thus $\mathrm{Ind}_{H}^{G} \mathfrak{A}(F[H], M)=\mathfrak{A}(F[G], \mathrm{Ind}_{H}^{G} M)$
by Proposition \ref{prop:intersection}.
Conversely, if $\mathrm{Ind}_{H}^{G} \mathfrak{A}(F[H], M)=\mathfrak{A}(F[G], \mathrm{Ind}_{H}^{G} M)$ 
then the left hand side is a ring since the right hand side is an associated order and thus a ring.
\end{proof}

\begin{remark}\label{rmk:conditions-to-be-a-ring}
In general, $\mathrm{Ind}_{H}^{G} \mathfrak{A}(F[H], M)$ need not be a ring. 
However, it is straightforward to deduce from the above that
$\mathrm{Ind}_{H}^{G} \mathfrak{A}(F[H], M)$ is a ring in the following cases:
\begin{enumerate}
\item there exists a subgroup $K\leq G$ such that $G\cong H\times K$,
\item $H$ is contained in the centre of $G$, or
\item $\mathfrak{A}(F[H], N) = R[H]$.
\end{enumerate} 
\end{remark}

\begin{remark}\label{rmk:inds-containment}
Proposition \ref{prop:intersection} implies that 
$\mathfrak{A}(F[G], \mathrm{Ind}_{H}^{G} M)\subseteq\mathrm{Ind}_{H}^{G} \mathfrak{A}(F[H], M)$. 
Hence $\mathrm{Ind}_{H}^{G} \mathfrak{A}(F[H], M)$ is an
$\mathfrak{A}(F[G], \mathrm{Ind}_{H}^{G} M)$-lattice.
\end{remark}

\begin{prop}\label{prop:induction-in-free-rank-1-case}
If $M$ is free over $\mathfrak{A}(F[H], M)$ then
$\mathrm{Ind}_{H}^{G} M \cong \mathrm{Ind}_{H}^{G} \mathfrak{A}(F[H], M)$
as $\mathfrak{A}(F[G], \mathrm{Ind}_{H}^{G} M)$-lattices.
\end{prop}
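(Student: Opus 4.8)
The plan is to first produce an isomorphism of $R[G]$-modules by pure functoriality, and then upgrade it to an isomorphism over the (a priori larger) associated order $\mathfrak{A}(F[G], \mathrm{Ind}_{H}^{G} M)$ using Lemma \ref{lemma:biggerorder}.

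First I would use the hypothesis: since $M$ is free of rank $1$ over $\Lambda := \mathfrak{A}(F[H], M)$, we have $M \cong \Lambda$ as $\Lambda$-modules, and hence also as $R[H]$-modules via restriction of scalars along the inclusion $R[H] \subseteq \Lambda$ from Remark \ref{rmk:associatedorderring}'s setting. Applying the functor $\mathrm{Ind}_{H}^{G}(-) = R[G] \otimes_{R[H]} (-)$ then yields an isomorphism
$\mathrm{Ind}_{H}^{G} M \cong \mathrm{Ind}_{H}^{G} \Lambda = \mathrm{Ind}_{H}^{G}\mathfrak{A}(F[H], M)$
of $R[G]$-modules. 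Note that both sides are full $R[G]$-lattices in the free $F[G]$-module $F[G] \otimes_{F[H]} FM \cong F[G]$ of rank $1$ (using that $FM$ is free of rank $1$ over $F[H]$), so $\mathfrak{A}(F[G], \mathrm{Ind}_{H}^{G} M)$ is defined and the results of \S\ref{subsec:induction} apply.

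Next I would observe that both $\mathrm{Ind}_{H}^{G} M$ and $\mathrm{Ind}_{H}^{G}\mathfrak{A}(F[H], M)$ are $\mathfrak{A}(F[G], \mathrm{Ind}_{H}^{G} M)$-lattices: the former by the very definition of the associated order, the latter by Remark \ref{rmk:inds-containment}. Since $R[G] \subseteq \mathfrak{A}(F[G], \mathrm{Ind}_{H}^{G} M)$, Lemma \ref{lemma:biggerorder}, applied with the pair of orders $R[G] \subseteq \mathfrak{A}(F[G], \mathrm{Ind}_{H}^{G} M)$, shows that the $R[G]$-module isomorphism constructed above is automatically a homomorphism of $\mathfrak{A}(F[G], \mathrm{Ind}_{H}^{G} M)$-lattices; applying the same lemma to its inverse, it is an isomorphism of $\mathfrak{A}(F[G], \mathrm{Ind}_{H}^{G} M)$-lattices. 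This completes the argument.

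I do not expect a serious obstacle here. The only subtlety, and the reason the statement is not completely formal, is that induction is taken along $R[H]$ rather than along $\Lambda = \mathfrak{A}(F[H], M)$, so the functorial argument a priori delivers only an $R[G]$-module isomorphism; the passage to $\mathfrak{A}(F[G], \mathrm{Ind}_{H}^{G} M)$ must go through Lemma \ref{lemma:biggerorder} and is not automatic. It is also worth emphasising that no canonical generator of $M$ over $\Lambda$ is needed, as the proposition only asserts an abstract isomorphism of lattices.
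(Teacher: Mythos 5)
Your proof is correct and follows essentially the same route as the paper: both use that freeness (necessarily of rank $1$, since $FM$ is free of rank $1$ over $F[H]$) gives $M \cong \mathfrak{A}(F[H], M)$ as $R[H]$-lattices, induce to obtain an $R[G]$-isomorphism, and then upgrade it via Lemma \ref{lemma:biggerorder}. Your extra remarks (checking both sides are $\mathfrak{A}(F[G], \mathrm{Ind}_{H}^{G} M)$-lattices via Remark \ref{rmk:inds-containment}, and applying the lemma to the inverse map) only make explicit what the paper leaves implicit.
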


\begin{proof}
Since $R[H] \subseteq \mathfrak{A}(F[H], M)$ and $M$ is free (necessarily of rank $1$) over $\mathfrak{A}(F[H], M)$, 
we see that $M$ and $\mathfrak{A}(F[H], M)$ are isomorphic as $R[H]$-lattices.
Extension of scalars gives an isomorphism 
$\Ind_{H}^{G}M \cong \mathrm{Ind}_{H}^{G} \mathfrak{A}(F[H], M)$ of $R[G]$-lattices.
By Lemma \ref{lemma:biggerorder} this is also an isomorphism of 
$\mathfrak{A}(F[G], \mathrm{Ind}_{H}^{G} M)$-lattices.
\end{proof}

\begin{corollary}\label{cor:induction-in-free-rank-1-case}
Suppose that $M$ is free over $\mathfrak{A}(F[H], M)$.
If
\begin{enumerate}
\item $\mathrm{Ind}_{H}^{G} \mathfrak{A}(F[H], M)$ 
is free over $\mathfrak{A}(F[G], \mathrm{Ind}_{H}^{G} M)$,
\item $\mathrm{Ind}_{H}^{G} \mathfrak{A}(F[H], M)=\mathfrak{A}(F[G], \mathrm{Ind}_{H}^{G} M)$, or
\item $\mathrm{Ind}_{H}^{G} \mathfrak{A}(F[H], M)$ is a ring,
\end{enumerate}
then $\mathrm{Ind}_{H}^{G} M$ is free over $\mathfrak{A}(F[G], \mathrm{Ind}_{H}^{G} M)$.
\end{corollary}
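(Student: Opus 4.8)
The plan is to reduce all three cases to case (i) and then apply Proposition \ref{prop:induction-in-free-rank-1-case}. First I would observe that (ii) implies (i): if $\mathrm{Ind}_{H}^{G}\mathfrak{A}(F[H],M)$ actually equals $\mathfrak{A}(F[G],\mathrm{Ind}_{H}^{G}M)$, then it is free of rank $1$ over itself, so in particular free over $\mathfrak{A}(F[G],\mathrm{Ind}_{H}^{G}M)$. Next, (iii) implies (ii) by Corollary \ref{cor:condition-to-be-a-ring}, which says precisely that $\mathrm{Ind}_{H}^{G}\mathfrak{A}(F[H],M)$ being a ring forces it to coincide with $\mathfrak{A}(F[G],\mathrm{Ind}_{H}^{G}M)$. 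Thus in all three cases we may assume hypothesis (i) holds, namely that $\mathrm{Ind}_{H}^{G}\mathfrak{A}(F[H],M)$ is free over $\Gamma:=\mathfrak{A}(F[G],\mathrm{Ind}_{H}^{G}M)$.

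Now I would invoke the running assumption that $M$ is free over $\mathfrak{A}(F[H],M)$. By Proposition \ref{prop:induction-in-free-rank-1-case} this yields an isomorphism $\mathrm{Ind}_{H}^{G}M \cong \mathrm{Ind}_{H}^{G}\mathfrak{A}(F[H],M)$ of $\Gamma$-lattices. Combining this with hypothesis (i), we conclude that $\mathrm{Ind}_{H}^{G}M$ is free over $\Gamma$, which is the desired statement. One should note that the freeness is necessarily of rank $1$: passing to $F$-algebras, $FM\cong F[H]$ gives $F\otimes_{R}\mathrm{Ind}_{H}^{G}M\cong F[G]\otimes_{F[H]}F[H]\cong F[G]$, so $\mathrm{Ind}_{H}^{G}M$ is a full lattice in a free $F[G]$-module of rank $1$, and by Proposition \ref{prop:associatedfree} a free $\Gamma$-lattice in this situation must be of rank $1$ with $\Gamma$ as its associated order.

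There is essentially no obstacle here; the corollary is a bookkeeping consequence of the preceding results. The only point requiring a moment's care is verifying the chain of implications (iii) $\Rightarrow$ (ii) $\Rightarrow$ (i) uses exactly the cited statements and nothing more, and that Lemma \ref{lemma:biggerorder} (used inside Proposition \ref{prop:induction-in-free-rank-1-case}) legitimately upgrades the $R[G]$-module isomorphism to a $\Gamma$-module isomorphism, which it does since $R[G]\subseteq\Gamma$ are both $R$-orders in $F[G]$ and $\mathrm{Ind}_{H}^{G}M$, $\mathrm{Ind}_{H}^{G}\mathfrak{A}(F[H],M)$ are $\Gamma$-lattices by Remark \ref{rmk:inds-containment}.
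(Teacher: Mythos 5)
Your proposal is correct and follows essentially the same route as the paper: reduce (iii) to (ii) via Corollary \ref{cor:condition-to-be-a-ring}, note (ii) trivially gives (i), and then in case (i) apply Proposition \ref{prop:induction-in-free-rank-1-case} (whose proof already uses Lemma \ref{lemma:biggerorder}) to transfer freeness from $\mathrm{Ind}_{H}^{G}\mathfrak{A}(F[H],M)$ to $\mathrm{Ind}_{H}^{G}M$. The extra observations about rank $1$ and Remark \ref{rmk:inds-containment} are harmless but not needed.
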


\begin{proof}
In case (i) this follows immediately from  Proposition \ref{prop:induction-in-free-rank-1-case}. 
Clearly, (ii) $\Rightarrow$ (i). Moreover, (ii) $\Leftrightarrow$ (iii) by Corollary \ref{cor:condition-to-be-a-ring}.
\end{proof}

The following is a partial converse to Corollary \ref{cor:induction-in-free-rank-1-case}(i).

\begin{prop}\cite[Proposition 2]{zbMATH03623711}\label{prop:projproj}
If $\mathrm{Ind}_{H}^{G} M$ is a projective $\mathfrak{A}(F[G], \mathrm{Ind}_{H}^{G} N)$-lattice, 
then $M$ is a projective $\mathfrak{A}(F[H], M)$-lattice.
\end{prop}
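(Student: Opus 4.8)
The plan is to reduce the question back down from $G$ to $H$ by means of the idempotent $e_{H}$ in $\mathfrak{A}(F[G], \mathrm{Ind}_{H}^{G} N)$-land, exploiting the fact that $\mathrm{Ind}_{H}^{G} M$ is, up to isomorphism, a very explicit $R[G]$-lattice, namely $\bigoplus_{s \in G/H} sM$ with the standard permutation-twisted action. The first observation I would record is that since $\mathfrak{A}(F[G], \mathrm{Ind}_{H}^{G} M) = \mathfrak{A}(F[G], \mathrm{Ind}_{H}^{G} N)$ — both $FM$ and $FN$ being free of rank $1$ over $F[H]$, so $F\mathrm{Ind}_{H}^{G}M \cong F\mathrm{Ind}_{H}^{G}N \cong F[G]$ — there is no loss in writing $\Gamma := \mathfrak{A}(F[G], \mathrm{Ind}_{H}^{G} N)$ and $\Lambda := \mathfrak{A}(F[H], M)$, so that by Remark \ref{rmk:inds-containment} we have $\Gamma \subseteq \mathrm{Ind}_{H}^{G}\Lambda$, hence $\mathrm{Ind}_{H}^{G}\Lambda$ and $\mathrm{Ind}_{H}^{G}M$ are both $\Gamma$-lattices. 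The hypothesis is that $\mathrm{Ind}_{H}^{G}M$ is projective over $\Gamma$.

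Next I would bring in the standard adjunction/restriction machinery. The key point is that for a $\Gamma$-lattice $X$ sitting inside $F\mathrm{Ind}_{H}^{G}M = F[G]$, one can recover an $R[H]$-lattice inside $FM$ by applying a suitable projection. Concretely, choosing coset representatives with $1 \in G$ among them, the map $X \mapsto$ "the $M$-component" — or more invariantly, $X \mapsto e_{1}X$ where $e_{1}$ is the idempotent of $F[G]$ projecting $F\mathrm{Ind}_{H}^{G}M = \bigoplus_{s} sFM$ onto the summand indexed by the trivial coset — sends $\Gamma$-lattices to $R[H]$-lattices, and one checks that the image of $\mathfrak{A}(F[G],\mathrm{Ind}_{H}^{G}M)$ under this projection is contained in $\Lambda = \mathfrak{A}(F[H],M)$ (this is essentially the inclusion $\Gamma \subseteq \mathrm{Ind}_{H}^{G}\Lambda$ read off the trivial-coset component), and the image of $\mathrm{Ind}_{H}^{G}M$ is exactly $M$. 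So applying this projection functor to a $\Gamma$-projective resolution of $\mathrm{Ind}_{H}^{G}M$, the crux is to verify that the projection is an exact additive functor carrying $\Gamma$-projectives to $\Lambda$-projectives; granting that, projectivity of $\mathrm{Ind}_{H}^{G}M$ over $\Gamma$ descends to projectivity of $M$ over $\Lambda$. I expect the cleanest way to phrase this is: $\mathrm{Ind}_{H}^{G}(-)$ from $\Lambda$-lattices to $\Gamma$-lattices has the restriction-along-the-projection functor as a one-sided inverse on the objects we care about, specifically $M$ is a $\Lambda$-direct summand of $\mathrm{Res}\,\mathrm{Ind}_{H}^{G}M$ (a Mackey-type statement, with the identity-double-coset summand splitting off), so that projectivity of $\mathrm{Ind}_{H}^{G}M$ over $\Gamma$ implies projectivity of $\mathrm{Res}\,\mathrm{Ind}_{H}^{G}M$ over $\Lambda$ (since $\Gamma \supseteq R[G]$ and $\Lambda \subseteq R[H]$-compatibly, a $\Gamma$-projective restricts to an $R[G]$-projective and thence behaves well under the relevant base change), and then $M$, being a summand, is $\Lambda$-projective too.

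The main obstacle, I anticipate, is making precise and correct the compatibility between the two associated orders under the projection/restriction operation — that is, showing the functor genuinely lands in $\Lambda$-modules and preserves projectivity, rather than merely in $R[H]$-modules. The subtlety is that $\Gamma$ is strictly smaller than $\mathrm{Ind}_{H}^{G}\Lambda$ in general, so one must be careful that restricting a $\Gamma$-projective does not merely give an $R[H]$-lattice but an honest $\Lambda$-lattice, and that the splitting of $M$ off $\mathrm{Res}\,\mathrm{Ind}_{H}^{G}M$ is $\Lambda$-linear and not just $R[H]$-linear. I would handle this by working with the idempotent $e_{1} \in F[G]$ explicitly: show $e_{1}\Gamma e_{1}$, viewed inside $F[H]$, is contained in $\Lambda$ (using $\Gamma \subseteq \mathrm{Ind}_{H}^{G}\Lambda$ and reading off the $(1,1)$-block), and that $e_{1} \cdot \mathrm{Ind}_{H}^{G}M = M$ as a module over this corner; then a corner/Peirce-decomposition argument — $e_{1}(-)$ applied to a projective $\Gamma$-module gives a projective $e_{1}\Gamma e_{1}$-module — together with the inclusion into $\Lambda$ and a further projectivity-preservation step along $e_{1}\Gamma e_{1} \hookrightarrow \Lambda$ yields the claim. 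An alternative, and perhaps slicker, route is to invoke Lemma \ref{lemma:biggerorder} to upgrade everything in sight to $\mathrm{Ind}_{H}^{G}\Lambda$-module homomorphisms and then use the genuinely nicer relationship between $\mathrm{Ind}_{H}^{G}\Lambda$-lattices and $\Lambda$-lattices; I would try both and present whichever avoids the most bookkeeping.
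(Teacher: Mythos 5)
You should note at the outset that the paper does not reprove this proposition at all: it is quoted from Berg\'e \cite[Proposition 2]{zbMATH03623711}. Your argument therefore has to stand on its own, and as written it does not. The central device, an ``idempotent $e_1$ of $F[G]$'' inducing the projection of $F\Ind_H^G M=\bigoplus_s sFM$ onto the trivial-coset summand, does not exist: that projection is left and right $F[H]$-linear but not $F[G]$-linear on either side (if it were left multiplication by some $e\in F[G]$, writing $e=\sum_s s\epsilon_s$ and evaluating on $1\otimes m$ for a free generator $m$ of $FM$ forces $e=1$, which then fails on $s\otimes m$ for $s\notin H$). It lives only in $\End_{F[H]}(F[G])\cong M_{[G:H]}(F[H])$, where it is a matrix unit lying outside $\Gamma:=\mathfrak{A}(F[G],\Ind_H^G M)$; since the Peirce/corner argument requires the idempotent to lie in the ring one takes a corner of, neither $e_1X$ (for a $\Gamma$-module $X$) nor a ring structure on ``$e_1\Gamma e_1$'' is available. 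The same problem sinks the restriction variant: $\Lambda:=\mathfrak{A}(F[H],M)$ is \emph{not} contained in $\Gamma$ in general, so $\Ind_H^G M$ carries no natural $\Lambda$-action, ``$\mathrm{Res}\,\Ind_H^G M$'' is undefined over $\Lambda$, and the Mackey splitting you invoke is only an $R[H]$-module splitting, which says nothing about $\Lambda$-projectivity. This is not a technicality: the paper's own computations give examples, e.g.\ in the proof of Theorem \ref{thm:a4} (case $D=G_0=V_4$) one has $\tfrac12\Tr_{G_2}\in\mathfrak{A}_{K_\mathfrak{P}/\Q_2}$ while $\mathfrak{A}_{K/\Q,2}=\Z_2[G]+\tfrac14\Z_2[G]\Tr_{V_4}$ does not contain it. For the same reason your ``component map'' is not a functor to $\Lambda$-lattices: for a general $\Gamma$-submodule $X\subseteq F\Ind_H^G M$ and $\lambda\in\Lambda$ one has $\lambda\pi(x)=\pi(\lambda x)$, but $\lambda x$ need not lie in $X$.

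Beyond these definitional failures, the two transfer principles you lean on are false in general, and they are exactly where the content of the proposition lies. A projective $\Gamma$-module does not restrict to a projective module over the subring $R[G]$ (projectivity does not pass to subrings; also your ``$\Lambda\subseteq R[H]$'' is backwards, since $R[H]\subseteq\Lambda$), and a lattice that is projective over a smaller ring and happens to admit a module structure over a larger one need not be projective over the larger one (the trivial lattice $\Z_p$ is projective over $\Z_p$ and is a $\Z_p[C_p]$-module, but is not projective over $\Z_p[C_p]$). Your fallback of upgrading to ``$\Ind_H^G\Lambda$-module homomorphisms'' via Lemma \ref{lemma:biggerorder} is also unavailable: $\Ind_H^G\Lambda$ is not a ring in general --- that is precisely Corollary \ref{cor:condition-to-be-a-ring} and Theorem \ref{thm:inductiononegenerator}(iii) --- and Lemma \ref{lemma:biggerorder} needs a containment of orders that does not hold here. (A small point: the $N$ in the statement is a typo for $M$, carried over from Berg\'e's notation; your justification that the two associated orders agree because both rational spans are free of rank one is a non sequitur, since distinct full lattices in $F[G]$ generally have distinct associated orders.) A correct proof must engage directly with the lattice $\Ind_H^G M=\bigoplus_s sM$ and its multipliers, or construct an honest exactness/adjunction mechanism between $\Gamma$-lattices and lattices over an order that genuinely acts on $M$; the proposal supplies no such mechanism and in effect assumes the descent it is meant to prove.
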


If $H$ is normal in $G$ then we define
$\mathfrak{A}^{*}(M)=\bigcap_{g\in G}g\mathfrak{A}(F[H], M)g^{-1}$.

\begin{prop}\cite[Proposition 3]{zbMATH03623711}\label{prop:Berge-prop-3}
Suppose that $H$ is normal in $G$. Then 
\begin{enumerate}
\item $\mathfrak{A}^{*}(M)$ is an $R$-order in $F[H]$,
\item $\mathfrak{A}(F[G], \mathrm{Ind}_{H}^{G} M)=\mathrm{Ind}_{H}^{G}\mathfrak{A}^{*}(M)$, and 
\item $\mathrm{Ind}_{H}^{G} M$ is a projective $\mathfrak{A}(F[G], \mathrm{Ind}_{H}^{G} M)$-lattice 
if and only if $M$ is a projective $\mathfrak{A}^{*}(M)$-lattice.
\end{enumerate} 
\end{prop}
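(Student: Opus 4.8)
The plan is to prove (i), (ii), (iii) in turn, with (ii) carrying the main technical load. For (i): since $H$ is normal in $G$, conjugation by any $g \in G$ is an $F$-algebra automorphism of $F[G]$ which restricts to an $F$-algebra automorphism of $F[H]$, so each conjugate $g\mathfrak{A}(F[H],M)g^{-1}$ is again an $R$-order in $F[H]$ containing $R[H]$. As $G$ is finite there are only finitely many distinct such conjugates, so it suffices to observe that a finite intersection of $R$-orders in $F[H]$ is again an $R$-order: it is a subring containing $1_{F[H]}$; it is finitely generated over the Noetherian ring $R$, being an $R$-submodule of an $R$-order; and it is a full $R$-lattice, since given full $R$-lattices $L_{1}, \dots, L_{k}$ in $F[H]$ one may pick $0 \neq r \in R$ with $rL_{1} \subseteq L_{i}$ for every $i$, so that $rL_{1} \subseteq \bigcap_{i} L_{i}$ already spans $F[H]$ over $F$.

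For (ii): the starting point is Proposition \ref{prop:intersection}, which gives $\mathfrak{A}(F[G], \mathrm{Ind}_{H}^{G} M) = \bigcap_{g \in G} g\, \mathrm{Ind}_{H}^{G} \mathfrak{A}(F[H], M)\, g^{-1}$. The key preliminary observation is that for any $R$-order $\Lambda$ in $F[H]$ with $R[H] \subseteq \Lambda$ one has $\mathrm{Ind}_{H}^{G}\Lambda = R[G]\cdot\Lambda = \bigoplus_{s \in G/H} s\Lambda$ inside $F[G]$, a description independent of the chosen coset representatives $s$ (since $h\Lambda = \Lambda$ for $h \in H$) and compatible with the decomposition $F[G] = \bigoplus_{s \in G/H} sF[H]$. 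Applying this to $\Lambda = \mathfrak{A}(F[H],M)$ and using $gR[G]g^{-1} = R[G]$ together with the normality of $H$ gives $g\, \mathrm{Ind}_{H}^{G} \mathfrak{A}(F[H], M)\, g^{-1} = \mathrm{Ind}_{H}^{G}\bigl(g\mathfrak{A}(F[H],M)g^{-1}\bigr)$. Finally, writing $\Lambda_{g} := g\mathfrak{A}(F[H],M)g^{-1}$, an element $\sum_{s} s x_{s}$ with $x_{s} \in F[H]$ lies in $\mathrm{Ind}_{H}^{G}\Lambda_{g} = \bigoplus_{s} s\Lambda_{g}$ if and only if each $x_{s}$ lies in $\Lambda_{g}$; intersecting over all $g$ then yields $\mathfrak{A}(F[G],\mathrm{Ind}_{H}^{G}M) = \bigoplus_{s} s\bigl(\bigcap_{g}\Lambda_{g}\bigr) = \mathrm{Ind}_{H}^{G}\mathfrak{A}^{*}(M)$.

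For (iii): put $\Lambda_{0} := \mathfrak{A}^{*}(M)$ and $\Gamma := \mathfrak{A}(F[G],\mathrm{Ind}_{H}^{G}M)$, so that by (i) and (ii) $\Gamma = \mathrm{Ind}_{H}^{G}\Lambda_{0} = R[G]\otimes_{R[H]}\Lambda_{0}$ is an $R$-order, and by associativity of tensor products $\mathrm{Ind}_{H}^{G}M \cong \Gamma\otimes_{\Lambda_{0}}M$ as $\Gamma$-modules. If $M$ is projective over $\Lambda_{0}$, then $\Gamma\otimes_{\Lambda_{0}}M$ is projective over $\Gamma$ by base change, giving one implication. For the converse, note that $\Lambda_{0}$ is stable under conjugation by every $g \in G$ (immediate from its definition), so in $\Gamma = \bigoplus_{s \in G/H} s\Lambda_{0}$ each $s\Lambda_{0}$ is a left $\Lambda_{0}$-submodule, isomorphic as such to the free rank-one module $\Lambda_{0}$ twisted by the automorphism $\nu \mapsto s^{-1}\nu s$ of $\Lambda_{0}$, and therefore projective; hence $\Gamma$ is projective as a left $\Lambda_{0}$-module, and restriction of scalars from $\Gamma$-modules to $\Lambda_{0}$-modules preserves projectivity. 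Computing this restriction, $\mathrm{Ind}_{H}^{G}M \cong \bigoplus_{s \in G/H} \bigl(s\Lambda_{0}\otimes_{\Lambda_{0}}M\bigr)$ as $\Lambda_{0}$-modules, and the summand indexed by the coset representative $s = 1$ is $M$ with its given $\Lambda_{0}$-structure; since a direct summand of a projective module is projective, $M$ is projective over $\Lambda_{0}$.

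I expect the main obstacle to be the bookkeeping in (ii): one must verify that the induced module $\mathrm{Ind}_{H}^{G}\Lambda$ genuinely has the representative-independent realisation $R[G]\cdot\Lambda \subseteq F[G]$ whenever $R[H] \subseteq \Lambda$, and that conjugation by $g$ intertwines $\mathrm{Ind}_{H}^{G}$ with conjugation on $F[H]$ compatibly with the decomposition $F[G] = \bigoplus_{s} sF[H]$. Once that is in place, (i) and (iii) follow from the standard order-theoretic and homological arguments sketched above.
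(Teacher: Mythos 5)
Your proof is correct. The paper itself does not reprove this statement -- it simply cites Berg\'e's Proposition 3 -- so there is no in-text argument to compare with, but what you have written is the natural proof suggested by the surrounding material: part (ii) follows from Proposition \ref{prop:intersection} once one identifies $\mathrm{Ind}_{H}^{G}\Lambda$ with $\bigoplus_{s\in G/H}s\Lambda\subseteq F[G]$ for any order $R[H]\subseteq\Lambda\subseteq F[H]$ and observes that $g\,\mathrm{Ind}_{H}^{G}\Lambda\,g^{-1}=\mathrm{Ind}_{H}^{G}(g\Lambda g^{-1})$ when $H$ is normal, while (i) and (iii) are the standard lattice-theoretic and change-of-rings arguments. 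The one step in (iii) worth spelling out is the identification $\mathrm{Ind}_{H}^{G}M\cong\mathfrak{A}(F[G],\mathrm{Ind}_{H}^{G}M)\otimes_{\mathfrak{A}^{*}(M)}M$ as modules over the associated order: the balanced map is $(\gamma,m)\mapsto\gamma\cdot(1\otimes m)$, computed inside $F[G]\otimes_{F[H]}FM$, and it is well defined because $\mathfrak{A}^{*}(M)\subseteq\mathfrak{A}(F[H],M)$ stabilises $M$, so that $\lambda\cdot(1\otimes m)=1\otimes\lambda m$ even though $\lambda$ need not lie in $R[H]$; granted this, your base-change argument and the twisted-summand argument (with the $s=1$ summand recovering $M$ with its given $\mathfrak{A}^{*}(M)$-structure) go through as you say.
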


\subsection{Clean orders and induction}\label{subsec:induction-and-clean}
Let $R$ be a discrete valuation ring with field of fractions $F$ of characteristic zero
and suppose that the residue field of $R$ is finite. 

\begin{definition}
Let $\Lambda$ be an $R$-order in a finite dimensional semisimple $F$-algebra $A$.
Then $\Lambda$ is said to be \emph{clean} if it has the following property:
if $M$ is a projective $\Lambda$-lattice such that $FM$ is free over $A$ then $M$ is free over $\Lambda$.
\end{definition}

\begin{theorem}[Hattori]\label{thm:Hattori}
Commutative $R$-orders in finite-dimensional semisimple $F$-algebras are clean. 
\end{theorem}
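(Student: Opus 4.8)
The plan is to exploit commutativity to reduce the statement to the classical fact that a finitely generated projective module of constant rank over a commutative semilocal ring is free. Since $\Lambda$ is commutative and spans $A$ over $F$, the algebra $A = F \otimes_{R} \Lambda$ is commutative semisimple, hence a finite product $A \cong \prod_{i=1}^{r} K_{i}$ of finite field extensions $K_{i}/F$; write $e_{i}$ for the corresponding primitive idempotents. The hypothesis that $FM := F \otimes_{R} M$ is free over $A$ means $FM \cong A^{n}$ for a single integer $n$, so that $e_{i}FM \cong K_{i}^{\,n}$ for every $i$. The whole point will be that this common rank $n$ forces $M$ to have constant rank $n$ over $\Spec\Lambda$.

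First I would record that $\Lambda$ is semilocal. As an $R$-order it is module-finite, hence integral, over the local ring $R$, so every maximal ideal of $\Lambda$ contracts to the maximal ideal $\mathfrak{m}$ of $R$ and therefore contains $\mathfrak{m}\Lambda$. Thus the maximal ideals of $\Lambda$ are exactly those of the Artinian quotient $\Lambda/\mathfrak{m}\Lambda$ (finite-dimensional over the residue field of $R$), of which there are only finitely many. Over such a semilocal commutative ring, a finitely generated projective module $P$ of constant rank $n$ is free: reducing modulo the Jacobson radical gives a free module of rank $n$ over the semisimple ring $\Lambda/\rad\Lambda$, a basis of which lifts by Nakayama to a surjection $\Lambda^{n}\onto P$; this splits since $P$ is projective, and the complementary summand is a projective module of rank $0$, hence zero.

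It therefore remains to prove that $M$ has constant rank $n$, and this is where the freeness hypothesis on $FM$ is used. Since $\Lambda$ is reduced, its minimal primes are $\mathfrak{p}_{i} = \Lambda \cap \ker(A \onto K_{i})$, with residue field $\kappa(\mathfrak{p}_{i}) = K_{i}$, and $M \otimes_{\Lambda} A = F \otimes_{R} M = FM$. Hence the fibre of $M$ at $\mathfrak{p}_{i}$ is $M \otimes_{\Lambda} K_{i} \cong FM \otimes_{A} K_{i} \cong A^{n} \otimes_{A} K_{i} \cong K_{i}^{\,n}$, so $M$ has rank $n$ at every minimal prime. The rank function of a finitely generated projective module is locally constant on $\Spec\Lambda$; given any prime $\mathfrak{q}$, choose a minimal prime $\mathfrak{p} \subseteq \mathfrak{q}$, so that $\mathfrak{q}$ lies in the irreducible, hence connected, closed set $V(\mathfrak{p})$ on which the rank is constant equal to $n$. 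Thus $M$ has rank $n$ everywhere, and by the previous paragraph $M$ is free of rank $n$.

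The argument is short, so there is no single hard computational step; the main point to get right is that ``free over $A$'' supplies one rank $n$ shared by all the field components $K_{i}$, which is exactly what pins down a constant rank on the possibly disconnected scheme $\Spec\Lambda$. If instead $FM$ were merely projective, or had differing ranks on different $K_{i}$, the conclusion would fail; so the role of the freeness hypothesis, together with the commutativity that makes $A$ a product of fields and lets one invoke the semilocal constant-rank theorem, is essential throughout.
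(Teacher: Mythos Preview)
Your argument is correct. The paper itself does not prove this result; it simply cites Hattori's original article and a textbook reference, so there is no proof in the paper to compare against. What you have supplied is a clean, self-contained argument: commutativity of $\Lambda$ forces $A$ to be a finite product of fields $K_i$, the hypothesis $FM\cong A^{n}$ fixes a single rank $n$ on every component, and identifying $\kappa(\mathfrak{p}_i)$ with $K_i$ (via the fact that $e_i\Lambda$ is a full $R$-lattice in $K_i$, hence has fraction field $K_i$) shows that $M$ has rank $n$ at every minimal prime. Local constancy of the rank then propagates this along each irreducible component $V(\mathfrak{p}_i)$ to all of $\Spec\Lambda$, and the standard Nakayama lift over the semilocal ring $\Lambda$ finishes the job.

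Two small points worth making explicit if you write this up: first, $\Lambda$ is reduced because it sits inside the reduced ring $A$; second, the chain $M\otimes_\Lambda K_i \cong (M\otimes_\Lambda A)\otimes_A K_i \cong FM\otimes_A K_i$ uses $A=F\otimes_R\Lambda$ together with the factorisation of $\Lambda\to K_i$ through $A$. Both are routine, but they are exactly where the ``full lattice'' part of the definition of an order enters.
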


\begin{proof}
See \cite{MR0175950} or \cite[IX Corollary 1.5]{MR0283014}.
\end{proof}


\begin{prop}\cite[Corollaire to Proposition 3]{zbMATH03623711}\label{prop:strongequivalence}
Let $H$ be a normal abelian subgroup of a finite group $G$ and let $M$ be an $R[H]$-lattice such that 
$FM$ is free of rank $1$ over $F[H]$.
Then the following are equivalent:
 \begin{enumerate}
\item $\mathrm{Ind}_{H}^{G}M$ is projective over $\mathfrak{A}(F[G], \mathrm{Ind}_{H}^{G} M)$;
\item $\mathrm{Ind}_{H}^{G}M$ is free over $\mathfrak{A}(F[G], \mathrm{Ind}_{H}^{G} M)$;
\item $\mathrm{Ind}_{H}^{G} \mathfrak{A}(F[H], M)$ is a ring
and $\mathrm{Ind}_{H}^{G}M$ is free over $\mathrm{Ind}_{H}^{G} \mathfrak{A}(F[H], M)$;
\item $M$ is free over $\mathfrak{A}(F[H], M)$ and $\mathfrak{A}^{*}(M)=\mathfrak{A}(F[H], M)$.
\end{enumerate}
\end{prop}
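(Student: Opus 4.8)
The strategy is to prove the cycle of implications (ii)$\Rightarrow$(i)$\Rightarrow$(iv)$\Rightarrow$(iii)$\Rightarrow$(ii), assembling results already established in \S\ref{subsec:induction} together with Hattori's cleanness theorem. The implication (ii)$\Rightarrow$(i) is trivial, since free lattices are projective. For (i)$\Rightarrow$(iv): assuming $\mathrm{Ind}_{H}^{G}M$ is projective over $\mathfrak{A}(F[G], \mathrm{Ind}_{H}^{G} M)$, Proposition~\ref{prop:projproj} gives that $M$ is projective over $\mathfrak{A}(F[H], M)$; since $H$ is abelian, $\mathfrak{A}(F[H], M)$ is a commutative $R$-order, hence clean by Theorem~\ref{thm:Hattori}, so $M$ is in fact free over $\mathfrak{A}(F[H], M)$ (note $FM$ is free of rank $1$ over $F[H]$ by hypothesis, so the cleanness hypothesis applies). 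It remains to extract $\mathfrak{A}^{*}(M) = \mathfrak{A}(F[H], M)$ from the projectivity; here I would use Proposition~\ref{prop:Berge-prop-3}, which identifies $\mathfrak{A}(F[G], \mathrm{Ind}_{H}^{G} M)$ with $\mathrm{Ind}_{H}^{G}\mathfrak{A}^{*}(M)$ and characterises projectivity of $\mathrm{Ind}_{H}^{G}M$ over it in terms of projectivity of $M$ over $\mathfrak{A}^{*}(M)$.

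The heart of the argument, and the step I expect to be the main obstacle, is showing that $\mathfrak{A}^{*}(M) = \mathfrak{A}(F[H], M)$. The containment $\mathfrak{A}^{*}(M) \subseteq \mathfrak{A}(F[H], M)$ is immediate from the definition $\mathfrak{A}^{*}(M) = \bigcap_{g \in G} g\mathfrak{A}(F[H], M)g^{-1}$. For the reverse containment one argues as follows: $M$ is free (rank $1$) over $\mathfrak{A}(F[H], M)$, hence $\mathrm{Ind}_{H}^{G}M \cong \mathrm{Ind}_{H}^{G}\mathfrak{A}(F[H], M)$ as $R[G]$-lattices, and by Proposition~\ref{prop:projproj} combined with Hattori-cleanness applied downstairs, projectivity of $\mathrm{Ind}_{H}^{G}M$ over $\mathfrak{A}(F[G], \mathrm{Ind}_{H}^{G} M) = \mathrm{Ind}_{H}^{G}\mathfrak{A}^{*}(M)$ forces $M$ to be free over $\mathfrak{A}^{*}(M)$; comparing $R$-ranks (or using that both orders contain $R[H]$ and $M$ is full) then yields $\mathfrak{A}^{*}(M) = \mathfrak{A}(F[H], M)$. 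I would write this carefully since it is where the abelian (hence commutative, hence clean) hypothesis on $H$ is genuinely used.

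For (iv)$\Rightarrow$(iii): if $\mathfrak{A}^{*}(M) = \mathfrak{A}(F[H], M)$, then by Proposition~\ref{prop:Berge-prop-3}(ii) we have $\mathfrak{A}(F[G], \mathrm{Ind}_{H}^{G} M) = \mathrm{Ind}_{H}^{G}\mathfrak{A}^{*}(M) = \mathrm{Ind}_{H}^{G}\mathfrak{A}(F[H], M)$, so in particular $\mathrm{Ind}_{H}^{G}\mathfrak{A}(F[H], M)$ is a ring (being an associated order); and since $M$ is free over $\mathfrak{A}(F[H], M)$ by assumption, Proposition~\ref{prop:induction-in-free-rank-1-case} gives that $\mathrm{Ind}_{H}^{G}M$ is free over $\mathfrak{A}(F[G], \mathrm{Ind}_{H}^{G} M) = \mathrm{Ind}_{H}^{G}\mathfrak{A}(F[H], M)$. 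Finally (iii)$\Rightarrow$(ii): if $\mathrm{Ind}_{H}^{G}\mathfrak{A}(F[H], M)$ is a ring, Corollary~\ref{cor:condition-to-be-a-ring} gives $\mathrm{Ind}_{H}^{G}\mathfrak{A}(F[H], M) = \mathfrak{A}(F[G], \mathrm{Ind}_{H}^{G} M)$, so freeness over the former is freeness over the latter, which is exactly (ii). This closes the cycle.
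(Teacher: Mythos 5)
Your argument is essentially the paper's: the same cycle of implications, with Proposition~\ref{prop:Berge-prop-3}, Hattori's theorem and Corollaries~\ref{cor:condition-to-be-a-ring} and \ref{cor:induction-in-free-rank-1-case} doing the same work. The one step you should repair is the conclusion of (i)$\Rightarrow$(iv): once you know $M$ is free over $\mathfrak{A}^{*}(M)$ (which, as you indicate, follows from Proposition~\ref{prop:Berge-prop-3}(iii) together with cleanness of the commutative order $\mathfrak{A}^{*}(M)$), the equality $\mathfrak{A}^{*}(M)=\mathfrak{A}(F[H],M)$ does \emph{not} follow from a rank comparison --- both are full $R$-lattices in $F[H]$, so they have the same $R$-rank $|H|$ in any case --- nor merely from both orders containing $R[H]$. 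The correct and immediate justification is Proposition~\ref{prop:associatedfree}: if $M$ is free over an order $\Lambda$, then $\Lambda=\mathfrak{A}(F[H],M)$, since the associated order is the only order over which $M$ can be free. This is exactly how the paper concludes; note that it also renders your first-paragraph detour (Proposition~\ref{prop:projproj} plus Hattori applied to $\mathfrak{A}(F[H],M)$ itself) unnecessary, since freeness of $M$ over $\mathfrak{A}(F[H],M)$ falls out of the equality $\mathfrak{A}^{*}(M)=\mathfrak{A}(F[H],M)$.
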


\begin{proof}
(i)$\Rightarrow$(iv). 
By Proposition \ref{prop:Berge-prop-3}(iii), $M$ is projective over $\mathfrak{A}^{*}(M)$.
Moreover, $\mathfrak{A}^{*}(M)$ is a clean order by Theorem \ref{thm:Hattori}
and thus $M$ is in fact free over $\mathfrak{A}^{*}(M)$. 
Hence $\mathfrak{A}^{*}(M)=\mathfrak{A}(F[H], M)$ by 
Proposition \ref{prop:associatedfree}.
 
(iv)$\Rightarrow$(iii). We have
$\Ind^{G}_{H}\mathfrak{A}(F[H], M)=\Ind^{G}_{H}\mathfrak{A}^{*}(M)=\mathfrak{A}(F[G], \mathrm{Ind}_{H}^{G} M)$, where the second equality is Proposition \ref{prop:Berge-prop-3}(ii).
Thus $\Ind^{G}_{H}\mathfrak{A}(F[H], M)$ is a ring by Corollary \ref{cor:condition-to-be-a-ring}. 
Hence $\mathrm{Ind}_{H}^{G}M$ is free over $\mathrm{Ind}_{H}^{G} \mathfrak{A}(F[H], M)$ by 
Corollary \ref{cor:induction-in-free-rank-1-case}(iii).

(iii)$\Rightarrow$(ii). This follows from Corollary \ref{cor:condition-to-be-a-ring}.
 
(ii)$\Rightarrow$(i). This follows from the general fact that every free module is projective.
\end{proof}


\section{Induction for orders of a certain structure}\label{inductionstructure}

Let $R$ be a discrete valuation ring with field of fractions $F$ of characteristic zero and suppose that the residue field of $R$ is finite.
Let $G$ be a finite group and let $H$ be a subgroup of $G$. In \S\ref{sec:induction} we reviewed some general induction properties of the associated order $\mathfrak{A}(F[H], M)$ (with weaker hypotheses on $R$ for some results). In this section, we prove new results concerning inductions of orders of a certain form and then consider arithmetic applications such as the study of weakly ramified extensions.

Let $\pi$ be a uniformizer of $R$.
For a subgroup $P$ of $G$, let $\ncl_G(P)$ denote the normal closure of $P$ in $G$ and let
$\Tr_{P} = \sum_{k\in P}k \in R[G]$.  

\begin{theorem}\label{thm:inductiononegenerator}
Let $M$ be an $R[H]$-lattice such that $FM$ is free of rank $1$ over $F[H]$.
Suppose that there exist a positive integer $n$ and a subgroup $P$ of $H$ such that
\[
\mathfrak{A}(F[H], M)=R[H]+{\pi^{-n}}R[H]\textup{Tr}_P.
\] 
Then the following statements hold:
\begin{enumerate}
\item $\mathrm{Ind}_{H}^{G} \mathfrak{A}(F[H], M)=R[G]+\pi^{-n}R[G]\textup{Tr}_P$.
\item $\mathfrak{A}(F[G], \mathrm{Ind}_{H}^{G} M)=R[G]+\pi^{-n}R[G]\textup{Tr}_{\ncl_G(P)}$.
\item $\mathrm{Ind}_{H}^{G} \mathfrak{A}(F[H], M)$ is a ring if and only if $P$ is normal in $G$.
\item  If $P$ is normal in $G$
and $M$ is free over $\mathfrak{A}(F[H], M)$
then $\mathrm{Ind}_{H}^{G} \mathfrak{A}(F[H], M)$
is free over $\mathfrak{A}(F[G], \mathrm{Ind}_{H}^{G} M)$.
\item If $H$ is abelian and normal in $G$ and $\mathrm{Ind}_{H}^{G} M$ is projective over $\mathfrak{A}(F[G], \mathrm{Ind}_{H}^{G} M)$, then $P$ is normal in $G$.
\end{enumerate}
\end{theorem}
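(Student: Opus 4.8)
The plan is to establish the five statements in sequence, since later parts depend on earlier ones. For (i), the key observation is that induction is right-exact and commutes with the $R[G]$-module operations: starting from $\mathfrak{A}(F[H],M)=R[H]+\pi^{-n}R[H]\Tr_P$, applying $R[G]\otimes_{R[H]}(-)$ gives $R[G]+\pi^{-n}R[G]\Tr_P$ inside $F[G]$, where one must check that $R[G]\otimes_{R[H]}R[H]\Tr_P$ maps isomorphically onto $R[G]\Tr_P$ (this uses that $\Tr_P$ is fixed under left multiplication by $P\subseteq H$, so $R[G]\Tr_P$ is a well-defined quotient of $R[G]\otimes_{R[H]}R[H]\Tr_P$, and a dimension/rank count over $F$ shows the map is injective). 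One should also verify that the natural map $\Ind_H^G\mathfrak{A}(F[H],M)\to F[G]$ is injective, which follows because $\mathfrak{A}(F[H],M)$ is a full lattice in $F[H]$ and induction of a full lattice is a full lattice in $\Ind_H^G F[H]\cong F[G]$.

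For (ii), I would compute $\bigcap_{g\in G} g\bigl(R[G]+\pi^{-n}R[G]\Tr_P\bigr)g^{-1}$ using Proposition~\ref{prop:intersection}. Conjugating, $g(R[G]+\pi^{-n}R[G]\Tr_P)g^{-1}=R[G]+\pi^{-n}R[G]\Tr_{gPg^{-1}}$, so the intersection is $\bigcap_{g\in G}(R[G]+\pi^{-n}R[G]\Tr_{gPg^{-1}})$. The containment $\supseteq$ of $R[G]+\pi^{-n}R[G]\Tr_{\ncl_G(P)}$ is the delicate direction: one needs that $\Tr_{\ncl_G(P)}\in R[G]+\pi^{-n}R[G]\Tr_{gPg^{-1}}$ for every $g$, which should follow from the fact that $\Tr_{\ncl_G(P)}$ is (up to a unit) divisible by $\Tr_{gPg^{-1}}$ in $R[G]$ whenever $gPg^{-1}\subseteq\ncl_G(P)$ — more precisely $\Tr_{\ncl_G(P)}=\Tr_{gPg^{-1}}\cdot(\text{sum of coset representatives})$. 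For $\subseteq$, an element $x\in F[G]$ lying in every $R[G]+\pi^{-n}R[G]\Tr_{gPg^{-1}}$ has its ``$\pi^{-n}$-part'' supported on the intersection of all $R[G]\Tr_{gPg^{-1}}$; I expect this intersection to equal $R[G]\Tr_{\ncl_G(P)}$ after possibly re-expressing things via the augmentation-type maps $R[G]\Tr_Q\cong R[G/Q]$ viewed inside $F[G]$. This is the main obstacle: pinning down exactly $\bigcap_g R[G]\Tr_{gPg^{-1}}=R[G]\Tr_{\ncl_G(P)}$ as $R[G]$-lattices in $F[G]$, and I would prove it by reducing to the statement that in $R[G]$ the ideal-theoretic "gcd-with-denominator" of the elements $\Tr_{gPg^{-1}}$ is governed by the subgroup they jointly generate, namely $\ncl_G(P)$.

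Given (i) and (ii), statement (iii) is immediate from Corollary~\ref{cor:condition-to-be-a-ring}: $\Ind_H^G\mathfrak{A}(F[H],M)$ is a ring iff it equals $\mathfrak{A}(F[G],\Ind_H^G M)$, and comparing (i) with (ii) this happens iff $\pi^{-n}R[G]\Tr_P=\pi^{-n}R[G]\Tr_{\ncl_G(P)}$, i.e.\ iff $\Tr_P=\Tr_{\ncl_G(P)}$ up to the relevant identification, i.e.\ iff $P=\ncl_G(P)$, i.e.\ iff $P$ is normal in $G$. Statement (iv) then follows directly from Corollary~\ref{cor:induction-in-free-rank-1-case}: with $P$ normal, (iii) gives that $\Ind_H^G\mathfrak{A}(F[H],M)$ is a ring, and combined with the hypothesis that $M$ is free over $\mathfrak{A}(F[H],M)$, part~(iii) of that corollary yields that $\Ind_H^G M$ is free over $\mathfrak{A}(F[G],\Ind_H^G M)$, hence a fortiori $\Ind_H^G\mathfrak{A}(F[H],M)=\mathfrak{A}(F[G],\Ind_H^G M)$ is free over itself of rank $1$.

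For (v), suppose $H$ is abelian and normal in $G$ and $\Ind_H^G M$ is projective over $\mathfrak{A}(F[G],\Ind_H^G M)$. By Proposition~\ref{prop:strongequivalence} (applicable since $H$ is normal abelian and $FM$ is free of rank $1$ over $F[H]$), projectivity is equivalent to condition~(iv) of that proposition, namely $M$ is free over $\mathfrak{A}(F[H],M)$ and $\mathfrak{A}^*(M)=\mathfrak{A}(F[H],M)$. The equality $\mathfrak{A}^*(M)=\mathfrak{A}(F[H],M)$ unwinds to $\bigcap_{g\in G}g\mathfrak{A}(F[H],M)g^{-1}=\mathfrak{A}(F[H],M)$; since $H$ is normal, conjugation by $g$ sends $R[H]+\pi^{-n}R[H]\Tr_P$ to $R[H]+\pi^{-n}R[H]\Tr_{gPg^{-1}}$, so the equality forces $\pi^{-n}R[H]\Tr_P\supseteq\pi^{-n}R[H]\Tr_{gPg^{-1}}$ for all $g$, hence (reversing roles) equality, which as in the argument for (iii) gives $gPg^{-1}=P$ for all $g$, i.e.\ $P$ is normal in $G$. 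The main point to be careful about here is that $\Tr_P$ and $\Tr_{gPg^{-1}}$ generate the same $R[H]$-submodule of $F[H]$ only when $P=gPg^{-1}$; this is where abelianness/normality of $H$ is used to keep everything inside $F[H]$ so the comparison of the two traces is meaningful, and it reduces to the elementary fact that $\Tr_P$ determines $P$ as $\{h\in H: h\Tr_P=\Tr_P\}$.
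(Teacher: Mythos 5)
Your parts (i), (iii) and (iv) follow the paper's argument essentially verbatim, and your part (v) is a legitimate minor variant: the paper deduces (v) from part (iii) of the theorem together with the equivalence (i)$\Leftrightarrow$(iii) of Proposition \ref{prop:strongequivalence} (projectivity forces $\mathrm{Ind}_{H}^{G}\mathfrak{A}(F[H],M)$ to be a ring, hence $P$ normal), whereas you unwind condition (iv) of that proposition, $\mathfrak{A}^{*}(M)=\mathfrak{A}(F[H],M)$, inside $R[H]$. Your route works but needs the same intersection computation as (ii) one level down, plus the observation that the lattice $R[H]+\pi^{-n}R[H]\Tr_{Q}$ determines $Q$; the paper's route avoids both.

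The genuine gap is in part (ii), which you yourself flag as the main obstacle but do not close. The inclusion $\supseteq$ is fine ($\Tr_{\ncl_G(P)}=\Tr_{\ncl_G(P)/gPg^{-1}}\cdot\Tr_{gPg^{-1}}$ for each $g$). For $\subseteq$, however, your plan of isolating the ``$\pi^{-n}$-part'' of an element of $\bigcap_{g}\bigl(R[G]+\pi^{-n}R[G]\Tr_{gPg^{-1}}\bigr)$ and intersecting those parts does not work as stated: the decomposition $x=r_{g}+\pi^{-n}s_{g}\Tr_{gPg^{-1}}$ is not canonical and varies with $g$, and $\bigcap_{g}(A+B_{g})\supseteq A+\bigcap_{g}B_{g}$ is in general strict, so knowing $\bigcap_{g}R[G]\Tr_{gPg^{-1}}=R[G]\Tr_{\ncl_G(P)}$ (which is true) does not by itself yield the claim. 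The device the paper uses, and which you would need, is the uniform congruence description
\[
R[G]+\pi^{-n}R[G]\Tr_{Q}
=\pi^{-n}\Bigl\{\textstyle\sum_{\gamma\in G}a_{\gamma}\gamma\in R[G] : \gamma_{1}^{-1}\gamma_{2}\in Q\Rightarrow a_{\gamma_{1}}\equiv a_{\gamma_{2}}\bmod (\pi^{n})\Bigr\}
\]
for a subgroup $Q$ of $G$. With this, intersecting over all conjugates $gPg^{-1}$ visibly imposes the congruence for all $\gamma_{1}^{-1}\gamma_{2}$ in $\bigcup_{g}gPg^{-1}$, hence (since the congruence condition for a subset coincides with that for the subgroup it generates) for $\gamma_{1}^{-1}\gamma_{2}\in\ncl_G(P)$, giving exactly $R[G]+\pi^{-n}R[G]\Tr_{\ncl_G(P)}$. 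Without some such single description valid simultaneously for all conjugates, the reverse inclusion in (ii) — and hence the ``only if'' direction of (iii) and all of (v) — remains unproved.
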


\begin{proof}
Note that if $\{h\in G/H\}$ and $\{k\in H/P\}$ are two sets of left cosets representatives,
then $\{hk\}$ is a set of left coset representatives for $G/P$. Thus we have
\[
\begin{split}
\mathrm{Ind}_{H}^{G} \mathfrak{A}(F[H], M)&=\mathrm{Ind}_{H}^{G}\left(R[H]+\left\{{\pi^{-n}}\left(\sum_{k\in H/P}a_{k}k\right):a_{k}\in R\right\}\cdot\textup{Tr}_{P}\right)\\
 &=R[G]+\left\{\sum_{h\in G/H}{\pi^{-n}}h\left(\sum_{k\in H/P}a_{h,k}k\right): a_{h,k}\in R\right\}\cdot\textup{Tr}_{P}\\
 &=R[G]+\left\{{\pi^{-n}}\left(\sum_{h\in G/H}\sum_{k\in H/P}a_{h,k}hk\right):a_{h,k}\in R\right\}\cdot\textup{Tr}_{P}\\
 &=R[G]+{\pi^{-n}}R[G]\textup{Tr}_{P},
\end{split}
\]
which proves (i). Moreover, we have
\[
\begin{split}
\mathrm{Ind}_{H}^{G} \mathfrak{A}(F[H], M)&=R[G]+{\pi^{-n}}R[G/P]\textup{Tr}_{P}\\
&=R[G]+\left\{{\pi^{-n}}\left(\sum_{h\in G/P}a_hh\right) : a_h\textup{ is a representative of }R/(\pi^{n})\right\}\cdot\textup{Tr}_{P}\\
&={\pi^{-n}}\left\{\sum_{\gamma\in G}a_\gamma\gamma\in R[G] : 
\gamma_{1}^{-1}\gamma_{2} \in P\Rightarrow a_{\gamma_{1}}\equiv a_{\gamma_{2}}\bmod{(\pi^{n})}\right\}.
\end{split}
\]
Thus for every $g\in G$, we have
\[
\begin{split}
g\mathrm{Ind}_{H}^{G} \mathfrak{A}(F[H], M)g^{-1}
&= R[G]+{\pi^{-n}}R[G/gPg^{-1}]\textup{Tr}_{gPg^{-1}}\\
&={\pi^{-n}}\left\{\sum_{\gamma\in G} a_{\gamma}\gamma \in R[G] : \gamma_{1}^{-1}\gamma_{2}\in gPg^{-1}\Rightarrow a_{\gamma_{1}}\equiv a_{\gamma_{2}}\bmod{(\pi^{n})}\right\}.
 \end{split}
\]
Now by Proposition \ref{prop:intersection}, we have that
\begin{equation}\label{eq:ncl}
\begin{split}
\mathfrak{A}(F[G], \mathrm{Ind}_{H}^{G} M)
&= \bigcap_{g \in G} g\mathrm{Ind}_{H}^{G} \mathfrak{A}(F[H], M)g^{-1}\\
&= {\pi^{-n}}\left\{\sum_{\gamma\in G}a_\gamma\gamma\in R[G] : \gamma_{1}^{-1}\gamma_{2}\in 
\bigcup_{g \in G} gPg^{-1}
\Rightarrow a_{\gamma_{1}}\equiv a_{\gamma_{2}}\bmod{(\pi^{n})} \right\} \\ 
&={\pi^{-n}}\left\{\sum_{\gamma\in G}a_\gamma\gamma\in R[G] : \gamma_{1}^{-1}\gamma_{2}\in 
\ncl_G(P)
\Rightarrow a_{\gamma_{1}}\equiv a_{\gamma_{2}}\bmod{(\pi^{n})} \right\} \\
&=R[G]+ {\pi^{-n}}R[G/\ncl_G(P)]\textup{Tr}_{\ncl_G(P)},\\
&=R[G]+ {\pi^{-n}}R[G]\textup{Tr}_{\ncl_G(P)},
\end{split}
\end{equation}
which proves (ii).

By Corollary \ref{cor:condition-to-be-a-ring}, $\mathrm{Ind}_{H}^{G} \mathfrak{A}(F[H], M)$ is a ring if and only if it is equal to $\mathfrak{A}(F[G], \mathrm{Ind}_{H}^{G} M)$, which by (i) and (ii) is true
if and only if $P=\ncl(P)$. This proves (iii).
Part (iv) follows from (iii) and Corollary \ref{cor:induction-in-free-rank-1-case}.
Part (v) follows from (iii) and Proposition \ref{prop:strongequivalence}.
\end{proof}

\begin{remark}
The system of equations (\ref{eq:ncl}) is justified by the following fact. Let $G$ be a group, let $B$ be any set, let $A$ be a subset of $G$ and let $A'$ be the subgroup of $G$ generated by $A$. Then
from the description of the elements of $A'$ in terms of products of elements of $A$ and their
inverses, we have 
\begin{multline*}
\left\{\{a_\gamma\}_{\gamma\in G}\in \textstyle{\prod_{\gamma \in G}} \, B:\gamma_1^{-1}\gamma_2\in A\Rightarrow a_{\gamma_1}=a_{\gamma_2}\right\} \\
=\left\{\{a_\gamma\}_{\gamma\in G}\in  \textstyle{\prod_{\gamma \in G}} B : \gamma_1^{-1}\gamma_2\in A'\Rightarrow a_{\gamma_1}=a_{\gamma_2}\right\}. 
\end{multline*}
\end{remark}

We have the following application to the understanding of 
local freeness in weakly ramified extensions of number fields.

\begin{corollary}\label{cor:inertianormal}
Let $K/F$ be a finite Galois extension of number fields with Galois group $G$ and let $\mathfrak{P}|\mathfrak{p}$ be two primes of $K/F$ such that $K_\mathfrak{P}/F_\mathfrak{p}$ is wildly and weakly ramified. 
\begin{enumerate}
\item If the inertia group $G_{0}=G_{0}(\mathfrak{P}|\mathfrak{p})$ is normal in $G$ then
$\mathcal{O}_{K,\mathfrak{p}}$ is free over $\mathfrak{A}_{K/F,\mathfrak{p}}$.
\item Assume further that the decomposition group $D=D(\mathfrak{P}|\mathfrak{p})$ is abelian and normal in $G$. Then $G_{0}$ is normal in $ G$ if and only if $\mathcal{O}_{K,\mathfrak{p}}$ is free over $\mathfrak{A}_{K/F,\mathfrak{p}}$.
\end{enumerate}
\end{corollary}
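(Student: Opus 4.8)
The plan is to localize at $\mathfrak{p}$, pass to the completion $K_\mathfrak{P}/F_\mathfrak{p}$, and then apply Theorem~\ref{thm:inductiononegenerator} with $H$ the decomposition group and $P$ the inertia group. Write $D=D(\mathfrak{P}|\mathfrak{p})$ and $G_0=G_0(\mathfrak{P}|\mathfrak{p})$, and recall that $G_0\trianglelefteq D\leq G$. By \S\ref{subsec:ass-orders-rings-of-ints} we have $\mathcal{O}_{K,\mathfrak{p}}\cong\mathrm{Ind}_D^G\mathcal{O}_{K_\mathfrak{P}}$ and $\mathfrak{A}_{K/F,\mathfrak{p}}\cong\mathfrak{A}(F_\mathfrak{p}[G],\mathrm{Ind}_D^G\mathcal{O}_{K_\mathfrak{P}})$, so that $\mathcal{O}_{K,\mathfrak{p}}$ is free over $\mathfrak{A}_{K/F,\mathfrak{p}}$ if and only if $\mathrm{Ind}_D^G\mathcal{O}_{K_\mathfrak{P}}$ is free over its associated order in $F_\mathfrak{p}[G]$. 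Since $\mathcal{O}_{K_\mathfrak{P}}$ is a full $\mathcal{O}_{F_\mathfrak{p}}[D]$-lattice in $K_\mathfrak{P}$, which is free of rank~$1$ over $F_\mathfrak{p}[D]$ by the normal basis theorem, the general hypotheses of Theorem~\ref{thm:inductiononegenerator} (with $H=D$) are met.

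First I would feed in Theorem~\ref{thm:weak}. Because $K_\mathfrak{P}/F_\mathfrak{p}$ is wildly and weakly ramified, $\mathcal{O}_{K_\mathfrak{P}}$ is free over $\mathfrak{A}_{K_\mathfrak{P}/F_\mathfrak{p}}$, and the explicit description of that associated order can be rewritten, using $e(K_\mathfrak{P}/F_\mathfrak{p})=|G_0|$ and the fact that $\mathrm{Tr}_{G_0}=\sum_{\gamma\in G_0}\gamma$ already lies in $\mathcal{O}_{F_\mathfrak{p}}[D]$, in the form
\[
\mathfrak{A}_{K_\mathfrak{P}/F_\mathfrak{p}}=\mathcal{O}_{F_\mathfrak{p}}[D]+\pi^{-n}\mathcal{O}_{F_\mathfrak{p}}[D]\,\mathrm{Tr}_{G_0}
\]
for a suitable positive integer $n$, where $\pi$ is a uniformizer of $\mathcal{O}_{F_\mathfrak{p}}$. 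This is exactly the standing hypothesis of Theorem~\ref{thm:inductiononegenerator} with $H=D$ and $P=G_0$.

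With this in place, part~(i) is a direct application. If $G_0$ is normal in $G$ then $\ncl_G(G_0)=G_0$, so by Theorem~\ref{thm:inductiononegenerator}(iii) the lattice $\mathrm{Ind}_D^G\mathfrak{A}_{K_\mathfrak{P}/F_\mathfrak{p}}$ is a ring; combining this with the freeness of $\mathcal{O}_{K_\mathfrak{P}}$ over $\mathfrak{A}_{K_\mathfrak{P}/F_\mathfrak{p}}$ and Corollary~\ref{cor:induction-in-free-rank-1-case} (equivalently, Theorem~\ref{thm:inductiononegenerator}(iv) together with Proposition~\ref{prop:induction-in-free-rank-1-case}) yields that $\mathrm{Ind}_D^G\mathcal{O}_{K_\mathfrak{P}}$ is free over $\mathfrak{A}(F_\mathfrak{p}[G],\mathrm{Ind}_D^G\mathcal{O}_{K_\mathfrak{P}})$, i.e.\ that $\mathcal{O}_{K,\mathfrak{p}}$ is free over $\mathfrak{A}_{K/F,\mathfrak{p}}$. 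For part~(ii), the forward implication is part~(i); for the converse, if $\mathcal{O}_{K,\mathfrak{p}}$ is free, hence projective, over $\mathfrak{A}_{K/F,\mathfrak{p}}$, then $\mathrm{Ind}_D^G\mathcal{O}_{K_\mathfrak{P}}$ is projective over its associated order, and since $D$ is abelian and normal in $G$, Theorem~\ref{thm:inductiononegenerator}(v) forces $P=G_0$ to be normal in $G$.

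The hard part will be the rewriting in the second paragraph: extracting from Theorem~\ref{thm:weak} that $\mathfrak{A}_{K_\mathfrak{P}/F_\mathfrak{p}}$ has precisely the single-extra-generator shape $R[H]+\pi^{-n}R[H]\mathrm{Tr}_P$ required by Theorem~\ref{thm:inductiononegenerator}, with $P$ equal to the inertia group $G_0$. Everything after that is bookkeeping of which clauses of Theorem~\ref{thm:inductiononegenerator} (parts (iii)--(v)) and of Corollary~\ref{cor:induction-in-free-rank-1-case} to invoke. As a sanity check, when $K_\mathfrak{P}/F_\mathfrak{p}$ is totally ramified one has $P=G_0=D=H$, so $G_0$ is normal in $G$ exactly when $D$ is; in the setting of part~(ii) the equivalence is then immediate.
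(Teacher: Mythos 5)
Your proposal is correct and follows essentially the same route as the paper: identify $\mathcal{O}_{K,\mathfrak{p}}$ with $\mathrm{Ind}_D^G\mathcal{O}_{K_\mathfrak{P}}$, use Theorem \ref{thm:weak} to write $\mathfrak{A}_{K_\mathfrak{P}/F_\mathfrak{p}}=\mathcal{O}_{F_\mathfrak{p}}[D]+\pi^{-1}\mathcal{O}_{F_\mathfrak{p}}[D]\mathrm{Tr}_{G_0}$ (the paper takes $n=1$ directly), and then invoke parts (iv) and (v) of Theorem \ref{thm:inductiononegenerator} together with Proposition \ref{prop:induction-in-free-rank-1-case}. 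The step you flag as "the hard part" is exactly the one-line rewriting the paper performs.
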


\begin{proof}
Let $\pi$ be any uniformizer of $\mathcal{O}_{F_{\mathfrak{p}}}$.
Then by Theorem \ref{thm:weak} we have
\[
\mathfrak{A}(F_{\mathfrak p}[D],\mathcal{O}_{K_{\mathfrak{P}}})
=\mathfrak{A}_{K_\mathfrak{P}/F_\mathfrak{p}}
=\mathcal{O}_{F_\mathfrak{p}}[D][{\pi^{-1}}\textup{Tr}_{G_0}]
=\mathcal{O}_{F_\mathfrak{p}}[D]+{\pi^{-1}}\mathcal{O}_{F_\mathfrak{p}}[D]\textup{Tr}_{G_0}.
 \] 
Hence claims (i) and (ii) follow from parts (iv) and (v) of Theorem \ref{thm:inductiononegenerator}.
\end{proof}

We now prove the following generalisation of Theorem \ref{thm:inductiononegenerator}.

\begin{theorem}\label{thm:indringstar}
Let $M$ be an $R[H]$-lattice such that $FM$ is free of rank $1$ over $F[H]$. Suppose that there exist integers $0=n_0<n_1<\cdots <n_r$ and subgroups
\[
\{e\}=P_0\subsetneq P_1\subsetneq P_2\subsetneq \cdots \subsetneq P_r \subseteq H\subseteq G
\] 
such that 
\begin{equation}\label{eq:assoc-order-with-traces}
\mathfrak{A}(F[H], M)=\sum_{i=0}^r\pi^{-n_i}R[H]\textup{Tr}_{P_i}. 
\end{equation}
Then the following statements hold:
\begin{enumerate}
\item $\mathrm{Ind}_{H}^{G} \mathfrak{A}(F[H], M)=\sum_{i=0}^r\pi^{-n_i}R[G]\textup{Tr}_{P_i}$.
\item $\mathfrak{A}(F[G], \mathrm{Ind}_{H}^{G} M)=\sum_{i=0}^r\pi^{-n_i}R[G]\textup{Tr}_{\ncl_G(P_i)}$.
\item $\mathrm{Ind}_{H}^{G} \mathfrak{A}(F[H], M)$ is a ring if and only if $P_i$ is normal in $ G$ for every $i$.
\item  If $P_i$ is normal in $ G$ for every $i$
and $M$ is free over $\mathfrak{A}(F[H], M)$
then $\mathrm{Ind}_{H}^{G} \mathfrak{A}(F[H], M)$
is free over $\mathfrak{A}(F[G], \mathrm{Ind}_{H}^{G} M)$.
\item If $H$ is abelian and normal in $G$ and $\mathrm{Ind}_{H}^{G} M$ is projective over $\mathfrak{A}(F[G], \mathrm{Ind}_{H}^{G} M)$, then $P_i$ is normal in $ G$ for every $i$.
\end{enumerate}
\end{theorem}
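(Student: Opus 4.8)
The plan is to run through the proof of Theorem~\ref{thm:inductiononegenerator} essentially unchanged, with the single subgroup $P$ and weight $\pi^{-n}$ replaced by the flag $P_0\subsetneq P_1\subsetneq\dots\subsetneq P_r$ and the weights $\pi^{-n_0},\dots,\pi^{-n_r}$. For (i), exactly as there: choosing left coset representatives $\{h\}$ for $G/H$ gives $\Ind_H^G\mathfrak{A}(F[H],M)=\bigoplus_h h\,\mathfrak{A}(F[H],M)$, and substituting $\mathfrak{A}(F[H],M)=\sum_i\pi^{-n_i}R[H]\Tr_{P_i}$ and using that $\bigoplus_h hR[H]=R[G]$ and that a sum of $R$-submodules commutes with $\bigoplus_h$ gives $\Ind_H^G\mathfrak{A}(F[H],M)=\sum_i\pi^{-n_i}R[G]\Tr_{P_i}$.

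The device for (ii) is a coordinate description generalising the one implicit in~\eqref{eq:ncl}: for any chain of subgroups $\{e\}=Q_0\subseteq Q_1\subseteq\dots\subseteq Q_r\subseteq G$ and the given weights,
\begin{multline*}
\sum_{i=0}^{r}\pi^{-n_i}R[G]\Tr_{Q_i}\\
=\Bigl\{\,\textstyle\sum_{\gamma\in G}a_\gamma\gamma\in\pi^{-n_r}R[G]\ :\ \gamma_1^{-1}\gamma_2\in Q_j\ \Rightarrow\ a_{\gamma_1}-a_{\gamma_2}\in\pi^{-n_{j-1}}R\ \text{ for all }j\in\{1,\dots,r\}\,\Bigr\}.
\end{multline*}
Inclusion ``$\subseteq$'' is immediate, since a coefficient of an element of the left-hand side is $\sum_i\pi^{-n_i}w^{(i)}_\gamma$ with each $w^{(i)}\colon G\to R$ constant on left $Q_i$-cosets, so $\gamma_1^{-1}\gamma_2\in Q_j$ kills the terms with $i\ge j$ and leaves $\sum_{i<j}\pi^{-n_i}(w^{(i)}_{\gamma_1}-w^{(i)}_{\gamma_2})\in\pi^{-n_{j-1}}R$. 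For ``$\supseteq$'' I would induct on $r$: given $x$ in the right-hand side, subtract the element of $\pi^{-n_r}R[G]\Tr_{Q_r}$ whose value on each left $Q_r$-coset equals $a_\rho$ for a chosen representative $\rho$; the $j=r$ condition shows the difference lies in $\pi^{-n_{r-1}}R[G]$ and still satisfies the conditions for $j\le r-1$, so the inductive hypothesis for the shorter chain $Q_0\subseteq\dots\subseteq Q_{r-1}$ (with the same weights) finishes it.

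Granted this, (ii) comes out just as in~\eqref{eq:ncl}: by (i) and the coordinate description with $Q_i=P_i$, $\Ind_H^G\mathfrak{A}(F[H],M)$ is the set of $\sum a_\gamma\gamma\in\pi^{-n_r}R[G]$ satisfying, for every $j$, $\gamma_1^{-1}\gamma_2\in P_j\Rightarrow a_{\gamma_1}-a_{\gamma_2}\in\pi^{-n_{j-1}}R$; conjugation by $g\in G$ replaces $P_j$ by $gP_jg^{-1}$; so by Proposition~\ref{prop:intersection} the associated order $\mathfrak{A}(F[G],\Ind_H^GM)$ is the set of such $x$ with $\gamma_1^{-1}\gamma_2\in\bigcup_g gP_jg^{-1}\Rightarrow a_{\gamma_1}-a_{\gamma_2}\in\pi^{-n_{j-1}}R$ for every $j$. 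Applying, separately for each $j$, the elementary group-theoretic fact recorded just after Theorem~\ref{thm:inductiononegenerator} (with $B=\pi^{-n_r}R/\pi^{-n_{j-1}}R$) replaces $\bigcup_g gP_jg^{-1}$ by the subgroup it generates, namely $\ncl_G(P_j)$; since $\ncl_G(P_0)\subseteq\dots\subseteq\ncl_G(P_r)$ is again a chain starting at $\{e\}$, the coordinate description read backwards gives $\mathfrak{A}(F[G],\Ind_H^GM)=\sum_i\pi^{-n_i}R[G]\Tr_{\ncl_G(P_i)}$, which is (ii). Parts (iii)--(v) are then formal. For (iii), Corollary~\ref{cor:condition-to-be-a-ring} says $\Ind_H^G\mathfrak{A}(F[H],M)$ is a ring iff it equals $\mathfrak{A}(F[G],\Ind_H^GM)$, i.e.\ by (i)--(ii) iff $\sum_i\pi^{-n_i}R[G]\Tr_{P_i}=\sum_i\pi^{-n_i}R[G]\Tr_{\ncl_G(P_i)}$; this holds if every $P_i$ is normal, while if some $P_k$ is not, picking $\gamma_0\in\ncl_G(P_k)\setminus P_k$ and writing $\pi^{-n_k}\Tr_{P_k}=\sum_i\pi^{-n_i}v_i\Tr_{\ncl_G(P_i)}$, comparison of the coefficients at $e$ and at $\gamma_0$ — which lie in one left $\ncl_G(P_i)$-coset for all $i\ge k$ — forces $\pi^{-n_k}\in\pi^{-n_{k-1}}R$, contradicting $n_{k-1}<n_k$. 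Part (iv) follows from (iii), Corollary~\ref{cor:induction-in-free-rank-1-case}(iii) and Proposition~\ref{prop:induction-in-free-rank-1-case}; part (v) follows from the implication (i)$\Rightarrow$(iii) of Proposition~\ref{prop:strongequivalence} together with (iii).

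The main obstacle is (ii), and within it the coordinate description — in particular the bookkeeping that in the coefficient conditions the trace $\Tr_{Q_j}$ is governed by the \emph{previous} weight $\pi^{-n_{j-1}}$ rather than $\pi^{-n_j}$. Once this is pinned down, the conjugation-and-intersection argument of~\eqref{eq:ncl} carries over with no new ideas, and (iii)--(v) are immediate.
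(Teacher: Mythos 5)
Your proposal is correct, and parts (i) and (iii)--(v) run exactly as in the paper (with your explicit comparison of the coefficients at $e$ and at $\gamma_0\in\ncl_G(P_k)\setminus P_k$ supplying the detail the paper compresses into ``the same methods''; the citations of Corollary \ref{cor:condition-to-be-a-ring}, Corollary \ref{cor:induction-in-free-rank-1-case}, Proposition \ref{prop:induction-in-free-rank-1-case} and Proposition \ref{prop:strongequivalence} are the right ones). The genuine difference is in part (ii). The paper only has the congruence description for a single trace (the $r=1$ case, i.e.\ Theorem \ref{thm:inductiononegenerator}) and then argues by a descending induction: it proves the containment $\supseteq$ via $\pi^{-n_i}\Tr_{\ncl_G(P_i)}=\Tr_{\ncl_G(P_i)/P_i}\,\pi^{-n_i}\Tr_{P_i}$ and, for $\subseteq$, repeatedly multiplies an element of $\mathfrak{A}(F[G],\mathrm{Ind}_H^GM)$ by $\pi^{n_{r-j-1}}$, applies the $r=1$ statement to the top trace, and subtracts a term $\pi^{-n_{r-j}}\alpha\Tr_{\ncl_G(P_{r-j})}$ to shorten the sum. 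You instead prove one coordinate lemma, valid for any nested (not necessarily strict) chain $\{e\}=Q_0\subseteq\cdots\subseteq Q_r$, identifying $\sum_i\pi^{-n_i}R[G]\Tr_{Q_i}$ with the set of $\sum_\gamma a_\gamma\gamma\in\pi^{-n_r}R[G]$ satisfying $a_{\gamma_1}-a_{\gamma_2}\in\pi^{-n_{j-1}}R$ whenever $\gamma_1^{-1}\gamma_2\in Q_j$, and then run the conjugation-and-intersection computation of \eqref{eq:ncl} once, invoking the remark on generated subgroups separately for each $j$ with $B=\pi^{-n_r}R/\pi^{-n_{j-1}}R$. The two points your route needs are both in place: the lemma is stated and proved for non-strict chains, which is what lets you read it backwards for the chain of normal closures $\ncl_G(P_i)$, and in the inductive step the subtracted coset-constant element leaves the conditions for $j\le r-1$ intact because $Q_j\subseteq Q_r$ forces equal $Q_r$-coset representatives. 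Your approach buys a reusable, transparent description of these trace lattices and a one-pass intersection argument; the paper's approach avoids formulating the general lemma at the cost of the more delicate index bookkeeping in its peeling induction.
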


\begin{proof}
The proof of part (i) is exactly as for Theorem \ref{thm:inductiononegenerator}(i).

We already know from Theorem \ref{thm:inductiononegenerator} that (ii) holds if $r=1$. 
So suppose that $r>1$.
Note that, since each $\ncl_G(P_i)$ is normal in $G$, for each $g \in G$ we have that
\[
g^{-1}\pi^{-n_i}\Tr_{\ncl_G(P_i)}g 
= \pi^{-n_i}\Tr_{\ncl_G(P_i)}
=\Tr_{\ncl_G(P_i)/P_i}\pi^{-n_i}\Tr_{P_i}\in \mathrm{Ind}_{H}^{G} \mathfrak{A}(F[H], M),
\]
where $\Tr_{\ncl_G(P_i)/P_i}$ is the sum over any fixed choice of coset representatives of
$\ncl_{G}(P_{i})/P_{i}$. 
Hence for each $g\in G$ we have $\pi^{-n_i}\Tr_{\ncl_G(P_i)}\in g\mathrm{Ind}_{H}^{G} \mathfrak{A}(F[H], M)g^{-1}$. Together with Proposition \ref{prop:intersection}, this implies that
$\pi^{-n_i}\Tr_{\ncl_G(P_i)}\in \mathfrak{A}(F[G], \mathrm{Ind}_{H}^{G} M)$. 
Therefore
\[
\mathfrak{A}(F[G], \mathrm{Ind}_{H}^{G} M)\supseteq\sum_{i=0}^r\pi^{-n_i}R[G]\textup{Tr}_{\ncl_G(P_i)}.
\]
It remains to show the reverse containment. First note that
\begin{equation}\label{eq:ind-contain-eq-base-case}
\mathfrak{A}(F[G], \mathrm{Ind}_{H}^{G} M)
\subseteq \mathrm{Ind}_{H}^{G} \mathfrak{A}(F[H], M)
=\sum_{i=0}^r\pi^{-n_i}R[G]\textup{Tr}_{P_i},
\end{equation}
where the containment follows from Remark \ref{rmk:inds-containment} and the equality is part (i). 
Let $\theta \in \mathfrak{A}(F[G], \mathrm{Ind}_{H}^{G} M)$. 
Then we can write $\theta=\sum_{i=0}^{r} \pi^{-n_i}\theta_{i}\Tr_{P_i}$, where $\theta_{i} \in R[G]$ for each $i$. 
For each integer $j$ with $0 \leq j \leq r$, we shall prove that
\[
\theta \in \sum_{i=0}^{r-j-1}\pi^{-n_i}R[G]\textup{Tr}_{P_i}
+\sum_{i=r-j}^r\pi^{-n_i}R[G]\textup{Tr}_{\ncl_G(P_i)}.
\]

We proceed by induction on $j$ and first consider the base case $j=0$. 
We have that
\[
\pi^{n_{r-1}}\theta=\sum_{i=0}^r \pi^{n_{r-1}-n_i}\theta_i\Tr_{P_i}\in R[G]+\pi^{n_{r-1}-n_r}R[G]\Tr_{P_r}.
\]
Also note that for each $g \in G$ we have
\begin{align*}
g^{-1}\pi^{n_{r-1}}\theta g & \in g^{-1}\pi^{n_{r-1}}\mathfrak{A}(F[G], \mathrm{Ind}_{H}^{G} M)g\\
&= \pi^{n_{r-1}}\mathfrak{A}(F[G], \mathrm{Ind}_{H}^{G} M)\\
&\subseteq\pi^{n_{r-1}}\mathrm{Ind}_{H}^{G} \mathfrak{A}(F[H], M)\\
&\subseteq R[G]+\pi^{n_{r-1}-n_r}R[G]\Tr_{P_r}.
\end{align*}
Hence
\[
\pi^{n_{r-1}}\theta\in \bigcap_{g\in G} g\left(R[G]+\pi^{n_{r-1}-n_r}R[G]\Tr_{P_r}\right)g^{-1}
=R[G]+\pi^{n_{r-1}-n_r}R[G]\Tr_{\ncl_G(P_r)},
\]
where the equality follows from the case $r=1$. 
Thus there exists $\alpha \in R[G]$ such that 
\[
 \theta-\pi^{-n_r}\alpha\Tr_{\ncl_G(P_r)}\in \pi^{-n_{r-1}}R[G]\cap \mathrm{Ind}_{H}^{G} \mathfrak{A}(F[H], M)=\sum_{i=0}^{r-1}\pi^{-n_i}R[G]\textup{Tr}_{P_i},
\]
where the equality follows from \eqref{eq:ind-contain-eq-base-case} and the containment $R[G]\Tr_{P_r}\subseteq R[G]\Tr_{P_{r-1}}$, which holds since $\Tr_{P_r}=\Tr_{P_r/P_{r-1}}\Tr_{P_{r-1}}$.
This completes the base case $j=0$.

We now proceed with the induction step. 
Suppose our claim is valid for $j-1$, and let us prove it for $j$.
Using the inductive hypothesis and subtracting an appropriate element of
$\sum_{i=j+1}^{r} \pi^{-n_i}R[G]\Tr_{\ncl_G(P_i)}$, we can and do assume without loss of generality that 
\[
\theta=\sum_{i=0}^{r-j} \pi^{-n_i}\theta_{i}\Tr_{P_i}\in \mathfrak{A}(F[G], \mathrm{Ind}_{H}^{G} M),
\]
for some $\theta_{i} \in R[G]$. Hence it remains to show that
\[
\theta \in \sum_{i=0}^{r-j-1} \pi^{-n_i}R[G]\Tr_{P_i}+\pi^{-n_{r-j}}R[G]\Tr_{\ncl_G(P_{r-j})}.
\]
As in the base case, for each $g\in G$ we have
\[\begin{split}
g^{-1}\pi^{n_{r-j-1}}\theta g &\in \pi^{n_{r-j-1}-n_{r-j}}R[G]\cap \pi^{n_{r-j-1}}\mathfrak{A}(F[G], \mathrm{Ind}_{H}^{G} M)\\
&\subseteq \pi^{n_{r-j-1}-n_{r-j}}R[G]\cap \pi^{n_{r-j-1}}\mathrm{Ind}_{H}^{G} \mathfrak{A}(F[H], M)\\
&\subseteq R[G]+\pi^{n_{r-j-1}-n_{r-j}}R[G]\Tr_{P_{r-j}},
\end{split}
\]
so, by the result for $r=1$, we have
\[
\pi^{n_{r-j-1}}\theta\in R[G]+\pi^{n_{r-j-1}-n_{r-j}}R[G]\Tr_{\ncl_G(P_{r-j})}.
\]
Thus there exists $\alpha \in R[G]$ such that 
\[
\theta-\pi^{-n_{r-j}}\alpha\Tr_{\ncl_G(P_{r-j})}\in \pi^{-n_{r-j-1}}R[G]\cap \left( \sum_{i=0}^{r-j} \pi^{-n_i}\theta_i\Tr_{P_i}\right)=\sum_{i=0}^{r-j-1}\pi^{-n_i}R[G]\textup{Tr}_{P_i}.
\]
This concludes the induction step. Therefore we deduce that
\[
\mathfrak{A}(F[G], \mathrm{Ind}_{H}^{G} M)=\sum_{i=0}^{r} \pi^{-n_i}R[G]\textup{Tr}_{\ncl_G(P_i)},
\]
which concludes the proof of part (ii).

We easily see with the same methods that 
\[
\mathrm{Ind}_{H}^{G} \mathfrak{A}(F[H], M)=\mathfrak{A}(F[G], \mathrm{Ind}_{H}^{G} M)
\]
precisely when $P_i=\ncl_G(P_i)$ for every $i$, establishing part (iii).
Part (iv) follows from part (iii) and Corollary \ref{cor:induction-in-free-rank-1-case}. 
Part (v) follows from Proposition \ref{prop:strongequivalence}.
\end{proof}

\begin{remark}
It follows from the proof of Theorem \ref{thm:indringstar} that the subgroups $P_i$ and the numbers $n_i$ are uniquely determined.
Moreover, $P_i$ is normal in $H$ (to see this, induct from $H$ to $H$ and use that $\mathfrak{A}(F[H], N)$ is a ring) and $\pi^{n_i}$ divides the order of $P_{i}$ for all $i$.
\end{remark}

\section{Leopoldt-type theorems for $A_4$, $S_4$ and $A_5$-extensions of $\Q$}\label{sec:leopoldthmsA4S4A5}

\subsection{Galois module structure of $A_4$-extensions of $\Q$}

In this subsection, we shall prove the following result, which is Theorem \ref{thm:a4intro} stated in the introduction.

\begin{theorem}\label{thm:a4}
Let $K/\Q$ be a Galois extension with $\Gal(K/\Q)\cong A_4$. Then $\mathcal{O}_K$ is free over $\mathfrak{A}_{K/\Q}$ if and only if $2$ is tamely ramified
or has full decomposition group.
\end{theorem}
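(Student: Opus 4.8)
The ``if'' implication is exactly Proposition~\ref{prop:preliminary-A4-result}, so the work is the converse, and by Proposition~\ref{prop:preliminary-A4-S4-result} this reduces to the following: if $2$ is wildly ramified in $K/\Q$ and does not have full decomposition group, then $\mathcal{O}_{K,2}$ is \emph{not} free over $\mathfrak{A}_{K/\Q,2}$. Fix a prime $\mathfrak{P}$ of $K$ above $2$, with decomposition group $D$ and inertia group $G_0$. Since $2$ is wildly ramified $D$ has even order, and since $A_4$ has no subgroup of order $6$ and the case $|D|=12$ is excluded, $D$ has order $2$ or $4$; in either case $D$ is abelian and is contained in the unique order-$4$ subgroup $V\cong C_2\times C_2$ of $A_4$, which is abelian and normal in $G=A_4$. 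By \S\ref{subsec:ass-orders-rings-of-ints} we have $\mathcal{O}_{K,2}\cong\Ind_D^G\mathcal{O}_{K_\mathfrak{P}}$ and $\mathfrak{A}_{K/\Q,2}\cong\mathfrak{A}(\Q_2[G],\Ind_D^G\mathcal{O}_{K_\mathfrak{P}})$; setting $M:=\Ind_D^V\mathcal{O}_{K_\mathfrak{P}}$, transitivity of induction gives $\mathcal{O}_{K,2}\cong\Ind_V^G M$ and $\mathfrak{A}_{K/\Q,2}\cong\mathfrak{A}(\Q_2[G],\Ind_V^G M)$.

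The next step is to pin down $\mathfrak{A}(\Q_2[V],M)$. As $D$ is abelian, $K_\mathfrak{P}/\Q_2$ is an abelian extension, so $\mathcal{O}_{K_\mathfrak{P}}$ is free over $\mathfrak{A}_{K_\mathfrak{P}/\Q_2}$ by Theorem~\ref{thm:lettl-abs-abelian-p-adic}. Moreover $D$ is isomorphic to $C_2=D_2$ or to $V\cong D_4$ and $\Q_2/\Q_2$ is unramified, so Theorem~\ref{thm:locallyfreedihedral} applies to $K_\mathfrak{P}/\Q_2$. I would then show that $K_\mathfrak{P}/\Q_2$ is always almost-maximally ramified, i.e.\ that case~(1) of that theorem holds: case~(2) would require $D=V$ with $G_0=V$ and failure of almost-maximal ramification, but a short computation with the conductor--discriminant formula shows every totally ramified $V$-extension of $\Q_2$ has $2$-adic discriminant valuation $8$, and combining this with Remark~\ref{rmk:idempotent} and the Hasse--Arf theorem (which pins the two lower ramification breaks down to $1$ and $3$) yields $e_H\in\mathfrak{A}_{K_\mathfrak{P}/\Q_2}$ for the relevant subgroups $H$. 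Hence $\mathfrak{A}_{K_\mathfrak{P}/\Q_2}=\Z_2[D][\{e_{G_t}\}_{t\geq1}]=\sum_i 2^{-n_i}\Z_2[D]\Tr_{P_i}$, where $\{e\}=P_0\subsetneq P_1\subsetneq\cdots$ runs over the distinct lower ramification subgroups $G_t$ ($t\geq1$) and $n_i=v_2(|P_i|)$. Inducing from $D$ up to $V$ and using $D\trianglelefteq V$, parts (i) and (iii) of Theorem~\ref{thm:indringstar} together with Corollary~\ref{cor:condition-to-be-a-ring} give that $\mathfrak{A}(\Q_2[V],M)=\Ind_D^V\mathfrak{A}_{K_\mathfrak{P}/\Q_2}=\sum_i 2^{-n_i}\Z_2[V]\Tr_{P_i}$, with the $P_i$ now viewed as subgroups of $V$; in particular $\mathfrak{A}(\Q_2[V],M)$ has the shape \eqref{eq:assoc-order-with-traces}.

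To conclude, note that since $2$ is wildly ramified $G_0$ is a nontrivial (hence order $2$ or $4$) subgroup of $V$, and $P_r=G_1=G_0$. If $|G_0|=2$ then $P_r$ already has order $2$; if $|G_0|=4$ then $G_0=V$, which by the discriminant computation is not weakly ramified, so the filtration has a further break at some $t_0\geq 2$ with $G_{t_0}$ of order $2$, and this subgroup occurs among the $P_i$. In every case some $P_i$ with $i\geq1$ is a subgroup of $A_4$ of order $2$, and no such subgroup is normal in $A_4$. Finally apply Theorem~\ref{thm:indringstar}(v) with $H=V$, which is abelian and normal in $G=A_4$: if $\mathcal{O}_{K,2}\cong\Ind_V^G M$ were free over $\mathfrak{A}_{K/\Q,2}$ it would in particular be projective, forcing every $P_i$ to be normal in $A_4$, a contradiction. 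Hence $\mathcal{O}_{K,2}$ is not free over $\mathfrak{A}_{K/\Q,2}$, and by Proposition~\ref{prop:preliminary-A4-S4-result} neither is $\mathcal{O}_K$ over $\mathfrak{A}_{K/\Q}$.

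The main obstacle is the second step: establishing that $\mathfrak{A}_{K_\mathfrak{P}/\Q_2}$ really does have the ``tower of traces'' form \eqref{eq:assoc-order-with-traces} built out of the ramification subgroups, so that the induction machinery of \S\ref{inductionstructure} applies and a non-normal subgroup of $A_4$ becomes visible among the trace generators. This rests on Berg\'e's description of associated orders in dihedral $p$-adic extensions (Theorem~\ref{thm:locallyfreedihedral}) together with the ramification input above, rather than on any ad hoc case analysis; in the sub-cases where $K_\mathfrak{P}/\Q_2$ happens to be weakly ramified one could alternatively invoke Corollary~\ref{cor:inertianormal} directly, but the non-weakly-ramified cases with $D=V$ genuinely require the general results of \S\ref{inductionstructure}.
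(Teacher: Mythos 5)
Your proposal is correct, and its overall skeleton matches the paper's: reduce to local freeness at $2$ via Propositions \ref{prop:preliminary-A4-S4-result} and \ref{prop:preliminary-A4-result}, pin down $\mathfrak{A}_{K_\mathfrak{P}/\Q_2}$ as a ``tower of traces'' using Theorems \ref{thm:lettl-abs-abelian-p-adic} and \ref{thm:locallyfreedihedral} together with Remark \ref{rmk:idempotent}, and then detect non-freeness from the non-normality in $A_4$ of an order-$2$ trace subgroup via the induction machinery of \S\ref{inductionstructure}. Two of your choices are genuinely different and worth comparing. First, you replace the paper's appeal to the database of $2$-adic fields (in the totally ramified $V_4$ case) by a conductor--discriminant computation showing every totally ramified $C_2\times C_2$-extension of $\Q_2$ has discriminant valuation $8$, which forces the lower jumps $1$ and $3$ and then almost-maximal ramification exactly as in the paper's Remark \ref{rmk:idempotent} computations; this is sketched rather than carried out in your write-up, but the asserted numbers are correct and the method is sound, and it makes the argument database-free. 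Second, and more substantively, you handle the case $D=G_0\cong C_2$ by a two-step induction $D\subsetneq V\trianglelefteq G$: since $D$ is normal in $V$, Theorem \ref{thm:indringstar}(i)--(iii) identifies $\mathfrak{A}(\Q_2[V],\Ind_D^V\mathcal{O}_{K_\mathfrak{P}})=\Ind_D^V\mathfrak{A}_{K_\mathfrak{P}/\Q_2}$ as a tower with $P_1=D$, and then Theorem \ref{thm:indringstar}(v) applied to the abelian normal subgroup $V$ of $A_4$ kills projectivity of $\Ind_V^G M\cong\mathcal{O}_{K,2}$ in all three cases uniformly. The paper instead cannot apply part (v) directly when $D\cong C_2$ (as $D$ is not normal in $G$) and argues by computing $\Ind_D^G\mathfrak{A}_{K_\mathfrak{P}/\Q_2}$ and $\mathfrak{A}_{K/\Q,2}$ via parts (i)\&(ii) and observing they are literally the same lattices as in the $(V_4,C_2)$ case; your route is arguably cleaner and also dispenses with Proposition \ref{prop:induction-in-free-rank-1-case} in that case, since part (v) speaks directly about $\Ind_H^G M$.

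One small caveat: Theorem \ref{thm:locallyfreedihedral} is stated for dihedral extensions, and while the paper itself applies it to $V_4$ (dihedral of order $4$), invoking it for $D\cong C_2$ as ``$D_2$'' is a stretch of the statement. This is harmless: in that case $\Q_2[D]$ has only one order strictly containing $\Z_2[D]$, and wild ramification together with Remark \ref{rmk:idempotent} gives $\tfrac12\Tr_D\in\mathfrak{A}_{K_\mathfrak{P}/\Q_2}$, so $\mathfrak{A}_{K_\mathfrak{P}/\Q_2}=\Z_2[D]+\tfrac12\Z_2[D]\Tr_D$ directly (this is exactly how the paper treats it), after which your argument proceeds unchanged.
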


\begin{remark}
After considering computational evidence, in \cite[\S 8]{MR2422318} the authors raised the question
of whether it is always the case that $\mathcal{O}_K$ is free over $\mathfrak{A}_{K/\Q}$ for every 
$A_{4}$-extension $K/\Q$. Theorem \ref{thm:a4} shows that this is false.
Indeed, one can use the database of number fields \cite{lmfdb} to verify that every possible decomposition
group of $2$ of even order can be realised by an $A_4$-extension $K/\Q$ in which $2$ is wildly ramified.
\end{remark}

\begin{proof}[Proof of Theorem \ref{thm:a4}]
We already showed the `if' direction in Proposition \ref{prop:preliminary-A4-result}. 
Now we prove that if $2$ is wildly ramified in 
$K/\Q$ and does not have full decomposition group then $\mathcal{O}_{K}$ is not (locally) free (at $2$) over $\mathfrak{A}_{K/\Q}$. 
Let $V_4$ denote the unique normal subgroup of $G:=\Gal(K/\Q)\cong A_{4}$ of order $4$, which is isomorphic to $C_{2} \times C_{2}$. 
Recall that $A_{4} \cong V_{4} \rtimes C_{3}$ and we have the following lattice of the subgroups of $A_4$ up to conjugacy (see, for instance, the GroupNames database \cite{groupnames}).
\[
\begin{tikzpicture}[scale=1.0,sgplattice]
  \node[char] at (1,0) (1) {\gn{C1}{C_1}};
  \node at (1,0.803) (2) {\gn{C2}{C_2}};
  \node at (1.88,1.61) (3) {\gn{C3}{C_3}};
  \node[char] at (0.125,1.61) (4) {\gn{V4}{V_{4}}};
  \node[char] at (1,2.41) (5) {\gn{A4}{A_4}};
  \draw[lin] (1)--(2) (1)--(3) (2)--(4) (3)--(5) (4)--(5);
  \node[cnj=2] {3};
  \node[cnj=3] {4};
\end{tikzpicture}
\]
Here the subscript on the left denotes the number of conjugate subgroups, and is taken to be $1$ when omitted (so that the subgroup is normal). 

Let $\mathfrak{P}$ be a prime of $K$ above $2$, let $D=D(\mathfrak{P}|2)$ be the decomposition group and let $G_0=G_0(\mathfrak{P}|2)$ be the inertia group of $K/\Q$.
From the subgroup lattice, we see that it suffices to analyse the cases in which $2$ is wildly ramified in
$K/\Q$ and $D=V_{4}$ or $D\cong C_{2}$.
More precisely, there are three possibilities for the pair $(D,G_0)$ up to isomorphism: 
$(V_{4},V_{4})$, $(V_{4},C_{2})$ and $(C_{2},C_{2})$. 
Since $D$ is abelian in each of these cases, we have that $\mathcal{O}_{K_\mathfrak{P}}$ is free over $\mathfrak{A}_{K_\mathfrak{P}/\Q_{2}}$ by Theorem \ref{thm:lettl-abs-abelian-p-adic}.
Thus by Proposition \ref{prop:induction-in-free-rank-1-case} we have that 
$\textup{Ind}_{D}^{G}\mathfrak{A}_{K_\mathfrak{P}/\Q_2} \cong \textup{Ind}_{D}^{G} \mathcal{O}_{K_{\mathfrak{P}}} \cong \mathcal{O}_{K,2}$
as $\mathfrak{A}_{K/\Q,2}$-lattices.
Therefore it suffices to show that 
$\textup{Ind}_{D}^{G}\mathfrak{A}_{K_\mathfrak{P}/\Q_2}$ is not free over $\mathfrak{A}_{K/\Q,2}$
in each of the three cases.

First suppose that $D=G_{0}=V_{4}$. 
Then from the database of $p$-adic fields \cite{MR2194887} we see that there are four possible 
extensions $K_{\mathfrak{P}}/\Q_{2}$, each of which has $1$ and $3$ as (lower) ramification jumps.
Let $F$ denote the subfield of $K_{\mathfrak{P}}$ fixed by $G_{2}$.
Then by Remark \ref{rmk:idempotent} we have
$e_{G_2} \in \mathfrak{A}_{K_\mathfrak{P}/\Q_{2}}$ since
\[
 \sum_{i=0}^{\infty} (|G_i(K_\mathfrak{P}/F)|-1)
 =1+1+1+1=4\geq 4=|G_0(K_\mathfrak{P}/\Q_2)|\cdot v_2(|G_2|).
\]
Similarly, we have $e_{V_4}=e_{G_1} \in \mathfrak{A}_{K_\mathfrak{P}/\Q_{2}}$ since
\[
 \sum_{i=0}^{\infty} (|G_i(K_\mathfrak{P}/\Q_2)|-1)
 =3+3+1+1=8\geq 8=|G_0(K_\mathfrak{P}/\Q_2)|\cdot v_2(|V_4|).
\]
Thus $K_\mathfrak{P}/\Q_2$ is almost-maximally ramified, and so by Theorem \ref{thm:locallyfreedihedral}(i) 
we have that
\[
\mathfrak{A}_{K_\mathfrak{P}/\Q_2}=\Z_2[D][e_{G_2},e_{G_1}]=\Z_2[V_4]+\frac{1}{2}\Z_2[V_4]\Tr_{G_2}+\frac{1}{4}\Z_2[V_4]\Tr_{V_4}.
\]
Since $G_2\cong C_2$ is not normal in $G$ and $D$ is both abelian and normal in $G$, 
Theorem \ref{thm:indringstar}(iv)\&(v) imply that 
$\textup{Ind}_D^{G}\mathfrak{A}_{K_\mathfrak{P}/\Q_2}$ is not free over $\mathfrak{A}_{K/\Q,2}$. Hence $\mathcal{O}_{K,2}$ is not free over $\mathfrak{A}_{K/\Q,2}$. 
As an aside, we note that, by Theorem \ref{thm:indringstar}(i)\&(ii), in this case we have
\[
\Ind_{D}^G\mathfrak{A}_{K_\mathfrak{P}/\Q_2}=\Z_2[G]+\frac{1}{2}\Z_2[G]\Tr_{G_2}+\frac{1}{4}\Z_2[G]\Tr_{V_4}\supsetneq
\Z_2[G]+\frac{1}{4}\Z_2[G]\Tr_{V_4}=\mathfrak{A}_{K/\Q,2}.
\]

Now suppose that $D=V_4$ and $G_{0} \cong C_{2}$.
Since $\mathcal{O}_{K_\mathfrak{P}}$ is free over $\mathfrak{A}_{K_\mathfrak{P}/\Q_2}$ and
$G_{0}=G_{1}$ is not dihedral of order $4$, Theorem \ref{thm:locallyfreedihedral} implies that 
\[
\mathfrak{A}_{K_\mathfrak{P}/\Q_2}=\Z_2[D][e_{G_0}]=\Z_2[V_4]+\frac{1}{2}\Z_2[V_4]\Tr_{G_0}.
\]
(Alternatively, we can use the database of $p$-adic fields \cite{MR2194887} 
to check for almost-maximal ramification as in the previous case; 
the ramification jump turns out to be $1$ or $2$). 
Since $D$ is both abelian and normal in $G$ and $G_{0}$ is not normal in $G$,
Theorem \ref{thm:inductiononegenerator}(iv)\&(v) imply that 
$\textup{Ind}_D^{G}\mathfrak{A}_{K_\mathfrak{P}/\Q_2}$ is not free over $\mathfrak{A}_{K/\Q,2}$.
Hence $\mathcal{O}_{K,2}$ is not free over $\mathfrak{A}_{K/\Q,2}$. 
Note that in the next paragraph we shall use the following fact
\[
\Ind_{D}^G\mathfrak{A}_{K_\mathfrak{P}/\Q_2}=\Z_2[G]+\frac{1}{2}\Z_2[G]\Tr_{G_0}
\supsetneq
\Z_2[G]+\frac{1}{2}\Z_2[G]\Tr_{V_4}=\mathfrak{A}_{K/\Q,2},
\]
which follows from Theorem \ref{thm:inductiononegenerator}(i)\&(ii).

Finally suppose $D=G_{0} \cong C_{2}$. Then $\mathfrak{A}_{K_\mathfrak{P}/\Q_2}$ is a $\Z_2$-order in $\Q_2[D]\cong \Q_2[C_2]$ strictly containing $\Z_2[D]$. As there is only one such order, we must have
\[
 \mathfrak{A}_{K_\mathfrak{P}/\Q_2}=\Z_2[D]+\frac{1}{2}\Z_2[D]\Tr_D.
\]
We cannot directly apply Theorem \ref{thm:inductiononegenerator}(iv)\&(v) as in the previous cases, as $D$ is not normal in $G$, but from Theorem \ref{thm:inductiononegenerator}(i)\&(ii) we have that
\[
 \mathrm{Ind}_{D}^{G} \mathfrak{A}_{K_\mathfrak{P}/\Q_2}
=\Z_2[G]+\frac{1}{2}\Z_2[G]\text{Tr}_{D}\supsetneq
 \Z_2[G]+\frac{1}{2}\Z_2[G]\text{Tr}_{V_4}=\mathfrak{A}_{K/\Q,2}.
\]
Once we fix a copy of $C_2$ inside $G$, note that $\mathrm{Ind}_{D}^{G} \mathfrak{A}_{K_\mathfrak{P}/\Q_2}$ and $\mathfrak{A}_{K/\Q,2}$ are exactly the same as in the $(V_{4},C_2)$-case of the previous paragraph, where we already showed that $\mathrm{Ind}_{D}^{G} \mathfrak{A}_{K_\mathfrak{P}/\Q_2}$ is not free over $\mathfrak{A}_{K/\Q,2}$.
Therefore $\mathcal{O}_{K,2}$ is not free over $\mathfrak{A}_{K/\Q,2}$. 
\end{proof}

\subsection{Galois module structure of $S_4$-extensions of $\Q$}

In this subsection, we shall prove the following result, which is Theorem \ref{thm:s4intro} stated in the introduction.

\begin{theorem}\label{thm:s4classification}
Let $K/\Q$ be a Galois extension with $G:=\Gal(K/\Q)\cong S_4$. Then $\mathcal{O}_K$ is free over 
$\mathfrak{A}_{K/\Q}$ if and only if one of the following conditions on $K/\Q$ holds:
\begin{enumerate}
\item $2$ is tamely ramified;
\item $2$ has decomposition group equal to the unique subgroup of $G$ of order $12$;
\item $2$ is wildly and weakly ramified and has full decomposition group; or
\item $2$ is wildly and weakly ramified, has decomposition group of order $8$ in $G$,
and has inertia subgroup equal to the unique normal subgroup of order $4$ in $G$.
\end{enumerate}
\end{theorem}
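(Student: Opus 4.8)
The plan is to follow the blueprint of the proof of Theorem~\ref{thm:a4}: first reduce to the study of local freeness at the prime $2$, then pass in review the finitely many possibilities for the decomposition and ramification data of $2$, settling freeness in each case with the results of \S\ref{sec:induction} and \S\ref{inductionstructure}. By Proposition~\ref{prop:preliminary-A4-S4-result}, $\mathcal{O}_K$ is free over $\mathfrak{A}_{K/\Q}$ if and only if $\mathcal{O}_{K,2}$ is free over $\mathfrak{A}_{K/\Q,2}$. Fix a prime $\mathfrak{P}$ of $K$ above $2$ with decomposition group $D$ and inertia group $G_0$, so that $\mathcal{O}_{K,2}\cong\Ind_D^G\mathcal{O}_{K_\mathfrak{P}}$ and $\mathfrak{A}_{K/\Q,2}\cong\mathfrak{A}(\Q_2[G],\Ind_D^G\mathcal{O}_{K_\mathfrak{P}})$. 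For the ``if'' direction, case~(i) is Theorem~\ref{thm:noetherlocallyfree}, and in cases~(ii)--(iv) I would check that $G_0$ is normal in $G$ and that $K_\mathfrak{P}/\Q_2$ is wildly and weakly ramified, and then invoke Corollary~\ref{cor:inertianormal}(i). In~(iv) normality of $G_0$ is the hypothesis; in~(ii) the decomposition group is $A_4$, so $K_\mathfrak{P}/\Q_2$ is the unique $A_4$-extension of Lemma~\ref{lem:unique-A4-ext-of-Q2}, whose inertia group is the normal $V_4$; in~(iii), since $D=S_4$, the subgroup $G_1$ is a normal $2$-subgroup of $S_4$ and $D/G_0$ is cyclic, which forces $G_1=V_4$ and $G_0=A_4$, both normal.

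For the ``only if'' direction I would use the database of $2$-adic fields~\cite{MR2194887} and the LMFDB~\cite{lmfdb} (to determine which configurations actually occur for a global $S_4$-extension, as in the remark after Theorem~\ref{thm:a4}) to list all triples $(D,G_0,\{G_i\})$ in which $2$ is wildly ramified but none of~(ii)--(iv) holds, and prove non-freeness for each. Up to conjugacy the decomposition groups $D\le S_4$ of even order are $C_2$ (two classes), the non-normal $V_4$, $C_4$, the normal $V_4$, $S_3$, $D_8$, $A_4$ and $S_4$; those with $D\cong S_3$ (and all $D$ of odd order) force tame ramification. When $D=S_4$ and $K_\mathfrak{P}/\Q_2$ is not weakly ramified the problem is purely local, since $\mathcal{O}_{K,2}\cong\mathcal{O}_{K_\mathfrak{P}}$ and $\mathfrak{A}_{K/\Q,2}\cong\mathfrak{A}_{K_\mathfrak{P}/\Q_2}$, and I would verify that $\mathcal{O}_{K_\mathfrak{P}}$ is not free over $\mathfrak{A}_{K_\mathfrak{P}/\Q_2}$ by direct computation with the \textsc{Magma}~\cite{MR1484478} implementation of the algorithms of~\cite{MR2422318} and~\cite{MR4136552}, over the finitely many $S_4$-extensions of $\Q_2$. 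When $D$ is the normal $V_4$ and $2$ is wildly ramified one has $G_0=V_4$; a totally ramified $V_4$-extension of $\Q_2$ is never weakly ramified, and using Remark~\ref{rmk:idempotent} and Theorem~\ref{thm:locallyfreedihedral}(i) one sees it is almost-maximally ramified with $\mathfrak{A}_{K_\mathfrak{P}/\Q_2}=\Z_2[V_4]+\tfrac12\Z_2[V_4]\Tr_{G_2}+\tfrac14\Z_2[V_4]\Tr_{V_4}$, where $G_2$ has order $2$ and hence is not normal in $S_4$; as $D=V_4$ is abelian and normal in $S_4$, Proposition~\ref{prop:induction-in-free-rank-1-case} together with Theorem~\ref{thm:indringstar}(iv)\&(v) then show $\mathcal{O}_{K,2}$ is not free over $\mathfrak{A}_{K/\Q,2}$.

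The remaining cases, in which $D$ is not normal in $S_4$ (so $D\cong C_2$, the non-normal $V_4$, $C_4$ or $D_8$, with their admissible filtrations), are where neither Corollary~\ref{cor:inertianormal} nor Theorem~\ref{thm:indringstar}(v) applies directly, and I expect these to be the main obstacle. In each I would compute $\mathfrak{A}_{K_\mathfrak{P}/\Q_2}$ (in \textsc{Magma}, or via Theorems~\ref{thm:weak} and~\ref{thm:locallyfreedihedral} and Remark~\ref{rmk:idempotent} where possible). If $\mathcal{O}_{K_\mathfrak{P}}$ is not projective over $\mathfrak{A}_{K_\mathfrak{P}/\Q_2}$ (which can occur when $D\cong D_8$), then $\mathcal{O}_{K,2}\cong\Ind_D^G\mathcal{O}_{K_\mathfrak{P}}$ is not projective, hence not free, over $\mathfrak{A}_{K/\Q,2}$ by Proposition~\ref{prop:projproj}. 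Otherwise $\mathcal{O}_{K_\mathfrak{P}}$ is free over $\mathfrak{A}_{K_\mathfrak{P}/\Q_2}$ (automatic when $D$ is abelian, by Theorem~\ref{thm:lettl-abs-abelian-p-adic}), so $\mathcal{O}_{K,2}\cong\Ind_D^G\mathfrak{A}_{K_\mathfrak{P}/\Q_2}$ as $\mathfrak{A}_{K/\Q,2}$-lattices by Proposition~\ref{prop:induction-in-free-rank-1-case}; writing $\mathfrak{A}_{K_\mathfrak{P}/\Q_2}=\sum_i\pi^{-n_i}\Z_2[D]\Tr_{P_i}$ and applying Theorem~\ref{thm:indringstar}(i)\&(ii) to obtain $\Ind_D^G\mathfrak{A}_{K_\mathfrak{P}/\Q_2}=\sum_i\pi^{-n_i}\Z_2[G]\Tr_{P_i}$ and $\mathfrak{A}_{K/\Q,2}=\sum_i\pi^{-n_i}\Z_2[G]\Tr_{\ncl_G(P_i)}$, I would conclude non-freeness either by recognising this pair of $\Z_2[G]$-orders as one already shown to be non-free, exactly as in the $D\cong C_2$ step of the proof of Theorem~\ref{thm:a4}, or by a direct check in \textsc{Magma}. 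The points demanding real care are the completeness of the enumeration of admissible $(D,G_0,\{G_i\})$ and these last, somewhat ad hoc, non-freeness arguments for non-normal $D$; the supporting computations, while extensive, are routine once set up.
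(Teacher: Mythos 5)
Your overall strategy coincides with the paper's: reduce to local freeness at $2$ via Proposition~\ref{prop:preliminary-A4-S4-result}; prove the ``if'' direction with Theorem~\ref{thm:noetherlocallyfree}, Lemma~\ref{lem:unique-A4-ext-of-Q2} and Corollary~\ref{cor:inertianormal}(i); and for the ``only if'' direction run through the possible decomposition/inertia configurations at $2$, using Theorem~\ref{thm:locallyfreedihedral}, Proposition~\ref{prop:projproj}, Proposition~\ref{prop:induction-in-free-rank-1-case}, Theorem~\ref{thm:indringstar}(i)\&(ii) and the \textsc{Magma} algorithms of \cite{MR2422318} (for the four wildly, non-weakly ramified $S_4$-extensions of $\Q_2$) and \cite{MR4136552} (for comparing the induced orders with the associated orders); the paper does exactly this, tabulating ten pairs $(\Ind_D^G\mathfrak{A}_{K_\mathfrak{P}/\Q_2},\mathfrak{A}_{K/\Q,2})$ and checking each computationally.

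There is, however, one concrete gap. Your claim that if $D$ is the normal $V_4$ and $2$ is wildly ramified then $G_0=V_4$ is false: nothing forces total ramification, and $G_0$ can be one of the order-$2$ subgroups of $V_4$ (residue degree $2$, ramification index $2$). Since your ``remaining cases'' are explicitly those with $D$ non-normal, this configuration is treated nowhere in your argument, yet it satisfies none of (i)--(iv) and so requires a non-freeness proof. The paper covers it (row (ix) of its table): here $\mathfrak{A}_{K_\mathfrak{P}/\Q_2}=\Z_2[D]+\tfrac12\Z_2[D]\Tr_{G_0}$, so $\Ind_D^G\mathfrak{A}_{K_\mathfrak{P}/\Q_2}=\Z_2[G]+\tfrac12\Z_2[G]\Tr_{G_0}$ while $\mathfrak{A}_{K/\Q,2}=\Z_2[G]+\tfrac12\Z_2[G]\Tr_{V_4}$ since $\ncl_G(G_0)=V_4$, and the algorithm shows these are not isomorphic; alternatively, non-freeness follows at once from Theorem~\ref{thm:indringstar}(v), exactly as in the $(V_4,C_2)$ case of the proof of Theorem~\ref{thm:a4}, because $D=V_4$ is abelian and normal in $G$ while $G_0$ is not normal in $G$. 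A secondary caution: you propose to use \cite{lmfdb} to decide which configurations ``actually occur'' for a global $S_4$-extension and restrict attention to those; the ``only if'' statement requires ruling out freeness in every a priori possible configuration, and a database search can exhibit configurations but cannot by itself certify that the omitted ones never arise, so the paper's policy of treating all a priori cases is the safe one.
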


\begin{proof}
By Proposition \ref{prop:preliminary-A4-S4-result}, 
$\mathcal{O}_K$ is free over $\mathfrak{A}_{K/\Q}$ if and only if
$\mathcal{O}_{K,2}$ is free over $\mathfrak{A}_{K/\Q,2}$.

We first show that if any of conditions (i)--(iv) hold then 
$\mathcal{O}_{K,2}$ is free over $\mathfrak{A}_{K/\Q,2}$.
In case (i), this follows from Theorem \ref{thm:noetherlocallyfree}.
In case (ii), Lemma \ref{lem:unique-A4-ext-of-Q2} shows that $2$ is wildly and weakly
ramified in $K/\Q$ and has inertia group equal to the unique normal subgroup of order $4$ in $G$
(note that $A_{4}$ is the unique subgroup of $S_{4}$ of order $12$).
Therefore in cases (ii), (iii) and (iv), $2$ is wildly and weakly ramified in $K/\Q$ and its
inertia group is normal in $G$, and so the desired result follows from 
Corollary \ref{cor:inertianormal}(i).

It now remains to show that if we are not in any of the cases (i)--(iv) then 
$\mathcal{O}_{K,2}$ is not free over $\mathfrak{A}_{K/\Q,2}$. 
We shall use the \textsc{Magma} implementation of two different algorithms: we use \cite[Algorithm 3.1(6)]{MR2422318} to verify that in four specific 
$S_{4}$-extensions of $\Q_{2}$ the ring of integers is not free over its associated order; 
we use \cite[\S8.5]{MR4136552}, which concerns general lattices in group rings, 
to prove that $\mathcal{O}_{K,2}$ is not free over $\mathfrak{A}_{K/\Q,2}$ in cases when we do have freeness in the corresponding $2$-adic extension.

We have the following lattice of the subgroups of $S_4$ up to conjugacy (see, for instance, the GroupNames database \cite{groupnames}).
\[
\begin{tikzpicture}[scale=1.0,sgplattice]
  \node[char] at (3.12,0) (1) {\gn{C1}{C_1}};
  \node at (4.75,0.803) (2) {\gn{C2}{C_2}};
  \node at (1.5,0.803) (3) {\gn{C2}{C_2}};
  \node at (2.12,2.02) (4) {\gn{C3}{C_3}};
  \node[char] at (4.12,2.02) (5) {\gn{V4}{V_{4}}};
  \node at (0.125,2.02) (6) {\gn{C2^2}{C_2^2}};
  \node at (6.12,2.02) (7) {\gn{C4}{C_4}};
  \node at (0.625,3.35) (8) {\gn{S3}{S_3}};
  \node at (5.62,3.35) (9) {\gn{D4}{D_8}};
  \node[char] at (3.12,3.35) (10) {\gn{A4}{A_4}};
  \node[char] at (3.12,4.3) (11) {\gn{S4}{S_4}};
  \draw[lin] (1)--(2) (1)--(3) (1)--(4) (2)--(5) (2)--(6) (3)--(6) (2)--(7)
     (3)--(8) (4)--(8) (5)--(9) (6)--(9) (7)--(9) (4)--(10) (5)--(10)
     (8)--(11) (9)--(11) (10)--(11);
  \node[cnj=2] {3};
  \node[cnj=3] {6};
  \node[cnj=4] {4};
  \node[cnj=6] {3};
  \node[cnj=7] {3};
  \node[cnj=8] {4};
  \node[cnj=9] {3};
\end{tikzpicture}
\]
Here the subscript on the left denotes the number of conjugate subgroups, and is taken to be $1$ when omitted (so that the subgroup is normal). 
In particular, the only normal subgroups are $C_{1}$, $V_{4}$, $A_{4}$ and $S_{4}$. 
(Recall that $V_{4} \cong C_{2} \times C_{2}$.)

We fix an isomorphism $G:=\Gal(K/\Q) \cong S_{4}$ and denote by
$A_4$, $D_{8}$, $S_{3}$, $C_{2}^{2}$, $V_{4}$, $C_{4}$
a choice of subgroups of $G$ in 
such a way that whenever there is a containment between choices of conjugates of two such subgroups,
one of the subgroups is in fact contained in the other.

Suppose that $K/\Q$ does not satisfy any of the conditions (i)--(iv).
Let $\mathfrak{P}$ be a prime of $K$ above $2$
and let $D=D(\mathfrak{P}|2)$ be the decomposition group. 
In particular, $2$ is wildly ramified in $K/\Q$ and so $D$ must be of even order.
We cannot have $D=A_{4}$ as this corresponds to case (ii).
Moreover, we cannot have $D=S_{3}$ since the Sylow $2$-subgroups of $D$ are not normal, but
the wild inertia subgroup $G_{1}$ must be normal in $D$.

Suppose that $D=S_{4}$. Since we are not in case (iii), this implies that $2$ is wildly but not weakly ramified in $K/\Q$. From the database of $p$-adic fields \cite{MR2194887}, we see that there are four possibilities
for the completed extension $K_{\mathfrak{P}}/\Q_{2}$.
By using the updated \textsc{Magma} implementation of \cite[Algorithm 3.1(6)]{MR2422318}
(see \S\ref{codeS4Q2} for details), which is based on that of \cite[\S 4.2]{MR2564571},
we can verify that $\mathcal{O}_{K_\mathfrak{P}}$ is not free over $\mathfrak{A}_{K_\mathfrak{P}/\Q_2}$
in either of these cases (for the details on the implementation see \S\ref{codeS4Q2}). Since the decomposition group is full (i.e.\ $D=G$), 
this immediately implies that $\mathcal{O}_{K,2}$ is not free over $\mathfrak{A}_{K/\Q,2}$.

Now suppose that $D\cong D_8$; without loss of generality, we can and do assume that $D=D_8$. 
We first treat the case in which $\mathcal{O}_{K_\mathfrak{P}}$ is not free over
$\mathfrak{A}_{K_{\mathfrak{P}}/\Q_2}$. 
Then $\mathcal{O}_{K_\mathfrak{P}}$ is not projective over $\mathfrak{A}_{K_\mathfrak{P}/\Q_2}$
by Theorem \ref{thm:locallyfreedihedral}, and so $\mathcal{O}_{K,2}$ is not free over $\mathfrak{A}_{K/\Q,2}$ by Proposition \ref{prop:projproj}.
As an aside, by using the database \cite{MR2194887}, Theorem \ref{thm:locallyfreedihedral} 
and Remark \ref{rmk:idempotent}, it is straightforward to check that 
$\mathcal{O}_{K_\mathfrak{P}}$ is not free over $\mathfrak{A}_{K_\mathfrak{P}/\Q_2}$ if and only if 
the ramification jumps of $K_\mathfrak{P}/\Q_2$ are $1$, $3$ and $5$.

Therefore in the remaining cases we can and do assume that $\mathcal{O}_{K_\mathfrak{P}}$ is free over $\mathfrak{A}_{K_{\mathfrak{P}}/\Q_2}$, since either $D$ is abelian (in which case we can apply 
Theorem \ref{thm:lettl-abs-abelian-p-adic}) or $D=D_{8}$, in which case the situation in which $\mathcal{O}_{K_\mathfrak{P}}$ is not free over $\mathfrak{A}_{K_{\mathfrak{P}}/\Q_2}$ has already been considered in the previous paragraph. As we wish to show that $\mathcal{O}_{K,2}$ is not free over $\mathfrak{A}_{K/\Q,2}$, by Proposition \ref{prop:induction-in-free-rank-1-case} it suffices to show that 
$\Ind_D^G \mathfrak{A}_{K_{\mathfrak{P}}/\Q_2}$ is not free over $\mathfrak{A}_{K/\Q,2}$.
Our strategy will be to determine the possible ramification groups, use this to understand
the structure of $\mathfrak{A}_{K_{\mathfrak{P}}/\Q_2}$ and then apply Theorem \ref{thm:indringstar} to obtain 
explicit descriptions of $\Ind_D^G \mathfrak{A}_{K_{\mathfrak{P}}/\Q_2}$ and $\mathfrak{A}_{K/\Q,2}$; we will leave this last step to the end as cases with different $D$ will overlap. 

We now return to the case $D=D_{8}$.
We have the following subgroup lattice.
\[
\begin{tikzpicture}[scale=1.0,sgplattice]
  \node[char] at (2,0) (1) {\gn{C1}{C_1}};
  \node[char] at (2,0.953) (2) {\gn{C2}{C_2}};
  \node at (0.125,0.953) (3) {\gn{C2}{C_2}};
  \node at (3.88,0.953) (4) {\gn{C2}{C_2}};
  \node[norm] at (0.125,2.17) (5) {\gn{C2^2}{C_2^2}};
  \node[char] at (2,2.17) (6) {\gn{C4}{C_4}};
  \node[norm] at (3.88,2.17) (7) {\gn{C2^2}{C_2^2}};
  \node[char] at (2,3.12) (8) {\gn{D4}{D_8}};
  \draw[lin] (1)--(2) (1)--(3) (1)--(4) (2)--(5) (3)--(5) (2)--(6) (2)--(7)
     (4)--(7) (5)--(8) (6)--(8) (7)--(8);
  \node[cnj=3] {2};
  \node[cnj=4] {2};
\end{tikzpicture}
\]
We denote the unique normal subgroup of order $2$ in $D_{8}$ by $V_{2}$, and note that 
as a subgroup of $S_{4}$, this is generated by a double transposition and contained in $C_4$. 
(Also note that under $D_8$-conjugation we have three conjugacy classes of subgroups of order $2$ compared to two in the $S_4$-lattice).

Suppose that $K_\mathfrak{P}/\Q_2$ is almost-maximally ramified. 
Then by Theorem \ref{thm:locallyfreedihedral} we have $\mathfrak{A}_{K_\mathfrak{P}/\Q_2}=\Z_2[D]+\sum_{t\geq1}\frac{1}{|G_t|}\Z_2[D]\Tr_{G_t}$ and all quotients of two consecutive different ramification groups are of order $2$ (see the database \cite{MR2194887}, for example). 
We have that $V_2$ must be among the ramification groups since they are all normal in $D_{8}$.
Thus if the ramification index of $K_\mathfrak{P}/\Q_2$ is 2, then $V_2$ is the unique 
ramification group. Otherwise, there is a ramification group of order $4$, which must
be one of $V_4$, $C_2^2$ or $C_4$. Moreover, $D_8$ is a ramification group if and only if the
ramification index of $K_\mathfrak{P}/\Q_2$ is $8$.

Suppose that $K_\mathfrak{P}/\Q_2$ is not almost-maximally ramified. 
Then by Theorem \ref{thm:locallyfreedihedral} we deduce that $G_0\cong C_2^2$ and
$\mathfrak{A}_{K_\mathfrak{P}/\Q_2}=\Z_2[D]+\frac{1}{2}\Z_2[D]\Tr_{G_0}$.
Moreover, by Remark \ref{rmk:idempotent} we must have that $G_2=0$ or $G_2\cong C_2$ and $G_3=0$, but in the latter case the upper ramification jumps are not integral, which is not possible by Hasse-Arf theorem (alternatively just use the database \cite{MR2194887}); hence $K_\mathfrak{P}/\Q_2$ is weakly ramified.
Note that $G_0$ is not equal to $V_4$, otherwise we are in case (iv), hence we can assume $G_0=C_2^2$. 

Now suppose that $D\cong C_4$; without loss of generality, we can and do assume that $D=C_{4}$.
If the ramification index of $K_\mathfrak{P}/\Q_2$ is $2$, then $G_0=V_2$ and by Remark \ref{rmk:idempotent} $K_\mathfrak{P}/\Q_2$ is almost-maximally ramified, and hence $\mathfrak{A}_{K_\mathfrak{P}/\Q_2}=\Z_2[D]+\frac{1}{2}\Z_2[D]\Tr_{G_0}$ (see \cite[Corollaire 3 to Th\'eor\`eme 1]{MR513880}, for example). 
If the ramification index is $4$, then there must be two ramification jumps;
since the upper ramification jumps are integral, Remark \ref{rmk:idempotent} implies that the extension is almost-maximally ramified and so $\frac{1}{2}\Tr_{V_2}$ and $\frac{1}{4}\Tr_{D}$ belong to 
$\mathfrak{A}_{K_\mathfrak{P}/\Q_2}$. Hence
\[
\mathfrak{A}_{K_\mathfrak{P}/\Q_2} = \Z_2[D]+\frac{1}{2}\Z_2[D]\Tr_{V_2}+\frac{1}{4}\Z_2[D]\Tr_{D}, 
\]
where the containment `$\subseteq$' follows from the fact that the right-hand side is the unique maximal order in $\Q_2[D]$ (see \cite[Proposition 5]{MR513880}, for example). 

Finally, note that in the cases $D\cong C_2^2$ or $D\cong C_2$, we already computed $\mathfrak{A}_{K_\mathfrak{P}/\Q_2}$ in the proof of Theorem \ref{thm:a4}.

We will denote by $W_2$ a choice of a subgroup of $S_4$ generated by a transposition and contained in $D_8$.
 We fix the following notation: 
$\langle 1,\frac{1}{2^{n_1}}\Tr_{H_1},\cdots,\frac{1}{2^{n_k}}\Tr_{H_k}\rangle$ is the lattice 
\[
\Z_2[G]+\frac{1}{2^{n_1}}\Z_2[G]\Tr_{H_1}+\cdots+\frac{1}{2^{n_k}}\Z_2[G]\Tr_{H_k}.
\]
Since we have now determined $\mathfrak{A}_{K_\mathfrak{P}/\Q_2}$ in all the remaining cases,
we can use Theorem \ref{thm:indringstar}(i)\&(ii) to determine
$\Ind_D^G\mathfrak{A}_{K_\mathfrak{P}/\Q_2}$ and $\mathfrak{A}_{K/\Q,2}$ as listed in Table \ref{tables4} below.

\begin{table}[htb]
\centering
\caption{}\label{tables4}
\begin{tabular}{|c|c|c|}
\hline
 \textcolor{white}{$\overline{\overline{\underline{\sum}}}$}& $\Ind_D^G\mathfrak{A}_{K_\mathfrak{P}/\Q_2}$ & $\mathfrak{A}_{K/\Q,2}$   \\
\hline
\textcolor{white}{$\overline{\overline{\underline{\sum}}}$}(i)\textcolor{white}{$\overline{\overline{\underline{\sum}}}$} & $\langle 1,\frac{1}{2}\Tr_{V_2},\frac{1}{4}\Tr_{C_4},\frac{1}{8}\Tr_{D_8}\rangle$ &  $\langle 1,\frac{1}{2}\Tr_{V_4},\frac{1}{8}\Tr_{G}\rangle$\\
\hline
\textcolor{white}{$\overline{\overline{\underline{\sum}}}$}(ii)\textcolor{white}{$\overline{\overline{\underline{\sum}}}$} & $\langle 1,\frac{1}{2}\Tr_{V_2},\frac{1}{4}\Tr_{V_4},\frac{1}{8}\Tr_{D_8}\rangle$ &  $\langle 1,\frac{1}{4}\Tr_{V_4},\frac{1}{8}\Tr_{G}\rangle$\\
\hline
\textcolor{white}{$\overline{\overline{\underline{\sum}}}$}(iii)\textcolor{white}{$\overline{\overline{\underline{\sum}}}$} & $\langle 1,\frac{1}{2}\Tr_{V_2},\frac{1}{4}\Tr_{C_2^2},\frac{1}{8}\Tr_{D_8}\rangle$ &  $\langle 1,\frac{1}{2}\Tr_{V_4},\frac{1}{8}\Tr_{G}\rangle$\\
\hline
\textcolor{white}{$\overline{\overline{\underline{\sum}}}$}(iv)\textcolor{white}{$\overline{\overline{\underline{\sum}}}$} & $\langle 1,\frac{1}{2}\Tr_{V_2},\frac{1}{4}\Tr_{C_4}\rangle$ &  $\langle 1,\frac{1}{2}\Tr_{V_4},\frac{1}{4}\Tr_{G}\rangle$\\
\hline
\textcolor{white}{$\overline{\overline{\underline{\sum}}}$}(v)\textcolor{white}{$\overline{\overline{\underline{\sum}}}$} & $\langle 1,\frac{1}{2}\Tr_{V_2},\frac{1}{4}\Tr_{V_4}\rangle$ &  $\langle 1,\frac{1}{4}\Tr_{V_4}\rangle$\\
\hline
\textcolor{white}{$\overline{\overline{\underline{\sum}}}$}(vi)\textcolor{white}{$\overline{\overline{\underline{\sum}}}$} & $\langle 1,\frac{1}{2}\Tr_{V_2},\frac{1}{4}\Tr_{C_2^2}\rangle$ &  $\langle 1,\frac{1}{2}\Tr_{V_4},\frac{1}{4}\Tr_{G}\rangle$\\
\hline
\textcolor{white}{$\overline{\overline{\underline{\sum}}}$}(vii)\textcolor{white}{$\overline{\overline{\underline{\sum}}}$} & $\langle 1,\frac{1}{2}\Tr_{W_2},\frac{1}{4}\Tr_{C_2^2}\rangle$ &  $\langle 1,\frac{1}{4}\Tr_{G}\rangle$\\
\hline
\textcolor{white}{$\overline{\overline{\underline{\sum}}}$}(viii)\textcolor{white}{$\overline{\overline{\underline{\sum}}}$} & $\langle 1,\frac{1}{2}\Tr_{C_2^2}\rangle$ &  $\langle 1,\frac{1}{2}\Tr_{G}\rangle$\\
\hline
\textcolor{white}{$\overline{\overline{\underline{\sum}}}$}(ix)\textcolor{white}{$\overline{\overline{\underline{\sum}}}$} & $\langle 1,\frac{1}{2}\Tr_{V_2}\rangle$ &  $\langle 1,\frac{1}{2}\Tr_{V_4}\rangle$\\
\hline
\textcolor{white}{$\overline{\overline{\underline{\sum}}}$}(x)\textcolor{white}{$\overline{\overline{\underline{\sum}}}$} & $\langle 1,\frac{1}{2}\Tr_{W_2}\rangle$ &  $\langle 1,\frac{1}{2}\Tr_{G}\rangle$\\
\hline
\end{tabular}
\end{table}
 
Hofmann and Johnston \cite[\S8.5]{MR4136552} described the implementation of an algorithm in 
\textsc{Magma} that, given a finite group $\Gamma$, a rational prime $p$, and $\Z[\Gamma]$-lattices 
$X$ and $Y$ contained in $\Q[\Gamma]$, determines whether the localisations $X_{p}$ and $Y_{p}$ are isomorphic over $\Z_{(p)}[\Gamma]$. Note that by \cite[Proposition (30.17)]{MR632548}, this is equivalent to checking whether the $p$-adic completions are isomorphic over $\Z_{p}[\Gamma]$.
In present situation, we are interested in understanding whether
$\Ind_D^G\mathfrak{A}_{K_\mathfrak{P}/\Q_2}$ is free over its associated order $\mathfrak{A}_{K/\Q,2}$
for each of the cases in Table \ref{tables4}. 
By Lemma~\ref{lemma:biggerorder} this condition is equivalent to $\Ind_D^G\mathfrak{A}_{K_\mathfrak{P}/\Q_2}$ being isomorphic to $\mathfrak{A}_{K/\Q,2}$ as $\Z_2[G]$-lattices.
Since $\Z_2[G]+\frac{1}{2^{n_1}}\Z_2[G]\Tr_{H_1}+\cdots+\frac{1}{2^{n_k}}\Z_2[G]\Tr_{H_k}$ is the completion of $\Z[G]+\frac{1}{2^{n_1}}\Z[G]\Tr_{H_1}+\cdots+\frac{1}{2^{n_k}}\Z[G]\Tr_{H_k}$, 
a computation using the aforementioned algorithm shows that 
$\Ind_D^G\mathfrak{A}_{K_\mathfrak{P}/\Q_2}$ is not free over $\mathfrak{A}_{K/\Q,2}$ 
in each of the ten cases in Table \ref{tables4} (for the implementation see \S\ref{codeS4}).
\end{proof}

\begin{remark}
Cases (v) and (ix) from Table \ref{tables4} can be tackled using Theorem \ref{thm:indringstar}(v) and cases (vii) and (x) using Theorem \ref{thm:indringstar}(v) combined with Proposition \ref{prop:projproj}. More precisely:
\begin{itemize}
\item for (v) we apply Theorem \ref{thm:indringstar}(v) with $H=V_4$ and $G=S_4$;
\item for (vii) we apply Theorem \ref{thm:indringstar}(v) with $H=C_2^2$ and $G=D_8$ and Proposition \ref{prop:projproj} inducing from $D_8$ to $\Gal(K/\Q)\cong S_4$;
\item for (ix) we apply Theorem \ref{thm:indringstar}(v) with $H=V_4$ and $G=S_4$;
\item for (x) we apply Theorem \ref{thm:indringstar}(v) with $H=W_2$ and $G=D_8$ and Proposition \ref{prop:projproj} inducing from $D_8$ to $\Gal(K/\Q)\cong S_4$.
\end{itemize}
Note that here $G$ is not necessarily the Galois group and $H$ is not necessarily one of the decomposition groups.
\end{remark}

\begin{remark}\label{rmk:generalproblemlattices}
The computations in the proof of Theorem \ref{thm:s4classification} 
show that each of the lattices considered is free over its associated order if and only if the lattice is a ring
if and only if the lattice is equal to its associated order. 
However, with the algorithm of \cite[\S 8.5]{MR4136552} we found that $\langle 1,\frac{1}{4}\Tr_{V_4},\frac{1}{8}\Tr_{D_8}\rangle$ is free over $\langle 1,\frac{1}{4}\Tr_{V_4},\frac{1}{8}\Tr_{G}\rangle$ (see \S\ref{codeS4} for the implementation).
\end{remark}

\subsection{Galois module structure of $A_5$-extensions of $\Q$}
In this subsection, we shall prove the following result, which is Theorem \ref{thm:a5intro} stated in the introduction.

\begin{theorem}\label{thm:A5main}
Let $K/\Q$ be a Galois extension with $\Gal(K/\Q) \cong A_{5}$. 
Then $\mathcal{O}_{K}$ is free over $\mathfrak{A}_{K/\Q}$ if and only if 
all three of the following conditions on $K/\Q$ hold:
\begin{enumerate}
\item $2$ is tamely ramified;
\item $3$ is tamely ramified or is weakly ramified with ramification index $6$; and
\item $5$ is tamely ramified or is weakly ramified with ramification index $10$.
\end{enumerate}
\end{theorem}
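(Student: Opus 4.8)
The plan is to run the three-step strategy already used for Theorems~\ref{thm:a4} and~\ref{thm:s4classification}. First, by Corollary~\ref{cor:A4S5A5-locally-free-implies-free} it suffices to decide whether $\mathcal{O}_{K,p}$ is free over $\mathfrak{A}_{K/\Q,p}$ for each rational prime $p$; for $p\nmid 60=|A_5|$ the extension $K/\Q$ is tamely ramified at $p$, so this holds by Theorem~\ref{thm:noetherlocallyfree}, and the whole problem reduces to $p\in\{2,3,5\}$. Fixing such a $p$ and a prime $\mathfrak{P}$ of $K$ above $p$, write $D$ for the decomposition group, $G_0$ for the inertia group, and recall $\mathcal{O}_{K,p}\cong\Ind_D^G\mathcal{O}_{K_\mathfrak{P}}$. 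I would organise the proof by the conjugacy class of the pair $(D,G_0)$. Since every proper subgroup of $A_5$ is non-normal, and since for a local extension the wild inertia $G_1$ is a normal $p$-subgroup of $G_0$ with $G_0/G_1$ and $D/G_0$ cyclic, one checks that the wildly ramified possibilities are: at $p=2$, the pairs $(V_4,V_4)$, $(V_4,C_2)$, $(C_2,C_2)$, together with $(A_4,V_4)$ coming from the unique $A_4$-extension of $\Q_2$ (Lemma~\ref{lem:unique-A4-ext-of-Q2}); at $p=3$, $(C_3,C_3)$, $(S_3,C_3)$ and $(S_3,S_3)$; and at $p=5$, $(C_5,C_5)$, $(D_{10},C_5)$ and $(D_{10},D_{10})$. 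For each I would determine the admissible ramification filtrations using the databases of $p$-adic fields and the Hasse--Arf theorem: in particular every ramified $C_3$-extension of $\Q_3$, every ramified $C_5$-extension of $\Q_5$, and the relevant completion in the $(S_3,C_3)$ and $(D_{10},C_5)$ cases are weakly ramified, while a totally ramified degree-$2p$ dihedral completion is either weakly ramified or almost-maximally ramified by Remark~\ref{rmk:weakalmostmaximal}.

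Next, in each wild case I would first settle the question over $\mathfrak{A}_{K_\mathfrak{P}/\Q_p}$ and then pass up. If $\mathcal{O}_{K_\mathfrak{P}}$ is not projective over $\mathfrak{A}_{K_\mathfrak{P}/\Q_p}$ --- which by Theorem~\ref{thm:locallyfreedihedral} happens precisely for the dihedral completions that are neither almost-maximally nor weakly ramified --- then $\mathcal{O}_{K,p}\cong\Ind_D^G\mathcal{O}_{K_\mathfrak{P}}$ is not free over $\mathfrak{A}_{K/\Q,p}$ by Proposition~\ref{prop:projproj}, and that case is finished. Otherwise $\mathcal{O}_{K_\mathfrak{P}}$ is free over $\mathfrak{A}_{K_\mathfrak{P}/\Q_p}$: this holds by Theorem~\ref{thm:lettl-abs-abelian-p-adic} when $D$ is abelian, by Theorem~\ref{thm:padicberge} when $D$ is dihedral of order $2\ell$, and by Theorem~\ref{thm:weak} when the completion is wildly and weakly ramified, with any residual case handled by a direct \textsc{Magma} computation using the Bley--Johnston algorithm~\cite{MR2422318}. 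Then Proposition~\ref{prop:induction-in-free-rank-1-case} gives $\mathcal{O}_{K,p}\cong\Ind_D^G\mathfrak{A}_{K_\mathfrak{P}/\Q_p}$ as $\mathfrak{A}_{K/\Q,p}$-lattices. I would compute $\mathfrak{A}_{K_\mathfrak{P}/\Q_p}$ explicitly in the form $\Z_p[D]+\sum_i p^{-n_i}\Z_p[D]\Tr_{P_i}$, with the $P_i$ among the ramification subgroups and the $n_i$ read off via Remark~\ref{rmk:idempotent} (using Theorems~\ref{thm:weak} and~\ref{thm:locallyfreedihedral}), and then apply Theorem~\ref{thm:indringstar}(i),(ii) to obtain $\Ind_D^G\mathfrak{A}_{K_\mathfrak{P}/\Q_p}=\sum_i p^{-n_i}\Z_p[G]\Tr_{P_i}$ and $\mathfrak{A}_{K/\Q,p}=\sum_i p^{-n_i}\Z_p[G]\Tr_{\ncl_G(P_i)}$. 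Because $A_5$ is simple, $\ncl_G(P_i)=A_5$ for every $P_i\neq\{e\}$, so these two orders are always distinct.

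It remains to decide, for each of the finitely many resulting pairs of lattices, whether the smaller is free over the larger. Since $A_5$ has no proper nontrivial normal subgroup, parts (iv) and (v) of Theorem~\ref{thm:indringstar} cannot be used directly to force freeness over $\mathfrak{A}_{K/\Q,p}$; however, applying parts (iv) and (v) of Theorem~\ref{thm:indringstar} to the intermediate inductions along a chain of normalisers ($C_2<V_4<A_4<A_5$ and $V_4<A_4<A_5$) and then descending via Proposition~\ref{prop:projproj} disposes of the $(V_4,V_4)$, $(V_4,C_2)$ and $(C_2,C_2)$ configurations at $p=2$. For all the remaining pairs I would invoke the Hofmann--Johnston algorithm~\cite[\S8.5]{MR4136552} in \textsc{Magma}: by Lemma~\ref{lemma:biggerorder}, $\Ind_D^G\mathfrak{A}_{K_\mathfrak{P}/\Q_p}$ is free over $\mathfrak{A}_{K/\Q,p}$ if and only if the two are isomorphic as $\Z_p[A_5]$-lattices, which can be tested on the corresponding $\Z[A_5]$-lattices. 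The outcome is that freeness at $p$ holds exactly in the tame cases and in the weakly ramified cases with ramification index $6$ at $p=3$ and $10$ at $p=5$, and fails for every other wild configuration --- in particular for $(A_4,V_4)$ at $p=2$, and for the $e=3$, $e=5$, and almost-maximally ramified configurations at $p=3$ and $p=5$ --- which is precisely the stated criterion. The main obstacle is this last step in the two \emph{positive} cases: no theoretical criterion in the paper applies there (the relevant $P_i$ is non-normal and $D$ is non-abelian), so the truth of the theorem rests on the \textsc{Magma} verification that $\Z_3[A_5]+\tfrac13\Z_3[A_5]\Tr_{S_3}$ (resp.\ $\Z_5[A_5]+\tfrac15\Z_5[A_5]\Tr_{D_{10}}$) is free over its associated order $\Z_p[A_5]+\tfrac1p\Z_p[A_5]\Tr_{A_5}$.
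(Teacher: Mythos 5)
Your proposal is correct and follows essentially the same route as the paper: reduce via Corollary \ref{cor:A4S5A5-locally-free-implies-free} to local freeness at $p=2,3,5$, classify the possible decomposition/inertia configurations, compute $\mathfrak{A}_{K_\mathfrak{P}/\Q_p}$ explicitly, pass to $\Ind_D^G\mathfrak{A}_{K_\mathfrak{P}/\Q_p}$ and $\mathfrak{A}_{K/\Q,p}$ via Theorem \ref{thm:indringstar}(i)\&(ii), and settle the resulting lattice comparisons (including both positive cases at $p=3,5$ and the $(A_4,V_4)$ case at $p=2$) by the Hofmann--Johnston computation, with outcomes matching the paper's appendix. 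Your staged-induction argument via Theorem \ref{thm:indringstar}(v) and Proposition \ref{prop:projproj} for the abelian decomposition groups at $p=2$ is precisely the alternative the paper records in Remark \ref{rmk:generalproblemlatticesa5}'s neighbouring remark (Remark 7.8), so there is no substantive difference.
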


\begin{proof}[Proof of Theorem \ref{thm:A5main}]
By Corollary \ref{cor:A4S5A5-locally-free-implies-free}, 
$\mathcal{O}_{K}$ is free over $\mathfrak{A}_{K/\Q}$ if and only if 
$\mathcal{O}_{K,p}$ is free over $\mathfrak{A}_{K/\Q,p}$ for every rational prime $p$.
If $p$ is tamely ramified in $K/\Q$ then $\mathcal{O}_{K,p}$ is indeed free over $\mathfrak{A}_{K/\Q,p}$
by Theorem \ref{thm:noetherlocallyfree}. 
Thus it remains to consider the situation in which at least one of the primes $p=2,3,5$ is wildly ramified in 
$K/\Q$. 

We have the following lattice of the subgroups of $A_{5}$ up to conjugacy
(see, for instance, the GroupNames database \cite{groupnames}).
\[
\begin{tikzpicture}[scale=1.0,sgplattice]
  \node[char] at (2,0) (1) {\gn{C1}{C_1}};
  \node at (2,0.803) (2) {\gn{C2}{C_2}};
  \node at (3.88,1.76) (3) {\gn{C3}{C_3}};
  \node at (0.125,1.76) (4) {\gn{C5}{C_5}};
  \node at (2,1.76) (5) {\gn{C2^2}{C_2^2}};
  \node at (3.88,2.97) (6) {\gn{S3}{S_3}};
  \node at (0.125,2.97) (7) {\gn{D_{10}}{D_{10}}};
  \node at (2,2.97) (8) {\gn{A4}{A_4}};
  \node[char] at (2,3.93) (9) {\gn{A5}{A_5}};
  \draw[lin] (1)--(2) (1)--(3) (1)--(4) (2)--(5) (2)--(6) (3)--(6) (2)--(7)
     (4)--(7) (3)--(8) (5)--(8) (6)--(9) (7)--(9) (8)--(9);
  \node[cnj=2] {15};
  \node[cnj=3] {10};
  \node[cnj=4] {6};
  \node[cnj=5] {5};
  \node[cnj=6] {10};
  \node[cnj=7] {6};
  \node[cnj=8] {5};
\end{tikzpicture}
\] 
Here the subscript on the left denotes the number of conjugate subgroups. Recall that $A_{5}$ is simple and note that the subgroup lattice shows that isomorphic subgroups must be conjugate. 
Moreover, since $A_5$ is not soluble, no prime can have full decomposition group. 
We fix an isomorphism $G:=\Gal(K/\Q) \cong A_{5}$ and denote by
$A_4$, $D_{10}$ etc.\
a choice of subgroups of $G$ in 
such a way that whenever there is a containment between choices of conjugates of two such subgroups,
one of the subgroups is in fact contained in the other.

Suppose that $p=2$ is wildly ramified in $K/\Q$. 
Let $\mathfrak{P}$ be a prime of $K$ above $2$ and let $D(2)$ be its decomposition group. 
Then $D(2)$ must be isomorphic to $A_{4}$, $C_{2}^{2}$ or $C_{2}$. 
Hence in each of these cases $\mathcal{O}_{K_\mathfrak{P}}$ is free over $\mathfrak{A}_{K_\mathfrak{P}/\Q_2}$ (if $D(2)=A_4$, this follows from Lemma~\ref{lem:unique-A4-ext-of-Q2} and Theorem \ref{thm:weak}; 
otherwise this follows from Theorem \ref{thm:lettl-abs-abelian-p-adic}). 
By Proposition~\ref{prop:induction-in-free-rank-1-case}, $\textup{Ind}_{D(2)}^{G}\mathfrak{A}_{K_\mathfrak{P}/\Q_2} 
\cong \textup{Ind}_{D(2)}^{G} \mathcal{O}_{K_{\mathfrak{P}}} \cong \mathcal{O}_{K,2}$
as $\mathfrak{A}_{K/\Q,2}$-lattices. 
Thus we need to analyse when $\Ind_{D(2)}^G\mathfrak{A}_{K_\mathfrak{P}/\Q_2}$ is free over $\mathfrak{A}_{K/\Q,2}$. 
In each of the aforementioned possibilities for $D(2)$, we already know the structure of
$\mathfrak{A}_{K_\mathfrak{P}/\Q_2}$ from the proof of Theorem \ref{thm:a4}. 
Using Theorem \ref{thm:indringstar}(i)\&(ii) we can write all the possibilities for $\Ind_{D(2)}^G\mathfrak{A}_{K_\mathfrak{P}/\Q_2}$ and $\mathfrak{A}_{K/\Q,2}$.
We use the following notation: $\langle 1,\frac{1}{2}\Tr_{C_2}\rangle=\Z_2[G]+\frac{1}{2}\Z_2[G]\Tr_{C_2}$ etc.\ The results are shown in Table \ref{tablea5prime2} below.

\begin{table}[htb]
\centering
\caption{}\label{tablea5prime2}
\begin{tabular}{|c|c|c|}
\hline
\textcolor{white}{$\overline{\overline{\underline{\sum}}}$}& $\Ind_{D(2)}^{G}
\mathfrak{A}_{K_\mathfrak{P}/\Q_2}$ & $\mathfrak{A}_{K/\Q,2}$   \\
\hline
\textcolor{white}{$\overline{\overline{\underline{\sum}}}$}(i)\textcolor{white}{$\overline{\overline{\underline{\sum}}}$} & $\langle 1,\frac{1}{2}\Tr_{C_2}\rangle$ &  $\langle 1,\frac{1}{2}\Tr_{G}\rangle$\\
\hline
\textcolor{white}{$\overline{\overline{\underline{\sum}}}$}(ii)\textcolor{white}{$\overline{\overline{\underline{\sum}}}$} & $\langle 1,\frac{1}{2}\Tr_{C_2},\frac{1}{4}\Tr_{C_2^2}\rangle$ &  $\langle 1,\frac{1}{4}\Tr_{G}\rangle$\\
\hline
\textcolor{white}{$\overline{\overline{\underline{\sum}}}$}(iii)\textcolor{white}{$\overline{\overline{\underline{\sum}}}$} & $\langle 1,\frac{1}{2}\Tr_{C_2^2}\rangle$ &  $\langle 1,\frac{1}{2}\Tr_{G}\rangle$\\
\hline
\end{tabular}
\end{table}

As in the proof of Theorem \ref{thm:s4classification}, we can use the \textsc{Magma} implementation of the algorithm described in \cite[\S8.5]{MR4136552}. We can hence verify that in none of the above cases $\textup{Ind}_{D(2)}^{G}\mathfrak{A}_{K_\mathfrak{P}/\Q_2}\cong\mathcal{O}_{K,2}$ is free over $\mathfrak{A}_{K/\Q,2}$ (see \S\ref{codeA5} for the implementation). 

Now suppose that $p=3$ or $5$ and that $p$ is wildly ramified in $K/\Q$.
Let $\mathfrak{P}$ be a choice of a prime of $K$ above $p$ and let $D(p)$ be its decomposition group. 
There is no Galois extension $L/\Q_{3}$ such that $\Gal(L/\Q_{3}) \cong A_{4}$
(one can check this using the database of $p$-adic fields \cite{MR2194887}, for example).
Hence $D(p)$ must be isomorphic to either $D_{2p}$ or $C_{p}$,
which implies that $\mathcal{O}_{K_\mathfrak{P}}$ is free over $\mathfrak{A}_{K_\mathfrak{P}/\Q_p}$
by Theorems \ref{thm:lettl-abs-abelian-p-adic} and \ref{thm:padicberge}.
Thus by Proposition \ref{prop:induction-in-free-rank-1-case} we have that 
$\textup{Ind}_{D(p)}^{G}\mathfrak{A}_{K_\mathfrak{P}/\Q_p} 
\cong \textup{Ind}_{D}^{G} \mathcal{O}_{K_{\mathfrak{P}}} \cong \mathcal{O}_{K,p}$
as $\mathfrak{A}_{K/\Q,p}$-lattices, so that our goal is to analyse when $\textup{Ind}_{D(p)}^{G}\mathfrak{A}_{K_\mathfrak{P}/\Q_p}$ is free over $\mathfrak{A}_{K/\Q,p}$. 
We use the following notation: $\langle 1,\frac{1}{p}\Tr_{C_p}\rangle=\Z_p[G]+\frac{1}{p}\Z_p[G]\Tr_{C_p}$, etc.\ If $D(p)\cong C_p$ (in which case we can and do assume that $D(p)=C_{p}$), then as $K_\mathfrak{P}/\Q_p$ is wildly ramfied this implies that $\mathfrak{A}_{K_\mathfrak{P}/\Q_p}=\Z_p[D(p)]+\frac{1}{p}\Z_p[D(p)]\Tr_{D(p)}$, which is the unique maximal order in $\Q_{p}[D(p)]$. If $D(p)\cong D_{2p}$ (in which case we can and do assume that $D=D_{2p}$), we can use Theorem \ref{thm:locallyfreedihedral}: in case of almost-maximal ramification, $\mathfrak{A}_{K_\mathfrak{P}/\Q_p}=\Z_p[D(p)]+\frac{1}{p}\Z_p[D(p)]\Tr_{C_p}$ (which gives the same structure for $\textup{Ind}_{D(p)}^{G}\mathfrak{A}_{K_\mathfrak{P}/\Q_p} 
$ as when $D(p)=C_p$); otherwise, by Remark \ref{rmk:weakalmostmaximal}, $K_\mathfrak{P}/\Q_p$ is weakly and totally ramified and $\mathfrak{A}_{K_\mathfrak{P}/\Q_p}=\Z_p[D(p)]+\frac{1}{p}\Z_p[D(p)]\Tr_{D(p)}$. 
Hence there are two possibilities for $\Ind_{D(p)}^G\mathfrak{A}_{K_\mathfrak{P}/\Q_p}$ and $\mathfrak{A}_{K/\Q,p}$, shown in Table \ref{tablea5primep}.

\begin{table}[htb]
\centering
\caption{}\label{tablea5primep}
\begin{tabular}{|c|c|c|}
\hline
 \textcolor{white}{$\overline{\overline{\underline{\sum}}}$}& $\Ind_{D(p)}^G\mathfrak{A}_{K_\mathfrak{P}/\Q_p}$ & $\mathfrak{A}_{K/\Q,p}$   \\
\hline
\textcolor{white}{$\overline{\overline{\underline{\sum}}}$}(i)\textcolor{white}{$\overline{\overline{\underline{\sum}}}$} & $\langle 1,\frac{1}{p}\Tr_{C_p}\rangle$ &  $\langle 1,\frac{1}{p}\Tr_{G}\rangle$\\
\hline
\textcolor{white}{$\overline{\overline{\underline{\sum}}}$}(ii)\textcolor{white}{$\overline{\overline{\underline{\sum}}}$} & $\langle 1,\frac{1}{p}\Tr_{D_{2p}}\rangle$ &  $\langle 1,\frac{1}{p}\Tr_{G}\rangle$\\
\hline
\end{tabular}
\end{table}

We used the \textsc{Magma} implementation of the algorithm from \cite[\S8.5]{MR4136552} to verify that $\Ind_{D(p)}^{G}\mathfrak{A}_{K_\mathfrak{P}/\Q_p}$ is free over $\mathfrak{A}_{K/\Q,p}$ if and only if we are in case (ii), that is, precisely when $K_\mathfrak{P}/\Q_p$ is weakly ramified or, 
equivalently, when it is not almost maximally-ramified (see \S\ref{codeA5}).
\end{proof}

\begin{remark}
Note that from the proof of Theorem \ref{thm:a4}, we already knew that neither $\Z_2[A_4]+\frac{1}{2}\Z_2[A_4]\Tr_{C_2}$ nor $\Z_2[A_4]+\frac{1}{2}\Z_2[A_4]\Tr_{C_2}+\frac{1}{4}\Z_2[A_4]\Tr_{C_2^2}$ are even projective over their associated orders; induction from $A_4$ to $S_4$ and Proposition \ref{prop:projproj} permit us to conclude that $\Z_2[S_4]+\frac{1}{2}\Z_2[S_4]\Tr_{C_2}$ and $\Z_2[S_4]+\frac{1}{2}\Z_2[S_4]\Tr_{C_2}+\frac{1}{4}\Z_2[S_4]\Tr_{C_2^2}$ are not projective over their associated orders. Thus we can treat cases (i) and (ii) from Table \ref{tablea5prime2} without using the algorithm. 
\end{remark}

\begin{remark}\label{rmk:generalproblemlatticesa5}
Note that for $p=3$ and $p=5$ we found that $\langle 1,\frac{1}{p}\Tr_{D_{2p}}\rangle$ is free over $\langle 1,\frac{1}{p}\Tr_{G}\rangle$ without the two being equal. We also found with the algorithm from \cite{MR4136552} that $\langle 1,\frac{1}{2}\Tr_{A_4}\rangle$, which does not come from a ring of integers, is free over $\langle 1,\frac{1}{2}\Tr_G\rangle$ (see \S\ref{codeA5}).
\end{remark}

\appendix

\section{Computer calculations}\label{appendix}

\subsection{Determining freeness for $S_4$-extensions of $\Q_2$}\label{codeS4Q2}
Let $K/\Q$ be an $S_4$-extension with full decomposition group that is wildly ramified.
Here we describe how to use the \textsc{Magma} implementation of \cite[Algorithm 3.1(6)]{MR2422318} to check whether or not $\mathcal{O}_K$ is locally free at $2$ over $\mathfrak{A}_{K/\Q}$, or equivalently, whether $\mathcal{O}_{K_{\mathfrak{P}}}$ is free over $\mathfrak{A}_{K_\mathfrak{P}/\Q_2}$, where $\mathfrak{P}$ is the unique prime of $K$ above $2$. 
We used the database \cite{lmfdb} to find six number fields, each of which has a completion at $2$ equal 
to one of the six wildly ramified $S_4$-extensions of $\Q_2$ listed in the database of $p$-adic fields \cite{MR2194887}.
Note that two of these extensions of $\Q_{2}$ are weakly ramified, and so Corollary \ref{cor:inertianormal}(ii)
already shows that $\mathcal{O}_{K_{\mathfrak{P}}}$ is free over $\mathfrak{A}_{K_\mathfrak{P}/\Q_2}$ in both cases, but we include them anyway as an addition check.
The files \texttt{RelAlgKTheory.m} and \texttt{INB.m} referred to below are available on Werner Bley's website
\texttt{https://www.mathematik.uni-muenchen.de/$\sim$bley/pub.php}

We refer to the sample file \texttt{sample.m} from the article \cite{MR2422318}. 
Note that here we use the updated file \texttt{INB.m} from \cite{MR2813368} rather than the original file
\texttt{ao.m}.

\medskip

\begin{verbatim}
Attach("RelAlgKTheory.m");
Attach("INB.m");
P<x> := PolynomialRing(IntegerRing());
Polynomials := [ x^6 + x^4 + x^2 - 1,
x^6 - x^4 + 3*x^2 - 1,
x^6 + 3*x^4 + 11*x^2 + 11,
x^6 + 7*x^4 + 15*x^2 + 11,
x^6 - x^4 - 2*x^3 - x^2 + 1,
x^6 - 2*x^5 + 2*x^4 - 4*x^3 + 4*x^2 - 2*x + 2 ];
for i in [1..6] do
  L := NormalClosure(NumberField(Polynomials[i]));
  G, Aut, h := AutomorphismGroup(L);
  h := map<Domain(h)->Codomain(h) | g:->h(g^-1)>;
  OL := MaximalOrder(L);
  theta := NormalBasisElement(OL, h);
  Ath := ComputeAtheta(OL, h, theta);
  QG := GroupAlgebra(Rationals(), G);
  AssOrd := ModuleConductor(QG, Ath, Ath);
  rho := RegularRep(QG);
  M := ZGModuleInit(Ath`hnf, rho);
  isfree, w := IsLocallyFree(QG, AssOrd, M, 2);
  if isfree then
    print "we have local freeness at 2";
  else
    print "we do not have local freeness at 2";
  end if;
end for;
we do not have local freeness at 2
we do not have local freeness at 2
we do not have local freeness at 2
we do not have local freeness at 2
we have local freeness at 2
we have local freeness at 2
\end{verbatim}

\subsection{Determining local freeness at $2$ for $S_4$-extensions of $\Q$}\label{codeS4}
Here we describe how to use to use the \textsc{Magma} implementation of \cite[\S 8.5]{MR4136552}
to show that for an $S_{4}$-extension 
$K/\Q$ we have that $\mathcal{O}_K$ is not locally free at $2$ over $\mathfrak{A}_{K/\Q}$ if 
$K/\Q$ does not satisfy any of the conditions (i)--(iv) of Theorem \ref{thm:s4classification} and $\mathcal{O}_{K_\mathfrak{P}}$ is free over $\mathfrak{A}_{K_\mathfrak{P}/\Q_2}$, where $\mathfrak{P}$ is a prime of $K$ above $2$ (see Table \ref{tables4}). 
Moreover, we also prove the freeness claim of Remark \ref{rmk:generalproblemlattices}. 
The files \texttt{Iso.m}, \texttt{Lattices.m} and \texttt{Iso.spec} referred to below are contained in \texttt{Iso.zip}, available in the link to \cite{MR4136552} on Tommy Hofmann's website \texttt{https://www.thofma.com}

\medskip

\begin{verbatim}
AttachSpec("Iso.spec");
G := Sym(4);
W2 := sub<G | G!(1, 3)>;
V2 := sub<G | G!(1, 3)(2, 4)>;  
C4 := sub<G | G!(1, 2, 3, 4)>; 
V4 := sub<G | G!(1, 3)(2, 4),(1, 2)(3, 4)>; 
C22 := sub<G | G!(1, 3),(2, 4)>;  
D8 := sub<G | G!(1, 2, 3, 4),(1, 3)>;       
QG := GroupAlgebra(Rationals(), G);
trW2 := &+[ QG!h : h in W2];
trV2 := &+[ QG!h : h in V2];
trC4 := &+[ QG!h : h in C4];
trV4 := &+[ QG!h : h in V4];
trC22 := &+[ QG!h : h in C22];
trD8 := &+[ QG!h : h in D8];
trG := &+[ QG!h : h in G]; 
ZG := Order(Integers(), Basis(QG));
M1 := rideal< ZG | 1, trV2/2, trC4/4,trD8/8>;
A1 := rideal< ZG | 1, trV4/2, trG/8>;        
IsLocallyIsomorphic(QG, BasisMatrix(M1), BasisMatrix(A1), 2); 
false
M2 := rideal< ZG | 1, trV2/2, trV4/4,trD8/8>;
A2 := rideal< ZG | 1, trV4/4, trG/8>; 
IsLocallyIsomorphic(QG, BasisMatrix(M2), BasisMatrix(A2), 2);
false
M3 := rideal< ZG | 1, trV2/2, trC22/4,trD8/8>;                                                                     
IsLocallyIsomorphic(QG, BasisMatrix(M3), BasisMatrix(A1), 2);
false
M4 := rideal< ZG | 1, trV2/2, trC4/4>;
A4 := rideal< ZG | 1, trV4/2, trG/4>;
IsLocallyIsomorphic(QG, BasisMatrix(M4), BasisMatrix(A4), 2);
false
M5 := rideal< ZG | 1, trV2/2, trV4/4>;                       
A5 := rideal< ZG | 1, trV4/4>;        
IsLocallyIsomorphic(QG, BasisMatrix(M5), BasisMatrix(A5), 2);
false
M6 := rideal< ZG | 1, trV2/2, trC22/4>;                                                                            
IsLocallyIsomorphic(QG, BasisMatrix(M6), BasisMatrix(A4), 2);
false
M7 := rideal< ZG | 1, trW2/2, trC22/4>;                      
A7 := rideal< ZG | 1, trG/4>;                                
IsLocallyIsomorphic(QG, BasisMatrix(M7), BasisMatrix(A7), 2);
false
M8 := rideal< ZG | 1, trC22/2>;                              
A8 := rideal< ZG | 1, trG/2>;  
IsLocallyIsomorphic(QG, BasisMatrix(M8), BasisMatrix(A8), 2);
false
M9 := rideal< ZG | 1, trV2/2>;                               
A9 := rideal< ZG | 1, trV4/2>;
IsLocallyIsomorphic(QG, BasisMatrix(M9), BasisMatrix(A9), 2);
false
M10 := rideal< ZG | 1, trW2/2>;                                                            
IsLocallyIsomorphic(QG, BasisMatrix(M10), BasisMatrix(A8), 2);
false
M11 := rideal< ZG | 1, trV4/4, trD8/8>;                        
A11 := rideal< ZG | 1, trV4/4, trG/8>; 
IsLocallyIsomorphic(QG, BasisMatrix(M11), BasisMatrix(A11), 2);
true -31/4*Id(G) + (1, 4, 3, 2) + 5/4*(1, 3)(2, 4) - 5*(2, 3)
+ 5/4*(1, 2, 4) + 1/4*(1, 4, 3)+ (1, 3, 4, 2) + (2, 4, 3)+ (1, 4, 2, 3)
+ (1, 2, 3) + 5/4*(2, 3, 4) + 1/4*(1, 3, 2) + (2, 4) + 5/4*(1, 2)(3, 4)
+ 1/4*(1, 4)(2, 3)
\end{verbatim}

\subsection{Determing local freeness for $A_5$-extensions of $\Q$}\label{codeA5}
Here we describe how to use the \textsc{Magma} implementation of \cite[\S 8.5]{MR4136552}
to check local freeness in $A_5$-extensions of $\Q$ at the wildly ramified primes (see Tables \ref{tablea5prime2} and \ref{tablea5primep}). 
Moreover, we also prove the second freeness claim of Remark \ref{rmk:generalproblemlatticesa5}. 
The files \texttt{Iso.m}, \texttt{Lattices.m} and \texttt{Iso.spec} referred to below are contained in \texttt{Iso.zip}, available in the link to \cite{MR4136552} on Tommy Hofmann's website \texttt{https://www.thofma.com}

When \texttt{IsLocallyIsomorphic(QG, BasisMatrix(M), BasisMatrix(A), 2)} is `true', 
we suppress the full output, which includes an element $x\in\Q[G]$ such that $x(\Z_2\otimes_{\Z}M)=\Z_2\otimes_{\Z} A$ (whose existence is in our case equivalent to $\Z_2\otimes_{\Z}M$ being free over $\Z_2\otimes_{\Z} A$).

\medskip

\begin{verbatim}
AttachSpec("Iso.spec");
G:=Alt(5);
C2 := sub<G | G!(1, 2)(3, 4)>; 
C22 := sub<G | G!(1, 2)(3, 4),(1, 3)(2, 4)>; 
C3 := sub<G | G!(1, 2, 3)>;                 
D6 := sub<G | G!(1, 2)(4, 5),(1, 2, 3)>; 
C5 := sub<G | G!(1, 2, 3, 4, 5)>;       
D10 := sub<G | G!(2 ,5)(3, 4),(1, 2, 3, 4, 5)>;
Alt4 := sub<G | G!(1 ,2)(3, 4),(1, 2, 3)>; 
QG := GroupAlgebra(Rationals(), G);
trC2 := &+[ QG!h : h in C2];      
trC22 := &+[ QG!h : h in C22];
trC3 := &+[ QG!h : h in C3];  
trD6 := &+[ QG!h : h in D6];
trC5 := &+[ QG!h : h in C5];
trD10 := &+[ QG!h : h in D10];                      
trAlt4 := &+[ QG!h : h in Alt4]; 
trG := &+[ QG!h : h in G];  
ZG := Order(Integers(), Basis(QG));
M1 := rideal< ZG | 1, trC2/2>;     
A1 := rideal< ZG | 1, trG/2>; 
IsLocallyIsomorphic(QG, BasisMatrix(M1), BasisMatrix(A1), 2); 
false
M2 := rideal< ZG | 1, trC2/2, trC22/4>;                      
A2 := rideal< ZG | 1, trG/4>;                                
IsLocallyIsomorphic(QG, BasisMatrix(M2), BasisMatrix(A2), 2); 
false
M3 := rideal< ZG | 1, trC22/2>;                                                                                    
IsLocallyIsomorphic(QG, BasisMatrix(M3), BasisMatrix(A1), 2); 
false
M4 := rideal< ZG | 1, trC3/3>;                               
A4 := rideal< ZG | 1, trG/3>;                                
IsLocallyIsomorphic(QG, BasisMatrix(M4), BasisMatrix(A4), 3); 
false
M5 := rideal< ZG | 1, trD6/3>;                               
IsLocallyIsomorphic(QG, BasisMatrix(M5), BasisMatrix(A4), 3); 
true
M6 := rideal< ZG | 1, trC5/5>;                               
A6 := rideal< ZG | 1, trG/5>;                                
IsLocallyIsomorphic(QG, BasisMatrix(M6), BasisMatrix(A6), 5); 
false
M7 := rideal< ZG | 1, trD10/5>;                              
IsLocallyIsomorphic(QG, BasisMatrix(M7), BasisMatrix(A6), 5); 
true                         
M8 := rideal< ZG | 1, trAlt4/2>;                               
IsLocallyIsomorphic(QG, BasisMatrix(M8), BasisMatrix(A1), 2); 
true 
\end{verbatim}

\bibliography{Associated-orders-bib.bib}{}
\bibliographystyle{amsalpha}

\end{document}